
\documentclass[11pt,oneside,british]{amsart}
\usepackage[T1]{fontenc}
\usepackage[latin9]{inputenc}
\usepackage{prettyref}
\usepackage{bm}
\usepackage{amsthm}
\usepackage{amssymb}
\usepackage{esint}
\usepackage{constants}

\makeatletter
\numberwithin{equation}{section}
\numberwithin{figure}{section}
\numberwithin{table}{section}
\theoremstyle{plain}
\newtheorem{thm}{\protect\theoremname}[section]
  \theoremstyle{plain}
  \newtheorem{lem}[thm]{\protect\lemmaname}
  \theoremstyle{plain}
  \newtheorem{cor}[thm]{\protect\corollaryname}
  \theoremstyle{definition}
  \newtheorem{rem}[thm]{Remark}
  \theoremstyle{plain}
  \newtheorem{prop}[thm]{\protect\propositionname}
  \newtheorem*{acknowledgement*}{\protect\acknowledgementname}

\def\n{\noindent}
\def\dsl{\textstyle\sum\limits}
\def\f{\footnotesize}
\def\dis{\displaystyle}

\usepackage{amsfonts}
\usepackage[numbers,sort&compress,square]{natbib}
\usepackage{url}
\numberwithin{equation}{section} 
\newrefformat{pro}{Proposition \ref{#1}}
\newrefformat{cor}{Corollary \ref{#1}}
\newrefformat{sub}{Section \ref{#1}}
\newrefformat{rem}{Remark \ref{#1}}
\newrefformat{def}{Definition \ref{#1}}

\setlength{\textwidth}{16cm}
\addtolength{\textheight}{2.5cm}
\addtolength{\topmargin}{-1cm}
\addtolength{\oddsidemargin}{-2.1cm}
\addtolength{\evensidemargin}{-1.4cm}

\makeatother

\usepackage{babel}
  \providecommand{\acknowledgementname}{Acknowledgement}
  \providecommand{\corollaryname}{Corollary}
  \providecommand{\lemmaname}{Lemma}
  \providecommand{\propositionname}{Proposition}
\providecommand{\theoremname}{Theorem}

\newconstantfamily{c}{symbol=c}

\begin{document}

\title{Gumbel fluctuations for cover times in the discrete torus}

\author{David Belius}

\thanks{This research has been supported by the grant ERC-2009-AdG 245728-RWPERCRI}

\maketitle

\begin{abstract}
This work proves that the fluctuations of the cover time of simple
random walk in the discrete torus of dimension at least three with
large side-length are governed by the Gumbel extreme value distribution.
This result was conjectured for example in \cite{aldous-fill:book}.
We also derive some corollaries which qualitatively describe ``how''
covering happens. In addition, we develop a new and stronger
coupling of the model of random interlacements, introduced by Sznitman
in \cite{Sznitman2007}, and random walk in the torus. This coupling is used
to prove the cover time result and is also of independent interest.
\end{abstract}

\section*{0 Introduction}

In this article we prove that if $C_{N}$ is the cover time of the
discrete torus of side-length $N$ and dimension $d\ge3$ then $C_{N}/(g(0)N^{d})-\log N^{d}$
(where $g(\cdot)$ is the $\mathbb{Z}^{d}$ Green function) tends
in law to the Gumbel distribution as $N\rightarrow\infty$, or in
other words, that the fluctuations of the cover time are governed
by the Gumbel distribution. This result was conjectured e.g. in
Chapter 7, p. 23, \cite{aldous-fill:book}. We also construct a new
stronger coupling of random walk in the torus and the model of random
interlacements, thus improving a result from \cite{TeixeiraWindischOnTheFrag}.
The coupling is of independent interest and is also used as a tool
to prove the cover time result.

Cover times of finite graphs by simple random walk have been studied
extensively, see e.g. \cite{Aldous-OnTimeTaken...,Aldous-ThresholdLimitsforCT,aldous-fill:book,Brummelhuis1991,DemboPeresEtAl-CoverTimesforBMandRWin2D,MatthewsCoveringProbsForMCs}.
One important case is the cover time of the discrete torus $\mathbb{T}_{N}=(\mathbb{Z}/N\mathbb{Z})^{d}$.
Let $P$ be the canonical law of continuous time simple random walk
in this graph, starting from the uniform distribution, and let the
canonical process be denoted by $(Y_{t})_{t\ge0}$. The cover time
of the discrete torus is the first time $Y_{\cdot}$ has visited every
vertex of the graph, or in other words
\begin{equation}
C_{N}=\max_{x\in\mathbb{T}_{N}}H_{x},\label{eq:IntroDefOfCoverTime}
\end{equation}
where $H_{x}$ denotes the entrance time of the vertex $x\in\mathbb{T}_{N}$.
For $d\ge3$ it is known that $EC_{N}\sim g(0)N^{d}\log N^{d}$, as
$N\rightarrow\infty$, and that $C_N$ concentrates in the sense
that $\frac{C_{N}}{g(0)N^{d}\log N^{d}}\rightarrow1$ in probability,
as $N\rightarrow\infty$. However previously it was only conjectured
that the finer scaling result 
\begin{equation}
\mbox{\f $\dis\frac{C_{N}}{g(0)N^{d}}$}-\log N^{d}\overset{\rm{law}}{\longrightarrow}G\mbox{ as }N\rightarrow\infty\mbox{, \mbox{for }}d\ge3,\label{eq:IntroCoverTimeOfTorus}
\end{equation}

\medskip\n
holds, where $G$ denotes the standard Gumbel distribution, with cumulative
distribution function $F(z)=e^{-e^{-z}}$ (see e.g. Chapter 7, p.
22-23, \cite{aldous-fill:book}). In this article we prove \prettyref{eq:IntroCoverTimeOfTorus}.
In fact our result, \prettyref{thm:G_Gumbel}, proves more, namely
that the (appropriately defined) cover time of any ``large'' subset
of $\mathbb{T}_{N}$ satisfies a similar relation. (For $d=1,2$,
the asymptotic behaviour of $E[C_{N}]$ is different; see \cite{DemboPeresEtAl-CoverTimesforBMandRWin2D}
for $d=2$, the case $d=1$ is an exercise in classical probability
theory. The concentration result $C_{N}/E[C_{N}]\rightarrow1$
still holds for $d=2$, but the nature of the fluctuations is unknown.
For $d=1$ one can show that $C_{N}/N^{2}$ converges in
law to the time needed by Brownian motion to cover $\mathbb{R}/\mathbb{Z}$.)

Our second main result is a coupling of random walk in the discrete
torus and random interlacements, which we now introduce. To do so
we very briefly describe the model of random interlacements (see \prettyref{sec:Notation}
for more details). It was introduced in \cite{Sznitman2007} and helps
to understand ``the local picture'' left by the trace of a simple
random walk in (among other graphs) the discrete torus when $d\ge3$.
The random interlacement roughly speaking arises from a Poisson cloud
of doubly infinite nearest-neighbour paths modulo time-shift in $\mathbb{Z}^{d},d\ge3$,
with a parameter $u\ge0$ that multiplies the intensity. The trace
(vertices visited) of all the paths at some intensity $u$ is a random
subset of $\mathbb{Z}^{d}$ denoted by $\mathcal{I}^{u}$. Together
the $\mathcal{I}^{u}$, for $u\ge0$, form an increasing family $(\mathcal{I}^{u})_{u\ge0}$
of random subsets of $\mathbb{Z}^{d}$. We call the family $(\mathcal{I}^{u})_{u\ge0}$
a random interlacement and for fixed $u$ we call the random set $\mathcal{I}^{u}$
a random interlacement at level $u$. Random interlacements are intimitaly
related to random walk in the torus; intuitively speaking, the trace
of $Y_{\cdot}$ in a ``local box'' of the torus, when run up to
time $uN^{d}$, in some sense ``looks like'' $\mathcal{I}^{u}$
intersected with a box (see \cite{Windisch2008}, \cite{Sznitman2007}).

Our coupling result is one way to make the previous sentence precise
and can be formulated roughly as follows. Let $Y(0,t)$ denote the
trace of $Y_{\cdot}$ up to time $t$ (i.e. the set of vertices visited
up to time $t$). For $n\ge1$ pick vertices $x_{1},...,x_{n}\in\mathbb{T}_{N}$
and consider boxes $A_{1},...,A_{n}$, defined by $A_{i}=x_{i}+A,i=1,...,n,$
where $A\subset\mathbb{Z}^{d}$ is a box centred at $0$, of side-length
such that the $A_{1},...,A_{n}$ are ``well separated'' and at most
``mesoscopic''. Then for a ``level'' $u>0$ and a $\delta>0$
(which may not be too small)
\begin{equation}\label{eq:IntroCoupling}
\begin{array}{l}
\mbox{one can construct a coupling of random walk }Y_{\cdot}\mbox{ with law }P\mbox{ and independent }\\
\mbox{random interlacements }(\mathcal{I}_{1}^{v})_{v\ge0},...,(\mathcal{I}_{n}^{v})_{v\ge0},\mbox{ such that ``with high probability''}\\[1ex]
\qquad \mathcal{I}_{i}^{u(1-\delta)}\cap A\subset(Y(0,uN^{d})-x_{i})\cap A\subset\mathcal{I}_{i}^{u(1+\delta)}\cap A\mbox{ for }i=1,...,n.
\end{array}
\end{equation}

\medskip\n
The result is stated rigorously in \prettyref{thm:Coupling}. The
case $n=1$ (i.e. coupling for one box) and $u,\delta$ fixed (as
$N\rightarrow\infty$) was obtained in \cite{TeixeiraWindischOnTheFrag}
(and earlier a coupling of random walk in the so called discrete cylinder
and random interlacements was constructed in \cite{Sznitman2009-UBonDTofDCandRI,Sznitman2009-OnDOMofRWonDCbyRI}).
In this paper we strengthen the result from \cite{TeixeiraWindischOnTheFrag}
by coupling random interlacements with random walk in many separated
boxes (the most important improvement), and by allowing $\delta$
to go to zero and $u$ to go to zero or infinity. A similar improvement
of the discrete cylinder coupling from \cite{Sznitman2009-UBonDTofDCandRI,Sznitman2009-OnDOMofRWonDCbyRI}
can be found in \cite{BeliusCTinDC}.

A coupling of random interlacements and random walk is a very powerful tool to study the trace of
random walk. In this article we use the above coupling to study certain properties of the trace
relevant to the cover time result \prettyref{eq:IntroCoverTimeOfTorus}. Similar couplings have also
been used to study the percolative properties of the complement of the trace of random walk in terms of random interlacements;
in this case the relevant properties of the trace studied with the coupling are very different (see \cite{TeixeiraWindischOnTheFrag} for the torus,
and \cite{Sznitman2009-OnDOMofRWonDCbyRI,Sznitman2009-UBonDTofDCandRI} for the discrete cylinder). We
expect our coupling to find uses beyond the current cover time application. For more on this see
\prettyref{rem:EndOfSec3Remarks} (1).

We also prove two interesting corollaries of \prettyref{eq:IntroCoverTimeOfTorus}
(actually using the stronger subset version mentioned above) and \prettyref{eq:IntroCoupling}.
To formulate the first corollary we define the ``point process of
vertices covered last'', a random measure on the torus $(\mathbb{R}/\mathbb{Z})^{d}$, by
\begin{equation}
\mathcal{N}_{N}^{z}=\dsl_{x\in\mathbb{T}_{N}}\delta_{x/N}1_{\{H_{x}>g(0)N^{d}\{\log N^{d}+z\}\}},N\ge1,z\in\mathbb{R}.\label{eq:DefOfPPoVCV}
\end{equation}

\medskip\n
Note that $\mathcal{N}_{N}^{z}$ counts the vertices of $\mathbb{T}_{N}$
which have not yet been hit at time $g(0)N^{d}\{\log N^{d}+z\}$ (this
is the ``correct time-scale''; from \prettyref{eq:IntroCoverTimeOfTorus}
one sees that the probability that $\mathcal{N}_{N}^{z}$ is the zero
measure is bounded away from zero and one). The result is then that
(for $d\ge3$)
\begin{equation}
\begin{array}{l}
\mathcal{N}_{N}^{z}\overset{\rm law}{\longrightarrow}\mathcal{N}^{z}\mbox{ as }N\rightarrow\infty,\mbox{ where }\mathcal{N}^z\mbox{ is a Poisson point process}\\
\mbox{on }(\mathbb{R}/\mathbb{Z})^{d}\mbox{ of intensity }e^{-z}\lambda,\mbox{ and }\lambda\mbox{ denotes Lebesgue measure.}
\end{array}\label{eq:IntroPPoVCV}
\end{equation}

\medskip\n
This is proven in \prettyref{cor:PPoVCL}. Intuitively speaking it
means that the last vertices of the torus to be covered are approximately
independent and uniform.

As a consequence of \prettyref{eq:IntroPPoVCV} we obtain \prettyref{cor:LastTwoIndians}
which says, intuitively speaking, that
\begin{equation}\label{eq:IntroLastTwoIndians}
  \mbox{the last few vertices of }\mathbb{T}_{N}\mbox{ to be covered are far apart, \mbox{at distance of order }}N.
\end{equation}

\medskip\n
Note that a priori it is not clear if the correct qualitative picture
is that the random walk completes covering of the torus by ``taking
out several neighbouring vertices at once'' or if it ``takes out
the last few vertices one by one''. Roughly speaking \prettyref{eq:IntroLastTwoIndians}
implies that the latter is the case.

We now discuss the proofs of the above results. The result \prettyref{eq:IntroPPoVCV}
is proven using Kallenberg's theorem, which allows one to verify
convergence in law of certain point processes by checking only convergence
of the intensity measure and convergence of the probability of ``charging
a set''. The latter two quantities will be shown to converge using
\prettyref{eq:IntroCoverTimeOfTorus} (or rather the subset version
of this statement) and the coupling \prettyref{eq:IntroCoupling}.
The result \prettyref{eq:IntroLastTwoIndians} follows from \prettyref{eq:IntroPPoVCV},
using a calculation involving Palm measures and the fact that the
points of the limit Poisson point processes $\mathcal{N}^{z}$ are
``macroscopically separated''.

We now turn to the proof of \prettyref{eq:IntroCoverTimeOfTorus}.
The method builds on the ideas of the works \cite{Belius2010,BeliusCTinDC},
which contain the corresponding results for the so called cover levels
of random interlacements (\cite{Belius2010}) and the cover times
of finite sets in the discrete cylinder (\cite{BeliusCTinDC}). It
is a well known ``general principle'' that entrance times of small
sets in graphs often behave like exponential random variables. In
the case of the torus the entrance time $H_{x}$ of a vertex $x\in\mathbb{T}_{N}$
is approximately exponential with parameter $\frac{1}{g(0)}$ (this
can, for instance, be proven with a good quantitative bound using
the coupling \prettyref{eq:IntroCoupling}, see \prettyref{lem:RWOnePoint}).
Thus, in view of \prettyref{eq:IntroDefOfCoverTime}, we see that
the cover time $C_{N}$ is the maximum of identically distributed
exponential random variables with parameter $\frac{1}{g(0)}$. If
the $H_{x},x\in\mathbb{T}_{N}$, were also independent then standard
extreme value theory (or a simple direct calculation of the distribution
function of the maximum of i.i.d. exponentials) would give \prettyref{eq:IntroCoverTimeOfTorus}.
But clearly the $H_{x},x\in\mathbb{T}_{N}$, are not even approximately
independent, (for example if $x,y\in\mathbb{T}_{N}$ are neighbouring
vertices then $H_{x}$ and $H_{y}$ are highly dependent). There are
theorems that give distributional results for the maxima of random
fields with some sufficiently weak dependence (see \cite{LeadbetterRozen_OnExtValuesinStatRanFields,PereiraFerreira_LimitingCrossingProbs})
but these do not apply to the random field $(H_{x})_{x\in\mathbb{T}_{N}}$
because the dependence is too strong (using \prettyref{eq:IntroCoupling}
one can show that correlation between $1_{\{H_{x}>uN^{d}\}}$ and
$1_{\{H_{y}>uN^{d}\}}$ decays as $\frac{c(u)}{(d(x,y))^{d-2}}$,
see (1.68) of \cite{Sznitman2007}).

However for sets $F\subset\mathbb{T}_{N}$ that consist of isolated
vertices $x_{1},...,x_{n}$, that are ``well-separated'', it turns
out that using \prettyref{eq:IntroCoupling} one can show that $H_{x_{1}},...,H_{x_{n}}$,
are approximately independent (see \prettyref{lem:RWNPointFunc}).
By comparing to the independent case, we will therefore be able to show that for such $F$ 
\begin{equation}
\max_{x\in F}H_{x}\mbox{ has law close to }g(0)N^{d}\{\log|F|+G\},\label{eq:IntroSeparatedGumbel}
\end{equation}

\medskip
where $G$ is a standard Gumbel random variable. In particular it
will turn out that for such $F$, roughly speaking, the distribution
of $\max_{x\in F}H_{x}$ essentially depends only on the cardinality of $F$.

To enable the proof of \prettyref{eq:IntroCoverTimeOfTorus} we will
introduce the set of ``$(1-\rho)-$late points'' $F_{\rho}$ defined
as the vertices of $\mathbb{T}_{N}$ that have not been hit at time
$t(\rho)=(1-\rho)g(0)N^{d}\log N^{d}$, for a fixed but small $0<\rho<1$.
Note that this is a $(1-\rho)$ fraction of the ``typical'' cover
time $g(0)N^{d}\log N^{d}$. By the Markov property $Y_{t(\rho)+\cdot}$
is a random walk, and using a mixing argument one can show that for
our purposes it is basically independent from the random walk $(Y_{t})_{0\le t\le t(\rho)}$,
so that the law of $C_{N}$ is approximately the law of $t(\rho)+\max_{x\in F'}H_{x}$,
where $F'$ is a random set which is independent of the random walk
$Y_{\cdot}$, but has the law of $F_{\rho}$.

Furthermore we will be able to show, using \prettyref{eq:IntroCoupling},
that ``with high probability'' $F'$ (and $F_{\rho}$) consists
of ``well-separated'' vertices, and that the cardinality of $F'$
concentrates around its expected value, which is close to $|\mathbb{T}_{N}|^{\rho}=N^{d\rho}$.
Thus as long as $F'$ is ``typical'', in the sense that it is well-separated
and has cardinality close to $N^{d\rho}$, we will ``know'' that
$\max_{x\in F'}H_{x}$ has law close to $g(0)N^{d}\{\rho\log N^{d}+G\}$
(see \prettyref{eq:IntroSeparatedGumbel}). Adding the deterministic
time $t(\rho)$ the $\rho$ will cancel and we will get that $C_{N}$
has law close to $g(0)N^{d}\{\log N^{d}+G\}$, which is essentially
speaking the claim in \prettyref{eq:IntroCoverTimeOfTorus}.

We turn finally to the proof of the coupling \prettyref{eq:IntroCoupling}.
It roughly speaking adapts the ``poissonization'' method used for
the case $n=1$ in \cite{TeixeiraWindischOnTheFrag} to the case $n>1$,
and combines it with a decoupling technique from \cite{SznitmanDecoupling}. 

The first step is to consider the \emph{excursions} of $Y_{\cdot}$,
that is the pieces of path $Y_{(R_{k}+\cdot)\wedge U_{k},k=1,2,...}$
where $R_{k}$ and $U_{k}$ are recursively defined, $U_{0}=0$, $R_{k}$
is the first time $Y_{\cdot}$ enters $A_{1}\cup...\cup A_{n}$ after
time $U_{k-1}$ and $U_{k}$ is the first time after $R_{k}$ that
the random walk has spent ``a long time far away from $A_{1}\cup...\cup A_{n}$''
(see \prettyref{eq:DefOfUandtStar}). By proving, using a mixing argument,
that the distribution of $Y_{U_{k}}$ is close to a certain probability
distribution on $\mathbb{T}_{N}$ known as the quasistationary distribution,
regardless of the value of $Y_{R_{k}}$, we will be able to couple
the excursions $Y_{(R_{k}+\cdot)\wedge U_{k},k=1,2,...}$ with \emph{independent}
excursions $\tilde{Y}^{1},\tilde{Y}^{2},...$ that have the law of
$Y_{\cdot\wedge U_{1}}$, when $Y_{\cdot}$ starts from the quasistationary
distribution, such that ``with high probability'' the traces of
$Y_{(R_{k}+\cdot)\wedge U_{k}}$ and $\tilde{Y}^{k}$ in $A_{1}\cup...\cup A_{n}$
coincide.

We will then collect a Poisson number of such independent excursions
in a point process $\mu$ (in fact two different point processes,
but for the purpose of this discussion let us ignore this) which will
be such that the trace of $\mu$ in $A_{1}\cup....\cup A_{n}$ with
high probability coincides with the trace of the random walk $Y_{\cdot}$
run up to time $uN^{d}$ in that set. Because of the way we construct
the point process $\mu$, it will be a Poisson point process on the
space of paths in $\mathbb{T}_{N}$ of a certain intensity related
to the law of $\tilde{Y}^{1}$. This will complete the step that we
refer to as ``poissonization''.

We will see that an ``excursion'' in the Poisson point process $\mu$
may visit several of the boxes $A_1,\ldots,A_n$, and, roughly speaking, ``feels the geometry
of the torus'', since it may wind its way all around it before the
time $U$. To deal with this we in essence split the excursions of
$\mu$ into the pieces of excursion between successive returns to
the set $A_{1}\cup...\cup A_{n}$ and successive departures from $B_{1}\cup...\cup B_{n}$,
where the $B_{i}\supset A_{i}$ are larger boxes (still disjoint and
at most mesoscopic), and use a decoupling technique to remove the
dependence between pieces from the same excursion. 

We then collect these, now independent, pieces of excursion into a
point processes which we will be able to couple with a Poisson point
processes $\nu$ (in fact two independent Poisson point processes)
on the space of random paths in the torus, such that with high probability
the trace of $\nu$ in $A_{1}\cup...\cup A_{n}$ coincides with the
trace of $\mu$ (and therefore also with the trace of the random walk
$Y_{\cdot}$ run up to time $uN^{d}$) in that set. The ``excursions''
of $\nu$ start in a box $A_{i}$ and end upon leaving $B_{i}\supset A_{i}$
(which are disjoint), so they visit only one box and do not ``feel
the geometry of the torus'' since $B_{i}$ can be identified with
a subset of $\mathbb{Z}^{d}$.

Also since $\nu$ is a Poisson point process we will see that we can
split it into $n$ independent Poisson point processes, one for each
box $A_{i}$, such that roughly speaking the trace of $\nu_{i}$ in
$A_{i}$ coincides with high probability with that of $Y_{\cdot}$
(run up to time $uN^{d}$) in $A_{i}$. Thus we will have ``decoupled''
the boxes.

Now, as mentioned above, random interlacements are constructed from
a ``Poisson cloud'' on a certain space on paths, and we will see
that when restricted to a box $A_{i}$, a random interlacement has
the law of the trace of a Poisson number of random walks. This will
also basically be the law of trace of the $\nu_{i}$, with the difference
that the paths in $\nu_{i}$ do not return to $A_{i}$ after leaving
$B_{i}$, while for random interlacements a small but positive proportion
the paths do return. By taking a small number of the paths from the
$\nu_{i}$, and ``gluing them together'' to form paths that do return
to $A_{i}$ after leaving $B_{i}$, we will be able to construct from
the $\nu_{i}$ the random interlacements $(\mathcal{I}_{i}^{v})_{v\ge0}$
in \prettyref{eq:IntroCoupling}.

We now describe how this article is organized. In \prettyref{sec:Notation}
we introduce some notation and preliminary lemmas. In \prettyref{sec:ProperStatements}
we give the formal statements of the main theorems \prettyref{eq:IntroCoverTimeOfTorus}
and \prettyref{eq:IntroCoupling}, and of corollaries \prettyref{eq:IntroPPoVCV}
and \prettyref{eq:IntroLastTwoIndians}. We also derive the corollaries
from the main theorems. The proof of the cover time result \prettyref{eq:IntroCoverTimeOfTorus},
from \prettyref{eq:IntroCoupling}, is contained in \prettyref{sec:ProofOfGumbel}.
The subsequent sections deal with the proof of \prettyref{eq:IntroCoupling}.
In \prettyref{sec:Coupling} we introduce three further couplings
and use them to construct the coupling \prettyref{eq:IntroCoupling}.
The first of the three, a coupling of the excursions $Y_{(R_{k}+\cdot)\wedge U_{k}}$
with the Poisson point process $\mu$, is then constructed in \prettyref{sec:Quasistationary}
and \prettyref{sec:Poissonization}. In \prettyref{sec:Quasistationary}
we define the quasistationary distribution and prove that the law
of $Y_{U_{k}}$ is close to it. In \prettyref{sec:Poissonization}
we use this fact to construct the coupling of the excursions $Y_{(R_{k}+\cdot)\wedge U_{k}}$
with the Poisson point process $\mu$. The second of the three couplings,
a coupling of $\mu$ and the i.i.d. Poisson point processes $\nu_{1},...,\nu_{n}$,
is constructed in \prettyref{sec:OutOfTorus}. The third coupling,
a coupling of a Poisson point process $\nu$ with the law of the $\nu_{i}$
and random interlacements, is constructed in \prettyref{sec:ToRI}.
The appendix contains the proof of a certain lemma (\prettyref{lem:ETTEntraceTimesTorus})
which is stated in \prettyref{sec:Quasistationary}.

We finish this section with a remark on constants. Unnamed constants
are represented by $c$, $c'$, etc. Note that these may represent different constants
in different formulas or even within the same formula. Named constants
are denoted by $c_{4},c_{5},...$ and have fixed values. All constants
are understood to be positive and, unless stated otherwise, depend
only on the dimension $d$. Dependence on e.g. a parameter $\alpha$
is denoted by $c(\alpha)$ or $c_{4}(\alpha)$.

\section{\label{sec:Notation}Notation and preliminary lemmas}

In this section we introduce basic notation and a few preliminary
lemmas.

We write $[x]$ for the integer part of $x\in[0,\infty)$. The cardinality
of a set $U$ is denoted by $|U|$.

We denote the $d-$dimensional discrete torus of side length $N\ge3$
by $\mathbb{T}_{N}=(\mathbb{Z}/N\mathbb{Z})^{d}$ for $d\ge1$. If
$x\in\mathbb{Z}^{d}$ we write $|x|$ for the Euclidean norm of $x$
and $|x|_{\infty}$ for the $l_{\infty}$ norm of $x$. We take $d(\cdot,\cdot)$
to mean the distance on $\mathbb{T}_{N}$ induced by $|\cdot|$ and
$d_{\infty}(\cdot,\cdot)$ to mean the distance induced by $|\cdot|_{\infty}$.
The closed $l_{\infty}-$ball of radius $r\ge0$ with centre $x$
in $\mathbb{Z}^{d}$ or $\mathbb{T}_{N}$ is denoted by $B(x,r)$.

We define the inner and outer boundaries of a set $U\subset\mathbb{Z}^{d}$
or $U\subset\mathbb{T}_{N}$ by
\begin{equation}
\partial_{i}U=\{x\in U:d(x,U^{c})=1\},\partial_{e}U=\{x\in U^{c}:d(x,U)=1\}.\label{eq:DefOfBoundary}
\end{equation}

\n
For a set $U$ we write $\Gamma(U)$ for the space of all cadlag piecewise
constant functions from $[0,\infty)$ to $U$, with at most a finite
number of jumps in any compact interval. When only finitely many jumps
occur for a function $w\in\Gamma(U)$ we set $w(\infty)=\lim_{t\rightarrow\infty}w(t)$.
Usually we will work with $\Gamma(\mathbb{Z}^{d})$ or $\Gamma(\mathbb{T}_{N})$.
We write $Y_{t}$ for the canonical process on $\Gamma(\mathbb{Z}^{d})$
or $\Gamma(\mathbb{T}_{N})$. When $w\in\Gamma(\mathbb{Z}^{d})$ or
$w\in\Gamma(\mathbb{T}_{N})$ we take $w(a,b)$ to mean the range
$\{w(t):t\in[a,b]\cap[0,\infty)\}\subset\mathbb{Z}^{d}$ or $\mathbb{T}_{N}$
(with this definition the range is empty if $b<0$ or $a>b$). We
let $\theta_{t}$ denote the canonical shift on $\Gamma(\mathbb{Z}^{d})$
and $\Gamma(\mathbb{T}_{N})$. The jump times of $Y_{t}$ are defined
by
\begin{equation}
\tau_{0}=0,\tau_{1}=\inf\{t\ge0:Y_{t}\ne Y_{0}\}\mbox{ and }\tau_{n}=\tau_{1}\circ\theta_{\tau_{n-1}}+\tau_{n-1},n\ge2.\label{eq:DefOfJumpTimes}
\end{equation}
Due to the usual interpretation of the infimum of the empty set, $\tau_{m}=\infty$
for $m>n$ when only $n$ jumps occur. For a set $U\subset\mathbb{Z}^{d}$
or $\mathbb{T}_{N}$ we define the entrance time, return time and
exit time by
\begin{equation}
H_{U}=\inf\{t\ge0:Y_{t}\in U\},\tilde{H}_{U}=\inf\{t\ge\tau_{1}:Y_{t}\in U\},T_{U}=\inf\{t\ge0:Y_{t}\notin U\}.\label{eq:DefOfEntraceTime}
\end{equation}

\n
We let $P_{x}^{\mathbb{Z}^{d}}$ denote law on $\Gamma(\mathbb{Z}^{d})$
of continuous time simple random in $\mathbb{Z}^{d}$ and let $P_{x}$
denote the law on $\Gamma(\mathbb{T}_{N})$ of continuous time simple
random walk on $\mathbb{T}_{N}$ (so that $\tau_{1}$ is an exponential
random variable with parameter $1$). If $\nu$ is a measure on $\mathbb{Z}^{d}$
we let $P_{\nu}^{\mathbb{Z}^{d}}=\sum_{x\in\mathbb{Z}^{d}}\nu(x)P_{x}^{\mathbb{Z}^{d}}$.
We define $P_{\nu}$ analogously. Furthermore $\pi$ denotes the uniform
distribution on $\mathbb{T}_{N}$, and $P$
denotes $P_{\pi}$, i.e. the law of simple random walk starting from the uniform distribution. 

Essentially because the mixing time of the torus is of order $N^{2}$
(see Proposition 4.7, p. 50, and Theorem 5.5, p. 66 in \cite{LevPerWilMarkovChainsandMixingTimes})
we have that
\begin{equation}\label{eq:TorusMixing}
\begin{array}{l}
\mbox{for }N\ge3,\lambda\ge1,x\in\mathbb{T}_{N},\mbox{ a coupling }q(w,v),w,v\in\mathbb{T}_{N}\mbox{ exists for which the first}\\
\mbox{marginal is }P_{x}[Y_{\lambda N^{2}}\in dw],\mbox{ the second is uniform, and }\sum_{w\ne v}q(w,v)\le ce^{-c\lambda}.
\end{array}
\end{equation}

\n
The Green function is defined by
\[
g(x,y)=\int_{0}^{\infty}P_{x}^{\mathbb{Z}^{d}}[Y_{t}=y]dt,x,y\in\mathbb{Z}^{d},\mbox{ and }g(\cdot)=g(0,\cdot).
\]
We have the following classical bounds on $g(x,y)$ (see Theorem 1.5.4,
p. 31 in \cite{LawlersLillaGrona})
\begin{equation}
c|x-y|^{2-d}\le g(x,y)\le c'|x-y|^{2-d}\mbox{ for }x,y\in\mathbb{Z}^{d},x\ne y,d\ge3.\label{eq:GreensFunctionBounds}
\end{equation}

\smallskip\n
For $K\subset\mathbb{Z}^{d}$ we define the equilibrium measure $e_{K}$
and the capacity $\mbox{cap}(K)$ by
\begin{equation}
e_{K}(x)=P_{x}^{\mathbb{Z}^{d}}[\tilde{H}_{K}=\infty]1_{K}(x)\mbox{ and }\mbox{cap}(K)=\dsl_{x\in\partial_{i}K}e_{K}(x).\label{eq:DefOfeKandCap}
\end{equation}
It is well-known (see (2.16), Proposition 2.2.1 (a), p. 52-53 in \cite{LawlersLillaGrona})
that
\begin{equation}
cr^{d-2}\le\mbox{cap}(B(0,r))\le c'r^{d-2}\mbox{ for }r\ge1,d\ge3.\label{eq:AsymptoticsOfCapOfBox}
\end{equation}

\medskip\n
The normalised equilibrium distribution $\frac{e_{K}(\cdot)}{{\rm cap}(A)}$
can be thought of as the hitting distribution on $K$ when ``starting
from infinity'', and in this paper we will use that for all $K\subset B(0,r)\subset\mathbb{Z}^{d}$,$r\ge1$,
we have (see Theorem 2.1.3, Exercise 2.1.4 and (2.13) in \cite{LawlersLillaGrona})
\begin{equation}
\Cl[c]{eqhitlower}\mbox{\f $\dis\frac{e_{K}(y)}{\rm cap (K)}$}\le P_{x}[Y_{H_{K}}=y|H_{K}<\infty]\le \Cl[c]{eqhitupper}\mbox{\f $\dis\frac{e_{K}(y)}{\rm cap(K)}$}\mbox{ for all }y\in K,x\notin B(0,\Cl[c]{eqhitdist}r).\label{eq:EquilDistHitDistFromFar}
\end{equation}

\smallskip\n
If $K\subset U\subset\mathbb{Z}^{d}$, with $K$ finite, we define
the equilibrium measure and capacity of $K$ relative to $U$ by
\begin{equation}\label{eq:DefOfRelativeEquilAndCap}
e_{K,U}(x)=P_{x}^{\mathbb{Z}^{d}}[\tilde{H}_{K}>T_{U}]1_{K}(x)\mbox{ and }\mbox{cap}_{U}(K)=\dsl_{x\in\partial_{i}K}e_{K,U}(x).
\end{equation}

\smallskip\n
We will need the following bounds on the probability of hitting sets
in $\mathbb{Z}^{d}$ and $\mathbb{T}_{N}$.
\begin{lem}
\label{lem:BHBBallHittingBound}$(d\ge3,N\ge3)$
\begin{align}
 & P_{x}^{\mathbb{Z}^{d}}[H_{B(0,r_{1})}<\infty]\le c(r_{1}/r_{2})^{d-2}\mbox{ for }1\le r_{1}\le r_{2},x\notin B(0,r_{2}).\label{eq:BHBZdBallHitting}\\
 & \underset{x\notin B(0,r_{2})}{\sup}P_{x}[H_{B(0,r_{1})}<N^{2+\lambda}]\le c(\lambda)(r_{1}/r_{2})^{d-2}\mbox{ for }1\le r_{1}\le r_{2}\le N^{1-3\lambda},\lambda>0.\label{eq:BHBBallHittingBound}
\end{align}
\end{lem}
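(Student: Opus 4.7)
My plan is to apply the classical last-exit identity, which for the transient walk reads
\[
P_x^{\mathbb{Z}^d}[H_K<\infty]=\sum_{y\in K}g(x,y)\,e_K(y)\le\Bigl(\max_{y\in K}g(x,y)\Bigr)\mathrm{cap}(K),
\]
taken with $K=B(0,r_1)$. By \eqref{eq:AsymptoticsOfCapOfBox}, $\mathrm{cap}(K)\le c\,r_1^{d-2}$. I split into cases: if $r_2\ge 2r_1$, then for $x\notin B(0,r_2)$ and $y\in B(0,r_1)$ we have $|x-y|\ge|x-y|_\infty\ge r_2-r_1\ge r_2/2$, hence by \eqref{eq:GreensFunctionBounds} $g(x,y)\le c\,r_2^{2-d}$, giving the desired $c(r_1/r_2)^{d-2}$; if $r_2<2r_1$ then $(r_1/r_2)^{d-2}\ge 2^{-(d-2)}$ and the trivial bound by $1$ suffices.

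\textbf{Part (ii).} The plan is to lift the torus walk to $\mathbb{Z}^d$. Pick the representative $\tilde x\in\mathbb{Z}^d$ of $x$ with $|\tilde x|_\infty=d_\infty(x,0)\in(r_2,N/2]$ (possible since $r_2\le N^{1-3\lambda}<N/2$ for large $N$), and couple $Y$ with a $\mathbb{Z}^d$-walk $\tilde Y$ starting at $\tilde x$ so that $\tilde Y_t\bmod N=Y_t$. The event $\{Y_t\in B(0,r_1)\}$ then corresponds to $\{\tilde Y_t\in B(kN,r_1)\text{ for some }k\in\mathbb{Z}^d\}$, so with cutoff $R=N^{1+\lambda}$,
\[
P_x[H_{B(0,r_1)}<N^{2+\lambda}]\le P_{\tilde x}^{\mathbb{Z}^d}\!\Bigl[\max_{t\le N^{2+\lambda}}|\tilde Y_t-\tilde x|_\infty\ge R\Bigr]+\!\!\sum_{|k|_\infty\le R/N+1}\!\!P_{\tilde x}^{\mathbb{Z}^d}[H_{B(kN,r_1)}<\infty].
\]
The escape probability is $\le e^{-cN^\lambda}$ by a standard exponential Chebyshev estimate for the continuous-time walk, reducing to $d$ one-dimensional estimates (note $R^2/N^{2+\lambda}=N^\lambda$). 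The $k=0$ term is $\le c(r_1/r_2)^{d-2}$ by part (i). For $k\ne 0$, the constraints $|\tilde x|_\infty\le N/2$ and $r_1\le N^{1-3\lambda}\ll N$ yield $d_\infty(\tilde x,B(kN,r_1))\ge c|k|_\infty N$, so part (i) bounds each term by $c(r_1/(|k|_\infty N))^{d-2}$.

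\textbf{Main issue.} The delicate point is summing the $k\ne 0$ contributions. Using $|\{k:|k|_\infty=m\}|\sim m^{d-1}$,
\[
\sum_{1\le|k|_\infty\le cN^\lambda}\bigl(r_1/(|k|_\infty N)\bigr)^{d-2}\le c(r_1/N)^{d-2}\sum_{m=1}^{cN^\lambda}m\le c'N^{2\lambda}(r_1/N)^{d-2}.
\]
For this to fit inside $c(\lambda)(r_1/r_2)^{d-2}$ requires $r_2^{d-2}\le c\,N^{d-2-2\lambda}$, which follows from $r_2\le N^{1-3\lambda}$ exactly when $(d-2)(1-3\lambda)\le d-2-2\lambda$, i.e.\ $3(d-2)\ge 2$. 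This holds for all $d\ge 3$, and is the precise reason the hypothesis carries the factor $3\lambda$ rather than $2\lambda$.
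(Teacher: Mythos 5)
Your proof is correct and follows essentially the same route as the paper's: the last-exit/capacity identity for \eqref{eq:BHBZdBallHitting}, and for \eqref{eq:BHBBallHittingBound} unfolding the torus into the translated balls $B(kN,r_1)$, controlling the probability of escaping to distance $N^{1+\lambda}$ within time $N^{2+\lambda}$, and applying the first bound to each translate. The only difference is bookkeeping in the $k\ne 0$ sum — you sum shell by shell, whereas the paper crudely bounds all $cN^{\lambda d}$ such terms by $c(r_1/N)^{d-2}$ each — and both reduce to the same arithmetic consequence of $r_2\le N^{1-3\lambda}$ and $d\ge 3$.
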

\begin{proof}
\eqref{eq:BHBZdBallHitting} follows from Proposition 2.2.2, p.53
in \cite{LawlersLillaGrona} and \eqref{eq:AsymptoticsOfCapOfBox}.
To prove \eqref{eq:BHBBallHittingBound} we let $K=\cup_{y\in\mathbb{Z}^{d},|y|_{\infty}\le N^{\lambda}}B(yN,r_{1})\subset\mathbb{Z}^{d}$
and note that by ``unfolding the torus'' we have
\begin{equation}
\sup_{z\notin B(0,r_{2})}P_{z}[H_{B(0,r_{1})}<N^{2+\lambda}]\le\sup_{z\in B(0,\frac{N}{2})\backslash B(0,r_{2})}P_{z}^{\mathbb{Z}^{d}}[H_{K}<\infty]+P_{0}^{\mathbb{Z}^{d}}[T_{B(0,\frac{N^{1+\lambda}}{2})}\le N^{2+\lambda}],\label{eq:BHBTorusToZd}
\end{equation}

\medskip\n
provided $N\ge c(\lambda)$. For any $z\in B(0,\frac{N}{2})\backslash B(0,r_{2})$
\begin{eqnarray*}
P_{z}^{\mathbb{Z}^{d}}[H_{K}<\infty] &\!\!\! \le &\!\!\! \dsl_{|y|_{\infty}\le1}P_{z}^{\mathbb{Z}^{d}}[H_{B(yN,r_{1})}<\infty]+\dsl_{1<|y|_{\infty}\le N^{\lambda}}P_{z}^{\mathbb{Z}^{d}}[H_{B(yN,r_{1})}<\infty]\\
 &\!\!\! \overset{\eqref{eq:BHBZdBallHitting}}{\le} &\!\!\! c\left(r_{1}/r_{2}\right)^{d-2}+cN^{\lambda d}(r_{1}/N)^{d-2}\le c(r_{1}/r_{2})^{d-2},
\end{eqnarray*}
since $N^{\lambda d}/N^{d-2}\overset{d\ge3}{\le}1/N^{(1-3\lambda)(d-2)}\overset{r_{2}\le N^{1-3\lambda}}{\le}1/r_{2}^{d-2}$. Furthermore
\[
P_{0}^{\mathbb{Z}^{d}}[T_{B(0,\frac{N^{1+\lambda}}{2})}\le N^{2+\lambda}]\le P_{0}^{\mathbb{Z}^{d}}\Big[|Y_{\tau_{n}}|>\mbox{\f $\dis\frac{N^{1+\lambda}}{2}$}\mbox{ for an }n\le2N^{2+\lambda}\Big]+P_{0}^{\mathbb{Z}^{d}}[\tau_{[2N^{2+\lambda}]}\le N^{2+\lambda}].
\]

\smallskip\n
But by applying Azuma's inequality (Theorem 7.2.1, p.~99 in \cite{AlonSpencerProbabilisticMethod})
to the martingale $Y_{\tau_{n}}$ we get that the first probability
on the right-hand side is bounded above by $cN^{c}e^{-cN^{\lambda}}$,
and by a standard large deviations bound the second probability is
bounded above by $e^{-cN^{2+\lambda}}$, so since $cN^{c}e^{-cN^{\lambda}}\le(r_{1}/r_{2})^{d-2}$
for $N\ge c(\lambda)$ we get \eqref{eq:BHBBallHittingBound}.
\end{proof}

Using \prettyref{lem:BHBBallHittingBound} we get the following bounds
for equilibrium measures.
\begin{lem}
\label{lem:EquilDistEstimates}$(d\ge3,N\ge3)$
\begin{eqnarray}
 & e_{K}(x)\ge cr^{-1},\mbox{ for }x\in\partial_{i}K,\mbox{ where }K=B(0,r),r\ge1.\label{eq:EquilDistLowerBound}\\
 & e_{K}(x)\le e_{K,U}(x),\mbox{ for all }x\in K\subset U\subset B(0,\frac{N}{4})\subset\mathbb{T}_{N}.\label{eq:RelativeEquilLowerBound}
\end{eqnarray}
\end{lem}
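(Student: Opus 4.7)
For the second inequality \eqref{eq:RelativeEquilLowerBound} the argument is essentially immediate. Viewing $U\subset B(0,N/4)\subset\mathbb{T}_{N}$ through the canonical identification of $B(0,N/4)$ with its lift in $\mathbb{Z}^{d}$, the set $U$ is a finite subset of $\mathbb{Z}^{d}$, so $T_{U}<\infty$ holds $P_{x}^{\mathbb{Z}^{d}}$-almost surely. Hence $\{\tilde H_{K}=\infty\}\subset\{\tilde H_{K}>T_{U}\}$, and comparing the definitions \eqref{eq:DefOfeKandCap} and \eqref{eq:DefOfRelativeEquilAndCap} yields $e_{K}(x)\le e_{K,U}(x)$ for every $x\in K$.

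For \eqref{eq:EquilDistLowerBound} the plan is to restrict the escape event $\{\tilde H_{K}=\infty\}$ to a concrete ``first-coordinate escape'' scenario. Fix $x\in\partial_{i}B(0,r)$; since $|x|_{\infty}=r$ some coordinate of $x$ equals $\pm r$, and by the lattice symmetries of $K=B(0,r)$ I may assume $x_{1}=r$, so that $y_{0}:=x+e_{1}$ lies outside $K$ with $(y_{0})_{1}=r+1$. Conditioning on the first jump of $Y$ and using the Markov property,
\begin{equation*}
e_{K}(x)=P_{x}^{\mathbb{Z}^{d}}[\tilde H_{K}=\infty]\ge\frac{1}{2d}\,P_{y_{0}}^{\mathbb{Z}^{d}}[H_{K}=\infty].
\end{equation*}

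To lower-bound $P_{y_{0}}^{\mathbb{Z}^{d}}[H_{K}=\infty]$ I exploit that, for continuous-time simple random walk in $\mathbb{Z}^{d}$, the coordinates are mutually independent, so $S_{t}:=(Y_{t})_{1}$ is a rate-$1/d$ continuous-time simple random walk on $\mathbb{Z}$. Choose a constant $c_{*}=c_{*}(d)$ large enough that $c(1/c_{*})^{d-2}\le 1/2$, where $c$ is the constant appearing in \eqref{eq:BHBZdBallHitting}, and set $M:=c_{*}r$, $\sigma_{0}:=\inf\{t:S_{t}=r\}$, $\sigma_{M}:=\inf\{t:S_{t}=r+M\}$. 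The discrete gambler's ruin identity applied to $S$ (starting at $r+1$ with absorbing barriers at $r$ and $r+M$) gives $P_{y_{0}}^{\mathbb{Z}^{d}}[\sigma_{M}<\sigma_{0}]=1/M$. On this event $S_{t}>r$ for every $t\in[0,\sigma_{M}]$, so $|Y_{t}|_{\infty}\ge S_{t}>r$ forces $Y_{t}\notin K$; in particular $H_{K}>\sigma_{M}$ and $|Y_{\sigma_{M}}|_{\infty}\ge r+M$. Applying \eqref{eq:BHBZdBallHitting} at $Y_{\sigma_{M}}$ with $r_{1}=r$ and $r_{2}=r+M-1$ (so that $r_{2}\ge c_{*}r$ since $r\ge 1$) gives $P_{Y_{\sigma_{M}}}^{\mathbb{Z}^{d}}[H_{K}<\infty]\le 1/2$ by the choice of $c_{*}$. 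Combining via the strong Markov property at $\sigma_{M}$,
\begin{equation*}
P_{y_{0}}^{\mathbb{Z}^{d}}[H_{K}=\infty]\ge P_{y_{0}}^{\mathbb{Z}^{d}}[\sigma_{M}<\sigma_{0}]\cdot\frac{1}{2}\ge\frac{c}{r},
\end{equation*}
and hence $e_{K}(x)\ge c/r$, as claimed.

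The only nontrivial step is the reduction to the one-dimensional gambler's ruin, which leverages the independence of coordinates of the continuous-time walk; after that the proof uses only the hitting bound \eqref{eq:BHBZdBallHitting} from Lemma~\ref{lem:BHBBallHittingBound} together with an elementary $1$D first-passage computation.
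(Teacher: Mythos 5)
Your proof is correct and follows essentially the same route as the paper: escape to distance of order $r$ at cost $cr^{-1}$ via a one-dimensional random walk (gambler's ruin) estimate, then never return with probability at least $\tfrac12$ via \eqref{eq:BHBZdBallHitting}, combined by the strong Markov property; and \eqref{eq:RelativeEquilLowerBound} is immediate from the definitions. You merely make explicit the ``one dimensional simple random walk estimate'' that the paper leaves implicit (and, pedantically, for non-integer $r$ one should read $[r]$ for the coordinate value, which changes nothing).
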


\medskip
Furthermore if $K\subset B(0,r)\subset U=B(0,r^{1+\lambda})\subset B(0,\frac{N}{4})\subset\mathbb{T}_{N},r\ge1,\lambda>0,$
we have 
\begin{equation}
e_{K,U}(x)\le(1+c(\lambda)r^{-\lambda})e_{K}(x),\mbox{ for all }x\in K.\label{eq:RelativeEquilUpperBound}
\end{equation}

\begin{proof}
For a large enough constant $c'$ we have $\inf_{x\in\partial_{i}B(0,c'r)}P_{x}^{\mathbb{Z}^{d}}[H_{B(0,r)}=\infty]\ge\frac{1}{2}$
(by \prettyref{eq:BHBZdBallHitting}), so \eqref{eq:EquilDistLowerBound}
follows from $\inf_{x\in\partial_{i}B(0,r)}P_{x}[\tilde{H}_{K}>T_{B(0,c'r)}]\ge cr^{-1}$
(which is a result of a one dimensional simple random walk estimate)
and the strong Markov property. The second inequality \eqref{eq:RelativeEquilLowerBound}
is obvious from \prettyref{eq:DefOfeKandCap} and \eqref{eq:DefOfRelativeEquilAndCap}.
 Finally for \eqref{eq:RelativeEquilUpperBound} note that $e_{K,U}(x)=e_{K}(x)+P_{x}^{\mathbb{Z}^{d}}[T_{U}<\tilde{H}_{K}<\infty]$ (by \prettyref{eq:DefOfeKandCap} and \prettyref{eq:DefOfRelativeEquilAndCap}),
and $P_{x}^{\mathbb{Z}^{d}}[T_{U}<\tilde{H}_{K}<\infty]\le P_{x}^{\mathbb{Z}^{d}}[T_{U}<\tilde{H}_{K}]\sup_{x\in\partial_{e}U}P_{x}^{\mathbb{Z}^{d}}[H_{K}<\infty]\le cr^{-\lambda}P_{x}^{\mathbb{Z}^{d}}[T_{U}<\tilde{H}_{K}]$
(by \prettyref{eq:BHBZdBallHitting}). Now $\inf_{x\in\partial_{e}U}P_{x}^{\mathbb{Z}^{d}}[H_{K}=\infty]$
is always positive and at least $\frac{1}{2}$ when $r\ge c(\lambda)$
(by \eqref{eq:BHBZdBallHitting}), so in fact $P_{x}^{\mathbb{Z}^{d}}[T_{U}<\tilde{H}_{K}<\infty]\le c(\lambda)r^{-\lambda}P_{x}^{\mathbb{Z}^{d}}[T_{U}<\tilde{H}_{K}]\inf_{x\in\partial_{e}U}P_{y}^{\mathbb{Z}^{d}}[H_{K}=\infty]\le c(\lambda)r^{-\lambda}e_{K}(x)$,
so \eqref{eq:RelativeEquilUpperBound} follows. 
\end{proof}
We now introduce some notation related to Poisson point processes.
Let $\Gamma=\Gamma(\mathbb{T}_{N})$ or $\Gamma=\Gamma(\mathbb{Z}^{d})$.
When $\mu$ is a Poisson point process on $\Gamma^{i},i\ge1,$ we
denote the trace of $\mu$ by
\begin{equation}
\mathcal{I}(\mu)=\mbox{\f $\dis\bigcup\limits_{(w_{1},...w_{i})\in\rm{Supp}(\mu)}$}\;\mbox{\f $\dis\bigcup\limits_{j=1}^{i}$}\;w_{i}(0,\infty)\subset\mathbb{Z}^{d}\mbox{ or }\mathbb{T}_{N}.\label{eq:DefOfPPPTrace}
\end{equation}

\smallskip\n
We will mostly consider Poisson point processes $\mu$ on $\Gamma$
where this simplifies to $\mathcal{I}(\mu)=$ \linebreak $\bigcup_{w\in\rm{Supp}(\mu)}w(0,\infty)$,
but in \prettyref{sec:OutOfTorus} and \prettyref{sec:ToRI} we will
also consider Poisson point processes $\mu$ on $\Gamma^{i},i\ge2.$
If $\mu$ is a Poisson point process of labelled paths (that is a
Poisson point process on $\Gamma\times[0,\infty)$, where $\Gamma$
is as above) we denote the trace up to label $u$ by
\begin{equation}
\mathcal{I}^{u}(\mu)=\mathcal{I}(\mu_{u})\subset\mathbb{Z}^{d}\mbox{ or }\mathbb{T}_{N},\mbox{ where }\mu_{u}(dw)=\mu(dw\times[0,u]).\label{eq:PPPTraceOfProcess}
\end{equation}

\n
Next let us recall some facts about random interlacements. They are,
roughly speaking, defined as a Poisson point process on a certain
space of labelled doubly-infinite trajectories modulo time-shift.
The \emph{random interlacement at level $u$,} or $\mathcal{I}^{u}\subset\mathbb{Z}^{d}$,
is the trace of trajectories with labels up to $u\ge0$. The family
of random subsets $(\mathcal{I}^{u})_{u\ge0}$ is called a \emph{random
interlacement}. The rigorous definitions and construction that make
this informal description precise can be found in Section 1 of \cite{Sznitman2007}
or Section 1 of \cite{SidoraviciusSznitman2009}. In this article
we will only use the facts \prettyref{eq:LawOfRIInFiniteSet}-\eqref{eq:RIOneAndTwoPointFunctions} which
now follow.
\begin{align}\label{eq:LawOfRIInFiniteSet}
 &   \mbox{There exists a space }(\Omega_{0},\mathcal{A}_{0},Q_{0})\mbox{ and a family }(\mathcal{I}^{u})_{u\ge0}\mbox{ of random subsets }\\[-1ex]
&\mbox{of }\mathbb{Z}^{d}\mbox{ such that }(\mathcal{I}^{u}\cap K)_{u\ge0}\overset{\rm law}{=}(\mathcal{I}^{u}(\mu_{K})\cap K)_{u\ge0}\mbox{ for all finite }K\subset\mathbb{Z}^{d}, \nonumber
\\
 & \mbox{where }\mu_{K}\mbox{ is a Poisson point process on }\Gamma(\mathbb{Z}^{d})\times[0,\infty)\mbox{ of intensity }P_{e_{K}}^{\mathbb{Z}^{d}}\otimes\lambda,\label{eq:DefOfMuK}
\end{align}
and $\lambda$ denotes Lebesgue measure (see (0.5), (0.6), (0.7) in
\cite{Sznitman2007}, also cf. (1.67) in \cite{Sznitman2007}).

(From \prettyref{eq:LawOfRIInFiniteSet} and \prettyref{eq:DefOfMuK}
we see that to ``simulate'' $\mathcal{I}^{u}\cap K$ for a finite
$K\subset\mathbb{Z}^{d}$ one simply samples an integer $n\ge0$ according
to the Poisson distribution with parameter $u\mbox{cap}(K)$, picks
$n$ random starting points $Z_{1},...,Z_{n}$ according to the normalized
equilibrium distribution $\frac{e_{K}(\cdot)}{\mbox{cap}(K)}$ and
runs from each starting point $Z_{i}$ an independent random walk,
recording the trace the walks leave in the set $K$. The ``full'' random 
interlacement $\mathcal{I}^u$ can be seen, intuitively speaking, as a
``globally consistent'' version of the traces of the Poisson point processes $\mu_K$.)

Random interlacements also satisfy (see above (0.5), (0.7), below (0.8) and (1.48) in \cite{Sznitman2007})
\begin{align}
 & \mbox{the law of }\mathcal{I}^{u}\mbox{ under }Q_{0}\mbox{ is translation invariant for all }u\ge0,\label{eq:TranslationInvariantInterlacements}\\[1ex]
 & \mathcal{I}^{u}\mbox{ is increasing in the sense that }Q_{0}-\mbox{almost surely }\mathcal{I}^{v}\subset\mathcal{I}^{u}\mbox{ for }v\le u,\label{eq:IncreasingInterlacements}
 \\[1ex]
 &\mbox{if }\mathcal{I}_{1}^{u}\mbox{ and }\mathcal{I}_{2}^{v}\mbox{ are independent with the laws under }Q_{0}\mbox{ of }\mathcal{I}^{u}\mbox{ and }\mathcal{I}^{v}\label{eq:AdditiveInterlacements}
 \\[-1ex]
&\mbox{respectively, then }(\mathcal{I}_{1}^{u},\mathcal{I}_{1}^{u}\cup\mathcal{I}_{2}^{v})\mbox{ has the law of }(\mathcal{I}^{u},\mathcal{I}^{u+v})\mbox{ under }Q_{0}. \nonumber
\end{align}

\smallskip\n
Finally (see (1.58) and (1.59) of \cite{Sznitman2007})
\begin{equation}
Q_{0}[x\notin\mathcal{I}^{u}]=\exp\big(-\mbox{\f $\dis\frac{u}{g(0)}$}\big)\mbox{ and }Q_{0}[x,y\notin\mathcal{I}^{u}]=\exp\big(-\mbox{\f $\dis\frac{2u}{g(0)+g(x-y)}$}\big),x,y\in\mathbb{Z}^{d}.\label{eq:RIOneAndTwoPointFunctions}
\end{equation}
(The properties \eqref{eq:IncreasingInterlacements}-\eqref{eq:RIOneAndTwoPointFunctions}
in fact also follow from \eqref{eq:LawOfRIInFiniteSet} and \eqref{eq:DefOfMuK}).

\smallskip
The following lemma, which follows from Lemma 1.5 from \cite{BeliusCTinDC}
(by letting $a$ in (1.39) in \cite{BeliusCTinDC} go to infinity,
and using \eqref{eq:RIOneAndTwoPointFunctions}), will be crucial
in our proof of \prettyref{eq:IntroCoverTimeOfTorus}.
\begin{lem} \label{lem:TwoPointCalculation}$(d\ge3)$ 

\medskip
There is a constant $\Cl[c]{TPC}>1$ such that or any $K\subset\mathbb{Z}^{d}$ with $0\notin K$ 
\begin{equation}
\dsl_{v\in K}Q_{0}[0,v\notin\mathcal{I}^{u}]\le|K|\left(Q_{0}[0\notin\mathcal{I}^{u}]\right)^{2}\{1+cu\}+ce^{-\Cr{TPC}\frac{u}{g(0)}},\mbox{ for all }u\ge0.\label{eq:TwoPointCalculation}
\end{equation}
\end{lem}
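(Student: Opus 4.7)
The approach is computational, built around the explicit formulas $Q_0[0\notin\mathcal{I}^u]=e^{-u/g(0)}$ and $Q_0[0,v\notin\mathcal{I}^u]=e^{-2u/(g(0)+g(v))}$ from \eqref{eq:RIOneAndTwoPointFunctions}. Writing $a=u/g(0)$ and factoring
\[
e^{-2u/(g(0)+g(v))}=e^{-2a}\exp\!\Bigl(\tfrac{2u\,g(v)}{g(0)(g(0)+g(v))}\Bigr),
\]
the excess over $(Q_0[0\notin\mathcal{I}^u])^2=e^{-2a}$ is an explicit factor $\ge 1$ whose size is controlled by $g(v)$, and the lemma amounts to summing this correction over $v\in K$.

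I would split $K=(K\cap B(0,R))\cup(K\setminus B(0,R))$ with $R=R(u)$ chosen so that $R^{d-2}=R_0^{d-2}+C_1 u$ for large dimensional constants $R_0,C_1$. On the far part, $g(v)\le c|v|^{2-d}\le cR^{2-d}$ by \eqref{eq:GreensFunctionBounds}, which keeps the correction exponent under a fixed constant $M$; using $e^x-1\le e^M x$ on $[0,M]$ and the tail sum $\sum_{v\in K,\,|v|\ge R}g(v)\le c|K|R^{2-d}$, the far contribution is bounded by
\[
\sum_{v\in K\setminus B(0,R)}e^{-2u/(g(0)+g(v))}\le |K|e^{-2a}\bigl(1+c'u/R^{d-2}\bigr)\le |K|e^{-2a}(1+cu).
\]
For the near part the essential dimensional fact is that $\gamma:=\sup_{v\ne 0}g(v)/g(0)<1$ for $d\ge 3$: the last-exit identity gives $g(v)=g(0)P_0^{\mathbb{Z}^d}[H_v<\infty]<g(0)$ for each $v\ne 0$, and $g(v)/g(0)\to 0$ at infinity by \eqref{eq:GreensFunctionBounds}, so the supremum is attained. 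Since $0\notin K$, this gives, uniformly in $v\in K$,
\[
Q_0[0,v\notin\mathcal{I}^u]\le e^{-\beta a},\qquad \beta:=\frac{2}{1+\gamma}>1,
\]
and since $|K\cap B(0,R)|\le cR^d$ with $R^d$ at most polynomial in $u$, the near contribution is at most $cR^d e^{-\beta a}\le ce^{-c^*a}$ for any $c^*\in(1,\beta)$, which supplies the constant $\Cr{TPC}>1$ of the statement.

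The only subtle point is calibrating $R=R(u)$ so that a single $c^*>1$ survives the split: too small an $R$ lets the far-side exponent exceed $M$ and destroys the $(1+cu)$ factor, while too large an $R$ produces a polynomial-in-$u$ prefactor on the near side that must be absorbed into $e^{-c^*a}$ with $c^*<\beta$; the balance $R^{d-2}\asymp 1+u$ resolves this. The essential structural input is the strict inequality $\gamma<1$, without which one only recovers $c^*=1$, i.e.\ the trivial $Q_0[0,v\notin\mathcal{I}^u]\le Q_0[0\notin\mathcal{I}^u]$, which would not be enough for the later cover-time application.
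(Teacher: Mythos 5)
Your argument is correct. Note that the paper does not prove this lemma in situ: it simply derives it from Lemma 1.5 of \cite{BeliusCTinDC} by a limiting argument in a parameter of that lemma, so your direct computation from \eqref{eq:RIOneAndTwoPointFunctions} is a self-contained alternative, and it isolates exactly the two ingredients that make the statement work. First, on the near part, the strict inequality $\gamma=\sup_{v\ne 0}g(v)/g(0)<1$ is the structural point: it follows from $g(v)=g(0)P_0^{\mathbb{Z}^d}[H_v<\infty]$, the strictness $P_0^{\mathbb{Z}^d}[H_v<\infty]<1$ for $v\ne 0$ (by symmetry $P_0^{\mathbb{Z}^d}[\tilde H_0<\infty]\ge P_0^{\mathbb{Z}^d}[H_v<\infty]^2$, so equality to $1$ would contradict transience), and the decay \eqref{eq:GreensFunctionBounds} which makes the supremum attained; this is what yields an exponent $2/(1+\gamma)$ strictly bigger than $1$ and hence a constant $>1$ in the exponential error term, which, as you observe, is precisely what the later application needs. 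Second, your calibration $R^{d-2}\asymp 1+u$ does the required double duty: it keeps the far-side correction exponent $2ug(v)/(g(0)(g(0)+g(v)))\le cuR^{2-d}$ below a fixed constant $M$ so that $e^x-1\le e^Mx$ applies and the correction sums to $c|K|uR^{2-d}\le c|K|u$, while keeping $|K\cap B(0,R)|\le cR^{d}\le c(1+u)^{d/(d-2)}$ only polynomial in $u$, which is absorbed into $e^{-c^{*}u/g(0)}$ for any $1<c^{*}<2/(1+\gamma)$. I see no gap; in a final write-up I would only spell out the transience argument for $\gamma<1$ and the elementary inequality $e^x-1\le xe^{M}$ on $[0,M]$, both of which you already flag.
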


Finally we define the cover time $C_{F}$ of a set $F\subset\mathbb{T}_{N}$
by
\begin{equation}
C_{F}\stackrel{\rm def}{=} \max_{x\in F}H_{x}=\inf\{t\ge0:F\subset Y(0,t)\}.\label{eq:DefOfCoverTime}
\end{equation}
Note that $C_{N}=C_{\mathbb{T}_{N}}$, cf. \prettyref{eq:IntroDefOfCoverTime}.
For convenience we introduce the notation
\begin{equation}
u_{F}(z)=g(0)\{\log|F|+z\},\label{eq:DefOfuF}
\end{equation}

\smallskip\n
so that $\{\frac{C_{N}}{g(0)N^{d}}-\log N^{d}\le z\}=\{C_{N}\le u_{\mathbb{T}_{N}}(z)N^{d}\}$,
cf. \prettyref{eq:IntroCoverTimeOfTorus}.

\bigskip
\section{\label{sec:ProperStatements}Gumbel fluctuations, coupling with random
interlacements and corollaries}

In this section we state our two main theorems and derive two corollaries.
The first main result is \prettyref{thm:G_Gumbel}, which, roughly
speaking, says that the cover times of large subsets of the torus
(for $d\ge3$) have Gumbel fluctuations, (and implies \prettyref{eq:IntroCoverTimeOfTorus}
from the introduction). The second main result is \prettyref{thm:Coupling}
and it states the coupling of random walk in the torus and random
interlacements (see \prettyref{eq:IntroCoupling} in the introduction).
The proofs of the theorems are contained in the subsequent sections.

The first corollary is \prettyref{cor:PPoVCL} which in essence says
that the ``point process of vertices covered last'' (see \prettyref{eq:DefOfPPoVCV})
converges in law to a Poisson point process as the side length of
the torus goes to infinity (see \prettyref{eq:IntroPPoVCV}). The
second corollary is \prettyref{cor:LastTwoIndians} and roughly says
that for any fixed $k\ge1$ the last $k$ vertices to be hit by the
random walk are far apart, at distance of order $N$.

We now state our result about fluctuations of the cover time. Recall
the notation from \prettyref{eq:DefOfCoverTime} and \prettyref{eq:DefOfuF}.

\begin{thm} \label{thm:G_Gumbel}($d\ge3$,$N\ge3$) 

\medskip 
For all $F\subset\mathbb{T}_{N}$ we have
\begin{equation}
\sup_{z\in\mathbb{R}}\left|P[C_{F}\le N^{d}u_{F}(z)]-\exp(-e^{-z})\right|\le c|F|^{-c}.\label{eq:G_Gumbel}
\end{equation}
\end{thm}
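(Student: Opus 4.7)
\medskip
\noindent\textbf{Proof proposal.} The bound $c|F|^{-c}$ is trivial when $|F|$ stays bounded, so I focus on $|F|\to\infty$. Fix a small $\rho\in(0,1)$ and set $t(\rho)=(1-\rho)g(0)N^{d}\log|F|$. Define the set of late points
\[
F_{\rho}=\{x\in F:H_{x}>t(\rho)\}\subset F.
\]
Since $N^{d}u_{F}(z)-t(\rho)=g(0)N^{d}(\rho\log|F|+z)$, on $\{C_{F}>t(\rho)\}$ the event $\{C_{F}\le N^{d}u_{F}(z)\}$ coincides with $\{C_{F_{\rho}}\circ\theta_{t(\rho)}\le g(0)N^{d}(\rho\log|F|+z)\}$. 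The plan is to show: (i) the walk after time $t(\rho)$ is approximately independent of $F_{\rho}$; (ii) $F_{\rho}$ is typically a well-separated subset of cardinality close to $|F|^{\rho}$; (iii) for a typical realization $F'$ of $F_{\rho}$, the cover time $C_{F'}$ by an independent walk has approximately Gumbel fluctuations on the scale $g(0)N^{d}$ around $g(0)N^{d}\log|F'|$.

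\medskip
\noindent\textbf{Step 1 (decoupling by mixing).} Using \prettyref{eq:TorusMixing} at an intermediate time slightly smaller than $t(\rho)$ (say $t(\rho)-N^{2}\log^{2}N$), I would couple $Y_{t(\rho)+\cdot}$ with an independent simple random walk $\tilde Y$ starting from $\pi$, with total variation error $e^{-c\log^{2}N}\le|F|^{-c}$ (valid as long as $|F|\le N^{d}$, which we may assume). As $F_{\rho}$ is measurable with respect to the pre-$t(\rho)$ walk, the pair $(F_{\rho},\tilde Y)$ is independent up to this error.

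\medskip
\noindent\textbf{Step 2 (structure of $F_{\rho}$).} The quantitative one-point estimate alluded to before \prettyref{eq:IntroSeparatedGumbel} (proved via \prettyref{eq:IntroCoupling} and \prettyref{eq:RIOneAndTwoPointFunctions}) gives
\[
P[H_{x}>t(\rho)]=\exp\bigl(-t(\rho)/(g(0)N^{d})\bigr)\bigl(1+O(N^{-c})\bigr)=|F|^{\rho-1}(1+o(1)),
\]
so $E[|F_{\rho}|]=|F|^{\rho}(1+o(1))$. An analogous two-point estimate for $P[H_{x},H_{y}>t(\rho)]$, coming from \prettyref{eq:IntroCoupling} and \prettyref{eq:TwoPointCalculation}, controls the variance and by Chebyshev yields $||F_{\rho}|-|F|^{\rho}|\le|F|^{\rho-c}$ outside an event of probability $|F|^{-c}$. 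The same two-point bound, applied to pairs at distance below a mesoscopic scale $r=N^{\beta}$ (small $\beta>0$), shows that with probability $1-|F|^{-c}$ the set $F_{\rho}$ is $r$-separated.

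\medskip
\noindent\textbf{Step 3 (Gumbel for well-separated sets).} Conditioning on a typical deterministic realization $F'$ of $F_{\rho}$ with $M=|F'|\approx|F|^{\rho}$ and $r$-separation, I would apply the coupling \prettyref{eq:IntroCoupling} to $\tilde Y$ with the $M$ boxes $A_{i}=B(x_{i},r/3)$ centred at the points of $F'$. This sandwiches $\tilde Y(0,uN^{d})\cap A_{i}$ between two independent random interlacements $\mathcal{I}_{i}^{u(1\mp\delta)}\cap A_{i}$, so the events $\{\tilde H_{x_{i}}>s\}$ become independent up to total error $|F|^{-c}$, each of probability $\exp(-s/(g(0)N^{d}))(1+O(|F|^{-c}))$ by \prettyref{eq:RIOneAndTwoPointFunctions}. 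At $s=g(0)N^{d}(\rho\log|F|+z)$ this equals $e^{-z}M^{-1}(1+o(1))$, whence
\[
P\bigl[C_{F'}\le s\,\big|\,F_{\rho}=F'\bigr]=\bigl(1-e^{-z}M^{-1}(1+o(1))\bigr)^{M}+O(|F|^{-c})\longrightarrow e^{-e^{-z}}.
\]
Integrating over typical $F'$ and combining with Step 1 yields \prettyref{eq:G_Gumbel}.

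\medskip
\noindent\textbf{Main obstacle.} The crux of the difficulty is Step 3: deploying the coupling \prettyref{eq:IntroCoupling} simultaneously to $\sim|F|^{\rho}$ boxes, with the precision $\delta$ shrinking as a power of $|F|$, while verifying the ``at most mesoscopic'' and ``well separated'' hypotheses for an appropriate choice of $r=N^{\beta}$. One must balance the scales $\rho$, $\beta$, $\delta$ so that the errors from Steps 1--3 together remain bounded by $|F|^{-c}$, and one must control the approximation uniformly in $z\in\mathbb{R}$, which for large $|z|$ requires crude tail bounds on $P[C_{F}\le N^{d}u_{F}(z)]$ rather than the detailed coupling machinery.
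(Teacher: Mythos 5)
Your outline is essentially the paper's proof: the same set of $(1-\rho)$-late points $F_{\rho}$, the same concentration-plus-separation statement for $F_{\rho}$ (the paper's \prettyref{lem:G_GE_GoodEvent}, proved exactly as you suggest via one- and two-point estimates from the coupling and \prettyref{lem:TwoPointCalculation}), and the same Gumbel computation for well-separated sets via the multi-box coupling (\prettyref{lem:RWNPointFunc} and \prettyref{lem:GumbelForSeparatedStartingFromUniform}). The uniformity in $z$ is handled as you anticipate, by monotonicity and crude bounds for $|z|\ge\frac{1}{4}\log|F|$.

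Two points in your write-up would need repair. First, Step 1 as stated does not give what you claim: mixing over a window $[t(\rho)-N^{2}\log^{2}N,\,t(\rho)]$ makes $Y_{t(\rho)+\cdot}$ approximately independent of the walk up to the \emph{start} of that window, but $F_{\rho}$ depends on the walk up to time $t(\rho)$ itself, so "$(F_{\rho},\tilde Y)$ is independent up to this error" is not justified. The paper instead applies the exact Markov property at $t(\rho)$, conditions on the pair $(F_{\rho},Y_{t(\rho)})$ as in \prettyref{eq:G_ConditionOnFPrime}, and proves the separated-set Gumbel estimate \emph{uniformly in the starting vertex} (\prettyref{lem:GSPGumbelForSeparatedStartingFromPoint}); the mixing step is pushed inside that lemma, where one also has to argue that in the pre-mixing window the walk can only hit the one point of $F'$ nearest its start. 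Second, the separation scale must be a power of $|F|$, not of $N$: if $F$ is, say, a single small ball with $|F|\ll N^{\beta d}$, then $F_{\rho}$ is certainly not $N^{\beta}$-separated with probability $1-c|F|^{-c}$, and the second-moment bound only yields separation at scale $|F|^{c}$ (the paper uses $|F|^{1/(2d)}$, see \prettyref{eq:GDefGoodSets}); correspondingly the boxes in Step 3 have radius a power of $|F|$ and the condition $n\le s^{\Cr{couplingerr}}$ of \prettyref{thm:Coupling} forces $\rho$ small compared to that exponent. Both issues are exactly the scale-balancing you flag as the main obstacle, and the paper's choices resolve them.
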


We will prove \prettyref{thm:G_Gumbel} in \prettyref{sec:ProofOfGumbel}.

Next we will state the coupling result. For $n\ge1$ and $x_{1},...,x_{n}\in\mathbb{T}_{N}$
we define the 
\begin{equation}\label{eq:DefOfSeparation}
\mbox{separation $s$ of the vertices $x_{1},...,x_{n}\in\mathbb{T}_{N}$, by $s$} =  \left\{ \begin{array}{ll}
\!\! N & \mbox{if} \; n=1,\\
\!\! \min_{i\ne j}d_{\infty}(x_{i},x_{j}) & \mbox{if}\; n>1.
\end{array}\right. 
\end{equation}

\medskip\n
For an arbitrarily small $\varepsilon>0$ which does not depend on $N$ we define the box
\begin{equation}
A=B(0,s^{1-\varepsilon}).\label{eq:DefOfA}
\end{equation}
The result will couple the trace of random walk in the boxes $A+x_{1},...,A+x_{n}$
with independent random interlacements. Note that thanks to \eqref{eq:DefOfSeparation}
and \eqref{eq:DefOfA} the boxes are ``far part'' (in the sense that the distance between them, of order $s$,
is ``much larger'' than their radius, which is $s^{1-\varepsilon}$) and ``at most mesoscopic'' (in that their radius, at most $N^{1-\varepsilon}$, is ``much smaller'' than the side-length $N$ of the torus).
Given a level $u>0$ and a $\delta>0$, with these parameters satisfying appropriate
conditions, we will construct independent random interlacements $(\mathcal{I}_{1}^{v})_{v\ge0},...,(\mathcal{I}_{n}^{v})_{v\ge0}$
such that (recall the notation $Y(a,b)$ from above \prettyref{eq:DefOfJumpTimes})
\begin{equation}
Q_{1}[\mathcal{I}_{i}^{u(1-\delta)}\cap A\subset(Y(0,uN^{d})-x_{i})\cap A\subset\mathcal{I}_{i}^{u(1+\delta)}]\ge1-\Cl[c]{couplingerr}ue^{-cs^{\Cr{couplingerr}}}\mbox{ for all }i.\label{eq:CouplingInclusion}
\end{equation}

\smallskip\n
Formally we have
\begin{thm}\label{thm:Coupling} ($d\ge3$,$N\ge3$) 

\medskip
For $n\ge1$ let $x_{1},...,x_{n}\in\mathbb{T}_{N}$
be distinct and have separation $s$ (see \prettyref{eq:DefOfSeparation}),
and let $\varepsilon\in(0,1)$. Further assume $u\ge s^{-\Cr{couplingerr}},1\ge\delta\ge\frac{1}{\Cr{couplingerr}}s^{-\Cr{couplingerr}}$,
$n\le s^{\Cr{couplingerr}}$, where $\Cr{couplingerr}=\Cr{couplingerr}(\varepsilon)$. We can then
construct a space $(\Omega_{1},\mathcal{A}_{1},Q_{1})$ with a random
walk $Y_{\cdot}$ with law $P$ and independent random interlacements
$(\mathcal{I}_{i}^{v})_{v\ge0},i=1,...,n$, each with the law of $(\mathcal{I}^{v})_{v\ge0}$
under $Q_{0}$, such that \prettyref{eq:CouplingInclusion} holds.
\end{thm}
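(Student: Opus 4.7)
The plan is to build the coupling in three successive stages, combining them with standard triangle-inequality error bookkeeping. Set $A_i = x_i + A$ with $A = B(0, s^{1-\varepsilon})$, enlarge each to $B_i = x_i + B(0, s^{1-\varepsilon/2})$ (still pairwise disjoint and of radius much smaller than $N$), and fix a ``long time away'' scale $t_* = N^{2+\gamma}$ with $\gamma = \gamma(\varepsilon) > 0$ small. The quantitative engines will be the capacity bound $\mathrm{cap}(A_i) \asymp s^{(1-\varepsilon)(d-2)}$ from \prettyref{eq:AsymptoticsOfCapOfBox} and \prettyref{eq:EquilDistLowerBound}, the hitting estimates \prettyref{eq:BHBZdBallHitting} and \prettyref{eq:BHBBallHittingBound}, and the mixing bound \prettyref{eq:TorusMixing}, which yields errors of size $e^{-cN^{\gamma}}$ whenever invoked.

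For the first coupling, I define excursion times $U_0 = 0$, $R_k = H_{\bigcup_i A_i} \circ \theta_{U_{k-1}} + U_{k-1}$, and $U_k = R_k + t_*$, and consider the pieces $Y_{(R_k + \cdot) \wedge U_k}$. Combining \prettyref{eq:TorusMixing} with the observation that by time $t_* \gg N^2$ the walk has, with high probability, left $\bigcup_i B_i$ and mixed, I would show that the conditional law of $Y_{U_k}$ given the past is close in total variation to a fixed quasistationary-type distribution $\sigma$ on $\mathbb{T}_N$, uniformly in $Y_{R_k}$. A standard conditional-coupling construction then produces i.i.d.\ excursions $\tilde Y^1, \tilde Y^2, \ldots$ distributed as $Y_{\cdot \wedge U_1}$ under $P_\sigma$, whose restrictions to $\bigcup_i A_i$ agree with the true excursions with probability at least $1 - e^{-cN^{c}}$ per step. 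Taking a Poisson number $N_{\pm}$ of such $\tilde Y^k$, with means chosen so that the actual number of $Y$-excursions up to time $uN^d$ is sandwiched between $N_-$ and $N_+$ with high probability, produces Poisson point processes $\mu_{\pm}$ on $\Gamma(\mathbb{T}_N)$ whose traces in $\bigcup_i A_i$ sandwich $Y(0, uN^d) \cap \bigcup_i A_i$, with intensities proportional to $u(1 \pm \delta/3)\mathrm{cap}(A_1 \cup \cdots \cup A_n)$.

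Second, I pass from $\mu_{\pm}$ (whose excursions may wander through several $A_i$'s) to one Poisson point process per box. I split each $\mu_{\pm}$-trajectory at its successive returns to $\bigcup_i A_i$ and subsequent exits from $\bigcup_i B_i$, and assign each piece to the $A_i$ in which it starts, yielding $n$ point processes $\nu_1, \ldots, \nu_n$. Pieces from the same parent trajectory are a priori dependent, so the key is to show that with high probability each $\mu_{\pm}$-trajectory visits only one $A_i$. Since distinct $B_i, B_j$ are at distance at least $s/2$, \prettyref{eq:BHBBallHittingBound} gives a return probability $\le c s^{-\varepsilon(d-2)}$ per excursion; combined with a decoupling bound in the spirit of \cite{SznitmanDecoupling} (to handle inner dependencies along a single long excursion) and a union bound over $\le c s^{c}$ pieces, the ``one-box'' event has probability $\ge 1 - e^{-c s^{c}}$. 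On this event the $\nu_i$ are independent Poisson point processes of walks started approximately from $e_{A_i, B_i}/\mathrm{cap}_{B_i}(A_i)$, killed on exiting $B_i$, with intensity $u(1 \pm \delta/2)\mathrm{cap}(A_i)$.

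Finally, I couple each $\nu_i$ with an independent random interlacement $(\mathcal{I}_i^v)_{v \ge 0}$. Identifying $B_i$ with a subset of $\mathbb{Z}^d$ (legitimate since $B_i \subset B(x_i, N/4)$), \prettyref{eq:LawOfRIInFiniteSet}--\prettyref{eq:DefOfMuK} realise $\mathcal{I}^v \cap A_i$ as the trace of a Poisson cloud of $\mathbb{Z}^d$-walks started from $e_{A_i}/\mathrm{cap}(A_i)$ with intensity $v\,\mathrm{cap}(A_i)$. Two discrepancies remain: the starting laws $e_{A_i}/\mathrm{cap}(A_i)$ and $e_{A_i, B_i}/\mathrm{cap}_{B_i}(A_i)$ differ by a factor $1 + O(s^{-c\varepsilon})$ by \prettyref{eq:RelativeEquilUpperBound}, which I absorb into $\delta$ via Poisson thinning/enrichment; and the $\nu_i$-paths die on leaving $B_i$, whereas interlacement paths may later return to $A_i$. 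I handle the latter by augmenting each $\nu_i$-path with a geometric number (of mean $\le c s^{-\varepsilon(d-2)}$, by \prettyref{eq:BHBZdBallHitting}) of independent $\mathbb{Z}^d$-continuations, each started from the appropriate conditional return distribution on $A_i$. Matching the Poisson intensities to $u(1 \mp \delta)\mathrm{cap}(A_i)$ yields \prettyref{eq:CouplingInclusion}. I expect the main difficulty to be the second stage: rigorously decoupling pieces of a single $\mu_{\pm}$-trajectory so that the $\nu_i$ are genuinely independent Poisson point processes with the correct intensities, while simultaneously managing the mixing error from stage one, the multi-box return probability, and the equilibrium-measure discrepancy from stage three, all within the target $c u\, e^{-cs^{c}}$.
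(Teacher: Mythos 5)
Your three-stage architecture (excursion decomposition and Poissonization; decoupling into per-box Poisson processes; reconstruction of random interlacements) is the same as the paper's, and stages one and three are essentially correct in outline. However, stage two contains a genuine gap that sinks the argument as written.

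You claim that ``with high probability each $\mu_{\pm}$-trajectory visits only one $A_i$,'' with failure probability $e^{-cs^{c}}$ obtained from a per-excursion return probability $\le cs^{-\varepsilon(d-2)}$ and a union bound. This is false, and not just quantitatively: a union bound over polynomially small probabilities cannot produce a stretched-exponential bound, and more importantly the expected number of offending excursions is \emph{large}, not small. The number of excursions up to time $uN^{d}$ is of order $un\,\mathrm{cap}(A)\asymp un\,s^{(1-\varepsilon)(d-2)}$ by \prettyref{eq:AsymptoticsOfCapOfBox}, while the probability that a single excursion returns to $\bar{A}$ after leaving $\bar{B}$ is only polynomially small (of order $s^{-\varepsilon(d-2)/2}$, from \prettyref{eq:BHBZdBallHitting} applied from $\partial_{e}\bar{B}$). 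Their product is $\gg1$ for any $u\ge1$, so the ``one-box'' event in fact has probability close to zero. (Even if it were likely, conditioning on it would destroy the Poisson structure you need for the $\nu_i$.) The correct move — and the heart of \prettyref{pro:CouplePPoEandUFreePPoE} — is to \emph{accept} that many excursions make several successive visits to $\bar{A}$ after leaving $\bar{B}$, and to dominate the collection of all such extra pieces by an auxiliary Poisson point process of intensity $O(u\delta\kappa_{2})$ that is absorbed into the upper member of the sandwich. This requires the product-measure domination $\kappa_{1}^{i}\le s^{-\frac{\varepsilon}{8}(i-1)}\mathrm{cap}(A)\otimes_{k}P_{\bar{e}}[Y_{\cdot\wedge T_{\bar{B}}}\in dw_{k}]$ of \prettyref{lem:CompoundPoissionIntMeasIneq} (the decoupling step, resting on \prettyref{lem:HittingLineEquilWeak}, whose point is that the hitting distribution on $\bar{A}$ from $\partial_{e}\bar{B}$ is bounded by $s^{-\varepsilon/4}\bar{e}$ \emph{uniformly in the starting point}), followed by the Poisson-counting comparison of \prettyref{lem:SumOfPoissonsLemma}. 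Once the multi-visit excursions are so dominated, the splitting into independent per-box processes is immediate, since single-visit excursions truncated at $T_{\bar{B}}$ live in exactly one $B_i$.

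A secondary issue: your definition $U_{k}=R_{k}+t_{*}$ differs from the paper's $U=\inf\{t\ge t^{\star}:Y(t-t^{\star},t)\cap\bar{C}=\emptyset\}$. With your definition the walk may be inside $\bar{C}$ (or even $\bar{A}$) at time $U_{k}$, so the conditional law of $Y_{U_{k}}$ given the excursion is not close to any fixed distribution on $\bar{C}^{c}$, and the law of the excursion trace and of the inter-excursion times is not what you need. The quasistationary distribution and the ``long time continuously away from $\bar{C}$'' stopping rule are what make the i.i.d.\ excursion coupling and the excursion-count estimates work.
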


\prettyref{thm:Coupling} will be proved in \prettyref{sec:Coupling}.

In the proof of \prettyref{cor:PPoVCL} we will need the following
estimate on the probability of hitting a point, which is a straight-forward
consequence of \prettyref{thm:Coupling}, (when $n=1$).
\begin{lem} \label{lem:RWOnePoint} ($d\ge3$,$N\ge3$) 

\medskip
There exists a constant $\Cl[c]{OPu}$ such that if $N^{-\Cr{OPu}}\le u\le N^{\Cr{OPu}}$
then for all $x\in\mathbb{T}_{N}$ 
\begin{equation}
Q_{0}[0\notin\mathcal{I}^{u}](1-cN^{-c})\le P[x\notin Y(0,uN^{d})]\le Q_{0}[0\notin\mathcal{I}^{u}](1+cN^{-c}).\label{eq:RWOPFuncMultError}
\end{equation}
\end{lem}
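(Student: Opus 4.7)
The proof is a direct application of \prettyref{thm:Coupling} in the case $n=1$, combined with the explicit one-point formula from \eqref{eq:RIOneAndTwoPointFunctions}. By translation invariance of $P=P_\pi$ on the torus, $P[x\notin Y(0,uN^d)]$ does not depend on $x$, so I may reduce to the case $x=0$.

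Concretely, I would apply \prettyref{thm:Coupling} with $n=1$, $x_1=0$, an arbitrary fixed $\varepsilon\in(0,1)$ (so that $s=N$ and $A=B(0,N^{1-\varepsilon})$), and $\delta=N^{-\eta}$ for some $\eta\in(0,\Cr{couplingerr}(\varepsilon))$ chosen so that the theorem's constraint $\delta\ge\Cr{couplingerr}^{-1}s^{-\Cr{couplingerr}}$ holds. Since $0\in A$ (it is the centre of $A$), the inclusion event in \eqref{eq:CouplingInclusion} forces
\[
\{0\in\mathcal{I}_1^{u(1-\delta)}\}\subset\{0\in Y(0,uN^d)\}\subset\{0\in\mathcal{I}_1^{u(1+\delta)}\};
\]
passing to complements and taking $Q_1$-probabilities (using that $\mathcal{I}_1^v$ has the $Q_0$-law of $\mathcal{I}^v$, and that $Y$ under $Q_1$ has law $P$) then yields
\[
Q_0[0\notin\mathcal{I}^{u(1+\delta)}]-\mathrm{err}\le P[0\notin Y(0,uN^d)]\le Q_0[0\notin\mathcal{I}^{u(1-\delta)}]+\mathrm{err},
\]
with $\mathrm{err}\le cue^{-cN^{\Cr{couplingerr}(\varepsilon)}}$. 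By \eqref{eq:RIOneAndTwoPointFunctions} one has the exact identity $Q_0[0\notin\mathcal{I}^{u(1\pm\delta)}]=Q_0[0\notin\mathcal{I}^u]\,e^{\mp u\delta/g(0)}=Q_0[0\notin\mathcal{I}^u]\,(1+O(u\delta))$ whenever $u\delta\le 1$, so dividing the preceding display through by $Q_0[0\notin\mathcal{I}^u]=e^{-u/g(0)}$ produces a relative error of the form $1+O(u\delta)+O\bigl(e^{u/g(0)}\mathrm{err}\bigr)$.

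The only remaining task, which I expect to be the (mild) main obstacle, is bookkeeping of constants so that both error contributions above are $O(N^{-c})$ uniformly in $u\in[N^{-\Cr{OPu}},N^{\Cr{OPu}}]$. Namely, $u\delta\le N^{\Cr{OPu}-\eta}$ is polynomially small precisely when $\Cr{OPu}<\eta$, while $e^{u/g(0)}\mathrm{err}\le cN^{\Cr{OPu}}\exp\bigl(N^{\Cr{OPu}}/g(0)-cN^{\Cr{couplingerr}(\varepsilon)}\bigr)$ decays faster than any polynomial in $N^{-1}$ precisely when $\Cr{OPu}<\Cr{couplingerr}(\varepsilon)$; the theorem's additional constraint $u\ge s^{-\Cr{couplingerr}}=N^{-\Cr{couplingerr}}$ is similarly implied by $\Cr{OPu}\le\Cr{couplingerr}(\varepsilon)$. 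Choosing $\Cr{OPu}=\tfrac{1}{2}\min(\eta,\Cr{couplingerr}(\varepsilon))$ satisfies all conditions simultaneously, which completes the argument. No deeper difficulty arises: the lemma is essentially the $n=1$ marginal of the coupling combined with the exact Poisson one-point function of random interlacements.
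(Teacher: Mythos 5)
Your proposal is correct and follows essentially the same route as the paper: apply \prettyref{thm:Coupling} with $n=1$ and a polynomially small $\delta$, use the inclusion \eqref{eq:CouplingInclusion} at the centre point to sandwich $P[0\notin Y(0,uN^d)]$ between $Q_0[0\notin\mathcal{I}^{u(1\pm\delta)}]$ up to the coupling error, and then convert to a multiplicative error via the exact formula \eqref{eq:RIOneAndTwoPointFunctions} together with the exponent bookkeeping $\Cr{OPu}<\Cr{couplingerr}(\varepsilon)$. The only cosmetic differences are your choice of $\delta=N^{-\eta}$ versus the paper's $\delta=\Cr{couplingerr}^{-1}N^{-\Cr{couplingerr}}$ and your explicit mention of translation invariance, which the paper uses implicitly.
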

\begin{proof}
We apply \prettyref{thm:Coupling} with $n=1$ (so that the separation
$s$ is $N$), $x_{1}=0$, $\varepsilon=\frac{1}{4}$ (say) and $\delta=\Cr{couplingerr}^{-1}N^{-\Cr{couplingerr}}$
(where $\Cr{couplingerr}=\Cr{couplingerr}(\frac{1}{4})$ is the constant from \prettyref{thm:Coupling}).
By choosing $\Cr{OPu}\le \Cr{couplingerr}$ we have $u\ge s^{-\Cr{couplingerr}}$, so that
all the conditions of \prettyref{thm:Coupling} are satisfied and
the coupling $Q_{1}$ of $Y_{\cdot}$ and random interlacements can
be constructed. Therefore it follows from \prettyref{eq:CouplingInclusion}
that 
\begin{equation}
Q_{0}[0\notin\mathcal{I}^{u(1+\delta)}]-cue^{-cN^{\Cr{couplingerr}}}\le P[0\notin Y(0,uN^{d})]\le Q_{0}[0\notin\mathcal{I}^{u(1-\delta)}]+cue^{-cN^{\Cr{couplingerr}}}.\label{eq:KallenbergProbabilityBound}
\end{equation}
But we also have, if we pick $\Cr{OPu}<\Cr{couplingerr}$, that
\begin{eqnarray*}
Q_{0}[0\notin\mathcal{I}^{u(1-\delta)}]+cue^{-N^{\Cr{couplingerr}}} & \!\!\! \overset{\eqref{eq:RIOneAndTwoPointFunctions}}{=} & \!\!\!  Q_{0}[0\notin\mathcal{I}^{u}](e^{\frac{\delta u}{g(0)}}+cue^{\frac{u}{g(0)}-cN^{\Cr{couplingerr}}})
\\
 & \!\!\! \le & \!\!\!  Q_{0}[0\notin\mathcal{I}^{u}](1+cN^{-c}),
\end{eqnarray*}

\medskip\n
since $cue^{\frac{u}{g(0)}-cN^{\Cr{couplingerr}}}\le cN^{c}e^{-cN^{c}}$ and
$\delta u\le cN^{\Cr{OPu}-\Cr{couplingerr}}\le cN^{-c}$ (recall $u\le N^{\Cr{OPu}},\delta=cN^{-\Cr{couplingerr}}$
and $\Cr{OPu}<\Cr{couplingerr}$). Similarly if $\Cr{OPu}<\Cr{couplingerr}$ we have $Q_{0}[0\notin\mathcal{I}^{u(1+\delta)}]-cue^{-cN^{\Cr{couplingerr}}}\ge Q_{0}[0\notin\mathcal{I}^{u}](1-cN^{-c})$,
so \prettyref{eq:RWOPFuncMultError} follows.
\end{proof}
We now state and prove the first corollary. The proof uses \prettyref{thm:G_Gumbel}
and \prettyref{thm:Coupling} (via \prettyref{lem:RWOnePoint}). Recall
the definition of $\mathcal{N}_{N}^{z}$ from \prettyref{eq:DefOfPPoVCV}.
\begin{cor}
\label{cor:PPoVCL}($d\ge3$) 

\medskip
\prettyref{eq:IntroPPoVCV} holds.\end{cor}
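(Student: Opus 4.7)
The plan is to apply Kallenberg's theorem (see e.g.\ Theorem 4.7 in Kallenberg, \emph{Random Measures}) to the simple point processes $\mathcal{N}_N^z$ on the Polish space $(\mathbb{R}/\mathbb{Z})^d$. Since the limit $\mathcal{N}^z$ is a Poisson point process with diffuse intensity $e^{-z}\lambda$, it suffices to verify, for every finite union $U$ of open boxes in $(\mathbb{R}/\mathbb{Z})^d$ with $\lambda(\partial U)=0$, the two convergences
\begin{equation*}
E[\mathcal{N}_N^z(U)]\longrightarrow e^{-z}\lambda(U),\qquad P[\mathcal{N}_N^z(U)=0]\longrightarrow \exp(-e^{-z}\lambda(U)).
\end{equation*}
Such sets form a dissecting class on the torus, so this implies $\mathcal{N}_N^z\stackrel{\mathrm{law}}{\to}\mathcal{N}^z$. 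Set $F_N=\{x\in\mathbb{T}_N:x/N\in U\}$; a standard Riemann-sum argument gives $|F_N|/N^d\to\lambda(U)$.

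For the intensity, apply \prettyref{lem:RWOnePoint} with $u=g(0)\{\log N^d+z\}$: for $N$ large, $u$ is of order $\log N$ and hence lies in $[N^{-\Cr{OPu}},N^{\Cr{OPu}}]$, and the one-point formula in \prettyref{eq:RIOneAndTwoPointFunctions} gives $Q_0[0\notin\mathcal{I}^u]=e^{-z}/N^d$. By translation invariance the probability is independent of $x\in\mathbb{T}_N$, so
\begin{equation*}
E[\mathcal{N}_N^z(U)]=|F_N|\cdot P[H_0>g(0)N^d\{\log N^d+z\}]=(|F_N|/N^d)\,e^{-z}(1+O(N^{-c})),
\end{equation*}
which tends to $e^{-z}\lambda(U)$.

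For the zero-charge probability, note that $\{\mathcal{N}_N^z(U)=0\}=\{C_{F_N}\le g(0)N^d\{\log N^d+z\}\}$. Introduce
\begin{equation*}
z_N:=z-\log(|F_N|/N^d),
\end{equation*}
chosen so that $g(0)\{\log|F_N|+z_N\}=g(0)\{\log N^d+z\}$; in the notation of \prettyref{eq:DefOfuF} this means $\{\mathcal{N}_N^z(U)=0\}=\{C_{F_N}\le N^d u_{F_N}(z_N)\}$. Assume first $\lambda(U)>0$; then $|F_N|\to\infty$ and $z_N\to z-\log\lambda(U)$, so \prettyref{thm:G_Gumbel} gives
\begin{equation*}
P[\mathcal{N}_N^z(U)=0]=\exp(-e^{-z_N})+O(|F_N|^{-c})\longrightarrow \exp(-e^{-z}\lambda(U)).
\end{equation*}
The degenerate case $\lambda(U)=0$ is handled by Markov's inequality together with the intensity convergence: $P[\mathcal{N}_N^z(U)\ge 1]\le E[\mathcal{N}_N^z(U)]\to 0$. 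Since both \prettyref{lem:RWOnePoint} and \prettyref{thm:G_Gumbel} are already available, no genuine obstacle remains; the only careful bookkeeping is the conversion between the ``interlacement level'' parameter $u$ and the ``Gumbel'' parameter $z$, carried out through the substitution defining $z_N$.
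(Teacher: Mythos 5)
Your proposal is correct and follows essentially the same route as the paper: Kallenberg's theorem reduced to the two convergences, the intensity handled via \prettyref{lem:RWOnePoint} with $u=u_{\mathbb{T}_N}(z)$ and \prettyref{eq:RIOneAndTwoPointFunctions}, and the avoidance probability handled by rewriting $\{\mathcal{N}_N^z(U)=0\}$ as a cover-time event for $NI\cap\mathbb{T}_N$ with the shifted parameter $z-\log(|NI\cap\mathbb{T}_N|/N^d)$ and invoking \prettyref{thm:G_Gumbel}. The only cosmetic difference is that the paper restricts the test class to sets with $\lambda(I)>0$ rather than treating the null case separately as you do.
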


\begin{proof}
By Kallenberg's Theorem (Proposition 3.22, p. 156 of \cite{Resnick_ExtremeValRegVarAndPP})
the result follows from
\begin{align}
\lim_{N\rightarrow\infty}P[\mathcal{N}_{N}^{z}(I)=0]  = & \exp(-\lambda(I)e^{-z})\mbox{ and }\label{eq:Kallenberg1}\\
\lim_{N\rightarrow\infty}E[\mathcal{N}_{N}^{z}(I)]  = & \lambda(I)e^{-z},\label{eq:Kallenberg2}
\end{align}

\smallskip\n
for all $I$ in the collection $\mathcal{J}=\{I:I\mbox{ a finite union of products of open intervals in }(\mathbb{R}/\mathbb{Z})^{d},\lambda(I)>0\}$.
Note that
\begin{equation}
\lim_{N\rightarrow\infty}\mbox{\f $\dis\frac{|NI\cap\mathbb{T}_{N}|}{|\mathbb{T}_{N}|}$} =\lambda(I)\mbox{ for all }I\in\mathcal{J},\label{eq:ConvergenceToLebesgueMeasure}
\end{equation}
since $|NI\cap\mathbb{T}_{N}|/|\mathbb{T}_{N}|$ is the mass assigned to the set $I$ by the measure
$\sum_{x\in\mathbb{T}_{N}}N^{-d}\delta_{x/N}$ on $(\mathbb{R}/\mathbb{Z})^{d}$, which converges weakly to $\lambda$ as $N\rightarrow\infty$
(note that $I$ is a continuity set of $\lambda$, and see the Portmanteau theorem, Proposition 5.1, p. 9, \cite{RevuzYor}).

\smallskip\n
Fix an $I\in\mathcal{J}$. To check \prettyref{eq:Kallenberg1} note
that $\{\mathcal{N}_{N}^{z}(I)=0\}=\{C_{NI\cap\mathbb{T}_{N}}\le N^{d}u_{\mathbb{T}_{N}}(z)\}$
(see \prettyref{eq:DefOfPPoVCV}, \prettyref{eq:DefOfCoverTime} and
\prettyref{eq:DefOfuF}). Since $u_{\mathbb{T}_{N}}(z)=u_{NI\cap\mathbb{T}_{N}}(z-\log\frac{|NI\cap\mathbb{T}_{N}|}{N^{d}})$
(see \prettyref{eq:DefOfuF}) and $|NI\cap\mathbb{T}_{N}|\rightarrow\infty$
as $N\rightarrow\infty$ (recall $\lambda(I)>0$) we get from \prettyref{eq:G_Gumbel} that
\[
\big| P[\mathcal{N}_{N}^{z}(I)=0]-\exp\big(-e^{-(z-\log\frac{|NI\cap\mathbb{T}_{N}|}{N^{d}})}\big)\big|\rightarrow0\mbox{ as }N\rightarrow\infty.
\]

\n
But by \prettyref{eq:ConvergenceToLebesgueMeasure} we have $\exp(-e^{-(z-\log\frac{|NI\cap\mathbb{T}_{N}|}{N^{d}})})\rightarrow\exp(-\lambda(I)e^{-z})$
as $N\rightarrow\infty$ so \prettyref{eq:Kallenberg1} follows.

\medskip
To check \prettyref{eq:Kallenberg2} note that by \prettyref{eq:DefOfPPoVCV}
and \prettyref{eq:DefOfuF} 
\begin{equation}
E[\mathcal{N}_{N}^{z}(I)]=|NI\cap\mathbb{T}_{N}|P[0\notin Y(0,N^{d}u_{\mathbb{T}_{N}}(z))].\label{eq:KallenbergExpectationEquality}
\end{equation}

\medskip\n
Using \prettyref{eq:RWOPFuncMultError} with $u=u_{\mathbb{T}_{N}}(z)$
($\in[N^{-\Cr{OPu}},N^{\Cr{OPu}}]$ for $N\ge c(z)$) and $Q_{0}[0\notin\mathcal{I}^{u_{\mathbb{T}_{N}}(z)}]=\frac{e^{-z}}{N^{d}}$
(see \prettyref{eq:RIOneAndTwoPointFunctions}) we get from \prettyref{eq:KallenbergExpectationEquality} that
\[
e^{-z}\lim_{N\rightarrow\infty}\mbox{\f $\dis\frac{|NI\cap\mathbb{T}_{N}|}{N^{d}}$}(1-cN^{-c})\le\lim_{N\rightarrow\infty}E[\mathcal{N}_{N}^{z}(I)]\le e^{-z}\lim_{N\rightarrow\infty}\mbox{\f $\dis\frac{|NI\cap\mathbb{T}_{N}|}{N^{d}}$}(1+cN^{-c}).
\]

\n
Now using \prettyref{eq:ConvergenceToLebesgueMeasure} we find that
\prettyref{eq:Kallenberg2} holds. Thus the proof of \prettyref{eq:IntroPPoVCV}
is complete.
\end{proof}
See \prettyref{rem:endremark} for a potential generalisation of \prettyref{cor:PPoVCL}.
We now state the second corollary, which is a consequence of the
first. The proof follows those of Corollary 2.3 of \cite{BeliusCTinDC}
and Proposition 2.8 of \cite{Belius2010}, so we omit some details.
\begin{cor} \label{cor:LastTwoIndians}($d\ge3$) 

\medskip
Let $Z_{1},...,Z_{N^{d}}$, be the vertices of $\mathbb{T}_{N}$ ordered by entrance time,
\[
\mbox{so that }C_{\mathbb{T}_{N}}=H_{Z_{1}}>H_{Z_{2}}>...>H_{Z_{N^{d}}}.
\]

\n
Then for any $k\ge2$ and $0<\delta\le\frac{1}{4}$ 
\begin{equation}
\lim_{\delta\rightarrow0}\limsup_{N\rightarrow\infty}P[\exists1\le i<j\le k\mbox{ with }d(Z_{i},Z_{j})\le\delta N]=0,\label{eq:LastTwoIndians}
\end{equation}
or in other words ``the last $k$ vertices to be hit are far apart,
at distance of order $N$''.\end{cor}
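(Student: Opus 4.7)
The plan is to reduce the statement about the last $k$ covered vertices to a question about the point process $\mathcal{N}_N^{-M}$ of very-late-uncovered vertices, for large $M$, and then apply \prettyref{cor:PPoVCL}. Set $t_M = g(0) N^d(\log N^d - M)$ and observe that since $H_{Z_k}$ is the $k$-th largest hitting time,
\[
\{H_{Z_k} > t_M\} = \{\mathcal{N}_N^{-M}(\mathbb{T}_N) \ge k\},
\]
and on this event $Z_1/N, \ldots, Z_k/N$ all appear as distinct atoms of $\mathcal{N}_N^{-M}$ on $(\mathbb{R}/\mathbb{Z})^d$ (recall \prettyref{eq:DefOfPPoVCV}). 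Letting $E_{N,\delta}$ denote the event that $\mathcal{N}_N^{-M}$ has two distinct atoms at $(\mathbb{R}/\mathbb{Z})^d$-distance at most $\delta$, this gives
\[
P[\exists\, 1 \le i < j \le k : d(Z_i, Z_j) \le \delta N] \le P[\mathcal{N}_N^{-M}(\mathbb{T}_N) < k] + P[E_{N,\delta}].
\]

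For the first term, \prettyref{cor:PPoVCL} gives $\mathcal{N}_N^{-M}(\mathbb{T}_N) \overset{\rm{law}}{\longrightarrow} \mathrm{Poisson}(e^M)$, so $\limsup_N P[\mathcal{N}_N^{-M}(\mathbb{T}_N) < k] = P[\mathrm{Poisson}(e^M) < k]$, which tends to $0$ as $M \to \infty$. For the second term, the plan is to cover $(\mathbb{R}/\mathbb{Z})^d$ by balls $B_1, \ldots, B_L$ of radius $2\delta$ centred on a $\delta$-net, so that $L \le c\delta^{-d}$ and any two points at distance at most $\delta$ lie in some common $B_j$; hence $E_{N,\delta} \subset \bigcup_j \{\mathcal{N}_N^{-M}(B_j) \ge 2\}$. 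Each ball has boundary of Lebesgue measure zero, so \prettyref{cor:PPoVCL} again yields $\mathcal{N}_N^{-M}(B_j) \overset{\rm{law}}{\longrightarrow} \mathrm{Poisson}(e^M \lambda(B_j))$, whence $\limsup_N P[\mathcal{N}_N^{-M}(B_j) \ge 2] \le (e^M\lambda(B_j))^2/2 \le c e^{2M}\delta^{2d}$. Summing the (finite) collection over $j$ gives $\limsup_N P[E_{N,\delta}] \le c e^{2M}\delta^d$.

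Combining,
\[
\limsup_N P[\exists\, i < j \le k : d(Z_i, Z_j) \le \delta N] \le P[\mathrm{Poisson}(e^M) < k] + c e^{2M}\delta^d
\]
for every $M > 0$. Sending $\delta \to 0$ with $M$ fixed eliminates the second term, and subsequently sending $M \to \infty$ eliminates the first, yielding \prettyref{eq:LastTwoIndians}. I do not anticipate a serious obstacle: the real content is the containment $\{Z_1, \ldots, Z_k\} \subset \mathrm{supp}(\mathcal{N}_N^{-M})$ on $\{\mathcal{N}_N^{-M}(\mathbb{T}_N) \ge k\}$, which converts the ordered ``last $k$'' structure into a membership statement accessible to the Poisson limit of \prettyref{cor:PPoVCL}; the subsequent covering-by-balls argument and the two successive limits are routine.
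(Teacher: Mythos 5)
Your proof is correct, and it follows the same basic decomposition as the paper: bound the probability by $P[\mathcal{N}_{N}^{z}(\mathbb{T}_N)<k]$ plus the probability that two atoms of $\mathcal{N}_{N}^{z}$ are within distance $\delta$, invoke \prettyref{cor:PPoVCL} for both terms, and then send $\delta\rightarrow0$ followed by $z\rightarrow-\infty$ (your $z=-M$). Where you genuinely diverge is in the treatment of the pair-proximity term. The paper dominates the indicator of two close atoms by $\mathcal{N}_{N}^{z}\otimes\mathcal{N}_{N}^{z}(g)-\mathcal{N}_{N}^{z}((\mathbb{R}/\mathbb{Z})^{d})$ for a continuous test function $g(x,y)=f(x-y)$, passes to the limit process, and evaluates the resulting second factorial moment via a Palm measure computation (details of which it defers to \cite{BeliusCTinDC}), obtaining a bound of order $e^{-2z}\delta^{2d}$. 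You instead cover the torus by $O(\delta^{-d})$ balls of radius $2\delta$, apply a union bound over the events $\{\mathcal{N}_{N}^{-M}(B_{j})\ge2\}$, and control each via the elementary tail $P[\mathrm{Poisson}(\mu)\ge2]\le\mu^{2}/2$, obtaining $ce^{2M}\delta^{d}$. Your route is more elementary -- it needs only the convergence in law of $\mathcal{N}_{N}^{-M}(B)$ for sets $B$ with $\lambda(\partial B)=0$, which Kallenberg's theorem delivers directly, and avoids Palm calculus entirely -- at the cost of a lossier exponent in $\delta$, which is immaterial since $\delta\rightarrow0$ is taken before $M\rightarrow\infty$. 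Both arguments are complete and correct.
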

\begin{proof}
Note that for any $N,\delta,k$ and $z\in\mathbb{R}$ the probability
in \prettyref{eq:LastTwoIndians} is bounded above by
\begin{equation}
P[\exists x,y\in\mbox{Supp}(\mathcal{N}_{N}^{z})\mbox{ with }d(x,y)\le\delta N]+P[\mathcal{N}_{N}^{z}((\mathbb{R}/\mathbb{Z})^{d})<k].\label{eq:LastTwoIndiansFirstUpperBound}
\end{equation}

\n
Furthermore the first probability in \prettyref{eq:LastTwoIndiansFirstUpperBound}
is bounded above by $E[\mathcal{N}_{N}^{z}\otimes\mathcal{N}_{N}^{z}(g)-$ $\mathcal{N}_{N}^{z}((\mathbb{R}/\mathbb{Z})^{d})]$, where $g$ is the function $(x,y)\rightarrow f(x-y)$, for $f:(\mathbb{R}/\mathbb{Z})^{d}\rightarrow[0,1]$
continuous with $f(x)=1$ when $d(0,x)\le\delta$ and $f(x)=0$ if
$d(0,x)\ge2\delta$. Thus using \prettyref{eq:IntroPPoVCV}, one sees
that the limsup in \prettyref{eq:LastTwoIndians} is bounded above by
\begin{equation}
\mathbb{E}[\mathcal{N}^{z}\otimes\mathcal{N}^{z}(g)-\mathcal{N}^{z}((\mathbb{R}/\mathbb{Z})^{d})]+\mathbb{P}[\mathcal{N}^{z}((\mathbb{R}/\mathbb{Z})^{d})<k].\label{eq:LastTwoIndiansSecondUpperBound}
\end{equation}

\n
Using a calculation involving Palm measures (we omit the details,
cf. (2.19) of \cite{BeliusCTinDC}) the first term in \prettyref{eq:LastTwoIndiansSecondUpperBound}
can be shown to be equal to $(e^{-z}\int_{(\mathbb{R}/\mathbb{Z})^{d}}f(x)dx)^{2}.$
Since $\int_{(\mathbb{R}/\mathbb{Z})^{d}}f(x)dx\le c\delta^{d}$ one
thus finds that for all $z\in\mathbb{R}$ the left-hand side of \prettyref{eq:LastTwoIndians}
is bounded above by $\mathbb{P}[\mathcal{N}^{z}((\mathbb{R}/\mathbb{Z})^{d})<k]$.
But $\mathbb{P}[\mathcal{N}^{z}((\mathbb{R}/\mathbb{Z})^{d})<k]\rightarrow0$
as $z\rightarrow-\infty$, so \prettyref{eq:LastTwoIndians} follows.
\end{proof}


\section{\label{sec:ProofOfGumbel}Coupling gives Gumbel fluctuations}

In this section we use the coupling result \prettyref{thm:Coupling}
to prove \prettyref{thm:G_Gumbel}, i.e. to prove that cover times
in $\mathbb{T}_{N}$ have Gumbel fluctuations. Essentially speaking
we combine the method of the proofs of Theorem 0.1 in \cite{Belius2010}
and Theorem 0.1 in \cite{BeliusCTinDC} with the coupling \prettyref{thm:Coupling}.

The first step is to prove that if $F\subset\mathbb{T}_{N}$ is smaller
than $N^{c}$ for some small exponent $c$ (but still ``large'')
and consists of isolated vertices separated by a distance at least
$|F|^{c}$, then the cover time $C_{F}$ is approximately distributed
as $N^{d}\{g(0)\log|F|+G\}$, where $G$ is a standard Gumbel random
variable. We prove this (uniformly in the starting vertex of the random
walk) in \prettyref{lem:GSPGumbelForSeparatedStartingFromPoint}.
To prove \prettyref{lem:GSPGumbelForSeparatedStartingFromPoint} we
use a mixing argument to reduce to the case when random walk starts
from the uniform distribution. This case is then proven in \prettyref{lem:GumbelForSeparatedStartingFromUniform}
using the fact that for vertices that are far apart, entrance times
are approximately independent (which is proven in \prettyref{lem:RWNPointFunc},
using the coupling result \prettyref{thm:Coupling}), and from this
a simple calculation will show that the cover time (which is then
the maximum of almost i.i.d. random variables) has distribution close
to the Gumbel distribution.

To get the Gumbel limit result for arbitrary $F\subset\mathbb{T}_{N}$
(e.g. for $F=\mathbb{T}_{N}$, where the entrance times of close vertices
are far from being independent) we consider the set of $(1-\rho)-$\emph{late
points} (using the terminology from \cite{Demboetal_LatePoints}),
which is the set of vertices that are not yet hit at a $1-\rho$ fraction
of the typical time it takes to cover $F$\emph{, }or more formally
\begin{equation}
F_{\rho}=F\backslash Y(0,t(\rho)),\mbox{ where }t(\rho)=N^{d}(1-\rho)g(0)\log|F|\mbox{ and }0<\rho<1.\label{eq:G_DefOfUncoveredSet}
\end{equation}
It turns out, roughly speaking, that if we use a fixed but small $\rho$
then with high probability this set consists of isolated vertices
that are at distance at least $|F|^{c}$ from one another, and that
$|F_{\rho}|$ concentrates around $E[|F_{\rho}|]$, which we will
see, is close to $|F|^{\rho}$. This is done in \prettyref{lem:G_GE_GoodEvent}
by relating the probability of two vertices not being hit by random
walk to the probability of two vertices not being in a random interlacement
using the coupling (in \prettyref{lem:RWTwoPoint}), and using \prettyref{lem:TwoPointCalculation}.

This will imply that with high probability the set $F_{\rho}$ fulfils
the conditions of \prettyref{lem:GSPGumbelForSeparatedStartingFromPoint}
(i.e. is ``smaller than $N^{c}$'' and separated), so that, using
the Markov property, we will be able to show that conditioned on $F_{\rho}$
the time $C_{F_{\rho}}\circ\theta_{t(\rho)}$ has distribution close
to $N^{d}g(0)\{\log|F_{\rho}|+G\}\approx N^{d}g(0)\{\rho\log|F|+G\}$,
where $G$ is a standard Gumbel random variable. Since, on the event
that $F_{\rho}$ is non-empty (which has probability close to one),
$C_{F}=t(\rho)+C_{F_{\rho}}\circ\theta_{t(\rho)}$ we will be able
to conclude that $C_{F}$ has distribution close to $N^{d}g(0)\{\log|F|+G\}$,
which is the claim of \prettyref{thm:G_Gumbel}.

We start by stating \prettyref{lem:GSPGumbelForSeparatedStartingFromPoint},
which says that the cover time has law close to the Gumbel distribution
for ``well-separated'' sets that are not too large.
\begin{lem} \label{lem:GSPGumbelForSeparatedStartingFromPoint}($d\ge3$,$N\ge3$)

\medskip

There are constants $\Cl[c]{smallsetgumbsize}$ and $\Cl[c]{smallsetgumbsep}$
such that if $F\subset\mathbb{T}_{N}$ satisfies $2\le|F|\le N^{\Cr{smallsetgumbsize}}$
and $d_{\infty}(x,y)\ge|F|^{\Cr{smallsetgumbsep}}$ for all $x,y\in F,x\ne y$, then
\begin{equation}
\sup_{z\in\mathbb{R},x\in\mathbb{T}_{N}}\big|P_{x}[F\subset Y(0,u_{F}(z)N^{d})]-\exp(-e^{-z})\big|\le c|F|^{-c}.\label{eq:GSPGumbelForSeparatedStartingFromPoint}
\end{equation}

\end{lem}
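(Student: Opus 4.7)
The plan, following the sketch in the paragraph before the lemma, is in two stages. First, a mixing argument reduces matters to the case where the random walk starts from the uniform distribution $\pi$. Second, \prettyref{thm:Coupling} applied with $n=|F|$ boxes centred at the points of $F$ turns the event $\{F\subset Y(0,uN^{d})\}$ into one concerning $|F|$ \emph{independent} random interlacements, for which the Gumbel asymptotics is a direct calculation using \prettyref{eq:RIOneAndTwoPointFunctions}.

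For the mixing reduction, I would apply \prettyref{eq:TorusMixing} with $\lambda=|F|^{\beta}$ for a small $\beta>0$: this produces a coupling of $Y_{\cdot}$ under $P_{x}$ with a walk of uniform law at time $\lambda N^{2}$ (and agreeing afterwards), outside an exceptional event of probability $\le c\exp(-c|F|^{\beta})\le c|F|^{-c}$. Since $d\ge 3$ and $|F|\le N^{\Cr{smallsetgumbsize}}$ with $\Cr{smallsetgumbsize}$ small, the time loss satisfies $\lambda N^{2}/N^{d}=o(|F|^{-c})$, and can be absorbed into an $o(1)$ perturbation of $z$ (since $u_{F}(\cdot)$ is affine with slope $g(0)$). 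Thus it suffices to prove \prettyref{eq:GSPGumbelForSeparatedStartingFromPoint} with $P=P_{\pi}$ replacing $P_{x}$.

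For the coupling step, I apply \prettyref{thm:Coupling} to the $n=|F|$ points of $F$, which have separation $s\ge|F|^{\Cr{smallsetgumbsep}}$ by hypothesis, with $u=u_{F}(z)$, a fixed small $\varepsilon\in(0,1)$, and $\delta=s^{-\beta'}$ for a small $\beta'>0$; choosing $\Cr{smallsetgumbsep}$ sufficiently large relative to $1/\Cr{couplingerr}$ and $\Cr{smallsetgumbsize}$ small, all hypotheses of the theorem are satisfied for $z$ in a suitable range. Because $0\in A$ and $\{x_{i}\in Y(0,uN^{d})\}=\{0\in(Y(0,uN^{d})-x_{i})\cap A\}$, the inclusion \prettyref{eq:CouplingInclusion} combined with a union bound over $i$ gives, outside an event of $Q_{1}$-probability at most $|F|\cdot\Cr{couplingerr}\,ue^{-cs^{\Cr{couplingerr}}}\le c|F|^{-c}$,
\begin{equation*}
\bigcap_{i=1}^{|F|}\{0\in\mathcal{I}_{i}^{u(1-\delta)}\}\;\subset\;\{F\subset Y(0,uN^{d})\}\;\subset\;\bigcap_{i=1}^{|F|}\{0\in\mathcal{I}_{i}^{u(1+\delta)}\}.
\end{equation*}
By independence of the $\mathcal{I}_{i}$ and \prettyref{eq:RIOneAndTwoPointFunctions}, both enclosing events have $Q_{1}$-probability $\bigl(1-e^{-(1\pm\delta)(\log|F|+z)}\bigr)^{|F|}$; for $|z|\le c\log|F|$ and $\delta\le|F|^{-c'}$ this expands to $\bigl(1-|F|^{-1}e^{-z}(1+O(|F|^{-c}))\bigr)^{|F|}=\exp(-e^{-z})+O(|F|^{-c})$. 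For $z$ outside this range, monotonicity together with \prettyref{lem:RWOnePoint} and a union bound (for large $z$), and the trivial identity $Y(0,t)=\emptyset$ when $t<0$ (for very negative $z$), show that both sides of \prettyref{eq:GSPGumbelForSeparatedStartingFromPoint} are within $|F|^{-c}$ of their common limiting value.

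The principal technical obstacle is calibrating the four exponents $\beta$, $\beta'$, $\Cr{smallsetgumbsize}$, $\Cr{smallsetgumbsep}$ so that three competing error budgets all fit into $c|F|^{-c}$: the post-union-bound coupling error $s^{\Cr{couplingerr}}ue^{-cs^{\Cr{couplingerr}}}$ forces $\Cr{smallsetgumbsep}$ to be large enough relative to $1/\Cr{couplingerr}$; the $\delta$-dilation of interlacement levels shows up as a factor $|F|^{\pm\delta}$ inside the one-point probability, forcing $\delta$ (hence $\beta'$) to be polynomially small in $|F|$; and the mixing-time bias $\lambda N^{2}\ll N^{d}$ forces $\Cr{smallsetgumbsize}$ small. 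Since $\Cr{couplingerr}$ depends only on $\varepsilon$ and $d$, all three can be balanced simultaneously, which pins down admissible values of $\Cr{smallsetgumbsize}$ and $\Cr{smallsetgumbsep}$.
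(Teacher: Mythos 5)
Your coupling step and the subsequent computation with independent interlacements match the paper's route (the paper packages the coupling step as an intermediate lemma, \prettyref{lem:RWNPointFunc}, and first proves the uniform-start case as \prettyref{lem:GumbelForSeparatedStartingFromUniform}, but the content is the same). The gap is in your mixing reduction. You treat the initial un-mixed segment $Y(0,\lambda N^{2})$ purely as a ``time loss'' to be absorbed into a perturbation of $z$. That argument is one-sided: it gives the lower bound $P_{x}[F\subset Y(0,u_{F}(z)N^{d})]\ge P[F\subset Y(0,u_{F}(z_{-})N^{d})]-c|F|^{-c}$, since dropping the initial segment only shrinks the trace. But for the upper bound the containment goes the wrong way: $\{F\subset Y(0,u_{F}(z)N^{d})\}$ is contained in $\{F\subset Y(0,\lambda N^{2})\cup Z(0,u_{F}(z)N^{d})\}$, and the set $Y(0,\lambda N^{2})$ can itself cover points of $F$ — indeed it \emph{certainly} covers at least one when $x\in F$, which is allowed since the supremum ranges over all $x\in\mathbb{T}_{N}$. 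So no shift of $z$ reduces the upper bound to the uniform-start statement for $F$ itself.

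The repair (which is what the paper does) is to let $y$ be the point of $F$ closest to $x$, set $F'=F\setminus\{y\}$, and bound $P_{x}[H_{F'}<|F|N^{2}]\le c|F|^{1-\Cr{smallsetgumbsep}}\le c|F|^{-c}$ using \prettyref{eq:BHBBallHittingBound} together with the separation hypothesis $d_{\infty}(x,v)\ge\frac{1}{2}|F|^{\Cr{smallsetgumbsep}}$ for $v\in F'$; on the complement of this event the initial segment contributes nothing to covering $F'$, and one applies the uniform-start lemma to $F'$, absorbing the loss of one point into a shift $z\mapsto z+\log\frac{|F|}{|F'|}\le z+\frac{2}{|F|}$. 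Without this step your argument does not control the supremum over starting points, which is the whole point of the lemma relative to \prettyref{lem:GumbelForSeparatedStartingFromUniform}. The rest of your proposal — the union bound over the $n=|F|$ boxes, the product formula $\bigl(1-e^{-(1\pm\delta)(\log|F|+z)}\bigr)^{|F|}$, and the treatment of $|z|>\frac{1}{4}\log|F|$ by monotonicity — is correct and is essentially the paper's computation.
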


(Recall that we have defined the range $Y(0,u_{F}(z)N^{d})$ such
that it is empty if $u_{F}(z)N^{d}<0$.) We will prove \prettyref{lem:GSPGumbelForSeparatedStartingFromPoint}
after proving \prettyref{thm:G_Gumbel}. To prove \prettyref{thm:G_Gumbel}
we must prove something like \prettyref{eq:GSPGumbelForSeparatedStartingFromPoint}
for arbitrary $F\subset\mathbb{T}_{N}$. We do so by studying the
set of late points $F_{\rho}$ (recall \prettyref{eq:G_DefOfUncoveredSet}).
We will show that ``with high probability'' it belongs to the collection
$\mathcal{G}$ of ``good subsets of $F$'', where
\begin{equation}
\mathcal{G}=\{F'\subset F:\left||F'|-|F|^{\rho}\right|\le|F|^{\frac{2}{3}\rho}\mbox{ and }\inf_{x,y\in F',x\ne y}d_{\infty}(x,y)\ge|F|^{\frac{1}{2d}}\},\label{eq:GDefGoodSets}
\end{equation}
or in other words that it has cardinality close to $|F|^{\rho}$ and
is ``well-separated''. Formally:
\begin{lem} \label{lem:G_GE_GoodEvent}($d\ge3$,$N\ge3$) 

\medskip
There exists a constant $\Cl[c]{rhobound}$ such that for $0 < \rho\le \Cr{rhobound}$ and $F\subset\mathbb{T}_{N}$ with $|F|\ge c(\rho)$
\begin{equation}
P[F_{\rho}\notin\mathcal{G}]\le c|F|^{-c(\rho)}.\label{eq:G_GE_GoodEvent}
\end{equation}
\end{lem}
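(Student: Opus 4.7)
The plan is to control separately the two failure modes defining $\mathcal{G}$: the size deviation $\bigl||F_\rho|-|F|^\rho\bigr|>|F|^{2\rho/3}$, and the existence of a pair in $F_\rho$ at $d_\infty$-distance less than $|F|^{1/(2d)}$. Both reduce, by translation invariance of $P=P_\pi$ and by invoking \prettyref{thm:Coupling} with $n=1$ (singleton box $\{x\}$) and $n=2$ (the pair $\{x\},\{y\}$), to one- and two-point estimates for random interlacements at level $u=(1-\rho)g(0)\log|F|$; note $Q_0[0\notin\mathcal{I}^u]=e^{-u/g(0)}=|F|^{-(1-\rho)}$. Concretely, \prettyref{lem:RWOnePoint} yields $E[|F_\rho|]=|F|\cdot Q_0[0\notin\mathcal{I}^u](1+O(N^{-c}))=|F|^\rho(1+O(N^{-c}))$, and a two-point analog of the same coupling argument (the ``Lemma RWTwoPoint'' mentioned just before \prettyref{lem:G_GE_GoodEvent}) gives $P[x,y\notin Y(0,uN^d)]=Q_0[0,y-x\notin\mathcal{I}^u](1+O(N^{-c}))$ for $x\ne y$.

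For the size condition, I would apply Chebyshev to $|F_\rho|$ after summing the two-point estimate and appealing to \prettyref{lem:TwoPointCalculation} with $K=(F-x)\setminus\{0\}$. A black-box application of that lemma gives only $\mathrm{Var}(|F_\rho|)\le c|F|^{2\rho}\log|F|$, which is too weak at the scale $|F|^{2\rho/3}$ (Chebyshev would return $c|F|^{2\rho/3}\log|F|$, growing in $|F|$). The key refinement is to split the sum over $v\in F-x$ at a radius $r_0\sim(\log|F|)^{1/(d-2)+\varepsilon}$: for ``close'' $v$ with $|v|_\infty\le r_0$ one bounds $Q_0[0,v\notin\mathcal{I}^u]\le Q_0[0\notin\mathcal{I}^u]=|F|^{-(1-\rho)}$ and counts at most $(\log|F|)^{O(1)}$ such $v$ per $x$, giving $O(|F|^\rho(\log|F|)^{O(1)})$ after summing over $x$; for ``far'' $v$ the expansion $e^{2ug(v)/g(0)^2}-1\lesssim 2ug(v)/g(0)^2$ combined with the Hardy--Littlewood-type bound $\sum_{v\in F-x}g(v)\le c|F|^{2/d}$ yields a contribution of order $|F|^{2\rho-1+2/d}\log|F|$, which for $d\ge3$ and $\rho$ small is $\le c|F|^{-c(\rho)}$. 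Altogether $\mathrm{Var}(|F_\rho|)\le c|F|^\rho(\log|F|)^{O(1)}$, and Chebyshev then gives the size tail bound $\le c|F|^{-c(\rho)}$ (noting that the shift $|E[|F_\rho|]-|F|^\rho|=O(|F|^\rho N^{-c})$ is negligible compared to $|F|^{2\rho/3}$).

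For the separation condition, let $B$ count ordered pairs $(x,y)\in F\times F$ with $x\ne y$, $d_\infty(x,y)<|F|^{1/(2d)}$, and $x,y\in F_\rho$. Using the coupling together with \prettyref{lem:TwoPointCalculation} applied to $K=\{v\in\mathbb{Z}^d:0<|v|_\infty<|F|^{1/(2d)}\}$, of cardinality $\le c|F|^{1/2}$,
\[
E[B]\le c|F|\bigl(|F|^{1/2}(Q_0[0\notin\mathcal{I}^u])^2(1+cu)+ce^{-c_4 u/g(0)}\bigr)\le c|F|^{2\rho-1/2}\log|F|+c|F|^{1-c_4(1-\rho)},
\]
which is $\le c|F|^{-c(\rho)}$ once $\rho$ is small enough. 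Markov's inequality gives $P[B\ge1]\le c|F|^{-c(\rho)}$, and a union bound with the size-tail estimate yields \prettyref{eq:G_GE_GoodEvent}. The main obstacle is the variance refinement in the second step: the $(1+cu)$ factor in \prettyref{lem:TwoPointCalculation}, used directly, is fatal for Chebyshev at scale $|F|^{2\rho/3}$, and the close/far split above is the essential ingredient for recovering a sufficiently sharp variance bound.
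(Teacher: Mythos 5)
Your proposal is correct, and its overall architecture coincides with the paper's: reduce both failure modes to one- and two-point interlacement probabilities via the coupling, control the size deviation by Chebyshev, and control the separation failure by a first-moment bound on the number of close late pairs. Your treatment of the separation condition is essentially identical to the paper's (the paper applies \prettyref{lem:TwoPointCalculation} to $K_x=F\cap B(x,|F|^{\frac{1}{2d}})\setminus\{x\}$ rather than to the full punctured ball, which changes nothing). Where you genuinely diverge is in the variance refinement. The paper's fix for the obstacle you correctly identify (the factor $1+cu$ in \prettyref{eq:TwoPointCalculation} is fatal if summed over all $|F|^2$ pairs) is to split pairs at distance $|F|^{\frac{1}{2d}}$: for close pairs it discards the subtracted product and bounds $\sum_{\{x,y\}\in V}P[x,y\notin Y(0,uN^d)]$ by applying \prettyref{lem:TwoPointCalculation} to the sets $K_x$ of cardinality at most $c|F|^{\frac{1}{2}}$, so the troublesome term becomes $c|F|^{2\rho-\frac{1}{2}}\log|F|$ (and this same sum doubles as the bound for the separation probability); for far pairs it uses the decorrelation estimate \prettyref{eq:RWTPFuncUpperBoundFar} to get $q_{x,y}\le c|F|^{-3\rho}(Q_0[0\notin\mathcal{I}^u])^2$. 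Your fix instead splits at a polylogarithmic radius and, in the far regime, unpacks the explicit two-point formula \prettyref{eq:RIOneAndTwoPointFunctions} together with the rearrangement bound $\sum_{v\in F-x}g(v)\le c|F|^{2/d}$ --- in effect re-deriving a restricted form of \prettyref{lem:TwoPointCalculation} by hand. Both routes yield $\mathrm{Var}(|F_\rho|)\le c|F|^{\rho}(\log|F|)^{c}$ and hence the claim; yours is slightly more self-contained for the variance (it does not need the far-pair decorrelation there), while the paper's keeps \prettyref{lem:TwoPointCalculation} as a black box and recycles the close-pair sum for the separation bound. One small point to watch: the approximation $P[x,y\notin Y(0,uN^d)]\le(1+cN^{-c})Q_0[0,y-x\notin\mathcal{I}^u]$ is only furnished by \prettyref{eq:RWTPFuncUpperBoundClose} for $d_\infty(x,y)\le N^{\frac{1}{2}}$; for the remaining, very distant pairs one falls back on \prettyref{eq:RWTPFuncUpperBoundFar} and the trivial inequality $(Q_0[0\notin\mathcal{I}^u])^2\le Q_0[0,y-x\notin\mathcal{I}^u]$, so nothing breaks.
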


Before proving \prettyref{lem:G_GE_GoodEvent} we use it to prove
\prettyref{thm:G_Gumbel}. Recall that $\{C_{F}\le t\}\overset{\prettyref{eq:DefOfCoverTime}}{=}\{F\subset Y(0,t)\}$.
\begin{proof}[Proof of \prettyref{thm:G_Gumbel}.]
By \prettyref{eq:G_GE_GoodEvent} we have for $0<\rho\le \Cr{rhobound}$ and
$|F|\ge c(\rho)$ 
\begin{equation}
\left|P[C_{F}\le u_{F}(z)N^{d}]-P[F\subset Y(0,u_{F}(z)N^{d}),F_{\rho}\in\mathcal{G}]\right|\le c|F|^{-c(\rho)}.\label{eq:GToOnGoodEvent}
\end{equation}

\medskip\n
Also by the Markov property, if $|F|\ge c(\rho)$ (so that $\emptyset\notin\mathcal{G}$),
\begin{equation}\label{eq:G_ConditionOnFPrime}
\begin{array}{l}
P[F\subset Y(0,u_{F}(z)N^{d}),F_{\rho}\in\mathcal{G}]  \overset{\prettyref{eq:G_DefOfUncoveredSet}}{=} 
\\[0.5ex]
\underset{x\in\mathbb{T}_{N},F'\in\mathcal{G}}{\sum}  P[F_{\rho}=F',Y_{t(\rho)}=x] P_{x}[F'\subset Y(0,u_{F}(z)N^{d}-t(\rho))].
\end{array}
\end{equation}

\medskip\n
Set $h=\log\frac{|F'|}{|F|^{\rho}}$ so that $P_{x}[F'\subset Y(0,u_{F}(z)N^{d}-t(\rho))]=P_{x}[F'\subset Y(0,u_{F'}(z-h)N^{d})]$
(see \prettyref{eq:DefOfuF}).
Also fix $\rho=c \le \Cr{rhobound}$ small enough so that
$2d\rho\le \Cr{smallsetgumbsize}$ and $\frac{1}{4d\rho}\ge \Cr{smallsetgumbsep}$.
Then \eqref{eq:GToOnGoodEvent} holds when $|F|\ge c$. Furthermore 
\prettyref{lem:GSPGumbelForSeparatedStartingFromPoint} applies
to all $F'\in\mathcal{G}$ when $|F|\ge c$, since by \eqref{eq:GDefGoodSets}
every $F'\in\mathcal{G}$ satisfies $|F'|\le2|F|^{\rho}\le|F|^{2\rho}$,
if $|F|\ge c=c(\rho)$, so that $2d\rho\le \Cr{smallsetgumbsize}$ implies $|F'|\le|F|^{2\rho}\le N^{2d\rho}\le N^{\Cr{smallsetgumbsize}}$
and $\frac{1}{4d\rho}\ge \Cr{smallsetgumbsep}$ implies $\inf_{x,y\in F',x\ne y}d_{\infty}(x,y)\ge|F|^{\frac{1}{2d}}\ge|F'|^{\frac{1}{4d\rho}}\ge|F'|^{\Cr{smallsetgumbsep}}$.
So applying \prettyref{lem:GSPGumbelForSeparatedStartingFromPoint}
with $F'$ in place of $F$, we get that for all $|F|\ge c$, $x\in\mathbb{T}_{N}$
and $F'\in\mathcal{G}$ we have
\begin{equation}
\big|P_{x}[F'\subset Y(0,u_{F}(z)N^{d}-t(\rho))]-\exp(-e^{-(z-h)})\big|\le c|F|^{-c}.\label{eq:GApplyGSP}
\end{equation}
But it is elementary that
\begin{equation}
\big|\exp(-e^{-(z-h)})-\exp(-e^{-z})\big|\le c|h|\mbox{ for all }z,h\in\mathbb{R},\label{eq:GumbelDistFuncIsLipschitz}
\end{equation}

\n
and for the present $h$ we have $|h|\le\max(\log(1+|F|^{-\frac{1}{3}\rho}),-\log(1-|F|^{-\frac{1}{3}\rho}))\le c|F|^{-\frac{1}{3}\rho}$
(see \eqref{eq:GDefGoodSets}) provided $|F|\ge c$, so in fact $|P_{x}[F'\subset Y(0,u_{F}(z)N^{d}-t(\rho))]-\exp(-e^{-z})|\le c|F|^{-c}$ for all $F'\in\mathcal{G}$. Thus \prettyref{eq:G_ConditionOnFPrime}
implies that 
\[
|P[F\subset Y(0,u_{F}(z)N^{d}),F_{\rho}\in\mathcal{G}]-\exp(-e^{-z})P[F_{\rho}\in\mathcal{G}]|\le c|F|^{-c}.
\]

\n
Combining this with \prettyref{eq:GToOnGoodEvent} and one more application
of \prettyref{eq:G_GE_GoodEvent}, the claim \prettyref{eq:G_Gumbel}
follows for $|F|\ge c$ (recall that $\rho$ is itself a constant).
But by adjusting constants \prettyref{eq:G_Gumbel} holds for all
$F$, so the proof of \prettyref{thm:G_Gumbel} is complete.
\end{proof}

We now turn to the proof of \prettyref{lem:GSPGumbelForSeparatedStartingFromPoint}.
We will need the following lemma, which says that ``distant vertices
have almost independent entrance times''.
\begin{lem} \label{lem:RWNPointFunc}($d\ge3,$$N\ge3$) 

\medskip
Let $x_{1},...,x_{n}\in\mathbb{T}_{N}$
and let $s$ be the separation defined as in \prettyref{eq:DefOfSeparation}.
There is a constant $\Cl[c]{FarApartHitTimeIndep}$ such that if $n\le s^{\Cr{FarApartHitTimeIndep}}$
and $u\in[1,s^{\Cr{FarApartHitTimeIndep}}]$ we have 
\begin{equation}
\left(Q_{0}[0\in\mathcal{I}^{u}]\right)^{n}-cs^{-c}\le P[x_{1},...,x_{n}\in Y(0,uN^{d})]\le\left(Q_{0}[0\in\mathcal{I}^{u}]\right)^{n}+cs^{-c}.\label{eq:RWNPointFunc}
\end{equation}

\begin{proof}
We apply \prettyref{thm:Coupling} with $\varepsilon=\frac{1}{2}$
(say). We pick $\Cr{FarApartHitTimeIndep}\le\frac{\Cr{couplingerr}}{3}$, where $\Cr{couplingerr}=\Cr{couplingerr}(\frac{1}{2})$
is the constant from \prettyref{thm:Coupling}, so that that $n\le s^{\Cr{couplingerr}}$.
Thus if $\delta=\Cr{couplingerr}^{-1}s^{-\Cr{couplingerr}}$ and $s\ge c$ (so that $u\ge1\ge s^{-\Cr{couplingerr}}$)
all the conditions of \prettyref{thm:Coupling} are satisfied. By
\prettyref{eq:CouplingInclusion} we have for $s\ge c$
\begin{equation}
Q_{1}[0\in\mathcal{I}_{i}^{u(1-\delta)}\forall i]-cs^{-c}\le P[x_{1},...,x_{n}\in Y(0,uN^{d})]\le Q_{1}[0\in\mathcal{I}_{i}^{u(1+\delta)}\forall i]+cs^{-c},\label{eq:RWNPointFuncSandwich}
\end{equation}
where we also use that $cnue^{-cs^{\Cr{couplingerr}}}\le cs^{-c}$, since $u,n\le s^{c}$.
By the independence of $\mathcal{I}_{1},..,\mathcal{I}_{n},$ 
\begin{eqnarray*}
Q_{1}[0\in\mathcal{I}_{i}^{u(1+\delta)}\forall i] & \!\!\!\! = &\!\!\!\!  \big(Q_{0}[0\in\mathcal{I}^{u(1+\delta)}]\big)^{n}\\
 &\!\!\!\!  \overset{\eqref{eq:RIOneAndTwoPointFunctions}}{=} &\!\!\!\!  \left(Q_{0}[0\in\mathcal{I}^{u}]\right)^{n}\bigg(\frac{1-e^{-\frac{u(1+\delta)}{g(0)}}}{1-e^{-\frac{u}{g(0)}}}\bigg)^{n}\le(Q_{0}[0\in\mathcal{I}^{u}])^{n}(1+cs^{-c}),
\end{eqnarray*}
where we use that 
\begin{equation*}
\frac{1-e^{-\frac{u(1+\delta)}{g(0)}}}{1-e^{-\frac{u}{g(0)}}}=1+e^{-\frac{u}{g(0)}}\;\frac{1-e^{-\frac{u\delta}{g(0)}}}{1-e^{-\frac{u}{g(0)}}}\le1+c(1-e^{-\frac{u\delta}{g(0)}})\le1+cs^{-2\Cr{FarApartHitTimeIndep}}
\end{equation*}
(note $u\ge1$ and $u\delta\le cs^{\Cr{FarApartHitTimeIndep}-\Cr{couplingerr}}=cs^{-2\Cr{FarApartHitTimeIndep}}$) and $(1+cs^{-2\Cr{FarApartHitTimeIndep}})^{n}\le1+cs^{-c}$ (note $n\le s^{\Cr{FarApartHitTimeIndep}}$). Similarly
\[
Q_{1}[0\in\mathcal{I}_{i}^{u(1-\delta)}\forall i]\ge\left(Q_{0}[0\in\mathcal{I}^{u}]\right)^{n}(1-cs^{-c})\mbox{ if }s\ge c.
\]

\medskip\n
Applying these inequalities to the right- and left-hand sides of \eqref{eq:RWNPointFuncSandwich}
yields \prettyref{eq:RWNPointFunc} for $s\ge c$. But by adjusting
constants in \eqref{eq:RWNPointFunc} the same holds for all $s\ge1$.
\end{proof}
\end{lem}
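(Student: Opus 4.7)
The plan is to apply the coupling Theorem~\ref{thm:Coupling} to the $n$ boxes centred at the $x_i$, and then reduce the joint hitting probability to a product of one-point functions for random interlacements via independence. I will fix $\varepsilon = 1/2$ in Theorem~\ref{thm:Coupling}, write $\Cr{couplingerr} = \Cr{couplingerr}(1/2)$ for the resulting constant, and take $\delta = \Cr{couplingerr}^{-1} s^{-\Cr{couplingerr}}$. Choosing $\Cr{FarApartHitTimeIndep}$ to be a small multiple of $\Cr{couplingerr}$ (say $\Cr{couplingerr}/3$), the standing hypotheses $n \le s^{\Cr{couplingerr}}$, $u \ge s^{-\Cr{couplingerr}}$ and $\delta \ge \Cr{couplingerr}^{-1} s^{-\Cr{couplingerr}}$ of Theorem~\ref{thm:Coupling} are all satisfied once $s$ exceeds a constant, so the theorem produces a coupling $Q_1$ of $Y$ with independent random interlacements $(\mathcal{I}_i^v)_{v\ge 0}$ for which the sandwich \prettyref{eq:CouplingInclusion} holds.

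Since $0 \in A$, the inclusion \prettyref{eq:CouplingInclusion} implies that $\{x_i \in Y(0,uN^d)\}$ is sandwiched between $\{0 \in \mathcal{I}_i^{u(1-\delta)}\}$ and $\{0 \in \mathcal{I}_i^{u(1+\delta)}\}$ under $Q_1$, outside the coupling failure event. Intersecting over $i$ and invoking the independence of the $\mathcal{I}_i$ under $Q_1$, the joint probability $P[x_1,\dots,x_n \in Y(0,uN^d)]$ is sandwiched between $(Q_0[0 \in \mathcal{I}^{u(1-\delta)}])^n$ and $(Q_0[0 \in \mathcal{I}^{u(1+\delta)}])^n$, plus an additive error of at most $cue^{-cs^c}$, which is bounded by $cs^{-c}$ since $u \le s^{\Cr{FarApartHitTimeIndep}}$.

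The final step is to replace $u(1\pm\delta)$ by $u$ inside these bookends. Using the explicit formula $Q_0[0 \in \mathcal{I}^v] = 1 - e^{-v/g(0)}$ from \prettyref{eq:RIOneAndTwoPointFunctions}, the ratio $(1 - e^{-u(1\pm\delta)/g(0)})/(1 - e^{-u/g(0)})$ is $1 + O(u\delta)$ uniformly for $u \ge 1$, with $u\delta \le s^{\Cr{FarApartHitTimeIndep} - \Cr{couplingerr}}$. The main (and only delicate) point is the balancing that must then be done: raising the factor $1 + O(s^{\Cr{FarApartHitTimeIndep} - \Cr{couplingerr}})$ to the $n$-th power with $n$ as large as $s^{\Cr{FarApartHitTimeIndep}}$ must still produce $1 + cs^{-c}$, which forces $\Cr{FarApartHitTimeIndep}$ to be taken strictly smaller than $\Cr{couplingerr}$ (the choice $\Cr{couplingerr}/3$ suffices). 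Finally the multiplicative error converts to an additive error in \prettyref{eq:RWNPointFunc} using $Q_0[0 \in \mathcal{I}^u] \le 1$, and the regime $s \le c$ is absorbed by adjusting constants.
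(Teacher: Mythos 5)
Your proposal is correct and follows essentially the same route as the paper's own proof: apply Theorem~\ref{thm:Coupling} with $\varepsilon=\frac12$ and $\delta=\Cr{couplingerr}^{-1}s^{-\Cr{couplingerr}}$, use independence of the coupled interlacements to factorize, and absorb the $u(1\pm\delta)$ discrepancy via the explicit one-point formula, with the exponent balancing $\Cr{FarApartHitTimeIndep}\le\Cr{couplingerr}/3$ ensuring $(1+O(u\delta))^n=1+O(s^{-c})$. The paper's argument is identical in all these choices, so there is nothing to add.
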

We will now use \prettyref{lem:RWNPointFunc} to show \eqref{eq:GumbelForSeparatedStartingFromUniform},
which is almost like our goal \eqref{eq:GSPGumbelForSeparatedStartingFromPoint},
but has random walk starting from the uniform distribution.
\begin{lem} \label{lem:GumbelForSeparatedStartingFromUniform}($d\ge3$,$N\ge3$)

\medskip
There are constants $\Cl[c]{GumbelFromUnifSize}$ and $\Cl[c]{GumbelFromUnifSep}$,
such that for $F\subset\mathbb{T}_{N}$ satisfying $|F|\le N^{\Cr{GumbelFromUnifSize}}$ and
$d_{\infty}(x,y)\ge|F|^{\Cr{GumbelFromUnifSep}}$ for all $x,y\in F,x\ne y$, we have
\begin{equation}
\sup_{z\in\mathbb{R}}\big|P[F\subset Y(0,u_{F}(z)N^{d})]-\exp(-e^{-z})\big|\le c|F|^{-c}.\label{eq:GumbelForSeparatedStartingFromUniform}
\end{equation}
\end{lem}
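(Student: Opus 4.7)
The approach is to apply \prettyref{lem:RWNPointFunc} with $n=|F|$ to the whole set $F$ at once, rather than via inclusion-exclusion on small subsets: enumerating $F=\{x_1,\ldots,x_{|F|}\}$ gives the identity
\[
P[F\subset Y(0,u_F(z)N^d)] = P[x_1,\ldots,x_{|F|}\in Y(0,u_F(z)N^d)].
\]
I would choose $\Cr{GumbelFromUnifSep}$ so that the separation hypothesis $d_\infty(x,y)\ge|F|^{\Cr{GumbelFromUnifSep}}$ forces $s\ge|F|^{\Cr{GumbelFromUnifSep}}$ and so that $\Cr{GumbelFromUnifSep}\cdot\Cr{FarApartHitTimeIndep}\ge 1$, which guarantees $|F|\le s^{\Cr{FarApartHitTimeIndep}}$ and so meets the cardinality hypothesis of \prettyref{lem:RWNPointFunc}. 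In the main regime $u_F(z)\in[1,s^{\Cr{FarApartHitTimeIndep}}]$, \prettyref{lem:RWNPointFunc} combined with \eqref{eq:RIOneAndTwoPointFunctions} and \eqref{eq:DefOfuF} (which together give $Q_0[0\in\mathcal{I}^{u_F(z)}]=1-e^{-z}/|F|$) yields
\[
\bigl| P[F\subset Y(0,u_F(z)N^d)] - (1-e^{-z}/|F|)^{|F|} \bigr| \le cs^{-c} \le c|F|^{-c}.
\]

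The next step is the elementary bound $|(1-e^{-z}/|F|)^{|F|} - \exp(-e^{-z})|\le c|F|^{-c}$, uniformly in $z$. Writing $y=e^{-z}$: for $0\le y\le|F|^{1/2}$, the Taylor expansion $|F|\log(1-y/|F|)=-y+O(y^2/|F|)$ combined with the uniform bound $\sup_{y\ge 0} e^{-y}y^2\le c$ gives $|(1-y/|F|)^{|F|} - e^{-y}|\le ce^{-y}y^2/|F|\le c/|F|$. For $|F|^{1/2}\le y\le|F|$ one has $(1-y/|F|)^{|F|}\le e^{-y}\le e^{-|F|^{1/2}}$, so the absolute difference is superpolynomially small in $|F|$. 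For $y>|F|$ one has $u_F(z)<0$, so $Y(0,u_F(z)N^d)=\emptyset$ by convention and $P[F\subset\emptyset]=0$, while $e^{-y}<e^{-|F|}$ is also superpolynomially small.

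The edge regimes where \prettyref{lem:RWNPointFunc} does not apply directly are handled as follows. If $u_F(z)<1$, then $y\ge|F|e^{-1/g(0)}$, so $\exp(-e^{-z})$ is superpolynomially small; by monotonicity and \prettyref{lem:RWNPointFunc} applied at $u=1$,
\[
P[F\subset Y(0,u_F(z)N^d)]\le P[F\subset Y(0,N^d)]\le(1-e^{-1/g(0)})^{|F|}+c|F|^{-c},
\]
which is superpolynomially small as well. If $u_F(z)>s^{\Cr{FarApartHitTimeIndep}}$, then $y$ is very small, and a union bound using \prettyref{lem:RWOnePoint} (applied at $u_F(z)\wedge N^{\Cr{OPu}}$, together with monotonicity when $u_F(z)>N^{\Cr{OPu}}$) gives
\[
1-P[F\subset Y(0,u_F(z)N^d)]\le|F|\cdot Q_0[0\notin\mathcal{I}^{u_F(z)}](1+cN^{-c})=y(1+cN^{-c}),
\]
matching the bound $1-\exp(-e^{-z})\le y$ in that regime.

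The main obstacle will be the bookkeeping required for the case analysis over all regimes of $z$, and verifying that the chosen values of $\Cr{GumbelFromUnifSize}$ and $\Cr{GumbelFromUnifSep}$ are consistent with every hypothesis of \prettyref{lem:RWNPointFunc} and \prettyref{lem:RWOnePoint} in each case. The core probabilistic content reduces to a single application of \prettyref{lem:RWNPointFunc} to the full set $F$, combined with the elementary comparison of $(1-y/|F|)^{|F|}$ with its Gumbel limit $e^{-y}$.
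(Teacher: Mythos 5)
Your proposal is correct and follows essentially the same route as the paper: a single application of Lemma \ref{lem:RWNPointFunc} to the full set $F$ with $n=|F|$ and $u=u_F(z)$ (with the same choice $\Cr{GumbelFromUnifSep}\Cr{FarApartHitTimeIndep}\ge1$), followed by the elementary comparison of $(1-e^{-z}/|F|)^{|F|}$ with $\exp(-e^{-z})$. The only difference is organizational: the paper first restricts to $z\in[-\tfrac14\log|F|,\tfrac14\log|F|]$ by monotonicity, so the extreme regimes never require Lemma \ref{lem:RWOnePoint}, whereas you keep the full range allowed by Lemma \ref{lem:RWNPointFunc} and treat the tails directly; both work.
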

\begin{proof}
The claim \prettyref{eq:GumbelForSeparatedStartingFromUniform} follows from
\begin{equation}
\sup_{z\in[-\frac{\log|F|}{4},\frac{\log|F|}{4}]}\big|P[F\subset Y(0,u_{F}(z)N^{d})]-\exp(-e^{-z})\big|\le c|F|^{-c},\label{eq:GumbelSepUniToShow}
\end{equation}

\n
since $P[F\subset Y(0,u_{F}(z))]$ is monotone in $z$, $\exp(-e^{-z})\le c|F|^{-c}$
when $z\le-\frac{\log|F|}{4}$ and $\exp(-e^{-z})\ge1-c|F|^{-c}$
when $z\ge\frac{\log|F|}{4}$. For the rest of the proof we therefore
assume that $z\in[-\frac{\log|F|}{4},\frac{\log|F|}{4}]$. 

\medskip 
First, assume also that $|F| \ge c$. Let $F=\{x_{1},...,x_{n}\}$ so that $n=|F|$ and the separation $s$
satisfies $s\ge|F|^{\Cr{GumbelFromUnifSep}}$. Also let $u=u_{F}(z)$. To be able
to apply \prettyref{lem:RWNPointFunc} we pick $\Cr{GumbelFromUnifSep}$ large enough
so that $\Cr{GumbelFromUnifSep}\Cr{FarApartHitTimeIndep}\ge1$, and thus $|F|\le|F|^{\Cr{GumbelFromUnifSep}\Cr{FarApartHitTimeIndep}}\le s^{\Cr{FarApartHitTimeIndep}}$,
which implies $n\le s^{\Cr{FarApartHitTimeIndep}}$ and $1\le g(0)\frac{3}{4}\log|F|\le u\le g(0)\frac{5}{4}\log|F|\le|F|\le s^{\Cr{FarApartHitTimeIndep}}$
since $z\in[-\frac{1}{4}\log|F|,\frac{1}{4}\log|F|]$ (recall that we assumed $|F|\ge c$).
Thus by \prettyref{eq:RWNPointFunc} 
\begin{equation}
\big|P[F\subset Y(0,u_{F}(z)N^{d})]-\big(Q_{0}[0\in\mathcal{I}^{u_{F}(z)}]\big)^{|F|}\big|\le s^{-c}\le c|F|^{-c}\mbox{ for }|F|\ge c.\label{eq:GumbelSepUniSandwich3}
\end{equation}

\medskip\n
We have $(Q_{0}[0\in\mathcal{I}^{u_{F}(z)}])^{|F|}=(1-\frac{e^{-z}}{|F|})^{|F|}$
by \eqref{eq:RIOneAndTwoPointFunctions}. But it is elementary that
\[
\exp(-e^{-z})-c|F|^{-c}\le \Big(1-\mbox{\f $\dis\frac{e^{-z}}{|F|}$}\Big)^{|F|}\le\exp(-e^{-z})\mbox{ for }z\ge-\mbox{\f $\dis\frac{1}{4}$}\log|F|\mbox{ and }|F|\ge c,
\]

\medskip\n
(since $-\frac{e^{-z}}{|F|}-c|F|^{-\frac{1}{2}}\le\log(1-\frac{e^{-z}}{|F|})\le-\frac{e^{-z}}{|F|}$
using the Taylor expansion of $\log(1-x)$ and $e^{-z}\le|F|^{\frac{1}{4}}$).
Thus \prettyref{eq:GumbelSepUniToShow} follows for $|F|\ge c$. Finally
by adjusting constants \prettyref{eq:GumbelSepUniToShow} holds for
any $F$, so the proof of the lemma is complete.
\end{proof}
Now we use \prettyref{eq:GumbelForSeparatedStartingFromUniform} to
get the desired result \eqref{eq:GSPGumbelForSeparatedStartingFromPoint}
(where the random walk can start from any vertex). To do this we roughly
speaking show that in time $|F|N^{2}$ the random walk will, with
high probability, hit only the vertex in $F$ closest to the starting
point, if it hits any vertices at all. But it will turn out that in
time $|F|N^{2}$ the random walk mixes so that what happens after
this time is governed by \prettyref{eq:GumbelForSeparatedStartingFromUniform},
and from this \prettyref{eq:GSPGumbelForSeparatedStartingFromPoint}
will follow.
\begin{proof}[Proof of \prettyref{lem:GSPGumbelForSeparatedStartingFromPoint}]
Fix $x\in\mathbb{T}_{N}$ and $z\in\mathbb{R}$. By \prettyref{eq:TorusMixing}
we can (on an extended probability space $(\Omega,\mathcal{A},Q)$) construct a coupling of
$(Y_{t})_{t\ge0}$ with law $P_{x}$ and a process $(Z_{t})_{t\ge0}$
with law $P$ such that $(Y_{N^{2}|F|+t})_{t\ge0}$ coincides with
$(Z_{t})_{t\ge0}$ with probability at least $1-ce^{-c|F|}$. Then
if $z_{-}=z-\frac{1}{|F|}$ (so that $u_{F}(z_{-})N^{d}\overset{\eqref{eq:DefOfuF},g(0)\ge1}{\le}u_F(z)N^{d}-\frac{N^{d}}{|F|}\overset{|F|\le N^{\Cr{smallsetgumbsize}}}{\le}u_{F}(z)N^{d}-|F|N^{2}$,
provided $\Cr{smallsetgumbsize}$ is chosen small enough) we have 
\begin{equation}
\begin{split}
Q[F\subset Z(0,u_{F}(z_{-})N^{d})] \; - & \;ce^{-c|F|}\le Q[F\subset Y(0,u_{F}(z)N^{d})]\\
\;  \le &\; Q[F\subset Y(0,|F|N^{2})\cup Z(0,u_{F}(z)N^{d})]+ce^{-c|F|},
\end{split}\label{eq:GSPSandwich}
\end{equation}
Now (possibly making $\Cr{smallsetgumbsize}$ smaller and $\Cr{smallsetgumbsep}$ larger) we see
that \prettyref{lem:GumbelForSeparatedStartingFromUniform} applies
to the left-hand side of \prettyref{eq:GSPSandwich} (recall $Z_{\cdot}$
has law $P$) and we get $\exp(-e^{-z_{-}})-c|F|^{-c}\le Q[F\subset Y(0,u_{F}(z)N^{d})]$,
which together with \prettyref{eq:GumbelDistFuncIsLipschitz} and $h=\frac{1}{|F|}$ implies 
\begin{equation}
\exp(-e^{-z})-c|F|^{-c}\le Q[F\subset Y(0,u_{F}(z)N^{d})] = P_{x}[F\subset Y(0,u_{F}(z)N^{d})].\label{eq:GSPLowerBound}
\end{equation}

\medskip\n
It remains to bound the right-hand side of \prettyref{eq:GSPSandwich}
from above. Let $y$ denote a vertex of $F$ of minimal distance from
$x$ and let $F'=F\backslash\{y\}$. We then have 
\begin{align*}
Q[F\subset Y(0,|F|N^{2})\cup Z(0,u_{F}(z)N^{d})]  \le & \;Q[F'\subset Y(0,|F|N^{2})\cup Z(0,u_{F}(z)N^{d})],\\
  \le &\; P_{x}[H_{F'}<|F|N^{2}]+Q[F'\subset Z(0,u_{F}(z)N^{d})].
\end{align*}
Now $P_{x}[H_{F'}<|F|N^{2}]\le\sum_{v\in F'}P_{x}[H_{v}<N^{2+\frac{1}{10}}]$
(possibly decreasing $\Cr{smallsetgumbsize}$ so that $|F|\le N^{\frac{1}{10}}$).
Now by our assumption on $F$ and choice of $y$ we have $d(x,v)\ge\frac{1}{2}|F|^{\Cr{smallsetgumbsep}}$
for all $v\in F'$. Therefore using \prettyref{eq:BHBBallHittingBound}
with $\lambda=\frac{1}{10}$, $r_{1}=1$ and $r_{2}=\frac{1}{2}|F|^{\Cr{smallsetgumbsep}}$
(possibly decreasing $\Cr{smallsetgumbsize}$ even more so that $r_{2}\le|F|^{\Cr{smallsetgumbsep}}\le N^{\Cr{smallsetgumbsize}\Cr{smallsetgumbsep}}\le N^{1-3\lambda}$)
we get $P_{x}[H_{v}<N^{2+\frac{1}{10}}]\le c|F|^{-\Cr{smallsetgumbsep}}$ for all $v\in F'$ and thus $P_{x}[H_{F'}<|F|N^{2}]\le c|F|^{1-\Cr{smallsetgumbsep}}\le c|F|^{-c}$, since we may increase $\Cr{smallsetgumbsep}$ so that $\Cr{smallsetgumbsep}>1$. Letting $z_{+}=z+\frac{2}{|F|}$ we have 
\begin{equation*}
u_{F}(z)\overset{\prettyref{eq:DefOfuF}}{=}u_{F'}\Big(z+\log\mbox{\f $\dis\frac{|F|}{|F'|}$}\Big)\overset{|F'|=|F|-1,|F|\ge2}{\le}u_{F'}(z_{+}),
\end{equation*}
so that
\begin{equation}\label{eq:GSPSufficesToShow}
\begin{split}
Q[F\subset Y(0,|F|N^{2})\cup Z(0,u_{F}(z)N^{d})]  \le & \;Q[F'\subset Z(0,u_{F'}(z_{+})N^{d})]+c|F|^{-c}
\\
  = & \; P[F'\subset Y(0,u_{F'}(z_{+})N^{d})]+c|F|^{-c}
\\
  \le & \; \exp(-e^{-z})+c|F|^{-c},
\end{split}\end{equation}
where the last inequality follows from \prettyref{lem:GumbelForSeparatedStartingFromUniform} and \prettyref{eq:GumbelDistFuncIsLipschitz} (similarly to above
\eqref{eq:GSPLowerBound}). Together with \prettyref{eq:GSPLowerBound}
and \prettyref{eq:GSPSandwich} this implies \prettyref{eq:GSPGumbelForSeparatedStartingFromPoint}.
\end{proof}
It still remains to prove \prettyref{lem:G_GE_GoodEvent}, the other
ingredient in the proof of \eqref{thm:G_Gumbel}. For the proof we
will use the following bounds on the probability of not hitting two
points, which is a consequence of the coupling.
\begin{lem}\label{lem:RWTwoPoint}($d\ge3,$$N\ge3$) 

\medskip
There exists a constant $\Cl[c]{TPerr}$ such that for all $x,y\in\mathbb{T}_{N}$ we have, letting $v=d_{\infty}(x,y)$,
\begin{align}
 & P[x,y\notin Y(0,uN^{d})]\le(1+cv^{-\Cr{TPerr}})(Q_{0}[0\notin\mathcal{I}^{u}])^{2}\mbox{ if }u\in[1,v^{\Cr{TPerr}}],\label{eq:RWTPFuncUpperBoundFar}\\
 & P[x,y\notin Y(0,uN^{d})]\le(1+cN^{-\Cr{TPerr}})Q_{0}[0,x-y\notin\mathcal{I}^{u}]\mbox{ if }v\le N^{\frac{1}{2}},u\in[1,N^{\Cr{TPerr}}].\label{eq:RWTPFuncUpperBoundClose}
\end{align}
\end{lem}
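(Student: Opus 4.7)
The common strategy is to apply Theorem~\ref{thm:Coupling} to transfer the non-hitting event on the torus into a non-hitting event for random interlacements, and then to use the explicit one- and two-point formulas \eqref{eq:RIOneAndTwoPointFunctions} together with the independence, respectively translation invariance, of the interlacements produced by the coupling.

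For \eqref{eq:RWTPFuncUpperBoundFar} I would apply Theorem~\ref{thm:Coupling} with $n=2$, $x_1=x$, $x_2=y$ (so the separation is $s=v$), a fixed $\varepsilon\in(0,1)$ (say $\varepsilon=\tfrac12$), and $\delta=\Cr{couplingerr}^{-1}v^{-\Cr{couplingerr}}$, with $\Cr{TPerr}\le\Cr{couplingerr}/2$ chosen so that the hypotheses $u\ge s^{-\Cr{couplingerr}}$, $\delta\ge\Cr{couplingerr}^{-1}s^{-\Cr{couplingerr}}$, $n\le s^{\Cr{couplingerr}}$ all hold for $v\ge c$. Since $0\in A$, the event $\{x\notin Y(0,uN^d)\}$ is $\{0\notin(Y(0,uN^d)-x_1)\cap A\}$ and similarly for $y$ with $x_2$; the inclusion in \eqref{eq:CouplingInclusion} then forces $\{x,y\notin Y(0,uN^d)\}$ to imply $\{0\notin\mathcal{I}_1^{u(1-\delta)},\,0\notin\mathcal{I}_2^{u(1-\delta)}\}$ outside an event of probability $cue^{-cv^{\Cr{couplingerr}}}$. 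Using the independence of $\mathcal{I}_1,\mathcal{I}_2$ and the first identity in \eqref{eq:RIOneAndTwoPointFunctions}, this yields
\[
P[x,y\notin Y(0,uN^d)]\le e^{2u\delta/g(0)}\bigl(Q_0[0\notin\mathcal{I}^u]\bigr)^2+cue^{-cv^{\Cr{couplingerr}}}.
\]
Since $u\delta\le cv^{\Cr{TPerr}-\Cr{couplingerr}}\le cv^{-\Cr{TPerr}}$, and since $(Q_0[0\notin\mathcal{I}^u])^2\ge e^{-cv^{\Cr{TPerr}}}$ dominates the stretched-exponentially small additive error up to a factor $v^{-\Cr{TPerr}}$ (for small enough $\Cr{TPerr}$), the bound \eqref{eq:RWTPFuncUpperBoundFar} follows for $v\ge c$; for bounded $v$ the prefactor can be made trivial by adjusting constants.

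For \eqref{eq:RWTPFuncUpperBoundClose} I would apply Theorem~\ref{thm:Coupling} with $n=1$, $x_1=x$, $\varepsilon=\tfrac12$, so that $s=N$ and $A=B(0,N^{1/2})$ contains both $0$ and $y-x$ (using $v\le N^{1/2}$), together with $\delta=\Cr{couplingerr}^{-1}N^{-\Cr{couplingerr}}$ and $\Cr{TPerr}\le\Cr{couplingerr}/2$. On the coupling event, both $\{x\notin Y(0,uN^d)\}$ and $\{y\notin Y(0,uN^d)\}$ imply $\{0,y-x\notin\mathcal{I}_1^{u(1-\delta)}\}$. By translation invariance \eqref{eq:TranslationInvariantInterlacements} applied to the shift by $x-y$, $Q_0[0,y-x\notin\mathcal{I}^w]=Q_0[0,x-y\notin\mathcal{I}^w]$, and the second identity in \eqref{eq:RIOneAndTwoPointFunctions} gives
\[
\frac{Q_0[0,x-y\notin\mathcal{I}^{u(1-\delta)}]}{Q_0[0,x-y\notin\mathcal{I}^u]}=\exp\!\Bigl(\frac{2u\delta}{g(0)+g(x-y)}\Bigr)\le 1+cu\delta\le 1+cN^{-\Cr{TPerr}}.
\]
The additive coupling error $cue^{-cN^{\Cr{couplingerr}}}$ is again absorbed into $cN^{-\Cr{TPerr}}Q_0[0,x-y\notin\mathcal{I}^u]$ since $Q_0[0,x-y\notin\mathcal{I}^u]\ge e^{-cN^{\Cr{TPerr}}}$, yielding \eqref{eq:RWTPFuncUpperBoundClose}.

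The main obstacle is purely bookkeeping of constants: one has to choose $\Cr{TPerr}$ small enough (and the auxiliary $\varepsilon,\delta$ correspondingly) relative to the constant $\Cr{couplingerr}$ produced by Theorem~\ref{thm:Coupling}, so that the hypotheses of the coupling are satisfied and so that both the multiplicative distortion coming from $u\mapsto u(1-\delta)$ and the additive coupling error can be absorbed into the clean prefactors $(1+cv^{-\Cr{TPerr}})$ and $(1+cN^{-\Cr{TPerr}})$, uniformly within the stated ranges of $u$.
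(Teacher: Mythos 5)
Your proposal is correct and follows essentially the same route as the paper: Theorem \ref{thm:Coupling} with $n=2$, $\varepsilon=\tfrac12$, $\delta=\Cr{couplingerr}^{-1}s^{-\Cr{couplingerr}}$ for \eqref{eq:RWTPFuncUpperBoundFar}, and with $n=1$ (so $y-x\in A$) for \eqref{eq:RWTPFuncUpperBoundClose}, followed by the identities \eqref{eq:RIOneAndTwoPointFunctions} and absorption of the multiplicative $e^{cu\delta}$ distortion and the stretched-exponential additive error into the $(1+cv^{-\Cr{TPerr}})$, resp.\ $(1+cN^{-\Cr{TPerr}})$, prefactors. No substantive differences.
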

\begin{proof}
We start with \prettyref{eq:RWTPFuncUpperBoundFar}. Let $n=2,x_{1}=x,x_{2}=y$
(so that the separation $s$ is $v$) and $\varepsilon=\frac{1}{2}$
(say). We have $u\ge s^{-\Cr{couplingerr}}$ and thus letting $\delta=\Cr{couplingerr}^{-1}s^{-\Cr{couplingerr}}$
it follows from \prettyref{thm:Coupling} (see \prettyref{eq:CouplingInclusion})
that, choosing $\Cr{TPerr}<\frac{1}{2}\Cr{couplingerr}$ so that $u\delta\le cs^{\Cr{TPerr}}s^{-2\Cr{TPerr}}=cs^{-\Cr{TPerr}}$,
\begin{equation}
\begin{array}{ccl}
P[x,y\notin Y(0,uN^{d})] &\!\!\!\! \le &\!\!\!\! \left(Q_{0}[0\notin\mathcal{I}^{u(1-\delta)}]\right)^{2}+cue^{-cs^{\Cr{couplingerr}}}
\\[1ex]
 &\!\!\!\! \overset{\eqref{eq:RIOneAndTwoPointFunctions}}{=} &\!\!\!\! \left(Q_{0}[0\notin\mathcal{I}^{u}]\right)^{2}e^{2\frac{u\delta}{g(0)}}+cue^{-cs^{\Cr{couplingerr}}}
 \\[1ex]
 &\!\!\!\! \le & \!\!\!\!\left(Q_{0}[0\notin\mathcal{I}^{u}]\right)^{2}(1+cs^{-\Cr{TPerr}})+cue^{-cs^{\Cr{couplingerr}}}.
\end{array}\label{eq:RWTPFuncPart1}
\end{equation}
But if $\Cr{TPerr}$ is chosen small enough $cue^{-cs^{\Cr{couplingerr}}}\le ce^{-cs^{\Cr{couplingerr}}}\le cs^{-\Cr{TPerr}}e^{-cs^{c}}\le cs^{-\Cr{TPerr}}e^{-\frac{2u}{g(0)}}=cs^{-\Cr{TPerr}}(Q_{0}[0\notin\mathcal{I}^{u}])^{2}$,
so \prettyref{eq:RWTPFuncUpperBoundFar} follows.

To prove \prettyref{eq:RWTPFuncUpperBoundClose} we let $x_{1}=x$,
$n=1$ (so that the separation $s$ is $N$). We further let $\varepsilon=\frac{1}{2}$
so that the box $A+x_{1}=B(x_{1},s^{1-\varepsilon})$ contains $y$,
and note that $u\ge s^{-\Cr{couplingerr}}=N^{-\Cr{couplingerr}}$. Thus letting $\delta=\Cr{couplingerr}^{-1}s^{-\Cr{couplingerr}}=\Cr{couplingerr}^{-1}N^{-\Cr{couplingerr}}$
it follows from \prettyref{thm:Coupling} that
\[
P[x,y\notin Y(0,uN^{d})]\le Q_{0}[0,y-x\notin\mathcal{I}^{u(1-\delta)}]+cue^{-cN^{\Cr{couplingerr}}}.
\]

\medskip\n
Now similarly to above we find that the right-hand side is bounded
above by $(1+cN^{-\Cr{TPerr}})Q_{0}[0,y-x\notin\mathcal{I}^{u}]$ (provided
$\Cr{TPerr}$ is chosen small enough), so \prettyref{eq:RWTPFuncUpperBoundClose}
follows.
\end{proof}
We are now in a position to prove \prettyref{lem:G_GE_GoodEvent}.
We will show that $E[|F_{\rho}|]$ is close to $|F|^{\rho}$, so that
proving that the probability of $F_{\rho}\notin\mathcal{G}$ is small
reduces (via Chebyshev's inequality) to bounding the variance of $\mbox{Var}[|F_{\rho}|]$
from above and bounding the probability that  $\inf_{x,y\in F_{\rho},x\ne y}$ \, $d_{\infty}(x,y)$
is small from above (recall \prettyref{eq:GDefGoodSets} and \prettyref{eq:G_GE_GoodEvent}).
But both $\mbox{Var}[|F_{\rho}|]$ and the probability that $\inf_{x,y\in F_{\rho},x\ne y}d_{\infty}(x,y)$
is small can be bounded above in terms of sums, over pairs $x,y$
of vertices, of the probability $P[x,y\notin Y(0,t(\rho))]$, and
these sums can be controlled by \prettyref{lem:TwoPointCalculation},
via \prettyref{eq:RWTPFuncUpperBoundFar} and \prettyref{eq:RWTPFuncUpperBoundClose}.
\begin{proof}[Proof of \prettyref{lem:G_GE_GoodEvent}]
Let 
\begin{equation}
u=g(0)(1-\rho)\log|F|\mbox{ so that }t(\rho)=uN^{d},\label{eq:G_GE_DefofU}
\end{equation}
and record for the sequel that
\begin{equation}
Q_{0}[x\notin\mathcal{I}^{u}]\overset{\eqref{eq:RIOneAndTwoPointFunctions},\prettyref{eq:G_GE_DefofU}}{=}|F|^{\rho-1}\mbox{ for all }x\in\mathbb{Z}^{d}.\label{eq:GE_OnePointProb}
\end{equation}
By summing over $x\in F$ in \prettyref{eq:RWOPFuncMultError} (note
that $|F|\le N^{d}$ so that we have 
\begin{equation}
1\overset{|F|\ge c(\rho)}{\le}u\le c\log N\overset{N\ge|F|^{1/d}\ge c}{\le}N^{\Cr{OPu}}\label{eq:ustuff}
\end{equation}

\medskip\n
by \eqref{eq:G_GE_DefofU}) and using $|F|Q_{0}[0\notin\mathcal{I}^{u}]\overset{\eqref{eq:GE_OnePointProb}}{=}|F|^{\rho}$
we get 
\begin{equation}
(1-cN^{-c})|F|^{\rho}\le E[|F_{\rho}|]\le(1+cN^{-c})|F|^{\rho}.\label{eq:ExpectationofFRho}
\end{equation}

\medskip\n
Therefore $\left||F_{\rho}|-|F|^{\rho}\right|>|F|^{\frac{2}{3}\rho}$ implies 
\begin{equation*}
||F_{\rho}|-E[|F_{\rho}|]|>|F|^{\frac{2}{3}\rho}-cN^{-c}|F|^{\rho}\overset{|F|\le N^{d}}{\ge}|F|^{\frac{2}{3}\rho}-c|F|^{\rho-c}\overset{\rho\le c,|F|\ge c}{\ge}\mbox{\f $\dis\frac{|F|^{\frac{2}{3}\rho}}{2}$}.
\end{equation*}
Thus the Chebyshev inequality gives
\[
P\big[\bigl||F_{\rho}|-|F|^{\rho}]\big|>|F|^{\frac{2}{3}\rho}\big]\le P\big[\bigl||F_{\rho}|-E[|F_{\rho}|]\big|>\mbox{\f $\dis\frac{1}{2}$} |F|^{\frac{2}{3}\rho}\big]\le4\mbox{\f $\dis\frac{\mbox{Var}[|F_{\rho}|]}{|F|^{\frac{4}{3}\rho}}$}.
\]
Note that $\mbox{Var}[|F_{\rho}|]=\sum_{x,y\in F}q_{x,y}\le E[|F_{\rho}|]+\sum_{x\ne y\in F}q_{x,y}$,
where $q_{x,y}=P[x,y\notin Y(0,uN^{d})]-P[x\notin Y(0,uN^{d})]P[y\notin Y(0,uN^{d})]$.
Therefore using \prettyref{eq:ExpectationofFRho}, and splitting the
sum between ``far and close pairs of vertices'', we get 
\begin{equation}
P\big[\bigl||F_{\rho}|-|F^{\rho}|\big|>|F|^{\frac{2}{3}\rho}\big]\le c|F|^{-\frac{1}{3}\rho}+c\dsl_{\{x,y\}\in V}P[x,y\notin Y(0,uN^{d})]+c\dsl_{\{x,y\}\in W}q_{x,y},\label{eq:ResOfVarianceBound}
\end{equation}
where $V=\{\{x,y\}\subset F:0<d_{\infty}(x,y)\le|F|^{\frac{1}{2d}}\}$
and $W=\{\{x,y\}\subset F:d_{\infty}(x,y)>|F|^{\frac{1}{2d}}\}$.
Furthermore note that

\begin{equation}
P[\inf_{x,y\in F_{\rho},x\ne y}d_{\infty}(x,y)<|F|^{\frac{1}{2d}}]\le\dsl_{\{x,y\}\in V}P[x,y\notin Y(0,uN^{d})],\label{eq:G_GE_Part1UnionBound}
\end{equation}
and thus by \prettyref{eq:GDefGoodSets}
\begin{equation}
P[F_{\rho}\notin\mathcal{G}]\le c|F|^{-c\rho}+c\dsl_{\{x,y\}\in V}P[x,y\notin Y(0,uN^{d})]+c\dsl_{\{x,y\}\in W}q_{x,y}.\label{eq:EnGrej}
\end{equation}

\medskip\n
We seek to bound the sums $\sum_{\{x,y\}\in V}P[x,y\notin Y(0,uN^{d})]$
and $\sum_{\{x,y\}\in W}q_{x,y}$.
To this end note that if $\{x,y\} \in V$ then by \prettyref{eq:RWTPFuncUpperBoundClose} we have $P[x,y\notin Y(0,uN^{d})]  \le \; cQ_{0}[0,y-x\notin\mathcal{I}^{u}]$,
where \prettyref{eq:RWTPFuncUpperBoundClose} applies because $d_{\infty}(x,y)\le|F|^{1/2d}\le N^{1/2}$ (note $|F| \le N^d$), and $1 \le u \le N^{\Cr{TPerr}}$
(cf. \prettyref{eq:ustuff}). Thus
\begin{equation}
\begin{split}
\dsl_{\{x,y\}\in V}P[x,y\notin Y(0,uN^{d})]  \le &\; c\dsl_{\{x,y\}\in V}Q_{0}[0,y-x\notin\mathcal{I}^{u}]\\
  \le & \; c\dsl_{x\in F}\sum_{y\in K_{x}}Q_{0}[0,y-x\notin\mathcal{I}^{u}],
\end{split}\label{eq:G_GE_Part1UnionBound2}
\end{equation}
where $K_{x}=F\cap B(x,|F|^{\frac{1}{2d}})\backslash\{x\}$. Now using
\prettyref{eq:TwoPointCalculation} on the inner sum of the right-hand
side with $K_{x}-x$ in place of $K$, we get
\begin{equation}
\begin{array}{ccl}
\underset{\{x,y\}\in V}{\sum}P[x,y\notin Y(0,uN^{d})] &\!\!\!\!\! \overset{u\ge1}{\le} &\!\!\!\!\!  c\underset{x\in F}{\sum}\big\{ |K_{x}|Q_{0}[0\notin\mathcal{I}^{u}]{}^{2}u+e^{-\Cr{TPC}\frac{u}{g(0)}}\big\} \\
 &\!\!\!\!\! \overset{\eqref{eq:G_GE_DefofU}}{\le} &\!\!\!\!\! c\underset{x\in F}{\sum}\big\{ |K_{x}|Q_{0}[0\notin\mathcal{I}^{u}]{}^{2}\log|F|+|F|^{-\Cr{TPC}(1-\rho)}\big\} \\
 &\!\!\!\!\! \le &\!\!\!\!\!  c|F|^{\frac{3}{2}}Q_{0}[0\notin\mathcal{I}^{u}]{}^{2}\log|F|+c|F|^{-c},
\end{array}\label{eq:G_GE_ApplicationOfTwoPointCalc}
\end{equation}

\medskip\n
where we have used that $\sum_{x\in F}|K_{x}|\le\sum_{x\in F}c(|F|^{\frac{1}{2d}})^{d}=c|F|^{\frac{3}{2}}$,
and choose $\Cr{rhobound}$ small enough so that $\Cr{TPC}(1-\Cr{rhobound})>1$ (recall that $\Cr{TPC}>1$).
Thus from \prettyref{eq:GE_OnePointProb} we have
\begin{equation}
\underset{\{x,y\}\in V}{\dsl}P[x,y\notin Y(0,uN^{d})]\le c|F|^{2\rho-\frac{1}{2}}\log|F|+c|F|^{-c}\overset{\rho\le c}{\le}c|F|^{-c}.\label{eq:CPSZ}
\end{equation}

\n
We now turn to the sum $\sum_{\{x,y\}\in W}q_{x,y}$. Using \prettyref{eq:RWOPFuncMultError}
again we obtain 
\begin{equation}
P[x\notin Y(0,uN^{d})]P[y\notin Y(0,uN^{d})]\ge(1-cN^{-c})Q_{0}[0\notin\mathcal{I}^{u}]^{2}.\label{eq:GE_ProductOfOPProbsLowerBound}
\end{equation}

\medskip\n
Also by \prettyref{eq:RWTPFuncUpperBoundFar} we have that if $x$
and $y$ are such that $v=d_{\infty}(x,y)\ge|F|^{3\Cr{TPerr}^{-1}\rho}$
then (similarly to \prettyref{eq:ustuff} we have
$1\le u \le g(0)\log|F| \le |F|^{3\rho} \le v^{\Cr{TPerr}}$, see \eqref{eq:G_GE_DefofU} and note $|F|\ge c(\rho)$)

\begin{equation}
P[x,y\notin Y(0,uN^{d})]\le(1+c|F|^{-3\rho})Q_{0}[0\notin\mathcal{I}^{u}]^{2}.\label{eq:MOSZ}
\end{equation}
Combining \eqref{eq:GE_ProductOfOPProbsLowerBound} and \eqref{eq:MOSZ}
we have
\begin{equation}
\begin{array}{rc}
\underset{x,y\in F,d_{\infty}(x,y)\ge|F|^{3\Cr{TPerr}^{-1}\rho}}{\sum}q_{x,y}\le c(|F|^{-3\rho}+cN^{-c})\underset{x,y\in F}{\dsl}Q_{0}[0\notin\mathcal{I}^{u}]^{2}\le c|F|^{-\rho},\end{array}\label{eq:GE_SumFarPoints}
\end{equation}
since (possibly decreasing $\Cr{rhobound}$) 
\begin{equation*}
N^{-c}\overset{|F|\le N^{d}}{\le}|F|^{-c}\le|F|^{-3\Cr{rhobound}}\le|F|^{-3\rho} \;
\mbox{and} \;  \dsl_{x,y\in F}Q_{0}[0\notin\mathcal{I}^{u}]^{2}\overset{\eqref{eq:GE_OnePointProb}}{=}|F|^{2\rho}.
\end{equation*}
Possibly drecasing $\Cr{rhobound}$ once again, we have that all $ 0 < \rho \le \Cr{rhobound}$ satisfy $3\Cr{TPerr}^{-1}\rho \le \frac{1}{2d}$. Then $|F|^{3\Cr{TPerr}^{-1}\rho}\le|F|^{\frac{1}{2d}}$
so that from the definition of $W$ and \prettyref{eq:GE_SumFarPoints}
\begin{equation}
\underset{\{x,y\}\in W}{\dsl}q_{x,y}\le c|F|^{-\rho}.\label{eq:qxyUpperBound}
\end{equation}

\medskip\n
Now using \prettyref{eq:CPSZ} and \prettyref{eq:qxyUpperBound} on
the right-hand side of \prettyref{eq:EnGrej} gives \prettyref{eq:G_GE_GoodEvent}.\end{proof}

We have now completely reduced the proof of \prettyref{thm:G_Gumbel} to the coupling
result \prettyref{thm:Coupling}. We end this section with a remark on the use of the
coupling \prettyref{thm:Coupling} as a general tool, and on the possibility of
extending \prettyref{thm:G_Gumbel} to other families of graphs.

\begin{rem}
\label{rem:EndOfSec3Remarks}
(1) As mentioned in the introduction, a coupling like \prettyref{thm:Coupling} is a very powerful tool to
study the trace of random walk. To prove the cover time result \prettyref{thm:G_Gumbel} we used the
coupling to study certain properties of the trace of the random walk; namely the probabilities that points,
pairs of points, and sets of ``distant'' points are contained in the trace 
(see \prettyref{lem:RWOnePoint}, \prettyref{lem:RWTwoPoint} and \prettyref{lem:RWNPointFunc} respectively).
When studying the percolative properties of the so called vacant set (the complement of the trace of random walk), similar couplings have been used,
and there the properties of the trace studied are certain connnectivity properties of its complement (see e.g. (2.4), the display after (2.14) or (2.20)-(2.22)
in \cite{TeixeiraWindischOnTheFrag}, or (7.18) in \cite{Sznitman2009-OnDOMofRWonDCbyRI}). The generality
of the couping \prettyref{thm:Coupling} ensures that it can be used in the future to study further
unrelated properties of the trace of random walk in the torus.

\medskip\n
(2) The method used in this section to prove Gumbel fluctuations essentially
consists of considering the set of ``late points'' (recall \eqref{eq:G_DefOfUncoveredSet})
and proving that it concentrates and is separated (i.e. \eqref{lem:G_GE_GoodEvent}).
It has already been used to prove Gumbel fluctuations in related models
in \cite{Belius2010} and \cite{BeliusCTinDC}, and could potentially
apply to prove Gumbel fluctuations for many families of graphs, as
long as one can obtain good enough control of entrance times to replace
\eqref{eq:RWOPFuncMultError}, \eqref{lem:RWTwoPoint} and \eqref{eq:RWNPointFunc}
(in a more general context the latter estimate may be difficult to
obtain but could be replaced with an estimate on how close $H_{\{x_{1},x_{2},...,x_{m}\}}$
is to being exponential when $x_{1},...,x_{m}$ are ``separated'',
since the cover time of a set $\{x_{1},...,x_{m}\}$ consisting of
separated points is essentially the sum of $m$ entrance times for
sets consisting of $m,m-1,m-2,...$ and finally $1$ points; from
this one can derive something similar to \prettyref{eq:GSPGumbelForSeparatedStartingFromPoint}).
In a forthcoming work Roberto Oliveira and Alan Paula obtain such a
generalization of \prettyref{thm:G_Gumbel}.\qed
\end{rem}

\section{\label{sec:Coupling}Coupling}

We now turn to the proof of the coupling result \prettyref{thm:Coupling}.
The proof has three main steps: first the trace of random walk in
the union of the boxes $A+x_{i},i=1,...,n,$ (recall \prettyref{eq:DefOfSeparation}
and \prettyref{eq:DefOfA}) is coupled with a certain Poisson process
on the space of trajectories $\Gamma(\mathbb{T}_{N})$ (see below
\prettyref{eq:DefOfBoundary}). From this Poisson point process we
then construct $n$ independent Poisson point processes, one for each
box $A+x_{i}$, which are coupled with the trace of random walk in
the corresponding box. Lastly we construct from each of these Poisson
processes a random interlacement which is coupled with the trace of
the random walk in the corresponding box $A+x_{i}$. Essentially speaking
the three steps are contained in the three propositions \ref{pro:CoupleRWandPPoE},
\ref{pro:CouplePPoEandUFreePPoE} and \ref{pro:CouplePPoEandRI},
which we state in this section and use to prove \prettyref{thm:Coupling}.
The proofs of the propositions are postponed until the subsequent
sections.

For the rest of the paper we assume that we are given centres of boxes
\begin{equation}
x_{1},..,x_{n}\in\mathbb{T}_{N}\mbox{ whose separation is }s\mbox{ (see \eqref{eq:DefOfSeparation}), and }\varepsilon\in(0,1).\label{eq:StandingSeparationAssumption}
\end{equation}
We also define the concentric boxes $B\subset C$ around $A$ by 
\begin{equation}
A\overset{\eqref{eq:DefOfA}}{=}B(0,s^{1-\varepsilon})\subset B=B(0,s^{1-\frac{\varepsilon}{2}})\subset C=B(0,s^{1-\frac{\varepsilon}{4}}).\label{eq:ABCNotation}
\end{equation}
For convenience we introduce the notation 
\begin{equation}
\bar{F}=\mbox{\f $\dis\bigcup\limits_{i=1}^{n}$} F_{i}\mbox{ where }F_{i}=F+x_{i}\mbox{ for any }F\subset\mathbb{T}_{N}.\label{eq:SetUnionTranslateNotation}
\end{equation}
Note that if $s\ge c(\varepsilon)$ then the $C_{i}$ are disjoint.
To state \prettyref{pro:CoupleRWandPPoE} we introduce $U$, the first
time random walk spends a ``long'' time outside of $\bar{C}$ (roughly
speaking long enough time to mix, see \prettyref{pro:QSConvToQuasiStat}), defined by 
\begin{equation}
U=\inf\{t\ge t^{\star}:Y(t-t^{\star},t)\cap\bar{C}=\emptyset\}\mbox{ where }t^{\star}=N^{2+\frac{\varepsilon}{100}}.\label{eq:DefOfUandtStar}
\end{equation}
We also define the intensity measure $\kappa_{1}$ on $\Gamma(\mathbb{T}_{N})$
by (recall \prettyref{eq:DefOfeKandCap})
\begin{equation}
\kappa_{1}(dw)=P_{e}[Y_{\cdot\wedge U}\in dw]\mbox{ where }e(x)=\dsl_{i=1}^{n}e_{A}(x-x_{i}).\label{eq:DefOfKappa1}
\end{equation}
For parameters $u\ge0$ and $\delta>0$ (satisfying suitable conditions),
\prettyref{pro:CoupleRWandPPoE} constructs a coupling of $Y_{\cdot}$
with two independent Poisson point processes $\mu_{1}$ and $\mu_{2}$
on $\Gamma(\mathbb{T}_{N})$ of intensities $u(1-\delta)\kappa_{1}$
and $2u\delta\kappa_{1}$ respectively such that (recall the notation
from \prettyref{eq:DefOfPPPTrace}) 
\begin{equation}
\{\mathcal{I}(\mu_{1})\cap\bar{A}\subset Y(0,uN^{d})\cap\bar{A}\subset\mathcal{I}(\mu_{1}+\mu_{2})\cap\bar{A}\}\mbox{ with high probability.}\label{eq:CouplingInformalFirstInclusion}
\end{equation}

\medskip\n
\prettyref{pro:CouplePPoEandUFreePPoE}, the second ingredient in
the proof of \prettyref{thm:Coupling}, couples Poisson processes
like $\mu_{1}$ and $\mu_{2}$ with Poisson processes with intensity
a multiple of
\begin{equation}
\kappa_{2}(dw)=P_{e}[Y_{\cdot\wedge T_{\bar{B}}}\in dw].\label{eq:DefOfKappa2-1}
\end{equation}

\medskip\n
More precisely if $\nu$ is a Poisson process of intensity $u\kappa_{1},u>0,$
and $\delta>0$ then (under appropriate conditions) \prettyref{pro:CouplePPoEandUFreePPoE}
will construct Poisson point processes $\nu_{1}$ and $\nu_{2}$ of
intensities $u(1-\delta)\kappa_{2}$ and $2u\delta\kappa_{2}$ respectively
such that
\begin{align}
 & \{\mathcal{I}(\nu_{1})\cap\bar{A}\subset\mathcal{I}(\nu)\cap\bar{A}\}\mbox{ almost surely and}\label{eq:SecondCouplingASInc}\\
 & \{\mathcal{I}(\nu)\cap\bar{A}\subset\mathcal{I}(\nu_{1}+\nu_{2})\cap\bar{A}\}\mbox{ with high probability.}\label{eq:SecondInclusionHighProbInc}
\end{align}

\medskip\n
Note that, in contrast to the situation for $\mu_{1}$ and $\mu_{2}$
from \prettyref{eq:CouplingInformalFirstInclusion}, each ``excursion''
in the support of $\nu_{1}$ and $\nu_{2}$ never returns to $\bar{A}$
after it has left $\bar{B}$. Under the law induced on it from the
intensity measure $\kappa_{2}$, an excursion therefore, conditionally
on its starting point, has the law of a random walk in $\mathbb{Z}^{d}$
stopped upon leaving $B$ (up to translation). Furthermore it leaves
a trace in only one of the boxes $A_{1},...,A_{n}$. This will allow
us (in \prettyref{cor:CouplePPoEandIndepPPoEs}) to ``split'' the
Poisson point processes $\nu_{1}$ and $\nu_{2}$ into independent
Poisson point processes $\nu_{1}^{i},\nu_{2}^{i},i=1,...,n,$ (on
$\Gamma(\mathbb{Z}^{d})$) such that the $\nu_{1}^{i}$ have intensity
$u(1-\delta)\kappa_{3}$ and the $\nu_{2}^{i}$ have intensity $2u\delta\kappa_{3}$,
where
\begin{align}
 & \kappa_{3}(dw)=P_{e_{A}}^{\mathbb{Z}^{d}}[Y_{\cdot\wedge T_{B}}\in dw],\mbox{ and such that}\label{eq:DefOfKappa3}\\
 & \{\mathcal{I}(\nu_{1}^{i})\cap A\subset(\mathcal{I}(\nu)-x_{i})\cap A\mbox{ for all }i\}\mbox{ almost surely and}\label{eq:CompPPPropCorLowerInclusion-1}\\
 & \{(\mathcal{I}(\nu)-x_{i})\cap A\subset\mathcal{I}(\nu_{1}^{i}+\nu_{2}^{i})\cap A\mbox{ for all }i\}\mbox{ with high probability.}\label{eq:SecondInclusionCorollHighProbInc}
\end{align}

\medskip\n
\prettyref{pro:CouplePPoEandRI}, the third ingredient in the proof
of \prettyref{thm:Coupling}, constructs independent random subsets
of $\mathbb{Z}^{d}$ that have the law of random interlacements intersected
with $A$, from Poisson processes like $\nu_{j}^{i}$. More precisely
if $\eta$ is a Poisson point process of intensity $u\kappa_{3},u\ge0,$
and $\delta>0$ then under appropriate conditions it constructs independent
random sets $\mathcal{I}_{1},\mathcal{I}_{2}\subset\mathbb{Z}^{d}$
such that $\mathcal{I}_{1}$ has the law of $\mathcal{I}^{u(1-\delta)}\cap A$
under $Q_{0}$, $\mathcal{I}_{2}$ has the law of $\mathcal{I}^{2\delta u}\cap A$
under $Q_{0}$, and
\begin{equation}
\{\mathcal{I}_{1}\cap A\subset\mathcal{I}(\eta)\cap A\subset(\mathcal{I}_{1}\cup\mathcal{I}_{2})\cap A\}\mbox{ with high probability.}\label{eq:ThirdCouplingInclusion}
\end{equation}

\medskip\n
But essentially speaking because of \prettyref{eq:AdditiveInterlacements}
we will be able to easily construct a random interlacement $(\mathcal{I}^{u})_{u\ge0}$
from such a pair $\mathcal{I}_{1},\mathcal{I}_{2}$.

We now state the propositions. Recall the standing assumption \prettyref{eq:StandingSeparationAssumption}.
\begin{prop}
\label{pro:CoupleRWandPPoE}($d\ge3$,$N\ge3$,$x_{1},...,x_{n}\in\mathbb{T}_{N}$)

\medskip
If $s\ge c(\varepsilon)$, $u\ge s^{-c(\varepsilon)}$, $\frac{1}{2}\ge\delta\ge cs^{-c(\varepsilon)}$
and $n\le s^{c(\varepsilon)}$ we can construct a coupling $(\Omega_{2},\mathcal{A}_{2},Q_{2})$
of the random walk $Y_{\cdot}$ with law $P$ and independent Poisson
point processes $\mu_{1}$ and $\mu_{2}$ on $\Gamma(\mathbb{T}_{N})$,
such that $\mu_{1}$ has intensity $u(1-\delta)\kappa_{1}$, $\mu_{2}$
has intensity $2u\delta\kappa_{1}$, and $Q_{2}[I_{1}]\ge1-cue^{-cs^{c(\varepsilon)}}$,
where $I_{1}$ is the event in \eqref{eq:CouplingInformalFirstInclusion}.
\end{prop}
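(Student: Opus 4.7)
\medskip\noindent
\textbf{Plan.} The plan is to write the trace of $Y_\cdot$ in $\bar A$ up to time $uN^d$ as the union of the traces (in $\bar A$) of excursions $Y_{(R_k+\cdot)\wedge U_k}$, where the stopping times are defined recursively by $U_0=0$, $R_k=H_{\bar A}\circ\theta_{U_{k-1}}+U_{k-1}$, and $U_k=U\circ\theta_{R_k}+R_k$ (with $U$ as in \prettyref{eq:DefOfUandtStar}). Between $R_k$ and $U_k$ the walk first enters $\bar A$, may wander around $\bar C$ with possible re-entries to $\bar A$, and finally sits idle with respect to $\bar C$ for a ``mixing time'' $t^\star=N^{2+\varepsilon/100}$. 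The random number $K=\max\{k:R_k\le uN^d\}$ of such excursions will concentrate, for $s\ge c(\varepsilon)$, around a mean comparable to $u\,n\,\mathrm{cap}(A)$, i.e., the total mass $\|\kappa_1\|$ of the intensity measure \prettyref{eq:DefOfKappa1}. The three ingredients are: (a) a coupling of the successive excursions $Y_{(R_k+\cdot)\wedge U_k}$ with genuinely independent copies; (b) Poissonization of the number of excursions; (c) a sandwich argument with slack $\delta$ to absorb the remaining fluctuations.

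\medskip\noindent
\textbf{Step 1 (quasistationary decoupling).} Using the mixing property \prettyref{eq:TorusMixing} over the terminal interval of length $t^\star\ge N^2$ contained in $[R_k,U_k]$, one shows that the conditional law of $Y_{U_k}$ given $Y_{R_k}$ (and the past) is within total variation $ce^{-cs^{c(\varepsilon)}}$ of a fixed ``quasistationary-type'' distribution $\sigma$ on $\mathbb{T}_N$, for which a walk started from $\sigma$ and conditioned to enter $\bar A$ hits $\bar A$ with distribution $e/\|e\|$ (compare with \prettyref{eq:EquilDistHitDistFromFar} for $e_A/\mathrm{cap}(A)$ localized in each translate $A+x_i$). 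This is exactly the input that will be supplied by \prettyref{sec:Quasistationary}. Granted this, one inductively constructs, on an enlarged space, a sequence of i.i.d. excursions $\tilde Y^1,\tilde Y^2,\ldots$ each with law $P_e[Y_{\cdot\wedge U}\in\cdot]/\|e\|$, such that $Y_{(R_k+\cdot)\wedge U_k}=\tilde Y^k$ for all $k\le u s^{c(\varepsilon)}$ on an event of probability at least $1-cue^{-cs^{c(\varepsilon)}}$.

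\medskip\noindent
\textbf{Step 2 (Poissonization and sandwich).} Let $M_1,M_2$ be independent Poisson variables with means $u(1-\delta)\|e\|$ and $2u\delta\|e\|$ respectively, independent of everything else, and reserve additional i.i.d.\ excursions $\tilde Y^{M_1+1},\ldots,\tilde Y^{M_1+M_2}$ with the same law as the $\tilde Y^k$. Define
\begin{equation*}
\mu_1=\sum_{k=1}^{M_1}\delta_{\tilde Y^k},\qquad \mu_2=\sum_{k=M_1+1}^{M_1+M_2}\delta_{\tilde Y^k}.
\end{equation*}
Then $\mu_1,\mu_2$ are independent Poisson point processes on $\Gamma(\mathbb{T}_N)$ with the prescribed intensities $u(1-\delta)\kappa_1$ and $2u\delta\kappa_1$. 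To obtain the desired inclusions in \prettyref{eq:CouplingInformalFirstInclusion}, we combine (i) the event that the excursion coupling of Step~1 succeeds, (ii) the event $M_1\le K\le M_1+M_2$, and (iii) the event that the trace of $Y_\cdot$ in $\bar A$ after time $R_K$ is already contained in the traces of the first $K$ excursions. Concentration of Poisson variables gives $P[M_1\le K\le M_1+M_2]\ge 1-c\exp(-c\delta^2 u\|e\|)$, while the standard hitting-time bounds \prettyref{lem:BHBBallHittingBound} and \prettyref{lem:EquilDistEstimates} yield the matching concentration $|K-u\|e\||\le \tfrac{\delta}{2}u\|e\|$ for the actual number of excursions with error $cue^{-cs^{c(\varepsilon)}}$. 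Both deviations are controlled by $ue^{-cs^{c(\varepsilon)}}$ under the hypotheses $u\ge s^{-c(\varepsilon)}$, $\delta\ge cs^{-c(\varepsilon)}$, $n\le s^{c(\varepsilon)}$, since $\|e\|=n\,\mathrm{cap}(A)\ge c\,n\,s^{(1-\varepsilon)(d-2)}$ by \prettyref{eq:AsymptoticsOfCapOfBox}.

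\medskip\noindent
\textbf{Main obstacle.} The heart of the argument is the quasistationary decoupling in Step~1: quantitatively establishing that a walk that has spent time $t^\star$ far from $\bar C$ has a law close to a fixed distribution regardless of its entrance point into $\bar A$, and that this distribution makes the next hit of $\bar A$ distributed as $e/\|e\|$ up to an error that is summable over the $K\lesssim u\|e\|\le s^{c(\varepsilon)}$ excursions. Once that is in place the Poissonization and the sandwich are a bookkeeping exercise; the budget $\delta\ge cs^{-c(\varepsilon)}$ is precisely what is needed for the Poisson fluctuations of $K$ to be swallowed by $\mu_2$.
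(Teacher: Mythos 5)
Your decomposition into excursions $Y_{(R_k+\cdot)\wedge U_k}$, the quasistationary decoupling, and the Poissonization-plus-sandwich are exactly the route the paper takes (Lemmas \ref{lem:ExcwithIIDExc}, \ref{lem:ETRTExcursionTimeandRealTime}, Proposition \ref{pro:UnifRWIIDExc} and the final proof in Section \ref{sec:Poissonization}). However, there is one genuine gap in Step 1: you claim the i.i.d.\ excursions can be taken with law exactly $P_e[Y_{\cdot\wedge U}\in\cdot]/\|e\|$ while coinciding with the true excursions except on an event of probability $cue^{-cs^{c(\varepsilon)}}$. This conflates two error rates of very different sizes. The convergence of $Y_{U_k}$ to the quasistationary distribution $\sigma$ is exponentially good ($ce^{-cN^{c(\varepsilon)}}$ per excursion, Proposition \ref{pro:QSConvToQuasiStat}), so the per-excursion coupling to i.i.d.\ copies with law $P_\sigma[Y_{\cdot\wedge U_1}\in dw]$ survives a union bound over the $K\lesssim u\,n\,\mathrm{cap}(A)$ excursions. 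But the identification of the hitting distribution $q(x)=P_\sigma[Y_{H_{\bar A}}=x]$ with the normalized equilibrium measure $\bar e=e/(n\,\mathrm{cap}(A))$ is only polynomially good: Lemma \ref{lem:QSEQQuasiStatEquilMeas} gives $(1-cs^{-c(\varepsilon)})\bar e\le q\le(1+cs^{-c(\varepsilon)})\bar e$. A total-variation coupling of each excursion's entry point to a draw from $\bar e$ therefore fails with probability of order $s^{-c(\varepsilon)}$ per excursion, and since $K$ is of order $u\,n\,\mathrm{cap}(A)\gtrsim s^{(1-\varepsilon)(d-2)}$, the union bound over excursions does not close.

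The fix — which is what the paper does — is to keep the i.i.d.\ excursions with their honest law $P_\sigma[Y_{\cdot\wedge U_1}\in dw]$, so that the Poissonized point processes have intensity a multiple of $P_q[Y_{\cdot\wedge U}\in dw]$ rather than of $\kappa_1$, and then to correct the intensity at the level of the Poisson point process rather than excursion by excursion: the two-sided bound $u(1-\delta)\kappa_1\le u(1-\tfrac{\delta}{2})\,n\,\mathrm{cap}(A)P_q[Y_{\cdot\wedge U}\in dw]\le u(1-\tfrac{\delta}{3})\kappa_1$ (valid because $\delta\ge cs^{-c(\varepsilon)}$ dominates the $s^{-c(\varepsilon)}$ discrepancy) permits a thinning to extract $\mu_1$ of intensity exactly $u(1-\delta)\kappa_1$ and a thickening of the residual into $\mu_2$ of intensity $2u\delta\kappa_1$, with the inclusions preserved almost surely. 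So your $\delta$ budget must absorb not only the Poisson fluctuations of $K$ but also this intensity mismatch; once you add that step your argument matches the paper's. (Two smaller points: the concentration $E[H_{\bar A}]\approx N^d/(n\,\mathrm{cap}(A))$ needed for your bound on $K$ requires Lemma \ref{lem:ETTEntraceTimesTorus} and Khasminskii-type exponential moment estimates, not just Lemmas \ref{lem:BHBBallHittingBound} and \ref{lem:EquilDistEstimates}; and since the walk starts from $\pi$ rather than $\sigma$, an initial mixing step as in Proposition \ref{pro:UnifRWIIDExc} is needed, which is why the paper's first union in \eqref{eq:UnifRWIIDExc} starts at $i=2$.)
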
 

\prettyref{pro:CoupleRWandPPoE} will be proved in \prettyref{sec:Poissonization}. 
\begin{prop} \label{pro:CouplePPoEandUFreePPoE}($d\ge3$,$N\ge3$,$x_{1},...,x_{n}\in\mathbb{T}_{N}$)

\medskip
Assume $s\ge c(\varepsilon)$ and that $\nu$ is a Poisson point process
on $\Gamma(\mathbb{T}_{N})$ with intensity measure $u\kappa_{1}$,
$u\ge s^{-c(\varepsilon)}$, constructed on some probability space
$(\Omega,\mathcal{A},Q)$. Then if $1\ge\delta\ge cs^{-c(\varepsilon)}$
and $n\le s^{c(\varepsilon)}$ we can extend the space to get independent
Poisson point processes $\nu_{1},\nu_{2},$ on $\Gamma(\mathbb{T}_{N})$
such that $\nu_{1}$ has intensity $u(1-\delta)\kappa_{2}$, $\nu_{2}$
has intensity $2u\delta\kappa_{2}$, \eqref{eq:SecondCouplingASInc}
holds and $Q[I_{2}]\ge1-ce^{-cs^{c(\varepsilon)}}$, where $I_{2}$
is the event in \eqref{eq:SecondInclusionHighProbInc}.
\end{prop}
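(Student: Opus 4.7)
The plan is to decompose each path $w$ in the support of $\nu$ into its sub-excursions between successive entries to $\bar A$ and departures from $\bar B$, construct $\nu_1$ from initial sub-excursions of a $(1-\delta)$-fraction of $\nu$, and design $\nu_2$ to cover both the initial sub-excursions of the complementary $\delta$-fraction and the ``return'' sub-excursions (those of index $\ge 2$) across all of $\nu$. Formally, for $w$ starting in $\bar A$, set $R_1=0$ and for $k\ge 1$ let $D_k=T_{\bar B}\circ\theta_{R_k}+R_k$ and $R_{k+1}=H_{\bar A}\circ\theta_{D_k}+D_k$, giving sub-excursions $w_k=Y_{(R_k+\cdot)\wedge(D_k-R_k)}(w)$ for $k=1,\ldots,N(w)$; these jointly account for every $\bar A$-visit of $w$. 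Note that $T_{\bar B}\le U$ always, since the walk stays in $\bar B\subset\bar C$ up to $T_{\bar B}$ whereas $U$ requires $t^\star$ consecutive time outside $\bar C$.

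For the lower inclusion, split $\nu=\nu^a+\nu^b$ by Poisson thinning into independent Poisson point processes of intensities $u(1-\delta)\kappa_1$ and $u\delta\kappa_1$. Define $\nu_1$ as the image of $\nu^a$ under the stopping map $w\mapsto w_1$; the pushforward of $\kappa_1$ under this map equals $\kappa_2$ (both are the law of a walk from $e$ stopped at $T_{\bar B}$), so $\nu_1$ is Poisson of intensity $u(1-\delta)\kappa_2$ by the Poisson mapping theorem, and \prettyref{eq:SecondCouplingASInc} follows trivially since $\mathcal{I}(w_1)\subset\mathcal{I}(w)$. Similarly let $\nu_2^{(1)}$ be the image of $\nu^b$ under this map, a Poisson process of intensity $u\delta\kappa_2$ independent of $\nu_1$. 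Let $\nu_2^{(2)}$ be an auxiliary independent Poisson process of intensity $u\delta\kappa_2$ on $\Gamma(\mathbb{T}_N)$ and set $\nu_2=\nu_2^{(1)}+\nu_2^{(2)}$; then $\nu_2$ is Poisson of intensity $2u\delta\kappa_2$ and independent of $\nu_1$. Now $\mathcal{I}(\nu_1+\nu_2^{(1)})\cap\bar A$ contains the $\bar A$-traces of all initial sub-excursions of $\nu$, and the remaining task is to couple $\nu_2^{(2)}$ so that the traces of the return sub-excursions $\{w_k:w\in\nu,\,k\ge 2\}$ are contained in $\mathcal{I}(\nu_2^{(2)})\cap\bar A$ with probability at least $1-ce^{-cs^{c(\varepsilon)}}$.

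The main obstacle is this last coupling. The key estimate is a bound $p\le cs^{-c_0\varepsilon(d-2)}$ (for some $c_0>0$) on the probability that a walk at $\partial_e\bar B$ ever returns to $\bar A$ before $U$, obtained by applying \prettyref{eq:BHBBallHittingBound} separately to each of the $n\le s^{c(\varepsilon)}$ boxes $A_i$ (at distance of order $s^{1-\varepsilon/2}$ from $\partial_e\bar B$) and summing, together with large-deviations control on $U$ via \prettyref{eq:TorusMixing}. Choosing $c(\varepsilon)$ in the hypothesis $\delta\ge cs^{-c(\varepsilon)}$ so that $p\ll\delta$, the total number of return sub-excursions is a compound Poisson random variable with mean at most $cpu|\kappa_1|\ll u\delta|\kappa_2|$; by Chernoff-type concentration it is therefore bounded above by $|\nu_2^{(2)}|/2$ with probability at least $1-ce^{-cs^{c(\varepsilon)}}$. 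Moreover, by \prettyref{eq:EquilDistHitDistFromFar} the entry-point distribution on $\bar A$ conditional on a return is within a factor $1+O(s^{-c'\varepsilon})$ of the normalized equilibrium $e/|e|$, so conditionally each return sub-excursion has law with density close to $1$ with respect to $\kappa_2/|\kappa_2|$. A soft-local-time style coupling (injecting the return sub-excursions one by one into the points of $\nu_2^{(2)}$ by comparing their near-identical densities) then delivers \prettyref{eq:SecondInclusionHighProbInc}.
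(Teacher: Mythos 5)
Your overall architecture is close to the paper's: decompose each trajectory of $\nu$ into sub\--excursions between successive returns to $\bar{A}$ and departures from $\bar{B}$, build $\nu_{1}$ from first sub-excursions, and absorb the (rare) return sub-excursions into a small auxiliary Poisson process of intensity a multiple of $\kappa_{2}$, using a hitting estimate from $\partial_{e}\bar{B}$ and a Chernoff bound on the number of returns. The observation that the pushforward of $\kappa_{1}$ under $w\mapsto w_{1}$ is exactly $\kappa_{2}$ is correct and would let you skip the sandwich $(1-cs^{-c})\kappa_{2}\le\kappa_{1}^{1}\le\kappa_{2}$ that the paper proves. However, there is a genuine gap: \emph{your $\nu_{1}$ and $\nu_{2}$ are not independent.} You take $\nu_{1}$ to be the first sub-excursions of \emph{all} paths of $\nu^{a}$, including those paths that later return to $\bar{A}$; the return sub-excursions of those same paths must then be covered by $\nu_{2}^{(2)}$. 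Since $\nu_{2}^{(2)}$ is necessarily constructed jointly with (i.e.\ as a function of) those return sub-excursions, and the returns are correlated with the first sub-excursions through the exit points $w_{1}(\infty)\in\partial_{e}\bar{B}$ (and through whether a return occurs at all), the resulting $\nu_{2}$ is correlated with $\nu_{1}$. One cannot wave this away: for large $u$ the number of returning paths is large with high probability, so a genuine coupling of $\nu_{2}^{(2)}$ with the returns is unavoidable. The paper sidesteps this by partitioning the Poisson process $\nu$ according to the \emph{number of visits} to $\bar{A}$ before $U$: the restrictions $1_{\{\hat{R}_{i}<U<\hat{R}_{i+1}\}}\nu$ to these disjoint events are independent Poisson processes $\mu_{i}$; $\nu_{1}$ is obtained by thinning $\mu_{1}$ (single-visit paths only, whence the need for the sandwich lemma), and \emph{all} sub-excursions of the multi-visit paths --- including their first ones --- are dominated by a process $\theta$ built measurably from $(\mu_{i})_{i\ge2}$ and independent randomness, which is therefore exactly independent of $\nu_{1}$.

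A secondary problem: you claim that, conditionally on a return, the entry distribution on $\bar{A}$ from a point of $\partial_{e}\bar{B}$ is within a factor $1+O(s^{-c\varepsilon})$ of $\bar{e}$. This is false for $n>1$: starting just outside $B_{j}$, the walk hits $A_{j}$ with conditional probability of order $1$, whereas $\bar{e}$ gives each box mass $1/n$; moreover \prettyref{eq:EquilDistHitDistFromFar} only gives two-sided bounds with constants, not constants close to $1$. What is true, and what the paper proves (\prettyref{lem:HittingLineEquilWeak}), is the one-sided bound $h_{x}(y)\le s^{-\varepsilon/4}\bar{e}(y)$ uniformly in $x\in\partial_{e}\bar{B}$, where the small prefactor absorbs both the return probability and the factor-$n$ mismatch between boxes; a domination (rather than near-equality) argument then suffices because the number of returns is so small. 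Your ``soft-local-time with densities close to $1$'' step should be replaced by this one-sided domination.
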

The proof of \prettyref{pro:CouplePPoEandUFreePPoE} is contained
in \prettyref{sec:OutOfTorus}. In the proof of \prettyref{thm:Coupling}
we will actually use the following corollary.
\begin{cor}
\label{cor:CouplePPoEandIndepPPoEs}Under the conditions of \prettyref{pro:CouplePPoEandUFreePPoE}
we can construct independent Poisson point processes $\nu_{1}^{i},\nu_{2}^{i},i=1,2,...,n,$
such that $\nu_{1}^{i}$ has intensity $u(1-\delta)\kappa_{3}$ and
$\nu_{2}^{i}$ has intensity $2u\delta\kappa_{3}$ for $i=1,...,n,$
\prettyref{eq:CompPPPropCorLowerInclusion-1} holds and $Q[I_{3}]\ge1-ce^{-cs^{c(\varepsilon)}}$,
where $I_{3}$ is the event in \eqref{eq:SecondInclusionCorollHighProbInc}.\end{cor}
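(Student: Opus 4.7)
The strategy is to split the Poisson point processes $\nu_1,\nu_2$ from Proposition \ref{pro:CouplePPoEandUFreePPoE} according to which box $A_i$ contains the starting point of each excursion, and then identify each component with a process on $\Gamma(\mathbb{Z}^d)$ via translation. Writing $e = \sum_{i=1}^{n} e_i$ with $e_i(x) = e_A(x - x_i)$ induces the decomposition $\kappa_2 = \sum_{i=1}^{n} \kappa_2^i$, where $\kappa_2^i(dw) = P_{e_i}[Y_{\cdot\wedge T_{\bar B}}\in dw]$ is supported on the set of paths with $w(0)\in A_i$. Since the $B_i$ are disjoint when $s\ge c(\varepsilon)$, these supports are pairwise disjoint in $\Gamma(\mathbb{T}_N)$, so the standard splitting property of Poisson point processes yields independent Poisson point processes $\nu_j^{i,\mathbb{T}}$ on $\Gamma(\mathbb{T}_N)$ with intensities $u(1-\delta)\kappa_2^i$ (for $j=1$) and $2u\delta\kappa_2^i$ (for $j=2$), summing back to $\nu_j$.

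An excursion starting in $A_i$ cannot enter any other $B_{i'}$ before exiting $B_i$, so $T_{\bar B}$ agrees with $T_{B_i}$ on the support of $\kappa_2^i$. Because $B$ has radius $s^{1-\varepsilon/2}$, which is less than $N/2$ for $s\ge c(\varepsilon)$, the translation map $\Phi_i(w) = w - x_i$ sends a path confined to $B_i$ to a well-defined path in $\mathbb{Z}^d$ stopped upon exiting $B$, and the pushforward satisfies $(\Phi_i)_{\ast}\kappa_2^i = \kappa_3$. Setting $\nu_j^i = (\Phi_i)_{\ast}\nu_j^{i,\mathbb{T}}$ and applying the mapping theorem for Poisson point processes (together with the independence of the $\nu_j^{i,\mathbb{T}}$) then produces the independent Poisson point processes on $\Gamma(\mathbb{Z}^d)$ with the required intensities $u(1-\delta)\kappa_3$ and $2u\delta\kappa_3$.

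The trace inclusions follow by a direct intersection argument. For each $i$ one has pathwise $\mathcal{I}(\nu_j^{i,\mathbb{T}})\cap A_i = x_i + (\mathcal{I}(\nu_j^i)\cap A)$, and because excursions of $\nu_j$ not belonging to $\nu_j^{i,\mathbb{T}}$ are confined to boxes $B_{i'}$ disjoint from $A_i\subset B_i$, we also have $\mathcal{I}(\nu_j)\cap A_i = \mathcal{I}(\nu_j^{i,\mathbb{T}})\cap A_i$. Intersecting \prettyref{eq:SecondCouplingASInc} and the event $I_2$ of \prettyref{eq:SecondInclusionHighProbInc} with each $A_i$ and translating by $-x_i$ then yields \prettyref{eq:CompPPPropCorLowerInclusion-1} and the event $I_3$, with $Q[I_3]\ge 1-ce^{-cs^{c(\varepsilon)}}$ inherited directly from $Q[I_2]$. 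I anticipate no real obstacle: this corollary is essentially bookkeeping combining Poisson splitting with the isometric embedding $B_i\hookrightarrow\mathbb{Z}^d$, the only substantive verification being that the embedding preserves the law of the killed random walk, which is immediate from the isometry for $s\ge c(\varepsilon)$.
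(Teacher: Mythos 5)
Your proposal is correct and follows essentially the same route as the paper: the paper likewise restricts $\nu_{j}$ to the disjoint events $\{Y_{0}\in A_{i}\}$, pushes forward under $w\mapsto w-x_{i}$, reads off the intensities and independence from \prettyref{eq:DefOfKappa1}, \prettyref{eq:DefOfKappa2-1} and \prettyref{eq:DefOfKappa3}, and deduces the inclusions from $(\mathcal{I}(\nu_{j})-x_{i})\cap A=\mathcal{I}(\nu_{j}^{i})\cap A$. Your write-up merely makes explicit the decomposition $\kappa_{2}=\sum_{i}\kappa_{2}^{i}$ and the verification that $T_{\bar B}=T_{B_{i}}$ on the relevant paths, which the paper leaves implicit.
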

\begin{proof}
For $i=1,...,n,$ and $j=1,2,$ let $\nu_{j}^{i}$ be the image of
$1_{\{Y_{0}\in A_{i}\}}\nu_{j}$ under the map which sends $w(\cdot)\in\Gamma(B_{i})\subset\Gamma(\mathbb{T}_{N})$
to $w(\cdot)-x_{i}\in\Gamma(\mathbb{Z}^{d})$ (recall that $\Gamma(B_{i})$
for $B_{i}\subset\mathbb{T}_{N}$ denotes the set of paths in $\mathbb{T}_{N}$
that never leave $B_{i}$, and note that $B=B_{i}-x_{i}\subset\mathbb{T}_{N}$
may be identified with a subset of $\mathbb{Z}^{d}$, so that $w(\cdot)-x_{i}$
can be identified with an element of $\Gamma(\mathbb{Z}^{d}$)). Since
the sets $\{Y_{0}\in A_{i}\},i=1,...,n,$ are disjoint we have that
$\nu_{i}^{1},\nu_{i}^{2},i=1,...,n,$ are independent Poisson point
processes of the required intensities (see the \prettyref{eq:DefOfKappa1},
\prettyref{eq:DefOfKappa2-1} and \prettyref{eq:DefOfKappa3}). Now
\eqref{eq:CompPPPropCorLowerInclusion-1} and the required bound on
$Q[I_{3}]$ follows from \prettyref{pro:CouplePPoEandUFreePPoE} (see
\prettyref{eq:SecondCouplingASInc} and \eqref{eq:SecondInclusionHighProbInc}),
since $(\mathcal{I}(\nu_{j})-x_{i})\cap A=\mathcal{I}(\nu_{j}^{i})\cap A$
for all $i$ and $j$.
\end{proof}

We now state the proposition which couples processes like $\nu_{j}^{i}$
with random interlacements. Note that we will apply this proposition
after ``decoupling'' the boxes $A_{1},...,A_{n},$ using \prettyref{cor:CouplePPoEandIndepPPoEs},
and that the statement of \prettyref{pro:CouplePPoEandRI} therefore
does not refer to these boxes or the centres $x_{1},...x_{n},$ except
through their separation $s$ (recall \prettyref{eq:DefOfSeparation}),
which goes into the definition of the radii of the boxes $A$ and
$B$ (see \eqref{eq:DefOfA} and \eqref{eq:ABCNotation}). The interpretation
of $s$ as separation is therefore irrelevant, and for the purposes
of the following proposition it can be simply considered as a parameter
that (together with $\varepsilon$) determines the radii of $A$ and
$B$.
\begin{prop} \label{pro:CouplePPoEandRI}($d\ge3$) 

\medskip
Let $\eta$ be a Poisson point
process on $\Gamma(\mathbb{Z}^{d})$ with intensity measure $u\kappa_{3}$,
$u\ge0$, constructed on some probability space $(\Omega,\mathcal{A},Q)$.
If $s\ge c(\varepsilon)$ and $1\ge\delta\ge cs^{-c(\varepsilon)}$
then we can construct a probability space $(\Omega',\mathcal{A}',Q')$
and (on the product space) independent $\sigma(\eta)\times\mathcal{A}-$measurable
random sets $\mathcal{I}_{1},\mathcal{I}_{2}\subset\mathbb{Z}^{d}$
such that $\mathcal{I}_{1}$ has the law of $\mathcal{I}^{u(1-\delta)}\cap A$
under $Q_{0}$, $\mathcal{I}_{2}$ has the law of $\mathcal{I}^{2u\delta}\cap A$
under $Q_{0}$, and $Q\otimes Q'[I_{4}]\ge1-ce^{-cs^{c(\varepsilon)}}$,
where $I_{4}$ is the event in \eqref{eq:ThirdCouplingInclusion}.
\end{prop}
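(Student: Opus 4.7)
The plan is to interpret each excursion $w\in\eta$ as a ``first excursion'' in the Poisson representation of $\mathcal{I}^{u'}\cap A$ provided by \eqref{eq:LawOfRIInFiniteSet}--\eqref{eq:DefOfMuK}, and to use the auxiliary space $\Omega'$ to glue continuations onto each $w$ past time $T_B$, turning $w$ into a full forward random walk trajectory from $w(0)\in A$. The trace in $A$ of the resulting Poisson collection of trajectories then automatically has the law of the corresponding random interlacement intersected with $A$.

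Concretely, I would first split $\eta=\eta^{(1)}+\eta^{(2)}$ by an independent $(1-\delta)$-thinning using $\Omega'$, yielding independent Poisson point processes on $\Gamma(\mathbb{Z}^d)$ of intensities $u(1-\delta)\kappa_3$ and $u\delta\kappa_3$. For each $w\in\eta^{(1)}$ I would sample from $\Omega'$ an independent continuation $\tilde w$ starting at $w(T_B)$, and let $\bar w$ be the concatenation of $w$ and $\tilde w$; by the strong Markov property $\bar w$ is distributed as a full forward walk from $w(0)\sim e_A$, so by \eqref{eq:LawOfRIInFiniteSet}--\eqref{eq:DefOfMuK} the set $\mathcal{I}_1:=\bigcup_{w\in\eta^{(1)}}\bar w(0,\infty)\cap A$ has the law of $\mathcal{I}^{u(1-\delta)}\cap A$. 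Repeating the construction with $\eta^{(2)}$ produces a set with the law of $\mathcal{I}^{u\delta}\cap A$; unioning this with an independent interlacement trace $\mathcal{I}^{u\delta}\cap A$ drawn freshly from $\Omega'$ produces $\mathcal{I}_2$, which by \eqref{eq:AdditiveInterlacements} has the law of $\mathcal{I}^{2u\delta}\cap A$ and, by construction, is independent of $\mathcal{I}_1$. The upper inclusion $\mathcal{I}(\eta)\cap A\subset\mathcal{I}_1\cup\mathcal{I}_2$ holds almost surely, since every $w\in\eta^{(1)}\cup\eta^{(2)}$ is the initial segment of a trajectory feeding either $\mathcal{I}_1$ or $\mathcal{I}_2$.

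The main obstacle is the lower inclusion $\mathcal{I}_1\cap A\subset\mathcal{I}(\eta)\cap A$: the continuation $\tilde w$ of some $w\in\eta^{(1)}$ may re-enter $A$ and contribute points not a priori in $\mathcal{I}(\eta)$. To fix this, I would not sample the continuations entirely freely, but rather couple the ``return excursions'' they generate (the segments of $\tilde w$ between successive re-entries to $A$ and the next exits of $B$) with excursions of $\eta^{(2)}$. Two inputs make this feasible: by \eqref{eq:BHBZdBallHitting} the probability that any given continuation ever returns to $A$ is at most $q\le cs^{-c(\varepsilon)}$, so the total number of return excursions is Poisson-concentrated at a small value with error $\le ce^{-cs^{c(\varepsilon)}}$; and by \eqref{eq:EquilDistHitDistFromFar} the entry distribution into $A$ from any point of $\partial_e B$ is comparable to the normalised equilibrium $e_A/\mathrm{cap}(A)$, so the distribution of each individual return excursion is absolutely continuous with respect to $\kappa_3/\mathrm{cap}(A)$ with a bounded Radon--Nikodym derivative. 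Combined, these show that the aggregate intensity of return excursions on $\Gamma(\mathbb{Z}^d)$ is pointwise dominated by $u\delta\kappa_3$, provided the implicit constant in the hypothesis $\delta\ge cs^{-c(\varepsilon)}$ is chosen large enough relative to the exponent appearing in the bound on $q$. Under this domination, a Poissonian thinning/superposition coupling lets me draw the return excursions as a sub-multiset of $\eta^{(2)}$ except on a bad event of probability $\le ce^{-cs^{c(\varepsilon)}}$, and on the good event $\mathcal{I}_1\cap A\subset\mathcal{I}(\eta^{(1)})\cup\mathcal{I}(\eta^{(2)})=\mathcal{I}(\eta)\cap A$, yielding \eqref{eq:ThirdCouplingInclusion}.
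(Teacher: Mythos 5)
Your overall strategy is the one the paper uses: peel off a $\delta$-fraction of $\eta$ to supply the ``return excursions'', use the domination of the hitting distribution on $A$ from $\partial_{e}B$ by a multiple of $\bar{e}_{A}$ (via \eqref{eq:EquilDistHitDistFromFar} and \eqref{eq:BHBZdBallHitting}) to decouple successive returns, and reassemble full $P_{e_{A}}^{\mathbb{Z}^{d}}$-trajectories whose trace in $A$ has the interlacement law. However, there is a genuine gap in your treatment of the independence of $\mathcal{I}_{1}$ and $\mathcal{I}_{2}$, which the proposition requires (and which the proof of \prettyref{thm:Coupling} uses essentially, through \eqref{eq:AdditiveInterlacements}). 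In your construction the continuations of the points of $\eta^{(1)}$ have their return excursions coupled so as to coincide, with high probability, with points of $\eta^{(2)}$; this necessarily makes the continuations, and hence $\mathcal{I}_{1}$, depend on $\eta^{(2)}$. At the same time you build $\mathcal{I}_{2}$ from \emph{all} of $\eta^{(2)}$, so $\mathcal{I}_{1}$ and $\mathcal{I}_{2}$ share the randomness of exactly those points of $\eta^{(2)}$ that serve as return excursions, and they are positively correlated rather than independent. The assertion ``by construction, is independent of $\mathcal{I}_{1}$'' is therefore false as stated. (The alternative reading, in which the continuations are sampled independently of $\eta^{(2)}$, makes the lower inclusion $\mathcal{I}_{1}\cap A\subset\mathcal{I}(\eta)\cap A$ fail, since independently sampled return excursions will almost never coincide with points of $\eta^{(2)}$.)

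The paper resolves exactly this tension in \prettyref{lem:Mu234etc-1}: the $\delta$-part $\theta$ of $\eta$ is written as $\sum_{i=1}^{N}\delta_{w_{i}}$ with $N$ and the i.i.d.\ $w_{i}$ independent, the multi-visit pieces feeding $\mathcal{I}_{1}$ are assembled from the block $w_{1},\dots,w_{\bar{K}}$ with $\bar{K}$ determined by \emph{independent} Poisson counts $N_{i}$, and the part feeding $\mathcal{I}_{2}$ uses the disjoint block $w_{\bar{K}+1},\dots,w_{\bar{K}+N}$ (replenished with fresh i.i.d.\ excursions so that it regains full intensity $u\delta\kappa_{3}$). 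This disjoint-block bookkeeping is what simultaneously gives the almost sure inclusion of the glued pieces in $\mathcal{I}(\theta)$ on the event $\{\bar{K}\le N\}$ \emph{and} the independence of the two resulting point processes; your sketch omits it. A secondary, smaller issue: the collection of return excursions generated by the continuations is not a Poisson point process (successive returns of one continuation are dependent), so ``pointwise intensity domination by $u\delta\kappa_{3}$ plus thinning/superposition'' does not directly produce the coupling you invoke; one needs the product-measure domination on $\Gamma(\mathbb{Z}^{d})^{i}$ as in \eqref{eq:Kappa4lUpperBound} together with the counting estimate \prettyref{lem:SumOfPoissonsLemma}. Both points are repairable, but repairing them leads you to the paper's proof.
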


\prettyref{pro:CouplePPoEandRI} will be proved in \prettyref{sec:ToRI}.

We are now ready to start the proof of \prettyref{thm:Coupling}.
We will apply \prettyref{pro:CoupleRWandPPoE} to the random walk,
then apply \prettyref{cor:CouplePPoEandIndepPPoEs} to the resulting
Poisson point processes, and finally apply \prettyref{pro:CouplePPoEandRI}
to the Poisson point processes resulting from \prettyref{cor:CouplePPoEandIndepPPoEs}.
This gives us random subsets of $\mathbb{Z}^{d}$ from which we will
construct random interlacements.
\begin{proof}[Proof of \prettyref{thm:Coupling}.]
Throughout the proof we decrease $\Cr{couplingerr}(\varepsilon)$ whenever
necessary so that the conditions on $u,\delta$ and $n$ needed for
\prettyref{pro:CoupleRWandPPoE}, \prettyref{cor:CouplePPoEandIndepPPoEs}
or \prettyref{pro:CouplePPoEandRI} to hold are fulfilled. We first
apply \prettyref{pro:CoupleRWandPPoE} with $\frac{\delta}{14}$ in
place of $\delta$ to get the space $(\Omega_{2},\mathcal{A}_{2},Q_{2})$
and independent Poisson point processes $\mu_{1}$ and $\mu_{2}$
such that $\mu_{1}$ has intensity $u(1-\frac{\delta}{14})\kappa_{1}$,
$\mu_{2}$ has intensity $u\frac{\delta}{7}\kappa_{1}$ and
\begin{equation}
Q_{2}[\mathcal{I}(\mu_{1})\cap\bar{A}\subset Y(0,uN^{d})\cap\bar{A}\subset\mathcal{I}(\mu_{1}+\mu_{2})\cap\bar{A}]\ge1-cue^{-cs^{c(\varepsilon)}}\mbox{ for }s\ge c(\varepsilon).\label{eq:CouplingRWPPoE}
\end{equation}

\medskip\n
Next we apply \prettyref{cor:CouplePPoEandIndepPPoEs} once with $\mu_{1}$
in place of $\nu$, $u(1-\frac{\delta}{14})$ in place of $u$ and
$\frac{\delta}{14}$ in place of $\delta$ and extend the space $(\Omega_{2},\mathcal{A}_{2},Q_{2})$
to get the space $(\Omega_{1},\mathcal{A}_{1},Q_{1})$ with Poisson
point processes $\nu_{1}^{i},\nu_{2}^{i}$ of intensities $u(1-\frac{\delta}{14})^{2}\kappa_{3}$
and $u(1-\frac{\delta}{14})\frac{\delta}{7}\kappa_{3}$ such that
$\nu_{1}^{i},\nu_{2}^{i},i=1,...,n,\mu_{2}$ are mutually independent
and (for $s\ge c(\varepsilon)$)
\begin{equation}
Q_{1}[\mathcal{I}(\nu_{1}^{i})\cap A\subset(\mathcal{I}(\mu_{1})-x_{i})\cap A\subset\mathcal{I}(\nu_{1}^{i}+\nu_{2}^{i})\cap A\mbox{ for all }i]\ge1-ce^{-cs^{c(\varepsilon)}}.\label{eq:CouplingSecondStep}
\end{equation}

\smallskip\n
For convenience we may ``thicken'' each $\nu_{2}^{i}$ so that they
have intensity $u\frac{\delta}{7}\kappa_{3}$, while preserving the
independence of $\nu_{1}^{i},\nu_{2}^{i},i=1,...,n,\mu_{2}$ and the
validity \eqref{eq:CouplingSecondStep} (by extending the space with independent
Poisson point processes of intensities given by an appropriate multiple of $\kappa_3$, and adding
them to the original $\nu_2^i$). Repeating this extension
but with $\mu_{2}$ in place of $\mu$, $u\frac{\delta}{7}$ in place
of $u$ and $1$ in place of $\delta$, we furthermore get processes
$\nu_{3}^{i}$ of intensity $u\frac{2\delta}{7}\kappa_{3}$ (arising
from the $\nu_{2}^{i}$ in the statement of \prettyref{cor:CouplePPoEandIndepPPoEs},
the $\nu_{1}^{i}$ in the statement of \prettyref{cor:CouplePPoEandIndepPPoEs}
are zero since $u(1-\delta)=0$) such that $\nu_{1}^{i},\nu_{2}^{i},\nu_{3}^{i},i=1,...,n$
are mutually independent and 
\begin{equation}
Q_{1}[(\mathcal{I}(\mu_{2})-x_{i})\cap A\subset\mathcal{I}(\nu_{3}^{i})\cap A\mbox{ for all }i]\ge1-ce^{-cs^{c(\varepsilon)}}\mbox{ for }s\ge c(\varepsilon).\label{eq:CouplingSecondStepSecondPart}
\end{equation}

\smallskip\n
Now apply \prettyref{pro:CouplePPoEandRI} with $u(1-\frac{\delta}{14})^{2}$
in place of $u$, $\frac{\delta}{14}$ in place of $\delta$, and
$\nu_{1}^{1}$ in place of $\mu$ and extend the space with mutually
independent sets $\mathcal{I}_{1,1},\mathcal{I}_{2,1}$ (independent
of $\nu_{1}^{j},\nu_{2}^{i},\nu_{3}^{i},j\ge2,i\ge1$) such that $\mathcal{I}_{1,1}$
has the law of $\mathcal{I}^{u(1-\frac{\delta}{14})^{3}}\cap A$ under
$Q_{0}$, $\mathcal{I}_{2,1}$ has the law of $\mathcal{I}^{u(1-\frac{\delta}{14})^{2}\frac{\delta}{7}}\cap A$
under $Q_{0}$, and $Q_{1}[\mathcal{I}_{1,1}\cap A\subset\mathcal{I}(\nu_{1}^{1})\cap A\subset(\mathcal{I}_{1,1}\cup\mathcal{I}_{2,1})\cap A]\ge1-cue^{-cs^{c}}$
for $s\ge c(\varepsilon)$. Then apply \prettyref{pro:CouplePPoEandRI}
once again with $u\frac{3\delta}{7}$ in place of $u$, $1$ in place
of $\delta$ and $\nu_{2}^{1}+\nu_{3}^{1}$ (which is a Poisson point
process of intensity $u\frac{3\delta}{7}\kappa_{3}$) in place of $\eta$, to
extend the space with a random set $\mathcal{I}_{3,1}$ (independent
of $\mathcal{I}_{1,1},\mathcal{I}_{2,1},\nu_{1}^{i},\nu_{2}^{i},\nu_{3}^{i},i\ge2$)
such that $\mathcal{I}_{3,1}$ has the law of $\mathcal{I}^{u\frac{6\delta}{7}}$
under $Q_{0}$, and such that $Q_{1}[\mathcal{I}(\nu_{2}^{1}+\nu_{3}^{1})\cap A\subset\mathcal{I}_{3,1}\cap A]\ge1-cue^{-cs^{c}}$ for $s\ge c(\varepsilon)$ (similarly to before $\mathcal{I}_{3,1}$
arises from the $\mathcal{I}_{2}$ of the statement of \prettyref{pro:CouplePPoEandRI},
$\mathcal{I}_{1}$ is empty since $u(1-\delta)=0$). We can repeat
this for $i=2,3,...,n,$ each time extending the space, to get mutually
independent sets $\mathcal{I}_{1,i},\mathcal{I}_{2,i},\mathcal{I}_{3,i},i=1,...,n,$
such that for each $j=1,2,3$ the $\mathcal{I}_{j,i},i=1,...,n,$
have the same law, and for all $i$ and $s\ge c(\varepsilon)$ 
\begin{equation}
Q_{1}[\mathcal{I}_{1,i}\cap A\subset\mathcal{I}(\nu_{1}^{i})\cap A\subset(\mathcal{I}_{1,i}\cup\mathcal{I}_{2,i})\cap A,\mathcal{I}(\nu_{2}^{i}+\nu_{3}^{i})\cap A\subset\mathcal{I}_{3,i}\cap A]\ge1-cue^{-cs^{c}}.\label{eq:CouplingPPoERI}
\end{equation}

\smallskip\n
By \prettyref{eq:CouplingRWPPoE}, \prettyref{eq:CouplingSecondStep},
\prettyref{eq:CouplingSecondStepSecondPart} and \eqref{eq:CouplingPPoERI}
we have for all $i$ and (possibly decreasing $\Cr{couplingerr}$ and recalling
that $u\ge s^{-c(\varepsilon)}$) that for $s\ge c(\varepsilon)$
\begin{equation}
Q_{1}[\mathcal{I}_{1,i}\cap A\subset(Y(0,uN^{d})-x_{i})\cap A\subset(\mathcal{I}_{1,i}\cup\mathcal{I}_{2,i}\cup\mathcal{I}_{3,i})\cap A]\ge1-\Cr{couplingerr}^{-1}ue^{-\Cr{couplingerr}^{-1}s^{\Cr{couplingerr}}}.\label{eq:CouplingAllPropsTogether}
\end{equation}

\smallskip\n
It now only remains to construct ``proper'' random interlacements
from $\mathcal{I}_{1,i},\mathcal{I}_{2,i},\mathcal{I}_{3,i},i=1,...,n.$

By \prettyref{eq:AdditiveInterlacements} the $\mathcal{I}_{2,i}\cup\mathcal{I}_{3,i}$
have the law of $\mathcal{I}^{u_{2}}\cap A$ under $Q_{0}$, where
$u_{2}=u(1-\frac{\delta}{14})^{2}\frac{\delta}{7}+u\frac{6\delta}{7}$.
Once again by \prettyref{eq:AdditiveInterlacements} the pair $(\mathcal{I}_{1,i}\cap A,(\mathcal{I}_{1,i}\cup\mathcal{I}_{2,i}\cup\mathcal{I}_{3,i})\cap A)$
has the law of $(\mathcal{I}^{u_{1}}\cap A,\mathcal{I}^{u_{1}+u_{2}}\cap A)$
under $Q_{0}$, where $u_{1}=u(1-\frac{\delta}{14})^{3}$. But this
pair takes only finitely many values (so that the set of values that
are taken with positive probability together have probability one),
so we can, by ``sampling from the conditional law (under $Q_{0}$)
of $(\mathcal{I}^{u})_{u\ge0}$ given $(\mathcal{I}^{u_{1}}\cap A,\mathcal{I}^{u_{1}+u_{2}}\cap A)$'',
construct for each $i=1,...,n,$ a family $(\mathcal{I}_{i}^{u})_{u\ge0}$
with the law of $(\mathcal{I}^{u})_{u\ge0}$ under $Q_{0}$ such that
$\mathcal{I}_{i}^{u(1-\delta)}\cap A\subset\mathcal{I}_{i}^{u_{1}}\cap A=\mathcal{I}_{1,i}\cap A$
(recall \eqref{eq:IncreasingInterlacements} and note $u(1-\delta)\le u(1-\frac{3\delta}{14})\le u_{1}$)
almost surely and $(\mathcal{I}_{1,i}\cup\mathcal{I}_{2,i}\cup\mathcal{I}_{3,i})\cap A=\mathcal{I}_{i}^{u_{1}+u_{2}}\cap A\subset\mathcal{I}_{i}^{u(1+\delta)}$
(note $u_{1}+u_{2}\le u+u\frac{\delta}{7}+u\frac{6\delta}{7}=u(1+\delta)$)
almost surely, which combined with \eqref{eq:CouplingAllPropsTogether}
implies \eqref{eq:CouplingInclusion} for $s\ge c(\varepsilon)$.
But by adjusting constants \eqref{eq:CouplingInclusion} holds for
all $s$, so the proof of \prettyref{thm:Coupling} is complete.
\end{proof}
\prettyref{thm:Coupling} (and therefore also \prettyref{thm:G_Gumbel})
has now been reduced to \prettyref{pro:CoupleRWandPPoE}, \prettyref{pro:CouplePPoEandUFreePPoE}
and \prettyref{pro:CouplePPoEandRI}.

\section{\label{sec:Quasistationary}Quasistationary distribution}

In this section we introduce the \emph{quasistationary distribution},
which is a probability distribution on $\mathbb{T}_{N}\backslash\bar{C}$
(recall our standing assumption \prettyref{eq:StandingSeparationAssumption},
\prettyref{eq:ABCNotation} and \prettyref{eq:SetUnionTranslateNotation})
denoted by $\sigma(\cdot)$ and which will be an essential tool when
we prove (in \prettyref{sec:Poissonization}) the coupling \prettyref{pro:CoupleRWandPPoE}.

The main result is \prettyref{pro:QSConvToQuasiStat}, which says
that for all $x,y\in\mathbb{T}_{N}\backslash\bar{C}$ the probability
$P_{x}[Y_{t}=y|H_{\bar{C}}>t^{\star}]$ (recall the definition of
$t^{\star}$ from \prettyref{eq:DefOfUandtStar}) is very close to
$\sigma(y)$ (and thus almost independent of $x$). The result will
allow us to show, in \prettyref{sec:Poissonization}, that regardless
of where the random walk $Y_{\cdot}$ starts, $Y_{U}$ (where $U$
was defined in \prettyref{eq:DefOfUandtStar}) is very close in distribution
to the quasistationary distribution, and this in turn will let us
``cut the random walk'' $Y_{\cdot}$ into almost independent excursions,
each with law close to $P_{\sigma}[Y_{\cdot\wedge U}\in dw]$ (cf.
\prettyref{eq:DefOfKappa1}). This will be the main step in constructing
the Poisson processes $\mu_{1}$ and $\mu_{2}$ from the statement
of \prettyref{pro:CoupleRWandPPoE}.

At the end of this section we also give a result that says that the
hitting distribution on $\partial_{i}\bar{A}$ when starting random
walk from the quasistationary distribution is approximately the normalized
sum of the equilibrium distributions on $A_{1},A_{2},...,A_{n}$ (see
\prettyref{eq:QSEQQuasiStatEquilMeas}). This result will be used
several times in the subsequent sections.

Let us now formally introduce the quasistationary distribution. We
define the $(N^{d}-|\bar{C}|)\times(N^{d}-|\bar{C}|)$ matrix $(P^{\bar{C}})_{x,y\in\mathbb{T}_{N}\backslash\bar{C}}=\frac{1}{2d}1_{\{x\sim y\}},$where
$x\sim y$ means that $x$ and $y$ share an edge in $\mathbb{T}_{N}$.
When $s\ge c(\varepsilon)$ so that $\mathbb{T}_{N}\backslash\bar{C}$
is connected the Perron-Frobenius theorem (Theorem 8.2, p. 151 in
\cite{SerreMatrices}) implies that this (real symmetric, non-negative
and irreducible) matrix has a unique largest eigenvalue $\lambda_{1}^{\bar{C}}$
with a non-negative normalized eigenvector $v_{1}$. We let $\lambda_{2}^{\bar{C}}$
denote the second largest eigenvalue of $P^{\bar{C}}$. The quasistationary
distribution $\sigma$ on $\mathbb{T}_{N}\backslash\bar{C}$ is then defined by
\begin{equation}
\sigma(x)=\frac{(v_{1})_{x}}{v_{1}^{T}{\bf 1}}\mbox{ for }x\in\mathbb{T}_{N}\backslash\bar{C}.\label{eq:DefOfQuasiStat}
\end{equation}
Since $\mathbb{T}_{N}\backslash\bar{C}$ is connected (when $s\ge c(\varepsilon)$)
it holds that (see (6.6.3), p. 91 in \cite{keilson1979markov})
\begin{equation}
\lim_{t\rightarrow\infty}P_{x}[Y_{t}=y|H_{\bar{C}}>t]=\sigma(y)\mbox{ for all }x,y\in\mathbb{T}_{N}\backslash\bar{C}.\label{eq:QSNonQuantConv}
\end{equation}
\prettyref{pro:QSConvToQuasiStat}, the main result of the section,
is a quantitative version of \prettyref{eq:QSNonQuantConv}, which
we now state. Recall once again the assumption \prettyref{eq:StandingSeparationAssumption},
and the definition of $t^{\star}$ in \prettyref{eq:DefOfUandtStar}.
\begin{prop} \label{pro:QSConvToQuasiStat}($d\ge3$,$N\ge3$) 

\medskip
If $n\le s^{c(\varepsilon)}$ and $s\ge c(\varepsilon)$ then
\begin{equation}
\sup_{x,y\in\mathbb{T}_{N}\backslash\bar{C}}\left|P_{x}[Y_{t^{\star}}=y|H_{\bar{C}}>t^{\star}]-\sigma(y)\right|\le ce^{-cN^{c(\varepsilon)}}.\label{eq:QSConvToQuasiStat}
\end{equation}
\end{prop}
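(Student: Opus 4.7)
The plan is to use the spectral decomposition of the symmetric sub-stochastic matrix $P^{\bar{C}}$. Let $\lambda_1^{\bar{C}} > \lambda_2^{\bar{C}} \ge \ldots$ denote its eigenvalues with $\ell^2$-orthonormal eigenvectors $v_1, v_2, \ldots$; by Perron--Frobenius $v_1 > 0$ entrywise and $\sigma(y) = v_1(y)/\langle v_1, \mathbf{1}\rangle$. Since the killed continuous-time semigroup is $e^{t(P^{\bar{C}}-I)}$, one has
\[
P_x[Y_{t^\star}=y,\, H_{\bar{C}}>t^\star] = \sum_{k\ge 1} e^{-t^\star(1-\lambda_k^{\bar{C}})}\, v_k(x) v_k(y),
\]
and summing over $y\in\mathbb{T}_N\setminus\bar{C}$ yields the analogous spectral formula for $P_x[H_{\bar{C}}>t^\star]$. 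Factoring out the $k=1$ term in both numerator and denominator of the conditional probability gives
\[
P_x[Y_{t^\star}=y \mid H_{\bar{C}}>t^\star] = \sigma(y)\cdot\frac{1+R_1(x,y)}{1+R_2(x)},
\]
with $R_1,R_2$ higher-order spectral remainders. Cauchy--Schwarz and $\sum_k v_k(x)^2 = 1$ then yield
\[
|R_1(x,y)| \le \frac{e^{-t^\star(\lambda_1^{\bar{C}}-\lambda_2^{\bar{C}})}}{v_1(x)v_1(y)}, \qquad
|R_2(x)| \le \frac{e^{-t^\star(\lambda_1^{\bar{C}}-\lambda_2^{\bar{C}})}\, N^{d/2}}{v_1(x)\langle v_1,\mathbf{1}\rangle},
\]
so the proof reduces to (i) a spectral gap lower bound $\lambda_1^{\bar{C}} - \lambda_2^{\bar{C}} \ge c N^{-2}$ and (ii) a uniform pointwise lower bound $v_1(x) \ge c N^{-d/2}$.

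For (i), the Courant--Fischer min-max principle, applied with extension by zero from $\mathbb{T}_N \setminus \bar{C}$ to $\mathbb{T}_N$, yields $\lambda_2^{\bar{C}} \le \lambda_2(P) = 1 - \Theta(N^{-2})$, where $P$ is the full-torus transition matrix. Complementarily, evaluating the Rayleigh quotient of $P^{\bar{C}}$ on the indicator $\mathbf{1}_{\mathbb{T}_N\setminus\bar{C}}$ gives $\lambda_1^{\bar{C}} \ge 1 - c|\partial_e\bar{C}|/|\mathbb{T}_N\setminus\bar{C}|$. Using $n \le s^{c(\varepsilon)}$, $s \le N$, and $|\partial_e\bar{C}| \lesssim n\, s^{(d-1)(1-\varepsilon/4)}$, one obtains $1 - \lambda_1^{\bar{C}} \le C N^{-2 - c'(\varepsilon)}$, so the spectral gap is indeed of the desired order $N^{-2}$.

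The main obstacle is (ii). One route is to invoke the estimate \prettyref{lem:ETTEntraceTimesTorus} stated in this section (proved in the appendix), which should provide uniform-in-$x$ control on $P_x[H_{\bar{C}}>t^\star]$; combined with the spectral identity $P_x[H_{\bar{C}}>t^\star] = e^{-t^\star(1-\lambda_1^{\bar{C}})} v_1(x)\langle v_1,\mathbf{1}\rangle (1+O(e^{-cN^{c(\varepsilon)}}))$ obtained from (i), this pins down $v_1(x) \ge c N^{-d/2}$. Alternatively, starting from a maximizer $x^*$ where $v_1(x^*) \ge |\mathbb{T}_N\setminus\bar{C}|^{-1/2}$ (which exists since $\|v_1\|_2 = 1$), one can iterate $\lambda_1^{\bar{C}} v_1 = P^{\bar{C}} v_1$ over $\Theta(N^2)$ steps, using the full-torus mixing estimate \prettyref{eq:TorusMixing} and the fact that $\bar{C}$ is hit only with negligible probability in this window, to propagate the lower bound uniformly. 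Finally, since $t^\star = N^{2+\varepsilon/100}$ and the gap is $\gtrsim N^{-2}$, the factor $e^{-t^\star(\lambda_1^{\bar{C}}-\lambda_2^{\bar{C}})}$ is at most $e^{-cN^{\varepsilon/100}}$, which comfortably absorbs the polynomial-in-$N$ losses from $1/v_1$ and $N^{d/2}/\langle v_1,\mathbf{1}\rangle$; combined with $\sigma(y) \le 1$, this converts the multiplicative error into an absolute error and yields \prettyref{eq:QSConvToQuasiStat} after shrinking $c(\varepsilon)$.
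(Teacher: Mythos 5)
Your overall architecture (spectral expansion of $e^{-t^\star(I-P^{\bar C})}$, factor out the top eigenvector, control the remainder by a spectral gap times a lower bound on $v_1$) is exactly the paper's, and your reduction to (i) and (ii) is correct. But your proof of (i) has a genuine quantitative error. The Rayleigh quotient of $P^{\bar C}$ at the indicator of $\mathbb{T}_N\setminus\bar C$ gives $1-\lambda_1^{\bar C}\le c\,|\partial_e\bar C|/|\mathbb{T}_N\setminus\bar C|$, i.e.\ a \emph{surface-area over volume} bound of order $n\,s^{(d-1)(1-\varepsilon/4)}N^{-d}$. Already in the case $n=1$, $s=N$, $d=3$ (which is needed, e.g., for \prettyref{lem:RWOnePoint}) this is of order $N^{-1-\varepsilon/2}$, which is \emph{much larger} than the $N^{-2}$ scale of $1-\lambda_2^{\bar C}$; in general one would need $(d-1)\varepsilon/4>1$, which fails for all admissible $\varepsilon$. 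So this test function does not establish any spectral gap at all. The correct statement is that $1-\lambda_1^{\bar C}$ is governed by the \emph{capacity} of $\bar C$, not its surface area: one bounds $E[H_{\bar C}]\le\sum_t P_\pi[H_{\bar C}>t]\le\sum_t(\lambda_1^{\bar C})^t=(1-\lambda_1^{\bar C})^{-1}$, and then invokes $E[H_{\bar C}]\gtrsim N^d/(n\,{\rm cap}(C))\ge cN^{2+\varepsilon(d-2)/16}$ (this is \prettyref{eq:reqonn}, which rests on \prettyref{lem:ETTEntraceTimesTorus}). This hitting-time route is what the paper uses (following Lemma A.3 of \cite{TeixeiraWindischOnTheFrag}); your interlacing bound $\lambda_2^{\bar C}\le\lambda_2(P)$ for the other half of the gap is fine.

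On (ii), your route differs from the paper's, and this is worth noting: the paper proves $\inf_x\sigma(x)\ge N^{-cn}$ by a reversibility identity plus a Harnack chain of length $O(n\log N)$ covering $(\bar D\cup(y+D))^c$ (Lemmas \ref{lem:QSMQuasiStatMinimum}--\ref{lem:QSMCoveringLemma}), and explicitly flags this as the main effort of the section because $\bar C$ consists of many boxes. Your first suggestion --- deducing a pointwise lower bound on $v_1(x)$ from a uniform lower bound on $P_x[H_{\bar C}>t^\star]$ via the spectral identity --- is a genuinely different and plausibly simpler argument (it would even give the stronger bound $\sigma(x)\ge cN^{-d}$), but as written it is only a sketch: \prettyref{lem:ETTEntraceTimesTorus} controls $E_x[H_V]$ for $V\subset\bar B$, so you still need to (a) extend the uniform comparison to $V=\bar C$ and (b) convert the expectation bound into $P_x[H_{\bar C}>t^\star]\ge c$ at the fixed time $t^\star$ (e.g.\ via $E_x[H]\le t^\star+P_x[H>t^\star]\sup_yE_y[H]$), neither of which is automatic. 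Note also that the error term in the spectral identity you want to use here is controlled by the very gap whose proof is the broken step (i), so (i) must be repaired first.
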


To prove \prettyref{pro:QSConvToQuasiStat} we will express $P_{x}[Y_{t}=y|H_{\bar{C}}>t^{\star}]$
in terms of the matrix $P^{\bar{C}}$, and then use the spectral expansion
of $P^{\bar{C}}$ to prove that $P_{x}[Y_{t}=y|H_{\bar{C}}>t^{\star}]$
is close to $\frac{(v_{1})_{y}}{v_{1}^{T}\bm{1}}$. To control the
error we will need an estimate of the spectral gap of $P^{\bar{C}}$,
which we obtain in \prettyref{lem:QSSpectralGap}, and a lower bound
on the minimum of $\sigma(\cdot)$, which we obtain in \prettyref{lem:QSMQuasiStatMinimum}.
This is the approach taken to prove Lemma 3.9 in \cite{TeixeiraWindischOnTheFrag},
which is essentially the same result when $n=1$ so that $\bar{C}$
consists of only one box. Since for us $\bar{C}$ consists of many
boxes, bounding the minimum of $\sigma(\cdot)$ is harder, and achieving
a good enough bound will consume most of our efforts in this section.

We prove \prettyref{pro:QSConvToQuasiStat} after introducing \prettyref{lem:QSSpectralGap}
and \prettyref{lem:QSMQuasiStatMinimum}. To prove \prettyref{lem:QSSpectralGap}
we will need the following lemma, which roughly speaking says that
$E[H_{V}]\approx\frac{N^{d}}{\mbox{cap}(V)}$ for appropriate sets
$V\subset\mathbb{T}_{N}$.
\begin{lem} \label{lem:ETTEntraceTimesTorus}($d\ge3$,$N\ge3$) 

\medskip
For any (non-empty) $V\subset\bar{C}$ let $V^{i}=(V\cap C_{i})-x_{i}\subset\mathbb{Z}^{d},i=1,...,n$. Then if $s\ge c(\varepsilon)$ we have
\begin{equation}
\frac{N^{d}}{E[H_{V}]\sum_{i=1}^{n}\,{\rm cap}(V^{i})}\le1+c(\varepsilon)s^{-c(\varepsilon)}.\label{eq:ETTEntranceTimeLowerBound}
\end{equation}
Furthermore if $V\subset\bar{B}$ and $n\le s^{c(\varepsilon)}$ then
\begin{eqnarray}
 & 1-c(\varepsilon)s^{-c(\varepsilon)}\le\frac{N^{d}}{E[H_{V}]\sum_{i=1}^{n} {\rm cap}(V^{i})},\mbox{ and }\label{eq:ETTEntraceTimeUpperBound}\\
 & (1-c(\varepsilon)s^{-c(\varepsilon)})E[H_{V}]\le\underset{x\notin\bar{C}}{\inf}E_{x}[H_{V}]\le\underset{x\in\mathbb{T}_{N}}{\sup}E_{x}[H_{V}]\le(1+cN^{-c(\varepsilon)})E[H_{V}].\label{eq:ETTStartingFromPointandUniformComparison}
\end{eqnarray}
\end{lem}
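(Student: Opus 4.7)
The plan is to prove the three bounds by combining capacity-method heuristics for hitting times with the torus mixing estimate \eqref{eq:TorusMixing}, the ball-hitting bounds of Lemma \ref{lem:BHBBallHittingBound}, and the hitting-distribution and relative-equilibrium estimates \eqref{eq:EquilDistHitDistFromFar}, \eqref{eq:RelativeEquilUpperBound}. The picture is that under $P$ the walk hits $V$ only after first approaching some $C_i$ from far away; the entry distribution on $\partial_i C_i$ is close to the normalised equilibrium $e_{C_i}(\cdot - x_i)/\mathrm{cap}(C_i)$, and the conditional probability of subsequently hitting $V^i$ before exiting $C_i$ is $\mathrm{cap}(V^i)/\mathrm{cap}(C_i)$ up to a small error, so the effective hitting rate of $V$ is $\sum_i \mathrm{cap}(V^i)/N^d \cdot (1 + o(1))$.

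For \eqref{eq:ETTEntranceTimeLowerBound}, I would partition the time axis into windows of length $t^\star = N^{2+\varepsilon/100}$ and use \eqref{eq:TorusMixing} at each endpoint so that the walk is essentially uniform there. Within one window bound $P_\pi[H_V \le t^\star]$ from above by decomposing on the first entry into $\bar C$: such an entry occurs with probability $\le t^\star \mathrm{cap}(\bar C)/N^d \cdot (1+o(1))$, the entry distribution on $\partial_i C_i$ is close to $e_{C_i}(\cdot - x_i)/\mathrm{cap}(C_i)$ by \eqref{eq:EquilDistHitDistFromFar}, and the conditional probability of reaching $V^i$ before exiting $C_i$ is $\mathrm{cap}(V^i)/\mathrm{cap}(C_i) \cdot (1 + c s^{-c(\varepsilon)})$ by \eqref{eq:RelativeEquilUpperBound}. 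Summing over $i$, the $\mathrm{cap}(\bar C)$ factor cancels and one obtains $P_\pi[H_V \le t^\star] \le t^\star \sum_i \mathrm{cap}(V^i)/N^d \cdot (1 + c s^{-c(\varepsilon)})$; iterating across mixing-separated windows and integrating gives the desired lower bound on $E[H_V]$. This argument does not use the hypothesis $n \le s^{c(\varepsilon)}$, consistent with the statement.

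For the matching upper bound \eqref{eq:ETTEntraceTimeUpperBound}, I would apply a second-moment / Paley--Zygmund argument to the number $N_T$ of distinct entries of $Y$ into $V$ within $[0,T]$ for a suitable $T \gg N^d/\sum_i \mathrm{cap}(V^i)$. The first moment satisfies $E[N_T] = T \sum_i \mathrm{cap}(V^i)/N^d \cdot (1+o(1))$ by the same calculation; for the second moment, the hypothesis $V \subset \bar B$ combined with \eqref{eq:BHBBallHittingBound} ensures that after leaving $\bar C$ the walk takes at least $t^\star$ units of time to return to $V$ with probability $1 - s^{-c(\varepsilon)}$, so successive entries are well separated in time and $\mathrm{Var}(N_T) \lesssim E[N_T]$. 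Paley--Zygmund then gives $P[H_V \le T] \ge (1 - c s^{-c(\varepsilon)}) T \sum_i \mathrm{cap}(V^i)/N^d$, yielding the desired upper bound on $E[H_V]$. For \eqref{eq:ETTStartingFromPointandUniformComparison}, I would couple $(Y_{s+\lambda N^2})_{s \ge 0}$ under $P_x$ with $(Y_s)_{s\ge 0}$ under $P$ via \eqref{eq:TorusMixing} with $\lambda = N^{c(\varepsilon)}$; the upper half follows immediately since $\lambda N^2 \ll E[H_V]$, and for the lower half when $x \notin \bar C$ an additional use of \eqref{eq:BHBBallHittingBound} gives $P_x[H_V \le \lambda N^2] \le cN^{-c(\varepsilon)}$, making the pre-coupling contribution negligible.

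The main technical obstacle is uniformly obtaining the $s^{-c(\varepsilon)}$ error bound throughout the first-moment computation, where one must compare the entry distribution on $\partial_i C_i$ with normalised equilibrium and the hitting probability of $V^i$ with $\mathrm{cap}(V^i)/\mathrm{cap}(C_i)$. The critical ingredient is \eqref{eq:RelativeEquilUpperBound} applied with inner radius $r = s^{1-\varepsilon/2}$ (the radius of $B$) and outer radius $r^{1+\lambda} = s^{1-\varepsilon/4}$ (the radius of $C$), whose ratio supplies the $s^{-c(\varepsilon)}$ factor; this is precisely why the strengthened hypothesis $V \subset \bar B$ is needed for \eqref{eq:ETTEntraceTimeUpperBound} and \eqref{eq:ETTStartingFromPointandUniformComparison} but not for \eqref{eq:ETTEntranceTimeLowerBound}. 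The restriction $n \le s^{c(\varepsilon)}$ ensures that accumulating these errors over the $n$ box components preserves the order $s^{-c(\varepsilon)}$.
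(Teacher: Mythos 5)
Your overall architecture differs from the paper's: the paper does not re-derive the capacity--hitting-time relation at all, but imports the sandwich $(1-c\sup_{x\notin\bar D}|E_x[H_V]/E[H_V]-1|)\sum_i{\rm cap}_D(V^i)\le N^d/E[H_V]\le(\pi(\mathbb{T}_N\setminus\bar D))^{-2}\sum_i{\rm cap}_D(V^i)$ from (3.22)--(3.23) of \cite{TeixeiraWindischOnTheFrag}, then merely converts relative to absolute capacities via \eqref{eq:RelativeEquilLowerBound}--\eqref{eq:RelativeEquilUpperBound} and proves \eqref{eq:ETTStartingFromPointandUniformComparison} by the mixing argument (which your third paragraph reproduces essentially correctly). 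You are in effect attempting to re-prove the Teixeira--Windisch input from scratch by excursion counting, which is legitimate in principle but is where the trouble lies.

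The genuine gap is one of precision. The lemma asserts a multiplicative error $1+c(\varepsilon)s^{-c(\varepsilon)}$, but the tool you invoke for the entry distribution on $\partial_i C_i$, namely \eqref{eq:EquilDistHitDistFromFar}, only sandwiches $P_x[Y_{H_K}=y\,|\,H_K<\infty]$ between $\Cr{eqhitlower}$ and $\Cr{eqhitupper}$ times the normalised equilibrium measure, where $\Cr{eqhitlower}<1<\Cr{eqhitupper}$ are fixed dimensional constants (the hypothesis there is only $x\notin B(0,\Cr{eqhitdist}r)$, i.e.\ $x$ at distance \emph{comparable} to $r$, for which nothing sharper can hold). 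Feeding a constant-factor-accurate entry law into your telescoping product yields $P_\pi[H_V\le t^\star]\le c\,t^\star\sum_i{\rm cap}(V^i)/N^d$ with an unspecified constant $c$, not $1+o(1)$, and the same defect propagates into the first moment $E[N_T]$ of your second-moment argument. Moreover, ``${\rm Var}(N_T)\lesssim E[N_T]$'' is not enough for Paley--Zygmund to recover the sharp constant: you need $E[N_T^2]\le(1+cs^{-c})E[N_T]+(1+cs^{-c})E[N_T]^2$, i.e.\ genuine near-Bernoulli behaviour, which again requires the $(1+o(1))$-accurate excursion probability. Finally, the step ``the conditional probability of reaching $V^i$ before exiting $C_i$ is ${\rm cap}(V^i)/{\rm cap}(C_i)(1+cs^{-c})$ by \eqref{eq:RelativeEquilUpperBound}'' is not what that display says; it compares $e_{K,U}$ with $e_K$ and does not compute a hitting probability from the equilibrium measure of a larger set. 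The robust way to get the sharp constants without ever controlling the entry law is a last-exit decomposition, which gives the exact identity $P_\pi[H_V\le t]=N^{-d}\sum_{y\in V}E_y[\tilde H_V\wedge t]$ (suitably interpreted in continuous time) and hence $P_\pi[H_V\le t]=tN^{-d}\sum_i{\rm cap}_D(V^i)(1+O(s^{-c}))$ directly; this is essentially the content of the cited Teixeira--Windisch result, and either citing it or reproducing that decomposition is needed to close your argument.
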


The proof of \prettyref{lem:ETTEntraceTimesTorus} is contained in
the appendix. We are now ready to prove \prettyref{lem:QSSpectralGap}
about the spectral gap of $P^{\bar{C}}$.
\begin{lem} \label{lem:QSSpectralGap}($d\ge3$,$N\ge3$) 

\medskip
If $n\le s^{c(\varepsilon)}$ and $s\ge c(\varepsilon)$ we have
\begin{equation}
\lambda_{1}^{\bar{C}}-\lambda_{2}^{\bar{C}}\ge cN^{-2}.\label{eq:QSSpectralGap}
\end{equation}
\end{lem}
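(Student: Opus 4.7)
The plan is to establish the two complementary bounds
\begin{equation*}
\lambda_2^{\bar C} \le 1 - c_0 N^{-2} \qquad \mbox{and} \qquad 1 - \lambda_1^{\bar C} \le c N^{-2-\delta},
\end{equation*}
for some constants $c_0, \delta > 0$, whose difference immediately yields \eqref{eq:QSSpectralGap}. For the first, I would invoke Cauchy's interlacing theorem for principal submatrices. Since SRW on $\mathbb{T}_N$ is reversible with respect to the uniform measure, the full transition matrix $P$ is a symmetric $N^d \times N^d$ matrix, and $P^{\bar C}$ is exactly the principal submatrix obtained by deleting the rows and columns indexed by $\bar C$. Interlacing then gives $\lambda_2^{\bar C} \le \mu_2$, where $1 = \mu_1 \ge \mu_2 \ge \ldots$ are the eigenvalues of $P$. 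These are explicitly $\mu_{(k_1,\ldots,k_d)} = \frac{1}{d}\sum_j \cos(2\pi k_j/N)$ for $k \in \{0,\ldots,N-1\}^d$, so $\mu_2 = 1 - \frac{1-\cos(2\pi/N)}{d} \le 1 - c_0/N^2$ by Taylor expansion.

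For the lower bound on $\lambda_1^{\bar C}$, I would use that by \eqref{eq:DefOfQuasiStat} the measure $\sigma$ is the normalized Perron eigenvector of $P^{\bar C}$, so that in continuous time $P_\sigma[H_{\bar C} > t] = e^{-(1-\lambda_1^{\bar C})t}$, and consequently $E_\sigma[H_{\bar C}] = 1/(1-\lambda_1^{\bar C})$. Expanding $(I - P^{\bar C})^{-1}$ in the orthonormal eigenbasis $\{v_k\}$ of $P^{\bar C}$ and setting $S_k = \langle v_k, \bm{1}_{\bar C^c}\rangle$, one finds
\begin{equation*}
E_\pi[H_{\bar C}] = \frac{1}{N^d}\sum_{x \notin \bar C} E_x[H_{\bar C}] = \frac{1}{N^d}\sum_k \frac{S_k^2}{1-\lambda_k^{\bar C}} \le \frac{|\bar C^c|}{N^d(1-\lambda_1^{\bar C})} \le E_\sigma[H_{\bar C}],
\end{equation*}
where I use $1-\lambda_k^{\bar C} \ge 1-\lambda_1^{\bar C} > 0$, Parseval's identity $\sum_k S_k^2 = |\bar C^c|$, and $|\bar C^c| \le N^d$. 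Applying \eqref{eq:ETTEntranceTimeLowerBound} with $V = \bar C$ (so that each $V^i$ equals $C$) together with \eqref{eq:AsymptoticsOfCapOfBox} yields $E_\pi[H_{\bar C}] \ge c N^d/(n \cdot {\rm cap}(C)) \ge c N^d/(n s^{(1-\varepsilon/4)(d-2)})$. Since $n \le s^{c(\varepsilon)}$ and $s \le N$, choosing $c(\varepsilon)$ small enough (say $c(\varepsilon) < \varepsilon(d-2)/8$) gives $n s^{(1-\varepsilon/4)(d-2)} \le N^{d-2-\delta}$ for some $\delta = \delta(\varepsilon) > 0$, whence $1 - \lambda_1^{\bar C} \le c N^{-2-\delta}$.

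Combining the two bounds yields $\lambda_1^{\bar C} - \lambda_2^{\bar C} \ge c_0/N^2 - c N^{-2-\delta} \ge c_0/(2N^2)$ provided $s$ (and thus $N \ge 2s$) exceeds some threshold $c(\varepsilon)$, which is guaranteed by the hypothesis $s \ge c(\varepsilon)$ after possibly increasing the latter. The main technical point is the passage from $E_\pi[H_{\bar C}]$ to $E_\sigma[H_{\bar C}]$: \prettyref{lem:ETTEntraceTimesTorus} does not apply directly to $\bar C$ since $\bar C \not\subset \bar B$, but the spectral expansion above resolves this in a single line, relying only on the non-negativity of each term $S_k^2/(1-\lambda_k^{\bar C})$ and the Parseval bound $\sum_k S_k^2 = |\bar C^c| \le N^d$.
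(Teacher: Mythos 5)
Your proof is correct and follows essentially the same route as the paper, which simply cites Lemma A.3 of \cite{TeixeiraWindischOnTheFrag} (interlacing against the explicit torus spectrum to bound $\lambda_{2}^{\bar{C}}$, plus the spectral expansion of $E[H_{\bar{C}}]$ to bound $1-\lambda_{1}^{\bar{C}}$) and records only the one modified ingredient, the hitting-time lower bound \eqref{eq:reqonn}, which you re-derive in the same way from \eqref{eq:ETTEntranceTimeLowerBound} and \eqref{eq:AsymptoticsOfCapOfBox}. In effect you have written out in full the argument the paper delegates to the citation.
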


\begin{proof}
Lemma A.3 of \cite{TeixeiraWindischOnTheFrag} contains a proof for
$n=1$ (note that $B$ in that lemma plays the role of $\bar{C}$
in this lemma). The proof for $n>1$ is almost identical; one replaces
$B$ with $\bar{C}$ and the inequality $E[H_{B}]\ge c(\varepsilon)N^{2+\frac{\varepsilon(d-2)}{2}}$
with
\begin{equation}\label{eq:reqonn}
E[H_{\bar{C}}]\overset{\eqref{eq:AsymptoticsOfCapOfBox},\eqref{eq:ABCNotation},\eqref{eq:ETTEntranceTimeLowerBound}}{\ge}\frac{c(\varepsilon)N^{d}}{s^{(1-\frac{\varepsilon}{4})(d-2)}n}\overset{s\le N}{\ge}c(\varepsilon)s^{\frac{\varepsilon(d-2)}{8}}\frac{N^{2+\frac{\varepsilon(d-2)}{8}}}{n}\overset{s\ge c(\varepsilon),n\le N^{\frac{\varepsilon(d-2)}{16}}}{\ge}cN^{2+\frac{\varepsilon(d-2)}{16}}.
\end{equation}
We omit the details.
\end{proof}
The bound on the minimum of $\sigma(\cdot)$ comes from the following lemma.
\begin{lem}
\label{lem:QSMQuasiStatMinimum}($d\ge3$,$N\ge3$) 

\medskip
If $s\ge c(\varepsilon)$ we have
\begin{equation}
\inf_{x\in\mathbb{T}_{N}\backslash\bar{C}}\sigma(x)\ge N^{-cn}.\label{eq:QSMQuasiStatMinimum}
\end{equation}
\end{lem}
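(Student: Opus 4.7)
The plan is to iterate the eigenvalue equation $\sigma P^{\bar{C}}=\lambda_1^{\bar{C}}\sigma$ in order to compare the value of $\sigma$ at an arbitrary $y$ with its value at some reference point of large mass. Since $\sigma$ is a probability measure on $\mathbb{T}_N\setminus\bar{C}$, a set of cardinality at most $N^d$, pigeonholing produces some $x^\star\in\mathbb{T}_N\setminus\bar{C}$ with $\sigma(x^\star)\ge N^{-d}$. Iterating the eigenvalue equation $k$ times yields, for any $y\in\mathbb{T}_N\setminus\bar{C}$,
\[
\sigma(y)\;=\;(\lambda_1^{\bar{C}})^{-k}\sum_{x}\sigma(x)(P^{\bar{C}})^k(x,y)\;\ge\;(\lambda_1^{\bar{C}})^{-k}N^{-d}\,P^{\mathrm d}_{x^\star}[Y^{\mathrm d}_k=y,\,H_{\bar{C}}>k],
\]
where $Y^{\mathrm d}$ denotes the discrete-time simple random walk on $\mathbb{T}_N$, so the task reduces to bounding $(\lambda_1^{\bar{C}})^{-k}$ and the killed transition probability from below.

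The factor $(\lambda_1^{\bar{C}})^{-k}$ is harmless: by \prettyref{lem:ETTEntraceTimesTorus} applied with $V=\bar{C}$, together with \eqref{eq:AsymptoticsOfCapOfBox}, one has $E[H_{\bar{C}}]\ge c(\varepsilon)N^d/(n\,s^{(d-2)(1-\varepsilon/4)})$, and standard arguments give $1-\lambda_1^{\bar{C}}\le c/E[H_{\bar{C}}]$; thus for any $k$ of polynomial size in $N$ the factor $(\lambda_1^{\bar{C}})^{-k}$ is bounded by a benign power of $N$. At the crudest level, the simpler pointwise inequality $\sigma(x)\ge(2d\lambda_1^{\bar{C}})^{-1}\sigma(z)\ge(2d)^{-1}\sigma(z)$ (valid for any pair of neighbors $x,z\in\mathbb{T}_N\setminus\bar{C}$, using $\lambda_1^{\bar{C}}\le 1$) iterated along a nearest-neighbor path of length $L$ inside $\mathbb{T}_N\setminus\bar{C}$ gives
\[
\sigma(y)\;\ge\;(2d)^{-L}N^{-d};
\]
since $\mathbb{T}_N\setminus\bar{C}$ is connected when $s\ge c(\varepsilon)$ and has diameter at most $cN$, one always has $L\le cN$, yielding $\sigma(y)\ge e^{-c'N}$. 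This already implies the claim $\sigma(y)\ge N^{-cn}$ in the regime where $n\log N\gtrsim N$.

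The main obstacle is the opposite regime, where $n$ is small (in particular the case of a single box, handled as Lemma~3.8 of \cite{TeixeiraWindischOnTheFrag}). There the path-based bound $e^{-c'N}$ is far too weak and one must use the full iterated relation with $k$ of order $N^2$ (so that the killed walk has time to mix), and then lower-bound $P^{\mathrm d}_{x^\star}[Y^{\mathrm d}_k=y,\,H_{\bar{C}}>k]$ by a polynomial in $1/N$. The delicate point is the case $y\in\partial_e\bar{C}$, where a uniform lower bound requires a local harmonic-measure estimate quantifying how likely the walk is to ``thread'' itself to a vertex immediately outside $\bar{C}$ without being absorbed; this can be produced using the ball-hitting estimates from \prettyref{lem:BHBBallHittingBound} and the equilibrium-measure bounds from \prettyref{lem:EquilDistEstimates}, and combined with the mixing property of the walk on $\mathbb{T}_N$. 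The dependence on $n$ in the final bound $N^{-cn}$ enters through the capacity of $\bar{C}$, which scales linearly in $n$, both in the estimate of $1-\lambda_1^{\bar{C}}$ and in the union bound for the survival probability used to lower-bound the killed transition kernel.
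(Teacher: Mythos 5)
Your opening reduction is sound: iterating $\sigma (P^{\bar C})^{k}=(\lambda_{1}^{\bar C})^{k}\sigma$ and using $\lambda_{1}^{\bar C}\le 1$ together with the pigeonhole point $x^{\star}$ does reduce the lemma to a pointwise lower bound on the killed kernel $(P^{\bar C})^{k}(x^{\star},y)$, and the neighbour inequality $\sigma(x)\ge(2d)^{-1}\sigma(z)$ is correct. This parallels the paper's first step, which instead uses reversibility and \eqref{eq:QSNonQuantConv} to get $\sigma(x)\ge N^{-d}P_{x}[H_{y}<H_{\bar C}]$ with $y$ the mode of $\sigma$. (Incidentally, the factor $(\lambda_{1}^{\bar C})^{-k}$ is not merely "harmless": it is $\ge 1$ and can simply be dropped, so the spectral-gap discussion is not needed.)

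The gap is that the entire substance of the lemma lies in the bound you defer: a lower bound, uniform in $y\in\mathbb{T}_{N}\backslash\bar C$ (including $y\in\partial_{e}\bar C$) and with exponent $O(n)$, on the killed transition probability, equivalently on $P_{x}[H_{y}<H_{\bar C}]$. You assert this "can be produced" from \prettyref{lem:BHBBallHittingBound}, \prettyref{lem:EquilDistEstimates} and mixing, but those results give \emph{upper} bounds on hitting probabilities and two-sided bounds on equilibrium measures; none of them yields a pointwise lower bound on a killed heat kernel near the absorbing set, which is a boundary-Harnack type statement. The paper's mechanism for exactly this step is absent from your plan: it applies the Harnack inequality \eqref{eq:QSMHarnack} to the harmonic function $x\mapsto P_{x}[H_{y}<H_{\bar C}]$ along a chain of at most $cn\log N$ overlapping balls avoiding $\bar C\cup\{y\}$ (\prettyref{lem:QSMCoveringLemma}), after preliminary one-dimensional-walk reductions that move $x$ and $y$ away from the boxes; each link of the chain costs a constant factor, and $c^{cn\log N}=N^{-c'n}$ is precisely where the bound $N^{-cn}$ comes from. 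Moreover, your two regimes do not cover all cases: the crude path bound $e^{-c'N}$ suffices only when $n\log N\gtrsim N$, while the mixing-plus-union-bound route for the survival probability needs $1-nc(\varepsilon)s^{-c\varepsilon}$ to be bounded below, i.e.\ $n\lesssim s^{c\varepsilon}$; the lemma assumes only $s\ge c(\varepsilon)$, so for instance $s$ of constant order and $n$ of order $N^{1/2}$ falls into neither regime. As written, the proposal is a valid reduction followed by an unproved claim that is the actual content of the lemma.
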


\smallskip
We will prove \prettyref{lem:QSMQuasiStatMinimum} after finishing
the proof of \prettyref{pro:QSConvToQuasiStat}.
\begin{proof}[Proof of \prettyref{pro:QSConvToQuasiStat}.]
Note that $P_{x}[Y_{t^{\star}}=y,H_{\bar{C}}>t^{\star}]=\delta_{x}^{T}e^{-t^{\star}(I-P^{\bar{C}})}\delta_{y}$
for $x,y\in\mathbb{T}_{N}\backslash\bar{C}$, so
\begin{equation}
P_{x}[Y_{t^{\star}}=y|H_{\bar{C}}>t^{\star}]=\frac{\delta_{x}^{T}e^{-t^{\star}(I-P^{\bar{C}})}\delta_{y}}{\delta_{x}^{T}e^{-t^{\star}(I-P^{\bar{C}})}\bm{1}},\label{eq:QSCTQSCondProbInTermsOfSemiGroup}
\end{equation}

\smallskip\n
where $\bm{1}$ denotes the vector $(1,...,1)\in\mathbb{R}^{N^{d}-|\bar{C}|}$.
By the spectral theorem we have
\begin{eqnarray*}
e^{-t^{\star}(I-P^{\bar{C}})} & = & e^{-t^{\star}(1-\lambda_{1}^{\bar{C}})}v_{1}v_{1}^{T}+e^{-t^{\star}(1-\lambda_{2}^{\bar{C}})}R,
\end{eqnarray*}
where $R$ is an operator onto the space orthogonal to $v_{1}$ with
operator norm $1$ (we use the Euclidean norm on $\mathbb{R}^{N^{d}-|\bar{C}|}$).
We thus see from \prettyref{eq:QSCTQSCondProbInTermsOfSemiGroup}
that if we let $R'=\frac{e^{-t^{\star}(\lambda_{1}^{\bar{C}}-\lambda_{2}^{\bar{C}})}}{\sigma(x)(v_{1}^{T}\bm{1})^{2}}R$
then
\begin{equation}
P_{x}[Y_{t^{\star}}=y|H_{\bar{C}}>t^{\star}]=\frac{(v_{1})_{x}(v_{1})_{y}+\delta_{x}^{T}e^{-t^{\star}(\lambda_{1}^{\bar{C}}-\lambda_{2}^{\bar{C}})}R\delta_{y}}{(v_{1})_{x}(v_{1}^{T}\bm{1})+\delta_{x}^{T}e^{-t^{\star}(\lambda_{1}^{\bar{C}}-\lambda_{2}^{\bar{C}})}R\bm{1}}\overset{\prettyref{eq:DefOfQuasiStat}}{=}\frac{\sigma(y)+\delta_{x}^{T}R'\delta_{y}}{1+\delta_{x}^{T}R'\bm{1}}.\label{eq:QSCTQSAlmostThere}
\end{equation}

\smallskip\n
Now since we require $n\le s^{c(\varepsilon)}$ and $s\ge c(\varepsilon)$
both \prettyref{lem:QSSpectralGap} and \prettyref{lem:QSMQuasiStatMinimum} hold. Therefore 
\begin{equation*}
|R'\delta_{y}|,|R'\bm{1}|\le N^{d}\frac{e^{-t^{\star}(\lambda_{1}^{\bar{C}}-\lambda_{2}^{\bar{C}})}}{\sigma(x)|v_{1}^{T}\bm{1}|^{2}}\le ce^{-cN^{c(\varepsilon)}},
\end{equation*}
since 
\begin{equation*}
e^{-t^{\star}(\lambda_{1}^{\bar{C}}-\lambda_{2}^{\bar{C}})}\overset{\eqref{eq:DefOfUandtStar},\prettyref{eq:QSSpectralGap}}{\le}e^{-cN^{\frac{\varepsilon}{100}}},
\sigma(x)\overset{\prettyref{eq:QSMQuasiStatMinimum}}{\ge}N^{-cn}\overset{n\le s^{\frac{\varepsilon}{200}}\le N^{\frac{\varepsilon}{200}}}{\ge}e^{-cN^{\frac{\varepsilon}{200}}\log N},|v_{1}^{T}\bm{1}|\ge|v_{1}|^{2}=1
\end{equation*}

\smallskip\n
and $N\ge c(\varepsilon)$ (since $s\ge c(\varepsilon)$). Thus \prettyref{eq:QSConvToQuasiStat} follows
from \prettyref{eq:QSCTQSAlmostThere} (using again that $N\ge c(\varepsilon)$).
This completes the proof of \prettyref{pro:QSConvToQuasiStat}.
\end{proof}

It still remains to prove \prettyref{lem:QSMQuasiStatMinimum}. The
proof will involve further concentric boxes $D$, $E$ and $F$ such
that $A\subset B\subset C\subset D\subset E\subset F$ defined by
\begin{equation}
D=B(0,s^{1-\frac{\varepsilon}{8}})\subset E=B(0,s^{1-\frac{\varepsilon}{16}})\subset F=B(0,s^{1-\frac{\varepsilon}{32}}).\label{eq:DefOfD}
\end{equation}

\begin{proof}[Proof of \prettyref{lem:QSMQuasiStatMinimum}.]
Let $y$ be the maximum of $\sigma(\cdot)$. Since $\sigma(\cdot)$
is a probability distribution we have $\sigma(y)\ge N^{-d}$. Also
by reversibility we have for any $x\notin\bar{C}$ and $t\ge0$ that
$P_{x}[Y_{t}=y,H_{\bar{C}}>t]=P_{y}[Y_{t}=x,H_{\bar{C}}>t]$ and thus
\begin{equation}
P_{y}[Y_{t}=x|H_{\bar{C}}>t]=P_{x}[Y_{t}=y|H_{\bar{C}}>t]\mbox{\f $\dis\frac{P_{x}[H_{\bar{C}}>t]}{P_{y}[H_{\bar{C}}>t]}$}.\label{eq:QSMRatioOfProbs}
\end{equation}

\smallskip\n
Since by the Markov property $P_{x}[H_{\bar{C}}>t]\ge P_{x}[H_{y}<H_{\bar{C}}]P_{y}[H_{\bar{C}}>t]$
we see, by taking the limit $t\rightarrow\infty$ in \prettyref{eq:QSMRatioOfProbs}
and using \prettyref{eq:QSNonQuantConv}, that $\sigma(x)\ge\sigma(y)P_{x}[H_{y}<H_{\bar{C}}]\ge N^{-d}P_{x}[H_{y}<H_{\bar{C}}]$.
To prove \prettyref{eq:QSMQuasiStatMinimum} it thus suffices to show that
\begin{equation}
P_{x}[H_{y}<H_{\bar{C}}]\ge N^{-cn}\mbox{ for all }x,y\notin\bar{C}.\label{eq:QSMSufficeToShow1}
\end{equation}
For $i=1,...,n$, and $x\in D_{i}\backslash C_{i}$ (recall \prettyref{eq:SetUnionTranslateNotation},
\prettyref{eq:DefOfD}) it follows from a one-dimensional random walk
estimate that $P_{x}[T_{D_{i}}<H_{\bar{C}}]\ge N^{-1}$, so that by
the Markov property $P_{x}[H_{y}<H_{\bar{C}}]\ge N^{-1}\inf_{x'\in\partial_{e}D_{i}}P_{x'}[H_{y}<H_{\bar{C}}]$.
If $x\notin\bar{D}$ and $y\in\bar{F}\backslash\bar{C}$ then we can
use that by reversibility $P_{x}[H_{y}<H_{\bar{C}}]=P_{y}[H_{x}<H_{\bar{C}}]$
and another one-dimensional random walk estimate to show $P_{x}[H_{y}<H_{\bar{C}}]\ge N^{-1}\inf_{y'\notin\partial_{e}\bar{F}}P_{x}[H_{y'}<H_{\bar{C}}]$.
To prove \prettyref{eq:QSMSufficeToShow1} and thus \prettyref{eq:QSMQuasiStatMinimum}
it therefore suffices to show (recall $y\notin\bar{F}$) 
\begin{equation}
P_{x}[H_{y}<H_{\bar{C}}]\ge N^{-cn}\mbox{ for all }x\notin\bar{D},y\notin\bar{F}.\label{eq:QSMSufficeToShow2}
\end{equation}

\smallskip\n
Now fix $y\notin\bar{F}$ and note that $P_{x}[H_{y}<H_{\bar{C}}]\ge cs^{-c}$
for all $x\in\partial_{e}(D+y)$, since $P_{x}[H_{y}<T_{E+y}]\ge cs^{-c}$
(e.g. by Proposition 1.5.9, p. 35 in \cite{LawlersLillaGrona}) and
$(E+y)\cap\bar{C}=\emptyset$, since $s\ge c(\varepsilon)$. Therefore
to prove \prettyref{eq:QSMSufficeToShow2} and thus \prettyref{eq:QSMQuasiStatMinimum} it suffices to show 
\begin{equation}
P_{x_{1}}[H_{y}<H_{\bar{C}}]\ge N^{-cn}P_{x_{2}}[H_{y}<H_{\bar{C}}]\mbox{ for all }x_{1},x_{2}\notin\bar{D}\cup(y+D).\label{eq:QSMSufficeToShow3}
\end{equation}

\smallskip\n
Consider the function $x\rightarrow P_{x}[H_{y}<H_{\bar{C}}]$. This
function is non-negative and harmonic on $\left(\bar{C}\cup\{y\}\right)^{c}$.
Thus by the Harnack inequality (Theorem 1.7.2, p. 42 in \cite{LawlersLillaGrona})
we have, for any $z\in\mathbb{T}_{N}$ and $r\ge0$ for which $B(z,2(r+1))\cap(\bar{C}\cup\{y\})=\emptyset$, that 
\begin{equation}
\inf_{x\in B(z,r+1)}P_{x}[H_{y}<H_{\bar{C}}]\ge c\sup_{x\in B(z,r+1)}P_{x}[H_{y}<H_{\bar{C}}].\label{eq:QSMHarnack}
\end{equation}
To iterate this inequality we need the following lemma.
\begin{lem} \label{lem:QSMCoveringLemma}($d\ge3$,$N\ge3$) 

\medskip
If $s\ge c(\varepsilon)$ one can cover $\left(\bar{D}\cup(y+D)\right)^{c}$ by $m\le cn\log N$
balls $B(z_{i},r_{i}),i=1,...,m$, that satisfy $B(z_{i},2(r_{i}+1))\cap(\bar{C}\cup\{y\})=\emptyset$. 
\end{lem}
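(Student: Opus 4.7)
The plan is to build a dyadic family of balls indexed by a center and a scale, treating $y$ as an $(n+1)$st center by setting $x_{n+1}=y$ and $C_{n+1}=\{y\}$. The point is that any $z \in (\bar D \cup (y+D))^c$ has $l_\infty$-distance at least $s^{1-\varepsilon/8}$ from each $x_i$ in whose Voronoi cell it lies, but at most $N/2$ from any of the centers (the torus diameter), so there are only $O(\log N)$ dyadic scales in play; combined with the $n+1$ centers, the bound $cn\log N$ can be reached provided each (center, scale) pair contributes only $O_d(1)$ balls.

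Concretely, for $i \in \{1,\ldots,n+1\}$ I would introduce the Voronoi-like cell
\[
\tilde V_i \;=\; \{z \in \mathbb T_N : d_\infty(z,x_i) \le d_\infty(z,x_j)\;\text{for all } j \in \{1,\ldots,n+1\}\},
\]
and for integers $k_0 \le k \le K$ with $2^{k_0} \approx s^{1-\varepsilon/8}$ and $2^K \ge N/2$, the annulus $A_i^k = \{z : 2^k \le d_\infty(z,x_i) < 2^{k+1}\}$. Each $A_i^k$ sits inside a box of side $c\cdot 2^k$, so $\tilde V_i \cap A_i^k$ can be covered by $O_d(1)$ balls of radius $r_k := 2^{k-3}$ whose centers $z_\alpha$ are chosen in $\tilde V_i \cap A_i^k$ itself (by taking a $2^{k-4}$-net of $A_i^k$ and, for each grid point whose $r_k$-ball meets $\tilde V_i\cap A_i^k$, picking the center inside the intersection). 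Any $z\in (\bar D\cup(y+D))^c$ belongs to some $\tilde V_i$ and satisfies $s^{1-\varepsilon/8}\le d_\infty(z,x_i)\le N/2$, so it lies in one of these $\tilde V_i\cap A_i^k$; hence the family covers the target. The cardinality is $(n+1)(K-k_0+1)\cdot O_d(1)\le cn\log N$.

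The main thing to check is the doubling condition $B(z_\alpha, 2(r_k+1)) \cap (\bar C \cup \{y\}) = \emptyset$, and this is precisely where the Voronoi restriction pays off. Since $z_\alpha \in \tilde V_i$ with $d_\infty(z_\alpha, x_i) \ge 2^k$, one automatically gets $d_\infty(z_\alpha, x_j)\ge 2^k$ for every $j$, hence $d_\infty(z_\alpha, C_j) \ge 2^k - s^{1-\varepsilon/4}$ for $j\le n$ and $d_\infty(z_\alpha,y)\ge 2^k$. Choosing $k_0$ so that also $2^{k_0-1}\ge s^{1-\varepsilon/4}$ (possible once $s\ge c(\varepsilon)$, since then $s^{1-\varepsilon/8}\gg s^{1-\varepsilon/4}$) yields $d_\infty(z_\alpha, \bar C\cup\{y\})\ge 2^{k-1}$, which exceeds $2(r_k+1)=2^{k-2}+2$ as soon as $k\ge 3$ (automatic for $s\ge c(\varepsilon)$).

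The hard part, I anticipate, is precisely guaranteeing that covering balls at scale $2^k$ do not wander into components other than the one they are assigned to, particularly when $2^k$ becomes comparable to the separation $s$ (here the balls are large relative to the spacing between the $x_i$'s, so a naive cover of the full annulus $A_i^k$ by boxes of side $\sim 2^{k-3}$ could easily stretch across the torus and hit other $x_j$'s). Forcing centers into $\tilde V_i$ removes this possibility cleanly, and at no cost, since every point of the target set lies in at least one $\tilde V_i$. Once that refinement is in place, what remains is a routine count: $O(\log N)$ dyadic scales, $O_d(1)$ balls per scale per center, and $n+1$ centers.
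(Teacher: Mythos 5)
Your proof is correct and follows essentially the same route as the paper's: dyadic annuli around each of the $n+1$ centres supply the $O(\log N)$ scales, and a nearest-centre assignment is what guarantees that a ball attached to $x_i$ stays away from all the other $C_j$'s and from $y$. The only difference is organisational: the paper precomputes, for each centre $x$, a covering of $(x+D)^c$ with the stronger margin $B(z,4r+2)\cap(x+C)=\emptyset$ and then selects, for each point of the target set, a ball from the covering of that point's nearest centre, recovering the weaker $2(r+1)$ margin with respect to all centres via a triangle inequality; you instead force the ball centres themselves into Voronoi cells, which achieves the same effect directly and avoids the $4r+2\to 2(r+1)$ loss.
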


Before proving \prettyref{lem:QSMCoveringLemma} we use it to show
\prettyref{eq:QSMQuasiStatMinimum}. The balls $B(z_{i},r_{i}+1)$
``overlap'' and cover the connected set $\left(\bar{D}\cup(y+D)\right)^{c}$
(recall $s\ge c(\varepsilon)$), so for any $x_{1},x_{2}\in\left(\bar{D}\cup(y+D)\right)^{c}$
we can find a ``path'' of at most $cn\log N$ balls $B(z,r+1)$
satisfying \prettyref{eq:QSMHarnack}, such that any two consecutive
balls intersect, and such that $x_{1}$ is in the first ball and $x_{2}$
is in the last. Applying \prettyref{eq:QSMHarnack} at most $cn\log N$
times along these ``paths'' yields \prettyref{eq:QSMSufficeToShow3}.
This completes the proof of \prettyref{eq:QSMQuasiStatMinimum}, so
only the proof of \prettyref{lem:QSMCoveringLemma} remains.
\begin{proof}[Proof of \prettyref{lem:QSMCoveringLemma} ]
By a standard argument $\left(B(0,\frac{N}{4})\right)^{c}$ can be
covered by a bounded number of balls $B(z,r)$ that satisfy $B(z,4r+2)\cap B(0,\frac{N}{8})=\emptyset$
(when $N\ge c$, which we may assume since we require $s\ge c$).
Furthermore each of the annuli $B(0,2^{l+1}s^{1-\frac{\varepsilon}{8}})\backslash B(0,2^{l}s^{1-\frac{\varepsilon}{8}}),l=0,1,2,3,...$
can be covered by a bounded number of balls $B(z,r)$ that satisfy
$B(z,4r+2))\cap B(0,2^{l-1}s^{1-\frac{\varepsilon}{8}})=\emptyset$
(since $s^{1-\frac{\varepsilon}{8}}\ge c$). Now combining at most
$c\log N$ coverings of annuli with the covering of $\left(B(0,\frac{N}{4})\right)^{c}$
we get that (provided $s\ge c(\varepsilon)$ so that $C\subset B(0,\frac{1}{2}s^{1-\frac{\varepsilon}{8}})$)
\begin{equation}
\mbox{one can cover }D^{c}\mbox{ by at most }c\log N\mbox{ balls }B(z,r)\mbox{ with }B(z,4r+2)\cap C=\emptyset.\label{eq:QSMCLOneBoxCase}
\end{equation}
We can now use \prettyref{eq:QSMCLOneBoxCase} to get for each $x\in\{x_{1},...,x_{n},y\}$
a covering $\mathcal{B}_{x}$ of $\left(x+D\right)^{c}$ consisting
of at most $c\log N$ balls such that if $B(v,r)\in\mathcal{B}_{x}$
then $B(v,4r+2)\cap(x+C)=\emptyset$. We now combine the coverings
by picking for every $x\notin\bar{D}\cup(y+D)$ a ball from $\mathcal{B}_{z(x)}$
that contains $x$, where $z(x)$ is a member of $\{x_{1},...,x_{n},y\}$
of minimal $d_{\infty}$ distance to $x$. This gives a covering of
$(\bar{D}\cup(y+D))^{c}$ by at most $c(n+1)\log N\le cn\log N$ balls.
Also if $x\notin\bar{D}\cup(y+D)$ and $x \in B(v,r)\in\mathcal{B}_{z(x)}$
then $B(v,4r+2)\cap(z(x)+C)=\emptyset$, which implies
$d_{\infty}(v,z(x)) > s^{1-\frac{\varepsilon}{4}}+4r+2$.
This in turn implies
$d_{\infty}(v,\{x_{1},...,x_{n},y\}) \ge d_{\infty}(x,z(x))-d_{\infty}(v,x) \ge d_{\infty}(v,z(x))
- 2d_{\infty}(v,x) > s^{1-\frac{\varepsilon}{4}}+2(r+1)$ (using that $z(x)$ is of minimal distance to
$x$ and $d_{\infty}(x,v)\le r$), so that $B(v,2(r+1))\cap(\bar{C}\cup\{y+C\})=\emptyset$. Thus we
have the desired covering and the proof of \prettyref{lem:QSMCoveringLemma} is complete.
\end{proof}
This completes the proof of \prettyref{lem:QSMQuasiStatMinimum}.
\end{proof}

Finally we state and prove \prettyref{lem:QSEQQuasiStatEquilMeas},
which says that the hitting distribution on $\partial_{i}\bar{A}$
when starting from $\sigma$ is approximately the normalized sum of
the equilibrium distributions on $\partial_{i}A_{1},...,\partial_{i}A_{n}$.
\begin{lem}
\label{lem:QSEQQuasiStatEquilMeas}($d\ge3$,$N\ge3$) 

\medskip
If $s\ge c(\varepsilon)$ and $n\le s^{c(\varepsilon)}$ then for all $i=1,...,n,$
\begin{equation}
\frac{e_{A}(x-x_{i})}{n\, {\rm cap}(A)}(1-cs^{-c(\varepsilon)})\le P_{\sigma}[Y_{H_{\bar{A}}}=x]\le\frac{e_{A}(x-x_{i})}{n\, {\rm cap}(A)}(1+cs^{-c(\varepsilon)})\mbox{ for all }x\in\partial_{i}A_{i}.\label{eq:QSEQQuasiStatEquilMeas}
\end{equation}
\end{lem}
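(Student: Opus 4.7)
The plan is to decompose, for $x \in \partial_i A_i$,
\[
P_\sigma[Y_{H_{\bar{A}}} = x] = P_\sigma[Y_{H_{A_i}} = x] - \sum_{j \neq i}\sum_{y \in A_j} P_\sigma[Y_{H_{\bar{A}}} = y]\,P_y[Y_{H_{A_i}} = x],
\]
obtained by applying the strong Markov property at $H_{\bar{A}}$ to the event $\{H_{A_i} > H_{\bar{A}}\}$ (on which $Y_{H_{\bar{A}}} \in A_j$ for some $j \neq i$, and then the walk proceeds to $x \in A_i$). Combined with an equilibrium-entrance approximation for the two $P_{\cdot}[Y_{H_{A_i}} = x]$ factors and the estimate $P_\sigma[Y_{H_{\bar{A}}} \in A_i] \approx 1/n$, this will yield the lemma.

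Next, I would establish that uniformly in $z$ with $d_\infty(z, x_i) \ge \tfrac12 s$,
\[
P_z[Y_{H_{A_i}} = x] = (1 + O(s^{-c(\varepsilon)}))\,\frac{e_A(x - x_i)}{\mbox{cap}(A)}.
\]
Applying the strong Markov property at $H_{B_i}$ reduces to the case $z \in \partial_i B_i$, and since $B_i$ has radius $s^{1-\varepsilon/2} \ll N$, one may replace the torus walk by a $\mathbb{Z}^d$ walk started on $\partial_i B$ and killed at $T_B$ (a standard bound handles the small-probability event that the walk travels distance of order $N$ before hitting $A_i$). The hitting distribution on $A$ under this killed $\mathbb{Z}^d$-walk is, by standard potential theory, close to $e_{A,B}/\mbox{cap}_B(A)$; by \prettyref{eq:RelativeEquilLowerBound}--\prettyref{eq:RelativeEquilUpperBound}, this in turn equals $(1 + O(s^{-c(\varepsilon)}))\,e_A/\mbox{cap}(A)$. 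The distribution $\sigma$ is supported in $\mathbb{T}_N \setminus \bar{C}$ (so at $d_\infty$-distance $\ge s^{1-\varepsilon/4}$ from each $x_i$), and any $y \in A_j$ with $j \neq i$ has $d_\infty(y, x_i) \ge s - s^{1-\varepsilon} \ge \tfrac12 s$; so both satisfy the distance condition. Substituting gives $P_\sigma[Y_{H_{\bar{A}}} = x] \approx p_i\,e_A(x - x_i)/\mbox{cap}(A)$, where $p_i := P_\sigma[Y_{H_{\bar{A}}} \in A_i]$.

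Finally, to show $p_i \approx 1/n$, I would use the strong Markov property at $H_{\bar{A}}$ to write
\[
E_\sigma[H_{A_i}] = E_\sigma[H_{\bar{A}}] + E_\sigma\big[E_{Y_{H_{\bar{A}}}}[H_{A_i}];\, Y_{H_{\bar{A}}} \notin A_i\big].
\]
For $y \in A_j$ with $j \neq i$, \prettyref{eq:ETTStartingFromPointandUniformComparison} gives $E_y[H_{A_i}] \le (1 + cN^{-c(\varepsilon)})E[H_{A_i}]$; the matching lower bound $E_y[H_{A_i}] \ge (1 - c(\varepsilon) s^{-c(\varepsilon)})E[H_{A_i}]$ follows by applying the strong Markov property at $T_{C_j}$ (noting $T_{C_j} \le H_{A_i}$ since $A_i \cap C_j = \emptyset$ when $s \ge c(\varepsilon)$) and then the lower bound in \prettyref{eq:ETTStartingFromPointandUniformComparison} at $Y_{T_{C_j}} \in \mathbb{T}_N \setminus \bar{C}$. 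Combining these with \prettyref{eq:ETTEntranceTimeLowerBound}--\prettyref{eq:ETTEntraceTimeUpperBound} applied to $V = A_i$ and $V = \bar{A}$ (yielding $E[H_{A_i}] \approx N^d/\mbox{cap}(A)$ and $E[H_{\bar{A}}] \approx N^d/(n\,\mbox{cap}(A))$, since $\sum_k \mbox{cap}(V^k)$ equals $\mbox{cap}(A)$ in the first case and $n\,\mbox{cap}(A)$ in the second), the display reduces to $1 \approx 1/n + (1 - p_i)$ modulo $O(s^{-c(\varepsilon)})$, so that $p_i = 1/n + O(s^{-c(\varepsilon)})$. The main obstacle is the cumulative error tracking in this final step: three multiplicative $1 + O(s^{-c(\varepsilon)})$ corrections combine (from \prettyref{eq:ETTStartingFromPointandUniformComparison}, \prettyref{eq:ETTEntraceTimeUpperBound}, and the Markov decomposition of $E_y[H_{A_i}]$ for $y$ inside $\bar{C}$), and one must verify that they do not overwhelm the $1/n$ leading term when comparing $E_\sigma[H_{A_i}]$ to $E[H_{A_i}]$.
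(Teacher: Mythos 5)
Your route is genuinely different from the paper's, and as written it has two real gaps. First, the ``equilibrium-entrance approximation'' $P_z[Y_{H_{A_i}}=x]=(1+O(s^{-c(\varepsilon)}))\,e_A(x-x_i)/{\rm cap}(A)$ with a \emph{vanishing relative} error is strictly stronger than anything the paper supplies: \eqref{eq:EquilDistHitDistFromFar} only gives two-sided bounds with fixed constants, and with those alone your decomposition yields the lemma up to constants, not up to $1\pm cs^{-c(\varepsilon)}$. The refined version is true but requires its own argument (last-exit decomposition plus Green-function asymptotics, or a Harnack-at-infinity estimate). Moreover the proposed reduction to a $\mathbb{Z}^d$ walk killed at $T_B$ mischaracterizes the geometry: from $w\in\partial_i B_i$ the walk hits $A_i$ before $T_{B_i}$ only with probability of order $s^{-\varepsilon(d-2)/2}$ (cf.\ \eqref{eq:BHBZdBallHitting}), so leaving $B_i$ before hitting $A_i$ is the \emph{typical} event, not a negligible one; you would need the conditional per-excursion hitting law to be uniformly close to the normalized equilibrium measure and then an iteration over the geometric number of excursions. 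Second, your argument for $p_i=P_\sigma[Y_{H_{\bar{A}}}\in A_i]$ controls only the additive error: dividing the hitting-time identity by $E[H_{A_i}]$ leaves $p_i=\frac{1}{n}+O(s^{-c(\varepsilon)})$, whereas the lemma needs $p_i=\frac{1}{n}(1+O(s^{-c(\varepsilon)}))$, i.e.\ an additive error $O(s^{-c}/n)$. Since $n$ may be as large as $s^{c(\varepsilon)}$ this is not automatic; it can be rescued by shrinking the exponent in the hypothesis $n\le s^{c(\varepsilon)}$ below the error exponent, but you flag this obstacle without resolving it.

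For contrast, the paper sidesteps both problems by reversibility: using \prettyref{pro:QSConvToQuasiStat} one obtains, up to an error $ce^{-cN^{c(\varepsilon)}}$, the identity $P_\sigma[Y_{H_{\bar{A}}}=x]\approx P_x[\tilde{H}_{\bar{A}}>U]\big/\sum_{y\in\partial_{i}\bar{A}}P_y[\tilde{H}_{\bar{A}}>U]$ (see \eqref{eq:ThingByReversibility}), and then squeezes $P_x[\tilde{H}_{\bar{A}}>U]$ between $e_A(x-x_i)$ and $e_{A,C}(x-x_i)$ via \eqref{eq:RelativeEquilLowerBound}--\eqref{eq:RelativeEquilUpperBound}. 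The escape probability is itself an equilibrium-measure quantity, so no refined harmonic-measure estimate is needed, and the normalization $\sum_{y}P_y[\tilde{H}_{\bar{A}}>U]=(1\pm cs^{-c(\varepsilon)})\,n\,{\rm cap}(A)$ produces the factor $1/(n\,{\rm cap}(A))$ with the correct relative error automatically.
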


\begin{proof}
The proof is very similar to the proof of Lemma 3.10 in \cite{TeixeiraWindischOnTheFrag}.
By redefining $t^{\star}$ and $U$ from \cite{TeixeiraWindischOnTheFrag}
to agree with our definition in \eqref{eq:DefOfUandtStar}, replacing
$A$ with $\bar{A}$, $B$ with $\bar{C}$, and the application of
Lemma 3.9 from \cite{TeixeiraWindischOnTheFrag} with an application
of \prettyref{pro:QSConvToQuasiStat} (which is allowed since we assume
$n\le s^{c(\varepsilon)}$ and $s\ge c(\varepsilon)$) the argument
leading up to (3.42) in \cite{TeixeiraWindischOnTheFrag} becomes a proof of 
\begin{equation}
|P_{x}[\tilde{H}_{\bar{A}}>U]-P_{\sigma}[Y_{H_{\bar{A}}}=x]\dsl_{y\in\partial_{i}\bar{A}}P_{y}[\tilde{H}_{\bar{A}}>U]|\le ce^{-cN^{c(\varepsilon)}}\mbox{ for all }x\in\partial_{i}\bar{A}.\label{eq:ThingByReversibility}
\end{equation}
Furthermore note that by \eqref{eq:DefOfRelativeEquilAndCap}, \eqref{eq:DefOfUandtStar}
and the strong Markov property applied at time $T_{\bar{D}}$ we have
$e_{A,D}(x-x_{i})\inf_{x\notin\bar{D}}P_{x}[H_{\bar{C}}>U]\le P_{x}[\tilde{H}_{\bar{A}}>U]$.
But also
\begin{equation}
\sup_{x\notin\bar{D}}P_{x}[H_{\bar{C}}<U]\overset{\eqref{eq:DefOfUandtStar}}{=}\sup_{x\notin\bar{D}}P_{x}[H_{\bar{C}}<N^{2+\frac{\varepsilon}{100}}]\overset{\eqref{eq:BHBBallHittingBound}}{\le}nc(\varepsilon)s^{-c\varepsilon}\le c(\varepsilon)s^{-c\varepsilon}\mbox{ if }n\le s^{c\varepsilon},\label{eq:CDBallHitting}
\end{equation}
and thus by \eqref{eq:RelativeEquilLowerBound} we have $(1-cs^{-c(\varepsilon)})e_{A}(x-x_{i})\le P_{x}[\tilde{H}_{\bar{A}}>U]$
(recall $s\ge c(\varepsilon)$). Now 
\begin{equation*}
T_{C}\overset{\eqref{eq:DefOfUandtStar}}{\le}U \; \mbox{so} \; P_{x}[\tilde{H}_{\bar{A}}>U]\overset{\eqref{eq:DefOfRelativeEquilAndCap}}{\le}e_{A,C}(x-x_{i}),
\end{equation*}
and using \eqref{eq:RelativeEquilUpperBound} with $r=s^{1-\varepsilon}$
and $\lambda$ such that $s^{(1-\varepsilon)(1+\lambda)}=s^{1-\frac{\varepsilon}{4}}$
we get $e_{A,C}(x-x_{i})\le(1+cs^{-c(\varepsilon)})e_{A}(x-x_i)$ (since $s\ge c(\varepsilon)$). Thus 
\begin{equation}
(1-cs^{-c(\varepsilon)})e_{A}(x-x_{i})\le P_{x}[\tilde{H}_{\bar{A}}>U]\le e_{A}(x-x_{i})(1+cs^{-c(\varepsilon)}).\label{eq:CloseToEquil}
\end{equation}
But plugging \eqref{eq:CloseToEquil} into \eqref{eq:ThingByReversibility},
and using \eqref{eq:DefOfeKandCap} and \eqref{eq:EquilDistLowerBound}
yields \eqref{eq:QSEQQuasiStatEquilMeas}, cf. below (3.44) in \cite{TeixeiraWindischOnTheFrag}.
We omit the details.
\end{proof}

\section{\label{sec:Poissonization}Poissonization}

In this section the goal is to construct the coupling of random walk
in the torus with Poisson point processes of intensity a multiple
of $\kappa_{1}$, i.e. prove \prettyref{pro:CoupleRWandPPoE}. We
recall the standing assumption \prettyref{eq:StandingSeparationAssumption}.

First we define $R_{k},k\ge1$, the successive returns to $\bar{A}$
(see \eqref{eq:SetUnionTranslateNotation} for the notation) and $U_{k},k\ge0,$
the successive ``departures'' from $\bar{C}$, by
\begin{equation}
U_{0}=0,U_{k}=U\circ\theta_{R_{k}}+R_{k},k\ge1,R_{1}=H_{\bar{A}},R_{k}=H_{\bar{A}}\circ\theta_{U_{k-1}}+U_{k-1},k\ge2.\label{eq:DefOfRkUk}
\end{equation}
We call the segments $(Y_{(R_{k}+\cdot)\wedge U_{k}})_{k\ge1}$ the
\emph{excursions} of the random walk. The first step in the proof
of \prettyref{pro:CoupleRWandPPoE} is to couple the random walk $Y_{\cdot}$
when it starts from the \emph{quasistationary }distribution\emph{
}with i.i.d. processes $\tilde{Y}_{\cdot}^{1},\tilde{Y}_{\cdot}^{2},...$
with law $P_{\sigma}[Y_{\cdot\wedge U_{1}}\in dw]$, such that with
high probability $Y(U_{i-1},U_{i})\cap\bar{C}=\tilde{Y}^{i}(0,\infty)\cap\bar{C}$.
This is done in \prettyref{lem:ExcwithIIDExc} using \prettyref{pro:QSConvToQuasiStat}
from the previous section.

The second step in the proof of \prettyref{pro:CoupleRWandPPoE} is
to relate the stopping times $R_{k},U_{k}$, to deterministic times,
roughly speaking showing that $U_{[un\,{\rm cap}(A)]}\approx uN^{d}$.
This is done in \prettyref{lem:ETRTExcursionTimeandRealTime} using
large deviation estimates.

The third step in the proof of \prettyref{pro:CoupleRWandPPoE} is
to use the relation $U_{[un\,{\rm cap}(A)]}\approx uN^{d}$ to modify
the coupling from \prettyref{lem:ExcwithIIDExc} so that, very roughly,
$Y(0,uN^{d})\cap\bar{C}\approx\cup_{i=1}^{[un\,{\rm cap}(A)]}\tilde{Y}^{i}(0,\infty)\cap\bar{C}$
with high probability. This is done in \prettyref{pro:UnifRWIIDExc},
where we also use a mixing argument to ensure that the coupling, as
opposed to that from \prettyref{lem:ExcwithIIDExc}, has $Y_{\cdot}$
starting from the \emph{uniform }distribution\emph{.}

Finally at the end of the section we use \prettyref{pro:UnifRWIIDExc}
to prove \prettyref{pro:CoupleRWandPPoE} essentially by constructing
a point process from $\tilde{Y}_{\cdot}^{1},\tilde{Y}_{\cdot}^{2},...Y_{\cdot}^{J}$,
where $J$ is a Poisson random variable. We will see that this gives
rise to a Poisson point process which we can modify, using \prettyref{lem:QSEQQuasiStatEquilMeas}
to ``change'' the intensity from a multiple of $P_{\sigma}[Y_{\cdot\wedge U}\in dw]$
to a multiple of $P_{e}[Y_{\cdot\wedge U_{1}}\in dw]$ (i.e. of $\kappa_{1}$).

We now state and prove \prettyref{lem:ExcwithIIDExc} which couples
$Y_{\cdot}$ under $P_{\sigma}$ with i.i.d. excursions.
\begin{lem} \label{lem:ExcwithIIDExc}($d\ge3$,$N\ge3$)

\medskip
 If $n\le s^{c(\varepsilon)}$
and $s\ge c(\varepsilon)$ we can construct a coupling $(\Omega_{3},\mathcal{A}_{3},Q_{3})$
of a process $Y_{\cdot}$ with law $P_{\sigma}$ and an i.i.d. sequence
$\tilde{Y}_{\cdot}^{1},\tilde{Y}_{\cdot}^{2},...$ with law $P_{\sigma}[Y_{\cdot\wedge U_{1}}\in dw]$
such that
\begin{equation}
Q_{3}[Y(U_{i-1},U_{i})\cap\bar{C}\ne\tilde{Y}^{i}(0,\infty)\cap\bar{C}]\le e^{-cN^{c(\varepsilon)}}\mbox{ for all }i\ge1.\label{eq:ExcwithIIDExc}
\end{equation}
\end{lem}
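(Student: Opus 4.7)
The proof would proceed in two stages: an analytic total variation bound on the endpoint of a single excursion, and an inductive coupling construction that uses it.

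For the first stage, I would establish that for every $x\notin\bar{C}$,
\[
\|P_x[Y_{U_1}\in\cdot]-\sigma\|_{TV}\le ce^{-cN^{c(\varepsilon)}}.
\]
To show this, decompose the interval $[R_1,U_1]$ into successive ``attempts'' to stay outside $\bar{C}$ for $t^\star$ time units. Set $E_1=\inf\{t\ge R_1:Y_t\notin\bar{C}\}$ and recursively let $E_{j+1}$ be the first exit of $\bar{C}$ after the first re-entry following $E_j$; each $E_j$ is a stopping time. With $J=\min\{j:H_{\bar{C}}\circ\theta_{E_j}\ge t^\star\}$ one has $U_1=E_J+t^\star$ and $Y_{E_J}\notin\bar{C}$. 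On $\{J=j\}$, the strong Markov property at $E_j$ identifies the conditional law of $Y_{U_1}$ with $P_{Y_{E_j}}[Y_{t^\star}\in\cdot\mid H_{\bar{C}}>t^\star]$, which by \prettyref{pro:QSConvToQuasiStat} lies within total variation distance $ce^{-cN^{c(\varepsilon)}}$ of $\sigma$. Summing over $j\ge1$ yields the displayed bound.

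For the second stage, I would enlarge the probability space with i.i.d.\ samples $W_1,W_2,\ldots$ from $\sigma$ and independent auxiliary uniforms $\Xi_1,\Xi_2,\ldots$, and build $Y_\cdot$ and the $\tilde{Y}^i$ inductively. Setting $Y_0=W_1$, run $Y_\cdot$ up to $U_1$ as a simple random walk and define $\tilde{Y}^1:=Y_{\cdot\wedge U_1}$, so \eqref{eq:ExcwithIIDExc} holds trivially at $i=1$. For $k\ge 2$, having constructed $Y_{\cdot\wedge U_{k-1}}$ and $\tilde{Y}^1,\ldots,\tilde{Y}^{k-1}$, use $\Xi_k$ to implement a maximal coupling of $Y_{U_{k-1}}$ with a fresh $W_k\sim\sigma$ enjoying both (i) $W_k=Y_{U_{k-1}}$ with conditional probability at least $1-ce^{-cN^{c(\varepsilon)}}$ (by the displayed bound applied to the conditional law of $Y_{U_{k-1}}$ given the past), and (ii) the unconditional law of $W_k$ equal to $\sigma$ and independent of $(W_1,\ldots,W_{k-1})$. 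Extend $Y_\cdot$ past $U_{k-1}$ via the strong Markov property from $Y_{U_{k-1}}$; on the good event $\{W_k=Y_{U_{k-1}}\}$ set $\tilde{Y}^k:=Y_{(U_{k-1}+\cdot)\wedge U_k}$, while on the bad event sample $\tilde{Y}^k$ independently with law $P_{W_k}[Y_{\cdot\wedge U_1}\in dw]$ using fresh randomness.

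On the good event one has $\tilde{Y}^k(0,\infty)=Y(U_{k-1},U_k)$ as sets, so \eqref{eq:ExcwithIIDExc} follows directly from (i). The main obstacle I expect is verifying that the $\tilde{Y}^k$ so constructed are genuinely i.i.d.\ with the prescribed law $P_\sigma[Y_{\cdot\wedge U_1}\in dw]$: independence of the $W_k$'s is exactly the role of property (ii) in the maximal coupling; conditional on $(W_j)_{j\ge 1}$, the various $\tilde{Y}^k$ are built from disjoint sources of randomness (the increments of $Y_\cdot$ on $[U_{k-1},U_k]$ on the good event, and a fresh independent draw on the bad event); and the conditional law of $\tilde{Y}^k$ given $W_k$ equals $P_{W_k}[Y_{\cdot\wedge U_1}\in dw]$ on both branches --- on the good branch through the strong Markov identification $Y_{U_{k-1}}=W_k$, and on the bad branch by construction --- which integrates against $W_k\sim\sigma$ to the prescribed marginal.
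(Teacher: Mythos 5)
Your Stage 1 bound $\|P_x[Y_{U_1}\in\cdot]-\sigma\|_{TV}\le ce^{-cN^{c(\varepsilon)}}$ is correct, and its proof via the successive attempts $E_j$ is sound. The gap is in Stage 2: the sequence $\tilde Y^1,\tilde Y^2,\dots$ you construct is \emph{not} i.i.d. The excursion $\tilde Y^{k-1}=Y_{(U_{k-2}+\cdot)\wedge U_{k-1}}$ contains its endpoint $Y_{U_{k-1}}=\tilde Y^{k-1}(\infty)$ as a deterministic functional, and you then couple $W_k$ so that $W_k=Y_{U_{k-1}}$ with conditional probability at least $1-\varepsilon$. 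Hence $\tilde Y^{k}(0)=W_k$ coincides with $\tilde Y^{k-1}(\infty)$ with probability close to one, whereas under the target product law $\otimes_k P_\sigma[Y_{\cdot\wedge U_1}\in dw]$ these two variables are independent copies of (approximately) $\sigma$ and coincide only with probability about $\sum_z\sigma(z)^2$, which is small. This is an order-one discrepancy in the joint law. Your assertion that ``conditional on $(W_j)_{j\ge1}$ the $\tilde Y^k$ are built from disjoint sources of randomness'' fails because $Y_{U_{k-1}}$ is shared: it is simultaneously the endpoint of $\tilde Y^{k-1}$ and, through the maximal coupling, essentially determines $W_k=\tilde Y^k(0)$. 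Property (ii) gives independence of $W_k$ from $(W_1,\dots,W_{k-1})$, but what the i.i.d.\ claim requires --- and what this construction cannot deliver --- is independence of $W_k$ from the whole excursion $\tilde Y^{k-1}$.

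The missing idea, which is the heart of the paper's proof, is to decompose at the \emph{last} exit from $\bar C$ before $U_k$, i.e.\ at $L_k=\sup\{t\le U_k:Y_t\in\bar C\}$, rather than at $U_k$ itself. One proves a Markov-type identity at the non-stopping time $L_k$ (equation \eqref{eq:EIIDEKindOfMarkov}): conditionally on $Y_{\cdot\wedge L_k}$ and on $Y_{L_k}=y$, the endpoint $Y_{U_k}$ has law $\sigma_y(\cdot)=P_y[Y_{t^{\star}}=\cdot\mid H_{\bar C}>t^{\star}]$, which by \prettyref{pro:QSConvToQuasiStat} is within $ce^{-cN^{c(\varepsilon)}}$ of $\sigma$ uniformly in $y$. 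Thus $Y_{U_k}$ is \emph{almost independent of the truncated excursion} $Y_{(U_{k-1}+\cdot)\wedge L_k}$, and one can produce a starting point $\Sigma_k$ that is exactly $\sigma$-distributed, exactly independent of all earlier (truncated) excursions, and still equal to $Y_{U_k}$ with high probability. The i.i.d.\ excursions with law $P_\sigma[Y_{\cdot\wedge U_1}\in dw]$ are then obtained by appending to each truncated excursion an \emph{independently resampled} tail with law $P_y[Y_{\cdot\wedge t^{\star}}\in dw\mid H_{\bar C}>t^{\star}]$; since such a tail avoids $\bar C$, the traces on $\bar C$ in \eqref{eq:ExcwithIIDExc} are unaffected. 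Note that the lemma only asks for the traces on $\bar C$ to agree, and it is exactly this slack that permits discarding the actual post-$L_k$ stretch of the walk and replacing it by fresh randomness.
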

\begin{proof}
For each $i$ we define $L_{i}$, the last time that $Y_{\cdot}$
leaves $\bar{C}$ before $U_{i}$, by
\[
L_{i}=L\circ\theta_{U_{i-1}}+U_{i-1},i\ge1,\mbox{ where }L=\sup\{t\le U_{1}:Y_{t}\in\bar{C}\},
\]
so that $0=U_{0}\le L_{1}\le U_{1}\le L_{2}\le U_{2}\le...$. Define
for convenience
\begin{equation}
\mathcal{L}_{i}=Y_{L_{i}},\mathcal{U}_{i}=Y_{U_{i}}\mbox{ and }\hat{Y}_{\cdot}^{i}=Y_{(U_{i-1}+\cdot)\wedge L_{i}},i\ge1.
\label{eq:forconvenience}
\end{equation}
Note that $\mathcal{L}_{i}\in\partial_{e}\bar{C}$ almost surely since
$Y_{\cdot}$ is cadlag. Also note that the $L_{i}$ are \emph{not}
stopping times. However, if we let $\sigma_{y}(z)=P_{y}[Y_{t^{\star}}=z|H_{\bar{C}}>t^{\star}]$
(cf. \prettyref{eq:QSConvToQuasiStat}) we have
\begin{lem}
For any $k\ge1,x\in\mathbb{T}_{N},y\in\partial_{e}\bar{C},z\in\bar{C}^{c},$
and $E,F\subset\Gamma(\mathbb{T}_{N})$ measurable 
\begin{equation}
P_{x}[Y_{\cdot\wedge L_{k}}\in E,\mathcal{L}_{k}=y,\mathcal{U}_{k}=z,Y_{U_{k}+\cdot}\in F]=P_{x}[Y_{\cdot\wedge L_{k}}\in E,\mathcal{L}_{k}=y]\sigma_{y}(z)P_{z}[F].\label{eq:EIIDEKindOfMarkov}
\end{equation}
\end{lem}
\begin{proof}
Let $E'=E\cap\{w:w\mbox{ constant eventually},w(\infty)=y\}$ (see
below \prettyref{eq:DefOfBoundary} for the notation) and $F'=F\cap\{w:w(0)=z\}$,
so that the left-hand side of \eqref{eq:EIIDEKindOfMarkov} equals
\begin{equation}
P_{x}[Y_{\cdot\wedge L_{k}}\in E',Y_{U_{k}+\cdot}\in F']=\dsl_{i\ge1}P_{x}[Y_{\cdot\wedge\tau_{i}}\in E',L_{k}=\tau_{i},Y_{\tau_{i}+t^{\star}+\cdot}\in F'],\label{eq:Decomp}
\end{equation}
(note that $U_{i}\overset{\eqref{eq:DefOfUandtStar}}{=}L_{i}+t^{\star}$,
and recall \prettyref{eq:DefOfJumpTimes}). Now $G=\{Y_{\tau_{i-1}}\in\bar{C},U_{k-1}\le\tau_{i}\le U_{k}\}$
is $\sigma(Y_{\cdot\wedge\tau_{i}})-$measurable, and $G\cap\{H_{\bar{C}}\circ\theta_{\tau_{i}}>t^{\star}\}=\{L_{k}=\tau_{i}\}$,
so that by the strong Markov property applied at time $\tau_{i}$
and the definition of $E'$ we have
\[
P_{x}[Y_{\cdot\wedge\tau_{i}}\in E',L_{k}=\tau_{i},Y_{\tau_{i}+t^{\star}+\cdot}\in F']=P_{x}[\{Y_{\cdot\wedge\tau_{i}}\in E'\}\cap G]P_{y}[H_{\bar{C}}>t^{\star},Y_{t^{\star}+\cdot}\in F'].
\]
But $P_{y}[Y_{t^{\star}+\cdot}\in F',H_{\bar{C}}>t^{\star}]=P_{y}[H_{\bar{C}}>t^{\star}]\sigma_{y}(z)P_{z}[F]$
(by the Markov property applied at time $t^{\star}$ and the definitions
of $\sigma_{y}$ and $F'$) so in fact 
\begin{align*}
P_{x}[Y_{\cdot\wedge\tau_{i}}\in E',L_{k}=\tau_{i},Y_{\tau_{i}+t^{\star}+\cdot}\in F']  = & \;P_{x}[\{Y_{\cdot\wedge\tau_{i}}\in E'\}\cap G]P_{y}[H_{\bar{C}}>t^{\star}]\sigma_{y}(z)P_{z}[F]\\
  = &\; P_{x}[\{Y_{\cdot\wedge\tau_{i}}\in E'\}\cap G\cap\{H_{\bar{C}}\circ\theta_{\tau_{i}}>t^{\star}\}]\sigma_{y}(z)P_{z}[F]\\
  = &\; P_{x}[Y_{\cdot\wedge\tau_{i}}\in E',L_{k}=\tau_{i}]\sigma_{y}(z)P_{z}[F],
\end{align*}
by an application of the strong Markov property and the definition
of $G$. Plugging this into \prettyref{eq:Decomp} and using the definition
of $E'$ gives \prettyref{eq:EIIDEKindOfMarkov}.
\end{proof}
We now continue with the proof of \prettyref{lem:ExcwithIIDExc}.
Because of \prettyref{eq:QSConvToQuasiStat} (together with Proposition
4.7, p. 50 in \cite{LevPerWilMarkovChainsandMixingTimes}) we can
construct for each $y\in\bar{C}^{c}$ a measure $q_{y}(\cdot,\cdot)$
on $\bar{C}^{c}\times\bar{C}^{c}$ coupling $\sigma$ and $\sigma_{y}$,
such that 
\begin{equation}
\mbox{the first marginal is }\sigma(\cdot),\mbox{ the second is }\sigma_{y}(\cdot),\mbox{ and }\dsl_{z\in\mathbb{T}_{N}}q_{y}(z,z)\ge1-ce^{-cN^{c(\varepsilon)}}.\label{eq:CouplingIsGood}
\end{equation}
Let $q_{y}(\cdot|\cdot)$ denote the conditional distribution of the
first argument given the second (note that $\sigma_{y}(z)>0$ for
all $z\in\bar{C}^{c}$, provided $s\ge c(\varepsilon)$ so that $\bar{C}$
consists of disjoint boxes).

We now construct $(\Omega_{3},\mathcal{A}_{3},Q_{3})$ as a space
with the following \emph{mutually independent} families of random
variables
\begin{align}
 & Y_{\cdot}\mbox{ with law }P_{\sigma},\label{eq:DefOfRW}\\
 & (V_{y,z,i})_{y,z\in\bar{C}^{c},i\ge1}\mbox{ independent }\bar{C}^{c}-\mbox{valued, where }V_{y,z,i}\mbox{ has law }q_{y}(dw|z),\label{eq:DefOfS}\\
 & (Z_{\cdot}^{v,i})_{v\in\bar{C}^{c},i\ge1}\mbox{ independent }\Gamma(\mathbb{T}_{N})-\mbox{valued, where }Z_{\cdot}^{v,i}\mbox{ has law }P_{v}[\hat{Y}_{\cdot\wedge L_{1}}^{1}\in dw],\label{eq:DefOfZs}
\end{align}

\smallskip\n
(recall \prettyref{eq:forconvenience} for the definition of $\hat{Y}_{\cdot\wedge L_{1}}^{1}$). We define on $\Omega_{3}$ starting points of excursions $\Sigma_{i}$
by
\begin{equation}
\Sigma_{i}=V_{\mathcal{L}_{i},\mathcal{U}_{i},i}\mbox{ for }i\ge1.\label{eq:DefofSigmas}
\end{equation}
We will see that the $\Sigma_{i}$ are i.i.d. with law $\sigma$,
but coincide with the $\mathcal{U}_{i}$ with high probability. Furthermore
define the excursions $\bar{Y}^{i}$, with starting points $\Sigma_{i-1}$
for $i\ge2$, by
\begin{equation}
\bar{Y}_{\cdot}^{1}=\hat{Y}_{\cdot}^{1}\mbox{ and }\bar{Y}_{\cdot}^{i+1}=\begin{cases}
\hat{Y}_{\cdot}^{i+1} & \mbox{if }\mathcal{U}_{i}=\Sigma_{i}\\
Z_{\cdot}^{\Sigma_{i},i} & \mbox{if }\mathcal{U}_{i}\ne\Sigma_{i}
\end{cases}\mbox{ for }i\ge1.\label{eq:DefOfYBars}
\end{equation}
(By a slight abuse of notation we view $\hat{Y}_{\cdot}^{i},\mathcal{U}_{i},\mathcal{L}_{i}$
and $Y_{\cdot}$ as being defined on $\Omega_{3}$ as well as on $\Gamma(\mathbb{T}_{N})$).
We will see that the $\bar{Y}_{\cdot}^{i},i\ge1,$ are i.i.d. with
law $P_{\sigma}[Y_{\cdot\wedge L_{1}}\in dw]$, essentially because
the starting points $\Sigma_{i}$ are i.i.d. with law $\sigma$. Furthermore
we will see that $\bar{Y}_{\cdot}^{i+1}$ coincides with $\hat{Y}_{\cdot}^{i+1}$
with high probability, because $\Sigma_{i}$ coincides with $\mathcal{U}_{i}$
(the starting point of $\hat{Y}_{\cdot}^{i+1}$) with high probability.
To this end note that for all $i\ge2$ 
\[
Q_{3}[\bar{Y}_{\cdot}^{i}=\hat{Y}_{\cdot}^{i}]\overset{\eqref{eq:DefofSigmas},\eqref{eq:DefOfYBars}}{=}Q_{3}[\mathcal{U}_{i-1}=V_{\mathcal{L}_{i-1},\mathcal{U}_{i-1},i-1}]=\dsl_{y,z\in\mathbb{T}_{N}}Q_{3}[\mathcal{L}_{i-1}=y,\mathcal{U}_{i-1}=z,V_{y,z,i-1}=z].
\]
Now by independence and \eqref{eq:DefOfS} we have $Q_{3}[\mathcal{L}_{i-1}=y,\mathcal{U}_{i-1}=z,V_{y,z,i-1}=z]=P_{\sigma}[\mathcal{L}_{i-1}=y,\mathcal{U}_{i-1}=z]q_{y}(z|z)$.
Furthermore $P_{\sigma}[\mathcal{L}_{i-1}=y,\mathcal{U}_{i-1}=z]=P_{\sigma}[\mathcal{L}_{i-1}=y]\sigma_{y}(z)$
by \eqref{eq:EIIDEKindOfMarkov} so
\begin{equation}
Q_{3}[\bar{Y}_{\cdot}^{i}=\hat{Y}_{\cdot}^{i}]=\dsl_{y\in\mathbb{T}_{N}}\{P_{\sigma}[\mathcal{L}_{i-1}=y]\dsl_{z\in\mathbb{T}_{N}}\sigma_{y}(z)q_{y}(z|z)\}\overset{\eqref{eq:CouplingIsGood}}{\ge}1-ce^{-cN^{c(\varepsilon)}}\mbox{ for all }i\ge1,\label{eq:CouplingIsGood2}
\end{equation}
where we have used that $\sigma_{y}(z)q_{y}(z|z)=q_{y}(z,z)$ and
that $\bar{Y}_{\cdot}^{1}=\hat{Y}_{\cdot}^{1}$ almost surely.

The next lemma will be used to to show that the $\bar{Y}_{\cdot}^{i}$
are i.i.d. with law $P_{\sigma}[\hat{Y}_{\cdot\wedge L_{1}}^{1}\in dw]$.
\begin{lem}
\label{lem:ApplicationOfAlmostMarkov}For any measurable $E_{1},....,E_{k}\subset\Gamma(\mathbb{T}_{N})$,
let $F_{k}=\{\bar{Y}_{\cdot}^{i}\in E_{i},1\le i\le k\}$. Then for
all $y,z\in\bar{C}^{c}$ and measurable $F\subset\Gamma(\mathbb{T}_{N})$
we have 
\begin{equation}
Q_{3}[F_{k}\cap\{\mathcal{L}_{k}=y,\mathcal{U}_{k}=z,Y_{U_{k}+\cdot}\in F\}]=Q_{3}[F_{k}\cap\{\mathcal{L}_{k}=y\}]\sigma_{y}(z)P_{z}[F],\label{eq:ApplicationOfAlmostMarkov}
\end{equation}
\end{lem}
\begin{proof}
Essentially speaking \eqref{eq:ApplicationOfAlmostMarkov} follows
directly from \eqref{eq:EIIDEKindOfMarkov} because the event $F_{k}$
only depends on $Y_{\cdot\wedge L_{k}}$, $Z_{\cdot}^{v,i}$ and $V_{y,z,i}$
for $i\le k$ (see \eqref{eq:DefOfYBars}), where the $Z_{\cdot}^{v,i}$
and $V_{y,z,i}$ are independent of $Y_{U_{k}+\cdot}$ by construction.
We omit the details (which involve ``conditioning on $\mathcal{L}_{i},\mathcal{U}_{i},\Sigma_{i},i\le k-1$'',
i.e. considering $Q_{3}[F_{k}\cap\{\mathcal{L}_{k}=y,\mathcal{U}_{k}=z,Y_{U_{k}+\cdot}\in F\}\cap K(\bar{y},\bar{z},\bar{v})]$,
where $K(\bar{y},\bar{z},\bar{v})=\{(\mathcal{L}_{i})_{i=1}^{k-1}=\bar{y},(\mathcal{U}_{i})_{i=1}^{k-1}=\bar{z},(V_{\bar{y}_{i},\bar{z}_{i},i})_{i=1}^{k-1}=\bar{v}\}$
for vectors $\bar{y},\bar{z},\bar{v}$ in $(\bar{C}^{c})^{k-1}$).
\end{proof}
We now continue with the proof of \prettyref{lem:ExcwithIIDExc}
by showing that the $\bar{Y}_{\cdot}^{i}$ are i.i.d with law $P_{\sigma}[Y_{\cdot\wedge L_{1}}\in dw]$.
For any measurable $E_{1},...,E_{k},E_{k+1}\subset\Gamma(\mathbb{T}_{N})$
let $F_{k}$ be defined as in \prettyref{lem:ApplicationOfAlmostMarkov},
let $F=\{Y_{\cdot\wedge L_{1}}\in E_{k+1}\}$ and note that by \prettyref{eq:DefOfYBars},
$Q_{3}[F_{k}\cap\{\bar{Y}_{\cdot}^{k+1}\in E_{k+1}\}]$ equals
\begin{align}
 & \dsl_{y,z\in\bar{C}^{c}}Q_{3}[F_{k}\cap\{\mathcal{L}_{k}=y,\mathcal{U}_{k}=z,V_{y,z,k}=z,Y_{U_{k}+\cdot}\in F\}]\label{eq:FirstSum}\\
 & +\dsl_{y,z,v\in\bar{C}^{c},v\ne z}Q_{3}[F_{k}\cap\{\mathcal{L}_{k}=y,\mathcal{U}_{k}=z,V_{y,z,k}=v,Z_{\cdot}^{v,k}\in F\}].\label{eq:SecondSum}
\end{align}
By independence and \eqref{eq:DefOfS} we have that the probability
in \prettyref{eq:FirstSum} equals
\[
Q_{3}[F_{k}\cap\{\mathcal{L}_{k}=y,\mathcal{U}_{k}=z,Y_{U_{k}+\cdot}\in F\}]q_{y}(z|z)\overset{\eqref{eq:ApplicationOfAlmostMarkov}}{=}Q_{3}[F_{k}\cap\{\mathcal{L}_{k}=y\}]\sigma_{y}(z)P_{z}[F]q_{y}(z|z),
\]
and similarly by independence, \eqref{eq:DefOfS} and \prettyref{eq:DefOfZs},
the probability in \prettyref{eq:SecondSum} equals
\[
Q_{3}[F_{k}\cap\{\mathcal{L}_{k}=y,\mathcal{U}_{k}=z\}]q_{y}(v|z)P_{v}[F]\overset{\eqref{eq:ApplicationOfAlmostMarkov}}{=}Q_{3}[F_{k}\cap\{\mathcal{L}_{k}=y\}]\sigma_{y}(z)q_{y}(v|z)P_{v}[F].
\]

\smallskip\n
But $\sigma_{y}(z)P_{z}[F]q_{y}(z|z)=q_{y}(z,z)P_{z}[F]$ and $\sigma_{y}(z)q_{y}(v|z)P_{v}[F]=q_{y}(v,z)P_{v}[F]$,
so 
\begin{eqnarray*}
Q_{3}[F_{k}\cap\{\bar{Y}_{\cdot}^{k+1}\in E_{k+1}\}] & = & \dsl_{y,z,v\in\bar{C}^{c}}Q_{3}[F_{k}\cap\{\mathcal{L}_{k}=y\}]q_{y}(v,z)P_{v}[F]\\
 & = & Q_{3}[F_{k}]P_{\sigma}[F]=Q_{3}[F_{k}]P_{\sigma}[Y_{\cdot\wedge L_{1}}\in E_{k+1}],
\end{eqnarray*}
where we have used \eqref{eq:CouplingIsGood} and the definition of
$F$. But applying this recursively we get that for all $k\ge1$ and
measurable $E_{1},...,E_{k+1}\subset\Gamma(\mathbb{T}_{N})$
\[
Q_{3}[\bar{Y}_{\cdot}^{i}\in E_{i},1\le i\le k+1]=\mbox{\f $\dis\prod_{i=1}^{k+1}$} P_{\sigma}[Y_{\cdot\wedge L_{1}}\in E_{i}],
\]
and thus the $\bar{Y}_{\cdot}^{i},i\ge1,$ are i.i.d. with the same
law as $Y_{\cdot\wedge L_{1}}$ under $P_{\sigma}$.

\smallskip
We can now extend the space $(\Omega_{3},\mathcal{A}_{3},Q_{3})$
by independently ``appending a piece'' with law $P_{y}[Y_{\cdot\wedge t^{\star}}\in dw|H_{\bar{C}}>t^{\star}]$
to each $\bar{Y}_{\cdot}^{i}$, conditionally on the event $\{\bar{Y}_{\infty}^{i}=y\}$
for every $y\in\partial_{e}\bar{C}$, to obtain i.i.d. processes $\tilde{Y}_{\cdot}^{1},\tilde{Y}_{\cdot}^{2},...$
with law $P_{\sigma}[Y_{\cdot\wedge U_{1}}\in dw]$ such that $\tilde{Y}_{\cdot}^{i}(0,\infty)\cap\bar{C}=\bar{Y}_{\cdot}^{i}(0,\infty)\cap\bar{C}$
almost surely. Then \eqref{eq:ExcwithIIDExc} is satisfied by \eqref{eq:CouplingIsGood2}
and since $\hat{Y}_{\cdot}^{i}(0,\infty)\cap\bar{C}=Y(U_{i-1},U_{i})\cap\bar{C}$.
This completes the proof of \prettyref{lem:ExcwithIIDExc}.
\end{proof}
The next step is to to relate the stopping times $U_{k}$ to deterministic times.
\begin{lem} \label{lem:ETRTExcursionTimeandRealTime}($d\ge3$,$N\ge3$) 

\medskip
If $s\ge c(\varepsilon)$, $u\ge s^{-c(\varepsilon)}$, $\frac{1}{2}\ge\delta\ge cs^{-c(\varepsilon)}$ and $n\le s^{c(\varepsilon)}$ then
\begin{eqnarray}
P_{\sigma}[U_{[u(1+\delta)n\,{\rm cap}(A)]}\le uN^{d}] & \le & ce^{-s^{c(\varepsilon)}}\mbox{ and }\label{eq:ETRTUpperBound}\\
P_{\sigma}[U_{[u(1-\delta)n\,{\rm cap}(A)]}\ge uN^{d}] & \le & ce^{-s^{c(\varepsilon)}}.\label{eq:ETRTLowerBound}
\end{eqnarray}
\end{lem}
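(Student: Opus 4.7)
The strategy is to couple, via Lemma~\ref{lem:ExcwithIIDExc}, the excursion times $T_i := U_i - U_{i-1}$ under $P_\sigma$ to i.i.d.\ random variables $\tilde T_i$ with the law of $U_1$ under $P_\sigma$, and then apply a Cram\'er--Chernoff bound. A careful reading of the construction in the proof of Lemma~\ref{lem:ExcwithIIDExc} shows that on the event $\bigcap_{i \le k}\{\Sigma_{i-1} = \mathcal U_{i-1}\}$, which has probability $\ge 1 - ke^{-cN^{c(\varepsilon)}}$, the coupling actually matches full excursion paths (not merely their traces in $\bar C$), since the independently appended $t^\star$--tails have the same conditional law as the true tails $Y_{(L_i+\cdot)\wedge U_i}$. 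In particular, on this event $U_k = \sum_{i=1}^k \tilde T_i$, reducing the question to a tail estimate for an i.i.d.\ sum.

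Before invoking Chernoff I pin down $m := E_\sigma[U_1]$. Writing $U_1 = H_{\bar A} + (U_1 - H_{\bar A})$, Lemma~\ref{lem:ETTEntraceTimesTorus} applied to $V = \bar A$ (so $\sum_i {\rm cap}(V^i) = n\,{\rm cap}(A)$), combined with \eqref{eq:ETTStartingFromPointandUniformComparison}, yields $E_\sigma[H_{\bar A}] = \tfrac{N^d}{n\,{\rm cap}(A)}(1+O(s^{-c(\varepsilon)}))$. For the remainder, once the walk hits $\bar A$ each attempt to stay out of $\bar C$ for $t^\star$ consecutive time units succeeds with probability $\ge 1 - c(\varepsilon)s^{-c(\varepsilon)}$ by \eqref{eq:CDBallHitting}; so $E_\sigma[U_1 - H_{\bar A}] \le c(\varepsilon)N^{2+\varepsilon/100}$, which is negligible relative to $\tfrac{N^d}{n\,{\rm cap}(A)}$ since $d \ge 3$ and $n \le s^{c(\varepsilon)}$. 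Hence $m = \tfrac{N^d}{n\,{\rm cap}(A)}(1+O(s^{-c(\varepsilon)}))$.

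To apply Chernoff I need uniform exponential moments for $\tilde T_1/m$. The quantity $a(t) := \sup_{y \in \bar C^c} P_y[H_{\bar A} > t]$ is submultiplicative by the strong Markov property; Markov's inequality together with $\sup_y E_y[H_{\bar A}] = (1+O(N^{-c(\varepsilon)}))m$ from \eqref{eq:ETTStartingFromPointandUniformComparison} gives $a(Tm) \le \tfrac{1}{2}$ for some absolute $T$, and iteration yields $a(t) \le Ce^{-ct/m}$. Combined with the $O(t^\star)$ bound on $U_1 - H_{\bar A}$, this produces $E_\sigma[\exp(\lambda \tilde T_1/m)] \le 1 + \lambda + c\lambda^2$ for $|\lambda|$ smaller than an absolute constant. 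Setting $k := [u(1+\delta)n\,{\rm cap}(A)]$ one has $km \ge uN^d(1+\delta/2)$, so the standard Chernoff bound gives
\begin{equation*}
P_\sigma\Big[\sum_{i=1}^{k}\tilde T_i \le uN^d\Big] \le \exp(-ck\delta^2).
\end{equation*}
Using $u \ge s^{-c(\varepsilon)}$, $\delta \ge cs^{-c(\varepsilon)}$, and ${\rm cap}(A) \ge cs^{(1-\varepsilon)(d-2)}$ from \eqref{eq:AsymptoticsOfCapOfBox}, a choice of the free exponents $c(\varepsilon)$ in the hypotheses small relative to $(1-\varepsilon)(d-2)$ makes $k\delta^2 \ge s^{c(\varepsilon)}$. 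Combining with the coupling error $ke^{-cN^{c(\varepsilon)}} \le e^{-s^{c(\varepsilon)}}$ (recall $N \ge s$) yields \eqref{eq:ETRTUpperBound}; \eqref{eq:ETRTLowerBound} is entirely symmetric.

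The main obstacle is the uniform exponential moment bound for $H_{\bar A}/m$: the submultiplicativity argument requires that the decay rate $c$ in $a(t) \le Ce^{-ct/m}$ be an absolute constant, and this must be extracted from Lemma~\ref{lem:ETTEntraceTimesTorus} carefully so that the error does not deteriorate with $n$ or $s$. A secondary point is verifying the bound in the regime where $u$ is near the lower threshold $s^{-c(\varepsilon)}$: here $k$ itself may be only of order a small power of $s$, and one must check that the exponents chosen in the hypotheses are consistent with $k\delta^2 \ge s^{c(\varepsilon)}$ throughout.
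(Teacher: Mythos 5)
Your route is genuinely different from the paper's and is mostly viable, so let me first compare. The paper never converts the excursion durations into an i.i.d.\ sum: it splits $U_k=\sum_{i=0}^{k-1}H_{\bar A}\circ\theta_{U_i}+\sum_{i=1}^{k}U\circ\theta_{R_i}$ and bounds small exponential moments of each sum directly via the strong Markov property, $E_\sigma[\exp(\lambda\sum_iH_{\bar A}\circ\theta_{U_i})]\le(\sup_{x\notin\bar C}E_x[e^{\lambda H_{\bar A}}])^k$, with Khasminskii's lemma and \prettyref{lem:ETTEntraceTimesTorus} supplying the one-excursion moment estimates; this needs no independence and hence no coupling error. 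You instead invoke \prettyref{lem:ExcwithIIDExc} to replace the durations by i.i.d.\ copies and then run Chernoff. Your submultiplicativity argument for $a(t)$ is a perfectly good substitute for Khasminskii (but take the supremum over all of $\mathbb{T}_N$, not just $\bar C^c$, since after time $t_1$ on $\{H_{\bar A}>t_1\}$ the walk need not be outside $\bar C$), and your identification of $E_\sigma[U_1]$ with $\frac{N^d}{n\,\mathrm{cap}(A)}(1+O(s^{-c(\varepsilon)}))$ uses \prettyref{lem:ETTEntraceTimesTorus} exactly as the paper does. Two steps are stated imprecisely but are repairable: (i) the coupling does \emph{not} ``match full excursion paths'' --- the appended $t^\star$-tails are independent copies, not the true tails $Y_{(L_i+\cdot)\wedge U_i}$ --- but since $U_i-L_i=t^\star$ deterministically, the \emph{durations} do agree on the good event, which is all you use; (ii) an $O(t^\star)$ bound on $E[U_1-H_{\bar A}]$ does not by itself yield exponential moments of $U_1-H_{\bar A}$; you need the exponential tail of $U$ at scale $t^\star$, obtained by the same geometric-trials recursion the paper runs in \eqref{eq:expUupperbound}.

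The one genuine shortfall is the size of the coupling error. Your bound is $e^{-ck\delta^2}+ke^{-cN^{c(\varepsilon)}}$ with $k\approx u(1\pm\delta)n\,\mathrm{cap}(A)$, and the second term grows linearly in $u$. The lemma asserts $ce^{-s^{c(\varepsilon)}}$ for \emph{all} $u\ge s^{-c(\varepsilon)}$, with no upper cutoff, and the paper's moment-generating-function argument delivers exactly that (its error $ce^{-c\delta^2un\,\mathrm{cap}(A)}$ only improves as $u$ grows). Your argument proves the lemma only for $u$ up to roughly $e^{cN^{c(\varepsilon)}}$; beyond that the accumulated per-excursion coupling cost swamps the target bound. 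This range restriction happens to be harmless for the paper's applications (the final error in \prettyref{thm:Coupling} carries a factor of $u$ anyway), but as a proof of the lemma as stated it leaves a gap that you should either close --- most simply by reverting to the strong-Markov/MGF argument, which needs no coupling --- or acknowledge by adding an upper bound on $u$ to the hypotheses.
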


\begin{proof}
Note that 
\begin{equation}
U_{k}=\dsl_{i=0}^{k-1}H_{\bar{A}}\circ\theta_{U_{i}}+\dsl_{i=1}^{k}U\circ\theta_{R_{i}}\mbox{ for all }k\ge0.\label{eq:ETRTRDecomp}
\end{equation}
Let $k^{+}=[u(1+\delta)n\,{\rm cap}(A)]$ and $k^{-}=[u(1-\delta)n\,{\rm cap}(A)]$.
By \prettyref{eq:ETRTRDecomp} both \prettyref{eq:ETRTUpperBound}
and \prettyref{eq:ETRTLowerBound} follow from (note that $\delta uN^c,\delta^{2}un\mbox{cap}(A)\overset{\prettyref{eq:AsymptoticsOfCapOfBox}}{\ge}s^{c(\varepsilon)}$
if $u,\delta\ge s^{-c(\varepsilon)}$) 
\begin{eqnarray}
P_{\sigma}\Big[\dsl_{i=0}^{k^{+}-1}H_{\bar{A}}\circ\theta_{U_{i}}\le uN^{d}\Big] & \le & ce^{-c\delta^{2}un\mbox{cap}(A)},\label{eq:ETRTHA1}\\
P_{\sigma}\Big[\dsl_{i=0}^{k^{-}-1}H_{\bar{A}}\circ\theta_{U_{i}}\ge u(1-\frac{\delta}{2})N^{d}\Big] & \le & e^{-c\delta^{2}un\mbox{cap}(A)},\label{eq:ETRTHA2}\\
P_{\sigma}\Big[\dsl_{i=1}^{k^{-}}U\circ\theta_{R_{i}}\ge\frac{\delta}{2}uN^{d}\Big] & \le & ce^{-c\delta uN^c}.\label{eq:ETRTU}
\end{eqnarray}
One shows \prettyref{eq:ETRTHA1}, \prettyref{eq:ETRTHA2} and \prettyref{eq:ETRTU}
using large deviations bounds. Since the proofs are very similar to
those in Lemma 4.3 in \cite{TeixeiraWindischOnTheFrag} we omit the
details. Let us simply state that to prove \prettyref{eq:ETRTHA1}
one estimates the small exponential moments of $\delta\frac{\sum_{i=0}^{k^{+}-1}H_{\bar{A}}\circ\theta_{U_{i}}}{\inf_{x\notin\bar{C}}E_{x}[H_{\bar{A}}]}$
and to prove \prettyref{eq:ETRTHA2} one estimates the small exponential
moments of $\delta\frac{\sum_{i=0}^{k^{-}-1}H_{\bar{A}}\circ\theta_{U_{i}}}{\sup_{x\in\mathbb{T}_{N}}E_{x}[H_{\bar{A}}]}.$
This is possible because by the strong Markov property $E_{\sigma}[\exp(\lambda\sum_{i=0}^{k}H_{\bar{A}}\circ\theta_{U_{k}})]\le(\sup_{x\notin\bar{C}}E_{x}[\exp(\lambda H_{\bar{A}})])^{k}$
for all $\lambda\in\mathbb{R}$, and by elementary bounds on the function
$x\rightarrow e^{x}$ and Khasminskii\textquoteright{}s lemma (Lemma
3.8 in \cite{TeixeiraWindischOnTheFrag}) we have
\begin{align}
 & \sup_{x\notin\bar{C}}E_{x}[\exp(\lambda H_{\bar{A}})]\le1+\lambda\inf_{x\notin\bar{C}}E_{x}[H_{\bar{A}}]+c\lambda^{2}(\sup_{x\in\mathbb{T}_{N}}E[H_{\bar{A}}])^{2}\mbox{ for }\lambda<0\label{eq:TheFirstIneq}\\
 & \mbox{and }\sup_{x\notin\bar{C}}E_{x}[\exp(\lambda H_{\bar{A}})]\le\sum_{m\ge0}\lambda^{m}(\sup_{x\in\mathbb{T}_{N}}E_{x}[H_{\bar{A}}])^{m}\mbox{ for }\lambda>0.\label{eq:TheSecondIneq}
\end{align}
To show \eqref{eq:ETRTHA1} one sets $\lambda=-\frac{c\delta}{\inf_{x\notin\bar{C}}E_{x}[H_{\bar{A}}]}$
in \eqref{eq:TheFirstIneq}, for a small enough constant $c$, uses
\prettyref{eq:ETTStartingFromPointandUniformComparison} to show that
the term involving $\lambda^{2}$ is at most $c\delta^{2}$ (for a
small enough constant $c$) and \prettyref{eq:ETTEntranceTimeLowerBound}
and \prettyref{eq:ETTStartingFromPointandUniformComparison} to show
that $\frac{N^{d}}{\inf_{x\notin\bar{C}}E_{x}[H_{\bar{A}}]}\le(1+c\delta)n\mbox{cap}(A)$,
for any small constant $c$, as long as we require $\delta\ge c(\varepsilon)s^{-c(\varepsilon)}$.
To show \eqref{eq:ETRTHA2} one sets $\lambda=\frac{c\delta}{\sup_{x\in\mathbb{T}_{N}}E_{x}[H_{\bar{A}}]}>0$
in \eqref{eq:TheSecondIneq}, for a small enough constant $c$, and
uses \prettyref{eq:ETTEntraceTimeUpperBound} and \prettyref{eq:ETTStartingFromPointandUniformComparison}
to show that $\frac{N^{d}}{\sup_{x\in\mathbb{T}_{N}}E_{x}[H_{\bar{A}}]}\ge(1-c\delta)n\mbox{cap}(A)$,
for any small constant $c$, as long as we require $\delta\ge c(\varepsilon)s^{-c(\varepsilon)}$.
(Note that \prettyref{eq:ETTEntraceTimeUpperBound} and \prettyref{eq:ETTStartingFromPointandUniformComparison}
hold because we require $n\le s^{c(\varepsilon)}$.)

To prove \prettyref{eq:ETRTU} one estimates $E_{\sigma}[\exp(\lambda\sum_{i=1}^{k^{-}}U\circ\theta_{R_{i}})]$
for $\lambda=(t^{\star})^{-1}$, first by similarly bounding it above
by $(\sup_{x\in\bar{C}}E_{x}[\exp(\lambda U)])^{k^{-}}$. By noting
that if $U\le T_{\bar{D}}+t^{\star}$ does not hold, then $H_{\bar{C}}\circ\theta_{T_{\bar{D}}}\le t^{\star}$
and $U\le U\circ\theta_{H_{\bar{C}}}\circ\theta_{T_{\bar{D}}}+T_{\bar{D}}+t^{\star}$
(recall \prettyref{eq:DefOfUandtStar} and \eqref{eq:DefOfD}), we obtain the inequality

\begin{equation}
\sup_{x\in\bar{C}}E_{x}[\exp(\lambda U)]\le\sup_{x\in\bar{C}}E_{x}[\exp(\lambda(T_{\bar{D}}+t^{\star})]\left(1+\sup_{x\in\bar{C}}E_{x}[\exp(\lambda U)]\sup_{x\notin\bar{D}}P_{x}[H_{\bar{C}}<U]\right).\label{eq:expUupperbound}
\end{equation}
Using once again Khasminskii's lemma and the elementary  $\sup_{x\in\mathbb{T}_{N}}E_{x}[T_{\bar{D}}]=\sup_{x\in\mathbb{T}_{N}}E_{x}[T_{D}]\le cs^{2(1-\frac{\varepsilon}{8})}\le ct^{\star}$
(recall that $D$ has radius $s^{1-\frac{\varepsilon}{8}}$, $s\le N$
and \prettyref{eq:DefOfUandtStar})) one obtains $\sup_{x\in\bar{C}}E_{x}[\exp(\lambda(T_{\bar{D}}+t^{\star})]\le e^{c\lambda t^{\star}}=c$,
since $\lambda t^{\star}=1$. Using this inequality together with
\prettyref{eq:CDBallHitting} in \prettyref{eq:expUupperbound}, and rearranging terms, one obtains
that $\sup_{x\in\bar{C}}E_{x}[\exp(\lambda U)]\le e^{c\lambda t^{\star}}$
(provided $s\ge c(\varepsilon)$). To prove \prettyref{eq:ETRTU}
using the exponential Chebyshev one must then check that $ct^{\star}k^{-} - \frac{\delta}{2}uN^{d} \le -cu\delta N^d$
for $s\ge c(\varepsilon)$, which follows by noting that $\mbox{cap}(A)\le N^{(1-\varepsilon)(d-2)}$ (see \prettyref{eq:AsymptoticsOfCapOfBox}),
using \prettyref{eq:DefOfUandtStar}) and requiring $\delta\ge s^{-c(\varepsilon)}$
and $n\le s^{c(\varepsilon)}$ for small enough exponents $c(\varepsilon)$.
\end{proof}

We can now combine \prettyref{lem:ExcwithIIDExc} and \prettyref{lem:ETRTExcursionTimeandRealTime}
to construct a coupling of a random walk $Z_{\cdot}$ with law $P$
and a sequence of i.i.d. excursions with law $P_{\sigma}[Y_{\cdot\wedge U_{1}}\in dw]$
such that, roughly speaking, $Z(0,uN^{d})\cap\bar{A}$ coincides with
the traces of the i.i.d. excursions.
\begin{prop}\label{pro:UnifRWIIDExc}($d\ge3$,$N\ge3$) 

\medskip
If $s\ge c(\varepsilon)$,
$u\ge s^{-c(\varepsilon)}$, $\frac{1}{2}\ge\delta\ge cs^{-c(\varepsilon)}$
and $n\le s^{c(\varepsilon)}$, then we can construct a coupling $(\Omega_{4},\mathcal{A}_{4},Q_{4})$
of a process $Z_{\cdot}$ with law $P$ and an i.i.d. sequence $\tilde{Y}_{\cdot}^{1},\tilde{Y}_{\cdot}^{2},...$
with law $P_{\sigma}[Y_{\cdot\wedge U_{1}}\in dw]$ such that
\begin{equation}
\begin{array}{rc}
Q_{4}[\overset{[u(1-\delta)n\mbox{cap}(A)]}{\underset{i=2}{\cup}}\tilde{Y}^{i}(0,\infty)\cap\bar{C}\subset Z(0,uN^{d})\cap\bar{C}\subset\overset{[u(1+\delta)n\,{\rm cap}(A)]}{\underset{i=1}{\bigcup}}\tilde{Y}^{i}(0,\infty)\cap\bar{C}]\\
\ge1-c(\varepsilon)ue^{-cs^{c(\varepsilon)}}.
\end{array}\label{eq:UnifRWIIDExc}
\end{equation}
(Note that the first union is over $i\ge2$, see the remark after
the proof.)\end{prop}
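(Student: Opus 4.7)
My plan is to derive the sandwich first for a walk $Y$ started from the quasistationary distribution $\sigma$, using \prettyref{lem:ExcwithIIDExc} and \prettyref{lem:ETRTExcursionTimeandRealTime}, and then to pass to a walk $Z$ with uniform starting distribution via the torus mixing estimate \eqref{eq:TorusMixing}.

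Applying \prettyref{lem:ExcwithIIDExc} furnishes a coupling of $Y$ (law $P_\sigma$) with an i.i.d. sequence $\tilde{Y}^1,\tilde{Y}^2,\ldots$ of law $P_\sigma[Y_{\cdot\wedge U_1}\in dw]$ for which $Y(U_{i-1},U_i)\cap\bar{C}=\tilde{Y}^i(0,\infty)\cap\bar{C}$ fails for each fixed $i$ with probability at most $e^{-cN^{c(\varepsilon)}}$. A union bound over $i\le K^+=[u(1+\delta)n\,\mbox{cap}(A)]$, combined with $K^+\le cus^{c(\varepsilon)}$ (which follows from \eqref{eq:AsymptoticsOfCapOfBox} and $n\le s^{c(\varepsilon)}$), gives these identities simultaneously with failure probability $\le c(\varepsilon)ue^{-cs^{c(\varepsilon)}}$. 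Combining with \prettyref{lem:ETRTExcursionTimeandRealTime} (which places $uN^d$ between $U_{K^-}$ and $U_{K^+}$ on a further event of comparable probability) and with the elementary decomposition $Y(0,U_k)\cap\bar{C}=\bigcup_{i\le k}Y(U_{i-1},U_i)\cap\bar{C}$ (valid since $Y_0\in\bar{C}^c$ almost surely under $P_\sigma$) produces on a good event $\mathcal{G}$ of probability $\ge 1-c(\varepsilon)ue^{-cs^{c(\varepsilon)}}$ the ``$Y$-version''
\[
\bigcup_{i=1}^{K^-}\tilde{Y}^i(0,\infty)\cap\bar{C}\;\subset\;Y(0,uN^d)\cap\bar{C}\;\subset\;\bigcup_{i=1}^{K^+}\tilde{Y}^i(0,\infty)\cap\bar{C}.
\]

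Next I would extend the space to carry $Z$ with law $P$, coupled to $Y$ so that $Z_{t^\star+s}=Y_{t^\star+s}$ for all $s\ge 0$ on an event $\mathcal{G}'$ of probability $\ge 1-ce^{-cN^{c(\varepsilon)}}$. The ingredient is \eqref{eq:TorusMixing} with $\lambda=t^\star/N^2=N^{\varepsilon/100}$: averaging the pointwise mixing estimate against $\sigma$ places the law of $Y_{t^\star}$ under $P_\sigma$ within TV distance $ce^{-cN^{\varepsilon/100}}$ of uniform, while $Z_{t^\star}$ is exactly uniform since $\pi$ is stationary for SRW on the vertex-transitive graph $\mathbb{T}_N$; one couples the two marginals at $t^\star$ and extends by the strong Markov property. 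On $\mathcal{G}\cap\mathcal{G}'$ the lower inclusion is then immediate, via $U_1\ge t^\star$ (by definition \eqref{eq:DefOfUandtStar} of $U$) and $U_{K^-}\le uN^d$:
\[
\bigcup_{i=2}^{K^-}\tilde{Y}^i(0,\infty)\cap\bar{C}=Y(U_1,U_{K^-})\cap\bar{C}\subset Y(t^\star,uN^d)\cap\bar{C}=Z(t^\star,uN^d)\cap\bar{C}\subset Z(0,uN^d)\cap\bar{C}.
\]
The asymmetric indexing $i\ge 2$ is forced here precisely because $\tilde{Y}^1$ is coupled to $Y(0,U_1)$, which reaches into the pre-coupling window $[0,t^\star]$ where $Z$ and $Y$ need not agree.

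The main obstacle is the upper inclusion. Decomposing $Z(0,uN^d)\cap\bar{C}=(Z(0,t^\star)\cap\bar{C})\cup(Z(t^\star,uN^d)\cap\bar{C})$, the second piece equals $Y(t^\star,uN^d)\cap\bar{C}$ on $\mathcal{G}'$ and is therefore contained in $\bigcup_{i=1}^{K^+}\tilde{Y}^i(0,\infty)\cap\bar{C}$ on $\mathcal{G}$, but the pre-coupling piece $Z(0,t^\star)\cap\bar{C}$ must still be placed inside this union. My plan is to strengthen the second coupling so that additionally $Z(0,t^\star)\cap\bar{C}\subset Y(0,t^\star)\cap\bar{C}$ on $\mathcal{G}'$; combined with $Y(0,t^\star)\subset Y(0,U_1)$ (again from $U_1\ge t^\star$), this would absorb $Z(0,t^\star)\cap\bar{C}$ into $\tilde{Y}^1(0,\infty)\cap\bar{C}$ on $\mathcal{G}$, and the ``extra'' first excursion in the upper union is exactly what is needed to pay for this. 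Implementing the strengthening requires coupling entire pre-$t^\star$ trajectories of $Z$ and $Y$ rather than only their values at $t^\star$ — combining a coupling of $\pi$ and $\sigma$ (which differ essentially only by the absent mass $\sigma(\bar{C})=0$) with synchronous evolution, and controlling the rare event that $Z$ enters $\bar{C}$ during $[0,t^\star]$ at a point not visited by $Y$ within the same window — and I expect this to be the technical crux of the proof.
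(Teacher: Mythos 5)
Your first stage (the $Y$-version sandwich obtained from \prettyref{lem:ExcwithIIDExc}, \prettyref{lem:ETRTExcursionTimeandRealTime} and a union bound over $i\le k^{+}\le cus^{c(\varepsilon)}$) matches the paper. The gap is in the second stage, and it is exactly the point you flag as ``the technical crux'': your proposed resolution cannot deliver the stated error bound. You couple $Z$ and $Y$ so that they agree \emph{in absolute time} on $[t^{\star},\infty)$, which leaves the pre-window $Z(0,t^{\star})\cap\bar{C}$ uncontrolled, and you then hope to absorb it into $\tilde{Y}^{1}$ by additionally forcing $Z(0,t^{\star})\cap\bar{C}\subset Y(0,t^{\star})\cap\bar{C}$. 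But any such coupling must fail with probability at least of the order of $P[H_{\bar{C}}<t^{\star}]$ for the walk started from $\pi$ minus the corresponding quantity for $\sigma$; already $P[Y_{0}\in\bar{C}]=|\bar{C}|N^{-d}$ is only polynomially small in $N$ (and $P_\pi[H_{\bar C}<t^\star]\approx t^{\star}n\,{\rm cap}(C)N^{-d}$ is polynomially small as well, cf.\ \prettyref{eq:ETTEntranceTimeLowerBound}), whereas \prettyref{eq:UnifRWIIDExc} requires a stretched-exponential error $e^{-cs^{c(\varepsilon)}}$. Also, $\sigma$ is the quasistationary distribution, not the uniform distribution on $\bar{C}^{c}$, so ``synchronous evolution from a coupling of $\pi$ and $\sigma$'' does not make the two traces on $[0,t^{\star}]$ coincide even off the event $\{Y_0\ne Z_0\}$ unless the starting points agree.

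The paper sidesteps this entirely by a different identification: it sets $Z_{\cdot}=Y_{t^{\star}+\cdot}$ as a \emph{time shift}, so that $Z(0,uN^{d})=Y(t^{\star},uN^{d}+t^{\star})$ on the coupling event \prettyref{eq:URWIIDExcZY}. There is then no pre-coupling window at all — $Z$'s time $0$ \emph{is} $Y$'s time $t^{\star}$ — and the only price is that the relevant window of $Y$ extends to $uN^{d}+t^{\star}$ rather than $uN^{d}$. This overshoot is absorbed into $U_{k^{+}}$ by applying \prettyref{eq:ETRTUpperBound} with $u(1+\frac{\delta}{4})$ in place of $u$ and $\frac{\delta}{4}$ in place of $\delta$, using $uN^{d}+t^{\star}\le u(1+\frac{\delta}{4})N^{d}$ (valid since $u,\delta\ge s^{-c(\varepsilon)}$). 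With that substitution your lower-inclusion argument goes through verbatim ($U_{1}\ge t^{\star}$ and $U_{k^{-}}\le uN^{d}$), and the role of the extra excursion $\tilde{Y}^{1}$ is simply that $Y(0,U_{1})$ covers the initial segment $Y(0,t^{\star})$ that $Z$ never sees — which is why the first union starts at $i=2$.
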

\begin{proof}
We first use \prettyref{lem:ExcwithIIDExc} to construct the space
$(\Omega_{3},\mathcal{A}_{3},Q_{3})$. We will now extend it to get
$(\Omega_{4},\mathcal{A}_{4},Q_{4})$. By \prettyref{eq:TorusMixing}
(with $\lambda=N^{-2}t^{\star}\overset{\eqref{eq:DefOfUandtStar}}{=}N^{\frac{\varepsilon}{100}}$)
and a standard coupling argument we can construct a process $Z_{\cdot}$
with law $P$ such that $Z_{\cdot}$ agrees with $Y_{t^{\star}+\cdot}$
with probability at least $1-ce^{-N^{c(\varepsilon)}}$, and in particular
\begin{equation}
Q_{4}[Z(0,uN^{d})=Y(t^{\star},uN^{d}+t^{\star})]\ge1-ce^{-cN^{c(\varepsilon)}}.\label{eq:URWIIDExcZY}
\end{equation}

\smallskip\n
Now letting $k^{-}=[u(1-\delta)n\mbox{cap}(A)]$ and $k^{+}=[u(1+\delta)n\mbox{cap}(A)]$
we have
\begin{equation}
Q_{4}[Y(U_{1},U_{k^{-}})\subset Y(t^{\star},uN^{d}+t^{\star})\subset Y(0,U_{k^{+}})]\ge1-ce^{-s^{c}},\label{eq:UEQIIDExcYU}
\end{equation}
since $U_{1}\overset{\eqref{eq:DefOfRkUk}}{\ge}U\circ\theta_{R_{1}}\overset{\eqref{eq:DefOfUandtStar}}{\ge}t^{\star}$,
$Q_{4}[U_{k^{-}}\le uN^{d}]\ge1-ce^{-s^{c}}$ by \prettyref{eq:ETRTLowerBound},
and $uN^{d}+t^{\star}\overset{u,\delta\ge N^{-c}}{\le}N^d u(1+\frac{\delta}{4})<U_{[u(1+\frac{\delta}{4})^{2}n\,{\rm cap}(A)]}\le U_{k^{+}}$
with probability at least $1-ce^{-s^{c}}$ by \prettyref{eq:ETRTUpperBound}
(applied with $u(1+\frac{\delta}{4})$ in place of $u$ and $\frac{\delta}{4}$
in place of $\delta$). Finally by \prettyref{eq:ExcwithIIDExc} we have
\[
Q_{4}\Big[\,\mbox{\f $\dis\bigcup\limits^{k^{-}}_{i=2}$} \,\tilde{Y}^{i}(0,\infty)\cap\bar{C}=Y(U_{1},U_{k^{-}})\cap\bar{C},Y(0,U_{k^{+}})\cap\bar{C}=\mbox{\f $\dis\bigcup\limits^{k^{+}}_{i=1}$}\, \tilde{Y}^{i}(0,\infty)\cap\bar{C}\Big]\ge1-cue^{-cs^{c(\varepsilon)}},
\]
where we have used that $k^{+}\le cun\mbox{cap}(A)\overset{\prettyref{eq:AsymptoticsOfCapOfBox}}{\le}cus^{c(\varepsilon)}s^{(1-\varepsilon)(d-2)}\le cus^{c(\varepsilon)}$.
Combining this with \prettyref{eq:URWIIDExcZY} and \prettyref{eq:UEQIIDExcYU}
now gives \eqref{eq:UnifRWIIDExc} (using also that $u\ge s^{-c(\varepsilon)}$).
\end{proof}
Note that to ensure that also the first excursion $\tilde{Y}_{\cdot}^{1}$
has law $P_{\sigma}$, we have generated the law $P$ of $Z$ by modifying
$Y_{t^{\star}+\cdot}$ under $P_{\sigma}$, and getting the i.i.d.
excursions $\tilde{Y}_{\cdot}^{i},i\ge1,$ from $Y_{\cdot}$ via \prettyref{lem:ExcwithIIDExc}.
The first union is \prettyref{eq:UnifRWIIDExc} is over $i\ge2$,
since with this construction $\tilde{Y}_{\cdot}^{1}$ corresponds
to a piece of $Y_{\cdot}$ that is not fully included in $Z_{\cdot}$.

We are ready to finish the proof of \prettyref{pro:CoupleRWandPPoE}
using the previous \prettyref{pro:UnifRWIIDExc}.
\begin{proof}[Proof of \prettyref{pro:CoupleRWandPPoE}]
We apply \prettyref{pro:UnifRWIIDExc} with $\frac{\delta}{4}\ge cs^{-c(\varepsilon)}$
in place of $\delta$ to construct the space $(\Omega_{4},\mathcal{A}_{4},Q_{4})$
which we will extend to get $(\Omega_{2},\mathcal{A}_{2},Q_{2})$.
First of all we rename the process $Z_{\cdot}$ to $Y_{\cdot}$, so
that we have from \prettyref{eq:UnifRWIIDExc} that
\begin{equation}
\begin{array}{rc}
Q_{4}\Big[\, \mbox{\f $\dis\bigcup\limits^{[u(1-\frac{\delta}{4})n\,{\rm cap}(A)]}_{i=2}$}\tilde{Y}^{i}(0,\infty)\cap\bar{C}\subset Y(0,uN^{d})\cap\bar{C}\subset\mbox{\f $\dis\bigcup\limits^{[u(1+\frac{\delta}{4})n\mbox{cap}(A)]}_{i=1}$}\tilde{Y}^{i}(0,\infty)\cap\bar{C}]\\
\ge1-c(\varepsilon)ue^{-cs^{c(\varepsilon)}},
\end{array}\label{eq:RWPPOEFirstStep}
\end{equation}
where $\tilde{Y}_{1},\tilde{Y}_{2},...$ are i.i.d. with law $P_{\sigma}[Y_{\cdot\wedge U_{1}}\in dw]$.
We now add independent Poisson random variables $J_{1}$ and $J_{2}$
to the space, where $J_{1}$ has parameter $u(1-\frac{\delta}{2})n\mbox{cap}(A)$,
$J_{2}$ has parameter $u\delta n\mbox{cap}(A)$ and define the following
point processes on $\Gamma(\mathbb{T}_{N})$
\begin{equation}
\mu_{1}^{'}=\dsl_{i=2}^{J_{1}+1}\delta_{\tilde{Y}_{H_{\bar{A}}+\cdot}^{i}}\mbox{ and }\mu_{2}^{'}=1_{\{J_{2}\ne0\}}\delta_{\tilde{Y}_{H_{\bar{A}}+\cdot}^{1}}+\dsl_{i=J_{1}+2}^{J_{1}+J_{2}}\delta_{\tilde{Y}_{H_{\bar{A}}+\cdot}^{i}}.\label{eq:RWPPOEDefOfPPPs}
\end{equation}
Then $\mu_{1}^{'}$ and $\mu_{2}^{'}$ are independent Poisson point
processes such that $\mu_{1}^{'}$ has intensity $u(1-\frac{\delta}{2})n\mbox{cap}(A)P_{q}[Y_{\cdot\wedge U}\in dw]$
and $\mu_{2}^{'}$ has intensity $u\delta n\mbox{cap}(A)P_{q}[Y_{\cdot\wedge U}\in dw]$,
where $q$ denotes the measure defined by $q(x)=P_{\sigma}[Y_{H_{\bar{A}}}=x]$.
By a standard large deviations bound one can show that 
\[
Q_{4}[J_{1}+1\le\Big[u\Big(1-\mbox{\f $\dis\frac{\delta}{4}$}\Big)n {\rm cap}(A) \Big]\le\Big[u\Big(1+\mbox{\f $\dis\frac{\delta}{4}$}\Big)n{\rm cap}(A)\Big]\le J_{1}+J_{2}]\ge1-ce^{-cun\mbox{cap}(A)\delta^{2}},
\]
so that from $u\mbox{cap}(A)\delta^{2}\overset{\prettyref{eq:AsymptoticsOfCapOfBox},u,\delta\ge s^{-(1-\varepsilon)\frac{d-2}{8}}}{\ge}s^{(1-\varepsilon)\frac{d-2}{2}}$,
\prettyref{eq:RWPPOEFirstStep} and \prettyref{eq:RWPPOEDefOfPPPs}
it follows that
\begin{equation}
Q_{4}[\mathcal{I}(\mu_{1}^{'})\cap\bar{A}\subset Y(0,uN^{d})\cap\bar{A}\subset\mathcal{I}(\mu_{1}^{'}+\mu_{2}^{'})\cap\bar{A})\ge1-cue^{-cs^{c(\varepsilon)}}.\label{eq:RWPPOEResultPrime}
\end{equation}
By \prettyref{eq:QSEQQuasiStatEquilMeas} we have the following inequality
involving the intensity of $\mu_{1}^{'}$
\begin{equation}
u(1-\delta)\kappa_{1}\overset{\delta\ge cs^{-c(\varepsilon)}}{\le}u\Big(1-\mbox{\f $\dis\frac{\delta}{2}$}\Big)n\,{\rm cap}(A)P_{q}[Y_{\cdot\wedge U}\in dw]\overset{\delta\ge cs^{-c(\varepsilon)}}{\le}u\Big(1-\mbox{\f $\dis\frac{\delta}{3}$}\Big)\kappa_{1}.\label{eq:FirstIntensitySandwich}
\end{equation}
Using the lower bound we can thus construct, by means of a standard thinning procedure, a Poisson point process $\mu_{1}$ of intensity $u(1-\delta)\kappa_{1}$, such that $\mu_{1}\le\mu_{1}^{'}$
and $\mu_{1}$ and $\mu_{1}^{'}-\mu_{1}$ are independent (by placing each point $x \in \Gamma(\mathbb{T}_n)$ of $\mu_1'$ in $\mu_1$ independently with
probability $\frac{u(1-\delta)\kappa_1(x)}{u(1-\delta/2)n{\rm cap}(A)P_{q}[Y_{\cdot\wedge U} = x]} \in [0,1]$, where we extend the space with the appropriate Bernoulli random variables).
Using the upper bound we can furthermore thicken $\mu_{1}^{'}-\mu_{1}$ to get
a Poisson point process $\nu$ (independent of $\mu_{1}$) of intensity
$u\frac{2}{3}\delta\kappa_{1}$ such that $\mu_{1}^{'}\le\mu_{1}+\nu$
(by extending the space with with an independent Poisson point process of intensity
$u(1-\delta/3)\kappa_1 - u(1-\delta/2)n{\rm cap}(A)P_q[Y_{\cdot \wedge U} \in dw] \ge 0$ and adding this process
to $\mu_1'-\mu_1$ to form $\nu$).

Once again by \prettyref{eq:QSEQQuasiStatEquilMeas} we also have
the following inequality involving the intensity of $\mu_{2}^{'}$
\[
u\delta n\,{\rm cap}(A)P_{q}[Y_{\cdot\wedge U}\in dw]\overset{s\ge c(\varepsilon)}{\le}u\, \mbox{\f $\dis\frac{4}{3}$}\,  \delta\kappa_{1}.
\]
Thus we can (as above) thicken $\mu_{2}^{'}$ to get a Poisson point process
$\eta$ of intensity $u\frac{4}{3}\delta\kappa_{1}$, such that $\mu_{2}^{'}\le\eta$
and $\mu_{1},\nu,\eta$ are independent. We then define $\mu_{2}=\nu+\eta$,
and see that $\mu_{2}$ is a Poisson point process of intensity $u2\delta\kappa_{1}$
which is independent from $\mu_{1}$, and $\mu_{1}\le\mu_{1}^{'}\le\mu_{1}^{'}+\mu_{2}^{'}\le\mu_{1}+\mu_{2}$.
Thus it follows from \prettyref{eq:RWPPOEResultPrime} that the probability
of the event in \prettyref{eq:CouplingInformalFirstInclusion} is
at least $1-cue^{-s^{c(\varepsilon)}}$ and the proof of \prettyref{pro:CoupleRWandPPoE}
is complete.
\end{proof}
To prove \prettyref{thm:Coupling} it now remains to show \prettyref{pro:CouplePPoEandUFreePPoE}
and \prettyref{pro:CouplePPoEandRI}.

\section{\label{sec:OutOfTorus}From the torus to $\mathbb{Z}^{d}$, and decoupling
boxes.}

In this section we prove \prettyref{pro:CouplePPoEandUFreePPoE} which
dominates a Poisson point process $\nu$ of intensity a multiple of
$\kappa_{1}$ (and whose ``excursions'' therefore roughly speaking
``feel that they are in the torus'', and may visit several of the
boxes $A_{1},...,A_{n}$, see \eqref{eq:DefOfUandtStar} and \eqref{eq:DefOfKappa1}),
from above and below by Poisson point processes whose intensities
are multiples of $\kappa_{2}$ (and whose ``excursions'' thus, conditionally
on their starting point, ``behave'' like random walk in $\mathbb{Z}^{d}$
stopped upon leaving a box, and visit only a single box, see \eqref{eq:DefOfKappa2-1}).
First we will (roughly speaking) take all excursions in $\nu$ that
never return to $\bar{A}$ after leaving $\bar{B}$ (the great majority
of all excursions), truncate them upon leaving $\bar{B}$, and collect
them into a Poisson point process whose intensity can bounded from
above and below by multiples of $\kappa_{2}$ (see \prettyref{lem:KappaOneIntensitySandwich}).
This will allow us to dominate this Poisson point process, from above
and below, by Poisson point processes with intensities a multiple
of $\kappa_{2}$. We will then, in \prettyref{lem:Mu234etc}, use
an argument from the proof of Theorem 2.1 in \cite{SznitmanDecoupling}
to dominate the excursions that do return to $\bar{A}$ after leaving
$\bar{B}$ by a Poisson point process with intensity a small multiple
of $\kappa_{2}$. That is, we will, essentially speaking, decouple
the ``successive visits to $\bar{A}$ (after leaving $\bar{B}$)''
of a single excursion by dominating the hitting distribution on $\bar{A}$
when starting outside of $\bar{B}$ by a multiple of the measure $e$
from \prettyref{eq:DefOfKappa1} (see \prettyref{lem:CompoundPoissionIntMeasIneq}).
We then collect a number of the successive visits of all the excursions
(with high probability \emph{all the successive visits} of \emph{all
the excursions}, and in addition a number of extra independent visits)
into a Poisson point process of intensity a small multiple of $\kappa_{2}$.
This is done in \prettyref{lem:Mu234etc}. Though the number of excursions
that make several ``successive returns'' to $\bar{A}$ are small,
dominating them with high enough probability (namely, stretched exponential
in the separation $s$, see the statement of \prettyref{pro:CouplePPoEandUFreePPoE}
and \eqref{eq:Mu234etc}), so that what happens in the individual
boxes $A_{1},A_{2},...,A_{n},$ is independent, is not straight-forward.
Since \prettyref{lem:Mu234etc} achieves this, it should be considered
the heart of the proof of \ref{pro:CouplePPoEandUFreePPoE}.

We recall the standing assumption \prettyref{eq:StandingSeparationAssumption}.
Define for $w\in\Gamma(\mathbb{T}_{N})$ the successive returns $\hat{R}_{k}=\hat{R}_{k}(w)$
to $\bar{A}$ and departures $\hat{D}_{k}=\hat{D}_{k}(w)$ from $\bar{B}$
as follows
\begin{equation}
\hat{R}_{1}=H_{\bar{A}},\mbox{ }\hat{R}_{k}=H_{\bar{A}}\circ\theta_{\hat{D}_{k-1}}+\hat{D}_{k-1},k\ge2,\mbox{ }\hat{D}_{k}=T_{\bar{B}}\circ\theta_{\hat{R}_{k}}+\hat{R}_{k},k\ge1.\label{eq:RetDepABTorus}
\end{equation}
Note that $\hat{R}_{k}$ should not be confused with the $R_{k}$
used in \prettyref{sec:Poissonization} (see \prettyref{eq:DefOfRkUk}).
To extract the ``successive visits to $\bar{A}$'' of an excursion
we furthermore define for each $i\ge1$ the map $\phi_{i}$ from $\Gamma(\mathbb{T}_{N})$
into $\Gamma(\mathbb{T}_{N})^{i}$ by
\begin{equation}
(\phi_{i}(w))_{j}=w((\hat{R}_{j}+\cdot)\wedge\hat{D}_{j})\mbox{ for }j=1,...,i,w\in\{\hat{R}_{i}<U<\hat{R}_{i+1}\}\subset\Gamma(\mathbb{T}_{N}),i\ge1.\label{eq:PhiExcursionExtractor}
\end{equation}
For each $i\ge1$ we will apply this map to the Poisson point processes
$1_{\{\hat{R}_{i}<U<\hat{R}_{i+1}\}}\nu$ to get a Poisson point processes
$\mu_{i}$ of intensity $u\kappa_{1}^{i}$, where
\begin{equation}
\kappa_{1}^{i}=\phi_{i}\circ(1_{\{\hat{R}_{i}<U<\hat{R}_{i+1}\}}\kappa_{1}),i\ge1.\label{eq:DefOfKappaL}
\end{equation}
To dominate the $\mu_{1}$, the first of these Poisson point processes,
which contains most excursions, from above and below by Poisson point
processes of intensities that are multiples of $\kappa_{2}$ we will
use the following inequality.
\begin{lem}
\label{lem:KappaOneIntensitySandwich}($d\ge3$,$N\ge3$) 

\medskip
If $s\ge c(\varepsilon)$
and $n\le s^{c(\varepsilon)}$ we have $(1-cs^{-c(\varepsilon)})\kappa_{2}\le\kappa_{1}^{1}\le\kappa_{2}.$
\end{lem}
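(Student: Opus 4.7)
\bigskip
\noindent\textbf{Proof plan.} First I would unwind the definition of $\kappa_{1}^{1}$. Since $e$ in \eqref{eq:DefOfKappa1} is supported on $\bar{A}$, under $\kappa_{1}=P_{e}[Y_{\cdot\wedge U}\in dw]$ one has $\hat{R}_{1}=0$ and $\hat{D}_{1}=T_{\bar{B}}$ (cf.\ \eqref{eq:RetDepABTorus}); moreover, since $Y_{[0,T_{\bar{B}}]}\subset\bar{B}\cup\partial_{e}\bar{B}\subset\bar{C}$ (the last inclusion using $\partial_{e}B_{j}\subset C_{j}$ for $s\ge c(\varepsilon)$), every interval $[t-t^{\star},t]$ with $t\le T_{\bar{B}}+t^{\star}$ meets $\bar{C}$, forcing $U>T_{\bar{B}}+t^{\star}>0$. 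Hence for any measurable $F\subset\Gamma(\mathbb{T}_{N})$,
\[
\kappa_{1}^{1}(F)=P_{e}[Y_{\cdot\wedge T_{\bar{B}}}\in F,\ \hat{R}_{2}>U],\qquad\kappa_{2}(F)=P_{e}[Y_{\cdot\wedge T_{\bar{B}}}\in F].
\]
The upper bound $\kappa_{1}^{1}\le\kappa_{2}$ is immediate, and the lower bound amounts to showing $P_{e}[Y_{\cdot\wedge T_{\bar{B}}}\in F,\ \hat{R}_{2}\le U]\le cs^{-c(\varepsilon)}\kappa_{2}(F)$.

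Next I would apply the strong Markov property at the stopping time $T_{\bar{B}}$. The same observation as above yields the identity $U=T_{\bar{B}}+U(\tilde{Y})$, where $\tilde{Y}_{s}=Y_{T_{\bar{B}}+s}$, since for $t>T_{\bar{B}}+t^{\star}$ the range $Y(t-t^{\star},t)$ coincides with $\tilde{Y}(t-T_{\bar{B}}-t^{\star},t-T_{\bar{B}})$ while for earlier $t$ the condition in \eqref{eq:DefOfUandtStar} fails. Thus $\{\hat{R}_{2}\le U\}$ is precisely the event $\{H_{\bar{A}}\le U\}$ for a fresh walk started at $Y_{T_{\bar{B}}}\in\partial_{e}\bar{B}$, and strong Markov gives
\[
P_{e}[Y_{\cdot\wedge T_{\bar{B}}}\in F,\ \hat{R}_{2}\le U]\le\kappa_{2}(F)\cdot\sup_{y\in\partial_{e}\bar{B}}P_{y}[H_{\bar{A}}\le U].
\]
The proof is thereby reduced to establishing $\sup_{y\in\partial_{e}\bar{B}}P_{y}[H_{\bar{A}}\le U]\le cs^{-c(\varepsilon)}$.

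For $y\in\partial_{e}B_{j}$ I would split $\{H_{\bar{A}}\le U\}$ into $\{H_{\bar{A}}<T_{D_{j}}\}$ and $\{T_{D_{j}}\le H_{\bar{A}}\le U\}$. In the first case, since $d_{\infty}(x_{j'},x_{j})\ge s>r_{A}+r_{D}$ for $s\ge c(\varepsilon)$, one has $A_{j'}\cap D_{j}=\emptyset$ when $j'\ne j$, so $\{H_{\bar{A}}<T_{D_{j}}\}=\{H_{A_{j}}<T_{D_{j}}\}$; identifying $D_{j}\subset\mathbb{T}_{N}$ with $D\subset\mathbb{Z}^{d}$ (legitimate as $r_{D}\ll N$) and applying \eqref{eq:BHBZdBallHitting} with $r_{1}=r_{A}$, $r_{2}=r_{B}$ gives $P_{y}[H_{A_{j}}<T_{D_{j}}]\le P_{y-x_{j}}^{\mathbb{Z}^{d}}[H_{A}<\infty]\le c(r_{A}/r_{B})^{d-2}=cs^{-\varepsilon(d-2)/2}$. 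In the second case, strong Markov at $T_{D_{j}}$ places the walk at some $z\in\partial_{e}D_{j}\subset\bar{D}^{c}\subset\bar{C}^{c}$ (the $D_{i}$ being disjoint for $s\ge c(\varepsilon)$). The key observation is that for any $z\notin\bar{C}$, $\{H_{\bar{A}}\le U\}\subseteq\{H_{\bar{C}}\le t^{\star}\}$: if $H_{\bar{C}}>t^{\star}$ then $Y(0,t^{\star})\cap\bar{C}=\emptyset$, so $U=t^{\star}<H_{\bar{C}}\le H_{\bar{A}}$. Invoking \eqref{eq:CDBallHitting} (which uses $n\le s^{c(\varepsilon)}$) then bounds $\sup_{z\notin\bar{D}}P_{z}[H_{\bar{C}}<t^{\star}]$ by $cs^{-c(\varepsilon)}$, and summing the two contributions yields the required estimate.

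The only substantive ingredient is the inclusion $\{H_{\bar{A}}\le U\}\subseteq\{H_{\bar{C}}\le t^{\star}\}$ for $z\notin\bar{C}$, which trades the small set $\bar{A}$ and the random time $U$ for the larger set $\bar{C}$ and the deterministic time $t^{\star}$; the rest is bookkeeping with the standard hitting estimates \eqref{eq:BHBZdBallHitting} and \eqref{eq:CDBallHitting}, made possible by the hypotheses $s\ge c(\varepsilon)$ and $n\le s^{c(\varepsilon)}$.
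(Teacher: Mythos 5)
Your proof is correct and follows essentially the same route as the paper's: the identity $\kappa_1^1(W)=P_e[Y_{\cdot\wedge T_{\bar B}}\in W,\,U<\hat R_2]$ gives the upper bound, and the lower bound comes from the strong Markov property at $T_{\bar B}$ and then $T_{\bar D}$, with the two pieces controlled by \eqref{eq:BHBZdBallHitting} and \eqref{eq:CDBallHitting} respectively (the paper phrases this as a product of two infima of complementary events rather than a sum of two suprema, but the decomposition is identical). Your explicit justification of $U=T_{\bar B}+U\circ\theta_{T_{\bar B}}$ and of the inclusion $\{H_{\bar A}\le U\}\subset\{H_{\bar C}\le t^{\star}\}$ for starting points outside $\bar C$ simply spells out what the paper leaves implicit.
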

We postpone the proof of \prettyref{lem:KappaOneIntensitySandwich}
until after the proof of \prettyref{pro:CouplePPoEandUFreePPoE}.
The Poisson point processes $\mu_{2},\mu_{3},...$ will contain the
successive visits to $\bar{A}$ of the excursions that make $2,3,...$
such visits, respectively. There will only be a few of these and using
the following lemma we will be able to dominate them by a Poisson
point process $\theta$ of intensity a small multiple of $\kappa_{2}$.
\begin{lem} \label{lem:Mu234etc}$(d\ge3,N\ge3)$ 

\medskip
Let $(\Omega,\mathcal{A},Q)$
be a probability space with independent Poisson point processes $\mu_{2},\mu_{3},...$
such that $\mu_{i}$ has intensity $u\kappa_{1}^{i}$,$u\ge0$. Then
if $1 \ge \delta\ge s^{-c(\varepsilon)}$, $n\le s^{c(\varepsilon)}$ and
$s\ge c(\varepsilon)$ we can construct a space $(\Omega',\mathcal{A}',Q')$
and, on the product space $\Omega\times\Omega'$, a $\sigma(\mu_{i},i\ge2)\times\mathcal{A}'-$measurable
Poisson point process $\theta$ of intensity $u\delta\kappa_{2}$
such that (recalling the notation from \prettyref{eq:DefOfPPPTrace})
\begin{equation}
Q\otimes Q'[\cup_{i\ge2}\mathcal{I}(\mu_{i})\subset\mathcal{I}(\theta)]\ge1-ce^{-cu\delta\mbox{cap(A)}}.\label{eq:Mu234etc}
\end{equation}

\end{lem}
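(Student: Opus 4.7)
The strategy is to dominate each $\mu_i$ by an auxiliary PPP with product intensity $\kappa_2^{\otimes i}$, ``explode'' the resulting $i$-tuples into individual trajectories in $\Gamma(\mathbb{T}_N)$, and couple the aggregated point measure with a genuine PPP $\theta$ of intensity $u\delta\kappa_2$.

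The first ingredient is a pointwise intensity bound
$$
\kappa_1^i(dw_1,\ldots,dw_i) \le \Big(\tfrac{p}{n\,\mathrm{cap}(A)}\Big)^{i-1}\kappa_2(dw_1)\cdots\kappa_2(dw_i) \quad \text{on } \Gamma(\mathbb{T}_N)^i
$$
for some $p\le c s^{-c(\varepsilon)}$. This follows by iterating the strong Markov property at the stopping times $\hat D_1,\ldots,\hat D_{i-1}$ from \eqref{eq:RetDepABTorus}: the transition between consecutive excursion pieces $w_{j-1}$ and $w_j$ is governed by the hitting distribution on $\bar A$ of random walk started in $\bar B^c$, which the forthcoming \prettyref{lem:CompoundPoissionIntMeasIneq} dominates pointwise by $\tfrac{p}{n\,\mathrm{cap}(A)}e(\cdot)$ (the gain $p\le c s^{-c(\varepsilon)}$ reflects the gap in radii between $A$ and $B$). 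Since $\kappa_2(dw)=P_e[Y_{\cdot\wedge T_{\bar B}}\in dw]$ already carries $e$ as starting distribution, and the final factor $P_{y_i}[H_{\bar A}>U-\hat D_i]$ is bounded by $1$, the product bound falls out.

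A standard thinning extends $(\Omega,\mathcal A,Q)$ to carry, for each $i\ge 2$, an independent PPP $\bar\mu_i$ on $\Gamma(\mathbb{T}_N)^i$ of intensity $u(p/(n\,\mathrm{cap}(A)))^{i-1}\kappa_2^{\otimes i}$ with $\mu_i\le\bar\mu_i$ almost surely. Define the exploded measure
$$
\bar{\bar\mu}_i := \sum_{(w_1,\ldots,w_i)\in\mathrm{Supp}(\bar\mu_i)}\sum_{j=1}^i \delta_{w_j}
$$
on $\Gamma(\mathbb{T}_N)$, so that $\mathcal I(\mu_i)\subset \mathcal I(\bar{\bar\mu}_i)$; its total count $M_i$ equals $iN_i$ with $N_i$ independent Poisson of mean $\Lambda_i:=up^{i-1}n\,\mathrm{cap}(A)$, and conditionally on $(M_i)_{i\ge 2}$ all constituent trajectories are i.i.d.\ of law $\kappa_2/(n\,\mathrm{cap}(A))$. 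On the independent space $(\Omega',\mathcal A',Q')$ construct $\theta$ explicitly as a PPP of intensity $u\delta\kappa_2$ by drawing $M'\sim\mathrm{Poisson}(u\delta\,n\,\mathrm{cap}(A))$ and $M'$ i.i.d.\ trajectories of law $\kappa_2/(n\,\mathrm{cap}(A))$. By tightening the exponent in $p\le c s^{-c(\varepsilon)}$ so that $\sum_{i\ge 2}ip^{i-1}\le\delta/2$ (using $\delta\ge s^{-c(\varepsilon)}$), the mean of $\sum_i M_i$ is at most $\tfrac12 E[M']$. On the event $\mathcal E := \{\sum_i M_i\le M'\}$, replace the first $\sum_i M_i$ points of $\theta$ by the (cluster-concatenated) trajectories of $\sum_i\bar{\bar\mu}_i$; since both collections are conditionally i.i.d.\ of the same law $\kappa_2/(n\,\mathrm{cap}(A))$, this preserves the PPP law of $\theta$ and yields $\bigcup_{i\ge 2}\mathcal I(\mu_i)\subset \mathcal I(\theta)$.

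The main obstacle is bounding $Q\otimes Q'[\mathcal E^c]$ with the stretched-exponential rate: $M'$ concentrates by standard Poisson Chernoff, but $\sum_i iN_i$ is a \emph{compound} Poisson (points come in fixed clusters of size $i$), so the upper tail requires the MGF identity $E[\exp(s\sum_i iN_i)] = \exp(\sum_{i\ge 2}\Lambda_i(e^{si}-1))$, which is finite for any $s$ with $pe^s<1$ thanks to the geometric decay $\Lambda_i = \Lambda_2 p^{i-2}$. Choosing such an $s$ and applying Chernoff yields $Q\otimes Q'[\mathcal E^c]\le ce^{-cu\delta\,\mathrm{cap}(A)}$, completing the proof.
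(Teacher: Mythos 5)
Your proposal is correct and follows essentially the same route as the paper: the product-intensity domination $\kappa_1^i\le c\,p^{i-1}\,\mathrm{cap}(A)\,P_{\bar e}[Y_{\cdot\wedge T_{\bar B}}\in dw]^{\otimes i}$ is the paper's \prettyref{lem:CompoundPoissionIntMeasIneq} (obtained by iterating the strong Markov property with the pointwise hitting bound of \prettyref{lem:HittingLineEquilWeak}), the explosion into conditionally i.i.d.\ constituent excursions matched against a fresh $\mathrm{Poisson}(u\delta n\,\mathrm{cap}(A))$ number of i.i.d.\ excursions is exactly the paper's construction of $\theta$ in \eqref{eq:DefOfTheta-1}, and the compound-Poisson Chernoff bound is the paper's \prettyref{lem:SumOfPoissonsLemma}. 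The only (harmless) omissions are the explicit definition of $\theta$ on the bad event and the random-permutation step needed to make the constituent trajectories of each $\bar\mu_i$ conditionally i.i.d.
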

We postpone the proof of \prettyref{lem:Mu234etc} until later, and
instead use it together with \prettyref{lem:KappaOneIntensitySandwich}
to prove \prettyref{pro:CouplePPoEandUFreePPoE}.
\begin{proof}[Proof of \prettyref{pro:CouplePPoEandUFreePPoE}]
We let 
\begin{equation}
\mu_{i}=\phi_{i}(1_{\{\hat{R}_{i}<U<\hat{R}_{i+1}\}}\nu),i\ge1,\mbox{ so that }\cup_{i\ge1}\mathcal{I}(\mu_{i})\cap\bar{A}\overset{\eqref{eq:RetDepABTorus}\eqref{eq:PhiExcursionExtractor}}{=}\mathcal{I}(\nu)\cap\bar{A}.\label{eq:FirstUseOfPhil-1}
\end{equation}
Since the sets $\{\hat{R}_{i}<U<\hat{R}_{i+1}\},i\ge1,$ are disjoint
$\mu_{i},i\ge1,$ are independent Poisson point processes on the respective
spaces $\Gamma(\mathbb{T}_{N})^{i},i\ge1$, and by \prettyref{eq:DefOfKappaL}
they have respective intensities $u\kappa_{1}^{i},i\ge1$. By \prettyref{lem:KappaOneIntensitySandwich}
it follows, since we require $\delta\ge cs^{-c(\varepsilon)}$, that
\begin{equation}
u(1-\delta)\kappa_{2}\le u\kappa_{1}^{1}\le u\kappa_{2}.\label{eq:Kappa1DeltaIntensitySandwich-1}
\end{equation}
Now similarly to how we used \eqref{eq:FirstIntensitySandwich} to
construct the processes $\mu_{1}$ and $\nu$ from $\mu'$, we now
(extending our space appropriately) use \eqref{eq:Kappa1DeltaIntensitySandwich-1}
to construct processes $\nu_{1}$ and $\rho$, such that $\nu_{1},\rho,\mu_{i},i\ge2,$
are independent, $\nu_{1}$ has intensity $u(1-\delta)\kappa_{2}$,
$\rho$ has intensity $u\delta\kappa_{2}$ and 
\begin{equation}
\nu_{1}\le\mu_{1}\le\nu_{1}+\rho\mbox{ almost surely.}\label{eq:PPIneq}
\end{equation}
Thus \prettyref{eq:SecondCouplingASInc} holds, because $\mathcal{I}(\nu_{1})\cap\bar{A}\overset{\eqref{eq:PPIneq}}{\subset}\mathcal{I}(\mu_{1})\cap\bar{A}\overset{\eqref{eq:FirstUseOfPhil-1}}{\subset}\mathcal{I}(\nu)\cap\bar{A}$,
and since $\nu_{1}$ has intensity $u(1-\delta)\kappa_{2}$ it now
suffices to construct $\nu_{2}$ appropriately.

To this end we apply \prettyref{lem:Mu234etc}, once again extending
the space, to get a Poisson point process $\theta$ of intensity $u\delta\kappa_{2}$
such that $\nu_{1},\rho$ and $\theta$ are independent and 
\begin{equation}
Q[\cup_{i\ge2}\mathcal{I}(\mu_{i})\subset\mathcal{I}(\theta)]\ge1-ce^{-cu\delta\,{\rm cap}(A)}\ge1-ce^{-cs^{c(\varepsilon)}},\label{eq:DealingWithIt}
\end{equation}
where we use that we require $u,\delta\ge s^{-c(1-\varepsilon)} = s^{-c(\varepsilon)}$ so that $u\delta\,{\rm cap}(A)\overset{\eqref{eq:AsymptoticsOfCapOfBox},\eqref{eq:ABCNotation}}{\ge}u\delta s^{(1-\varepsilon)(d-2)}\ge s^{c(\varepsilon)}$.
Now set $\nu_{2}=\rho+\theta$ and note that $\nu_{1}$ and $\nu_{2}$
are independent, $\nu_{2}$ has intensity $2u\delta\kappa_{2}$ and
because of \prettyref{eq:FirstUseOfPhil-1}, \prettyref{eq:PPIneq}
and \prettyref{eq:DealingWithIt} we have 
\[
Q[\mathcal{I}(\nu)\cap\bar{A}\subset\mathcal{I}(\nu_{1}+\nu_{2})]\ge1-ce^{-cs^{c(\varepsilon)}}.
\]
Thus the proof of \prettyref{pro:CouplePPoEandUFreePPoE} is complete.
\end{proof}
The proof of \prettyref{pro:CouplePPoEandUFreePPoE} has thus been
reduced to \prettyref{lem:KappaOneIntensitySandwich} and \prettyref{lem:Mu234etc}.
We now prove \prettyref{lem:KappaOneIntensitySandwich}.
\begin{proof}[Proof of \prettyref{lem:KappaOneIntensitySandwich}]
Let $W\subset\Gamma(\mathbb{T}_{N})$ be measurable. Then $\kappa_{1}^{1}(W)\overset{\eqref{eq:DefOfKappa1},\eqref{eq:DefOfKappaL}}{=}P_{e}[Y_{\cdot\wedge T_{\bar{B}}}\in W,\mbox{ }U<\hat{R}_{2}]$
so the upper bound follows directly from \eqref{eq:DefOfKappa2-1}.
Furthermore $\kappa_{1}^{1}(W)\ge\kappa_{2}(W)\inf_{x\in\partial_{e}\bar{B}}P_{x}[H_{\bar{A}}>T_{\bar{D}}]\inf_{x\in\partial_{e}\bar{D}}P_{x}[H_{\bar{A}}>U]$
(recall \eqref{eq:DefOfD}) by the strong Markov property. But (if
$s\ge c(\varepsilon)$) $\inf_{x\in\partial_{e}\bar{B}}P_{x}[H_{\bar{A}}>T_{\bar{D}}]=\inf_{x\in\partial_{e}B}P_{x}^{\mathbb{Z}^{d}}[H_{A}>T_{D}]\overset{\eqref{eq:BHBZdBallHitting}}{\ge}1-cs^{-c(\varepsilon)}$,
and by \eqref{eq:CDBallHitting} we have $\inf_{x\in\partial_{e}\bar{D}}P_{x}[H_{\bar{A}}>U]\ge1-c(\varepsilon)s^{-c(\varepsilon)}$,
so the lower bound follows.
\end{proof}
It thus only remains to prove \prettyref{lem:Mu234etc} to complete
the proof of \prettyref{pro:CouplePPoEandUFreePPoE}. For this we
will use the following bound on the intensities $\kappa_{1}^{i}$
of the $\mu_{i}$, (this corresponds to (2.33) in \cite{SznitmanDecoupling}).
\begin{lem} \label{lem:CompoundPoissionIntMeasIneq}($d\ge3$,$N\ge3$) 

\medskip
If $s\ge c(\varepsilon)$ and $n\le s^{c(\varepsilon)}$ then for all $i\ge2$ 
\begin{equation}
\kappa_{1}^{i}\le\tilde{\kappa}_{1}^{i}\mbox{ where }\tilde{\kappa}_{1}^{i}(d(w_{1},...,w_{i}))=s^{-\frac{\varepsilon}{8}(i-1)}\mbox{cap}(A)\otimes_{k=1}^{i}P_{\bar{e}}[Y_{\cdot\wedge T_{\bar{B}}}\in dw_{k}],\label{eq:CompoundPoissonIntMeasIneq}
\end{equation}
and $\bar{e}=\frac{e}{n\,{\rm cap}(A)}$ denotes the normalisation
of the measure $e$ from \prettyref{eq:DefOfKappa1} (see \prettyref{eq:DefOfeKandCap}).
\end{lem}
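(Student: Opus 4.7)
The plan is to apply the strong Markov property iteratively at the departure times $\hat{D}_{1}, \ldots, \hat{D}_{i-1}$ from \eqref{eq:RetDepABTorus}, reducing $\kappa_{1}^{i}$ to a product in which each factor is controlled by a small multiple of $P_{\bar{e}}[Y_{\cdot\wedge T_{\bar{B}}} \in dw]$.

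The first ingredient is the telescoping bound
\[
1_{\{\hat{R}_{i}<U\}} \le \prod_{k=2}^{i} 1_{\{H_{\bar{A}}\circ\theta_{\hat{D}_{k-1}} < U\circ\theta_{\hat{D}_{k-1}}\}}.
\]
Indeed, immediately before each $\hat{D}_{k-1}$ the walk lies in $\bar{B} \subset \bar{C}$, so the $t^{\star}$-long final sojourn outside $\bar{C}$ that defines $U$ in \eqref{eq:DefOfUandtStar} cannot straddle $\hat{D}_{k-1}$; hence $U - \hat{D}_{k-1} = U\circ\theta_{\hat{D}_{k-1}}$ on the event $\{U \ge \hat{D}_{k-1}\}$, which in turn is implied by $\{\hat{R}_{i} < U\}$. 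Combining this with the strong Markov property at $\hat{D}_{i-1}, \hat{D}_{i-2}, \ldots, \hat{D}_{1}$ applied in succession (noting that both the $k$-th indicator and the map producing $w_{k}$ are functionals of $Y_{\hat{D}_{k-1}+\cdot}$), I obtain, for nonnegative measurable $g_{1},\ldots,g_{i}$,
\[
\int \prod_{k=1}^{i} g_{k}(w_{k})\, \kappa_{1}^{i}(dw) \le E_{e}[g_{1}(Y_{\cdot\wedge T_{\bar{B}}})]\prod_{k=2}^{i}\sup_{y\in\partial_{e}\bar{B}} E_{y}\!\left[g_{k}(Y_{\cdot\wedge T_{\bar{B}}}\circ\theta_{H_{\bar{A}}})\,1_{\{H_{\bar{A}}<U\}}\right].
\]

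The second ingredient is a hitting-distribution estimate
\[
\sup_{y\in\partial_{e}\bar{B}} P_{y}[Y_{H_{\bar{A}}}=x,\, H_{\bar{A}}<U] \le C\, s^{-\alpha}\,\bar{e}(x), \quad x\in\bar{A},
\]
with some exponent $\alpha=\alpha(\varepsilon)>0$ that can be made strictly larger than $\varepsilon/8$. For $y\in\partial_{e}B_{j}$, the dominant contribution comes from hitting the nearest box $A_{j}$: its hitting distribution is essentially the normalized equilibrium measure on $A$ by \eqref{eq:EquilDistHitDistFromFar}, and the hitting probability $P_{y}[H_{A_{j}}<U]$ is bounded, via \eqref{eq:BHBBallHittingBound}, by $c(s^{1-\varepsilon}/s^{1-\varepsilon/2})^{d-2} = cs^{-\varepsilon(d-2)/2}$. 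The remaining $n-1$ far-away boxes contribute a factor of $s^{-\varepsilon(d-2)}$ each, summable under the assumption $n\le s^{c(\varepsilon)}$ with $c(\varepsilon)$ sufficiently small. Rewriting in terms of $\bar{e}(x) = e_{A}(x-x_{i})/(n\,\mbox{cap}(A))$ gives the displayed bound. Inserting this estimate into each of the $i-1$ sup-factors and using $P_{e}[\cdot] = n\,\mbox{cap}(A)\,P_{\bar{e}}[\cdot]$ for the first factor yields
\[
\kappa_{1}^{i} \le n\, C^{i-1}\, s^{-\alpha(i-1)}\, \mbox{cap}(A)\, \otimes_{k=1}^{i} P_{\bar{e}}[Y_{\cdot\wedge T_{\bar{B}}}\in dw_{k}].
\]
Shrinking the exponent $c(\varepsilon)$ in the hypothesis $n\le s^{c(\varepsilon)}$ so that $n\, C^{i-1} \le s^{(\alpha-\varepsilon/8)(i-1)}$ for all $i\ge 2$ (using $s\ge c(\varepsilon)$ to absorb $C^{i-1}$) then establishes \eqref{eq:CompoundPoissonIntMeasIneq}.

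The main obstacle is the hitting-distribution estimate: the equilibrium bound \eqref{eq:EquilDistHitDistFromFar} is a statement about $\mathbb{Z}^{d}$ and must be adapted to the torus using \eqref{eq:BHBBallHittingBound}, while also restricting to $\{H_{\bar{A}}<U\}$. Moreover the $n$-dependence must be tracked carefully so that the single extra factor of $n$ coming from $P_{e} = n\,\mbox{cap}(A)\,P_{\bar{e}}$ can be absorbed by the surplus gained in the hitting estimate (namely the difference $\alpha - \varepsilon/8$), which dictates the smallness required of $c(\varepsilon)$ in the hypothesis of the lemma.
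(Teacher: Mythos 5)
Your proposal follows the paper's proof essentially verbatim: the iteration of the strong Markov property at the departure times $\hat{D}_{k}$, the hitting-distribution bound $\sup_{y\in\partial_{e}\bar{B}}P_{y}[Y_{H_{\bar{A}}}=x,\,H_{\bar{A}}<U]\le s^{-\varepsilon/4}\bar{e}(x)$ uniform in the starting point (this is the paper's \prettyref{lem:HittingLineEquilWeak}), and the absorption of the factor $n$ into the surplus exponent using $n\le s^{c(\varepsilon)}$ are exactly the steps used there. The one part you leave sketched --- adapting \eqref{eq:EquilDistHitDistFromFar} to the torus and to the restriction $\{H_{\bar{A}}<U\}$ --- is carried out in the paper via a decomposition at $T_{\bar{D}}$ together with the Harnack inequality, precisely along the lines you indicate.
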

In the proof of \prettyref{lem:Mu234etc} we will use \prettyref{lem:CompoundPoissionIntMeasIneq}
to dominate $\mu_{i},i\ge2,$ by Poisson point processes $\eta_{i}$
of intensity $u\tilde{\kappa}_{1}^{i}$. Since $\tilde{\kappa}_{1}^{i}$
is proportional to a product measure the ``points'' of $\eta_{i}$
will be vectors of \emph{independent} excursions with law $P_{\bar{e}}[Y_{\cdot\wedge T_{\bar{B}}}\in dw]$.
Thus we will have ``decoupled'' the excursions and we will be able
to use them (along with additional independent excursions) to construct
the Poisson point process $\theta$. We postpone the proof of \prettyref{lem:CompoundPoissionIntMeasIneq}
until after the proof of \prettyref{lem:Mu234etc}. In the proof of
\prettyref{lem:Mu234etc} we will use the following simple lemma about
Poisson random variables.
\begin{lem} \label{lem:SumOfPoissonsLemma} Let $N$ be a Poisson random variable
of intensity $\lambda>0$, and let $N_{i},i\ge2,$ be independent
Poisson random variables such that $N_{i}$ has intensity at most $\lambda r^{i-1}$. Then 
\begin{equation}
\mathbb{P}\Big[\dsl_{i\ge2}iN_{i}\le N\Big]\ge1-ce^{-c\lambda},\mbox{ if }0<r\le c.\label{eq:SumOfPoissonsLemma}
\end{equation}
\end{lem}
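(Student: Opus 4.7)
The plan is to apply an exponential Chebyshev (Chernoff) argument separately to $N$ and to the weighted sum $X := \sum_{i \ge 2} i N_i$, exploiting the fact that $\mathbb{E}[X] \le \lambda \sum_{i \ge 2} i r^{i-1}$ is of order $r\lambda$, and hence much smaller than $\mathbb{E}[N] = \lambda$ when $r$ is small. Concretely, I would start from the union bound
\[
\mathbb{P}\Big[\,\dsl_{i \ge 2} i N_i > N\Big] \le \mathbb{P}[N \le \lambda/2] + \mathbb{P}[X > \lambda/2]
\]
and estimate the two summands separately.

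The first probability is bounded by the standard lower-tail Chernoff estimate for a Poisson variable, yielding $\mathbb{P}[N \le \lambda/2] \le e^{-c\lambda}$. For the second, I would use the independence of the $N_i$ together with the Poisson moment generating function to obtain, for any $t>0$,
\[
\mathbb{E}[e^{t X}] = \dsl_{i \ge 2}\prod\exp\!\bigl(\mathbb{E}[N_i](e^{ti}-1)\bigr) \le \exp\!\Big(\lambda \dsl_{i \ge 2} r^{i-1}(e^{ti}-1)\Big),
\]
using the hypothesis $\mathbb{E}[N_i] \le \lambda r^{i-1}$ and monotonicity of $e^{ti}-1$ in $\mathbb{E}[N_i]$. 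Taking $t = 1$, the geometric-type sum $S(r) := \sum_{i \ge 2} r^{i-1}(e^i-1)$ is bounded above by $\tfrac{e^2 r}{1-er}$ for $r < 1/e$, and in particular tends to $0$ as $r \to 0$.

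It follows that if the absolute constant $c$ in the hypothesis $r \le c$ is chosen sufficiently small so that $S(r) \le 1/4$, then Markov's inequality gives
\[
\mathbb{P}[X > \lambda/2] \le e^{-\lambda/2}\,\mathbb{E}[e^{X}] \le e^{-\lambda/2 + \lambda S(r)} \le e^{-\lambda/4}.
\]
Combining the two estimates yields the desired bound $\mathbb{P}[\sum_{i \ge 2} i N_i > N] \le c e^{-c\lambda}$.

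There is no real obstacle here beyond bookkeeping: the argument is a routine exponential-Chebyshev computation, and the only choices that matter are the splitting threshold ($\lambda/2$) and the Chernoff parameter ($t=1$), after which the smallness condition on $r$ becomes a fixed absolute constant.
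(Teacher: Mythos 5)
Your proposal is correct and follows essentially the same route as the paper: the paper also splits via the event $\{N<\lambda/2\}$, applies the Poisson lower-tail Chernoff bound to $N$, and bounds $\mathbb{E}[e^{\sum_{i\ge2}iN_{i}}]$ by $\exp(\lambda\sum_{i\ge2}r^{i-1}(e^{i}-1))$ before invoking smallness of $r$. The only differences are cosmetic choices of constants.
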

\begin{proof}
This follows from the standard Chebyshev bounds $\mathbb{P}[N<\frac{\lambda}{2}] \le \mathbb{E}[e^{-\frac{1}{2}N}]e^{\frac{\lambda}{4}}\le e^{-\frac{\lambda}{8}}$ and
\[
\mathbb{P}\Big[\dsl_{l\ge2}iN_{i}>\mbox{\f $\dis\frac{\lambda}{2}$}\Big]\le e^{-\frac{\lambda}{2}}\mathbb{E}[e^{\sum_{i\ge2}iN_{i}}] \le e^{-\frac{\lambda}{2}+\lambda\sum_{i\ge2}r^{i-1}(e^{i}-1)}\overset{re\le c<1}{\le}e^{-\frac{\lambda}{4}}\mbox{ for all }\lambda>0.
\]

\end{proof}
We now prove \prettyref{lem:Mu234etc}. The proof corresponds roughly
to (2.38)-(2.54) in \cite{SznitmanDecoupling}.
\begin{proof}[Proof of \prettyref{lem:Mu234etc}]
If we multiply \prettyref{eq:CompoundPoissonIntMeasIneq} by $u$
we get for each $i\ge2$ an inequality for the intensity measure of
$\mu_{i}$. Because of this inequality we can ``thicken'' each $\mu_{i}$,
by constructing $(\Omega',\mathcal{A}',Q')$ with the appropriate
random variables, to get (on $\Omega\times\Omega'$) $\sigma(\mu_{i},i\ge2)\times\mathcal{A}'-$measurable
independent Poisson point processes
\begin{equation}
\eta_{i},i\ge2,\mbox{ on }\Gamma(\mathbb{T}_{N})^{i}\mbox{ of intensities }u\tilde{\kappa}_{1}^{i}\mbox{ (respectively), such that }\mu_{i}\le\eta_{i},i\ge2,\label{eq:FirstDom}
\end{equation}
(analogously to below \prettyref{eq:FirstIntensitySandwich}).
We note that if we let $N_{i}=\eta_{i}(\Gamma(\mathbb{T}_{N})^{i}),i\ge2,$
then (see \prettyref{eq:CompoundPoissonIntMeasIneq}) 
\begin{equation}
N_{i},i\ge2,\mbox{ are independent and Poisson, where }N_{i}\mbox{ has intensity }us^{-\frac{\varepsilon}{8}(i-1)}\mbox{cap}(A).\label{eq:Nilaw}
\end{equation}

\smallskip\n
Now extend $(\Omega',\mathcal{A}',Q')$ to obtain $\sigma(\mu_{i},i\ge2)\times\mathcal{A}'-$measurable
vectors $v_{j}^{i},i\ge2,j\ge1$, such that $v_{j}^{i},i\ge2,j\ge1,N_{i},i\ge1,$ are independent,  
\begin{equation}
v_{j}^{i}\mbox{ has law }\otimes_{k=1}^{i}P_{\bar{e}}[Y_{\cdot\wedge T_{\bar{B}}}\in dw_{k}]\mbox{ (i.e. }u\tilde{\kappa}_{1}^{i}\mbox{ normalised)}\mbox{ and }\eta_{i}=\dsl_{j=1}^{N_{i}}\delta_{v_{j}^{i}},i\ge2,\label{eq:DefOfNlandEtal}
\end{equation}
(conditionally on $\eta_{i}$, we order the $N_{i}$ points in the
support of $\eta_{i}$ according to say the time until the first jump
of the first of the $i$ paths that make up a point of $\eta_{i}$,
and let $v_{1}^{i},\ldots,v_{N_{i}}^{i},$ be a permutation of these
points chosen uniformly at random; we then add i.i.d. vectors to form
$v_{j}^{i},j>N_{i}$).
Define $\bar{N}=\sum_{i\ge2}iN_{i}$ and construct on $(\Omega',\mathcal{A}',Q')$
a Poisson random variable $N$ of intensity $u\delta n\mbox{cap}(A)$,
and trajectories $\tilde{w}_{i},i\ge1,$ with law $P_{\bar{e}}[Y_{\cdot\wedge T_{\bar{B}}}\in dw]$,
such that $N,\tilde{w}_{i},i\ge1,v_{j}^{i},i\ge2,j\ge1,N_{i},i\ge2,$
are independent. Write $v_{j}^{i}=(w_{j,1}^{i},...,w_{j,i}^{i})$
and let  
\begin{equation}
\theta=\begin{cases}
\dsl_{i=2}^{\infty}\sum_{j=1}^{N_{i}}(\delta_{w_{j,1}^{i}}+...+\delta_{w_{j,i}^{i}})+\dsl_{i=1}^{N-\bar{N}}\delta_{\tilde{w}_{i}} & \mbox{if }N\ge\bar{N},
\\[2ex]
\dsl_{i=1}^{N}\delta_{\tilde{w}_{i}} & \mbox{if }N<\bar{N}.
\end{cases}\label{eq:DefOfTheta-1}
\end{equation}

\smallskip\n
The number of points $N$ of $\theta$ is a Poisson random variable,
and conditionally on $N$ the points of $\theta$ are i.i.d. with
law $P_{\bar{e}}[Y_{\cdot\wedge T_{\bar{B}}}\in dw]$ (see \prettyref{eq:DefOfNlandEtal}),
so that $\theta$ is (as claimed) a $\sigma(\mu_{i},i\ge2)\times\mathcal{A}'-$measurable
Poisson point process of intensity $u\delta n\mbox{cap}(A)P_{\bar{e}}[Y_{\cdot\wedge T_{\bar{B}}}\in dw]=u\delta\kappa_{2}$
(see \prettyref{eq:DefOfKappa2-1}).

It remains to show \prettyref{eq:Mu234etc}. We have $\cup_{i\ge2}\mathcal{I}(\mu_{i})\subset\cup_{i\ge2}\mathcal{I}(\eta_{i})$
(by \prettyref{eq:FirstDom}) and on the event $\{\bar{N}\le N\}$
we have $\cup_{i\ge2}\mathcal{I}(\eta_{i})\subset\mathcal{I}(\theta)$
by \prettyref{eq:DefOfNlandEtal} and \prettyref{eq:DefOfTheta-1}.
Thus by \prettyref{eq:SumOfPoissonsLemma} with $\lambda=u\delta n\mbox{cap}(A)$
and $r=\delta^{-1}s^{-\frac{\varepsilon}{8}}\le c$ (we require $\delta\ge cs^{-\varepsilon/8}$)
we get
\begin{equation}
Q\otimes Q'[\cup_{i\ge2}\mathcal{I}(\mu_{i})\subset\mathcal{I}(\theta)]\ge Q\otimes Q'[\bar{N}\le N]\ge1-ce^{-cu\delta\mbox{cap(\emph{A})}},\label{eq:FirstUseOfPoissonLemma}
\end{equation}
(see \prettyref{eq:Nilaw} and note that $us^{-\frac{\varepsilon}{8}(i-1)}\mbox{cap}(A)\le\lambda\delta^{-1}s^{-\frac{\varepsilon}{8}(i-1)}\le\lambda r^{i-1}$).
Thus \prettyref{eq:Mu234etc} holds.
\end{proof}
It remains to prove \prettyref{lem:CompoundPoissionIntMeasIneq}.
For the proof we will need the following upper bound on the probability
of hitting $\bar{A}$ in a given point before $U$, from outside of
$\bar{B}$.
\begin{lem} \label{lem:HittingLineEquilWeak}($s\ge c(\varepsilon),n\le s^{c(\varepsilon)})$

\medskip
For all $x\in\partial_{e}\bar{B}$ and $y\in\partial_{i}\bar{A}$ we have
\begin{equation}
h_{x}(y)\le s^{-\frac{\varepsilon}{4}}\bar{e}(y)\mbox{ where }h_{x}(y)=P_{x}[H_{\bar{A}}<U,Y_{H_{\bar{A}}}=y].\label{eq:HittingLikeEquilWeak}
\end{equation}

\end{lem}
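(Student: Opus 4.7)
The plan is to bound $h_x(y)$ by decomposing the walk's approach to $\bar A$ via successive visits to the relevant box, applying the classical equilibrium-measure estimate \prettyref{eq:EquilDistHitDistFromFar} inside that box, and controlling the number of returns via a geometric-decay argument. Since for $s\ge c(\varepsilon)$ the sets $\partial_i A_1,\ldots,\partial_i A_n$ are disjoint, $y$ lies in a unique $\partial_i A_j$, and $h_x(y)\le P_x[H_{A_j}<U,\,Y_{H_{A_j}}=y]$. Introduce the successive returns to $B_j$ and departures from $D_j$ by $\tilde\tau_1=H_{B_j}$, $\tilde\sigma_k=T_{D_j}\circ\theta_{\tilde\tau_k}+\tilde\tau_k$, and $\tilde\tau_{k+1}=H_{B_j}\circ\theta_{\tilde\sigma_k}+\tilde\sigma_k$. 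Since $A_j\subset B_j\subset D_j$, the event $\{H_{A_j}<U\}$ decomposes over $k\ge 1$ according to which interval $[\tilde\tau_k,\tilde\sigma_k]$ contains $H_{A_j}$, and on this event necessarily $\tilde\tau_k<U$.

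Applying the strong Markov property at $\tilde\tau_k$ and identifying the torus walk inside $D_j$ with a $\mathbb{Z}^d$ walk (valid since $D_j\subsetneq\mathbb{T}_N$ when $s\ge c(\varepsilon)$) yields
\begin{equation*}
h_x(y)\le \Big(\sum_{k\ge 1}P_x[\tilde\tau_k<U]\Big)\sup_{z\in\partial_i B}P_z^{\mathbb{Z}^d}[H_A<\infty,\,Y_{H_A}=y-x_j].
\end{equation*}
The supremum is controlled by first using \prettyref{eq:EquilDistHitDistFromFar} (whose distance hypothesis holds because $|z|\ge s^{1-\varepsilon/2}-1\ge\Cr{eqhitdist}\,s^{1-\varepsilon}$ for $s\ge c(\varepsilon)$) to peel off the equilibrium factor $e_A(y-x_j)/{\rm cap}(A)$, and then \prettyref{eq:BHBZdBallHitting} with $r_1=s^{1-\varepsilon}$ and $r_2\sim s^{1-\varepsilon/2}$ to obtain $P_z^{\mathbb{Z}^d}[H_A<\infty]\le c\,s^{-\varepsilon(d-2)/2}$; together these give a supremum bound of $c\,s^{-\varepsilon(d-2)/2}\,e_A(y-x_j)/{\rm cap}(A)$.

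The main technical step is to show $\sum_{k\ge 1}P_x[\tilde\tau_k<U]\le 2$ for $s\ge c(\varepsilon)$. The key observation is that if $\tilde\tau_k<U$ for some $k\ge 2$, then necessarily $H_{\bar C}\circ\theta_{\tilde\sigma_{k-1}}<t^*$; for otherwise, since $Y_{\tilde\sigma_{k-1}}\in\partial_e D_j\subset\bar C^c$, the definition of $U$ in \prettyref{eq:DefOfUandtStar} would force $U\le\tilde\sigma_{k-1}+t^*\le\tilde\sigma_{k-1}+H_{B_j}\circ\theta_{\tilde\sigma_{k-1}}=\tilde\tau_k$ (using $H_{B_j}\ge H_{\bar C}$ since $B_j\subset\bar C$), contradicting $\tilde\tau_k<U$. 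Since $\{\tilde\tau_{k-1}<U\}\in\mathcal{F}_{\tilde\sigma_{k-1}}$, the strong Markov property at $\tilde\sigma_{k-1}$ combined with \prettyref{eq:CDBallHitting} (applicable because $\partial_e D_j\subset\bar D^c$ and $n\le s^{c(\varepsilon)}$) gives $P_x[\tilde\tau_k<U]\le q\cdot P_x[\tilde\tau_{k-1}<U]$ with $q=c(\varepsilon)s^{-c\varepsilon}$, and hence $\sum_{k\ge 1}P_x[\tilde\tau_k<U]\le P_x[\tilde\tau_1<U]/(1-q)\le 2$ for $s\ge c(\varepsilon)$.

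Combining these ingredients and recalling $\bar e(y)=e_A(y-x_j)/(n\,{\rm cap}(A))$ gives $h_x(y)\le c\,n\,s^{-\varepsilon(d-2)/2}\bar e(y)\le c\,s^{c(\varepsilon)-\varepsilon(d-2)/2}\bar e(y)\le s^{-\varepsilon/4}\bar e(y)$, where the final inequality uses $d\ge 3$ (so $\varepsilon(d-2)/2\ge\varepsilon/2$) and the freedom to choose $c(\varepsilon)$ small enough (for instance $c(\varepsilon)\le\varepsilon/8$) together with $s\ge c(\varepsilon)$ to absorb the multiplicative constant. The principal obstacle is the control of $\sum_k P_x[\tilde\tau_k<U]$: this is precisely the reason for introducing the auxiliary box $D\supsetneq C$, whose role is to force the walk into $\bar D^c\subset\bar C^c$ at each $\tilde\sigma_k$, where \prettyref{eq:CDBallHitting} supplies the small re-entry probability driving the geometric decay.
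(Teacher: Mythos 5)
Your argument is correct, and it differs from the paper's proof in the key ``bootstrap'' step. The paper first reduces to the case $x\in\partial_{e}B_{k}$ (the box containing $y$), splits $h_{x}(y)$ into the contribution before $T_{\bar{D}}$ and the contribution from excursions that return after $T_{\bar{D}}$, bounds the latter by $cs^{-c(\varepsilon)}\sup_{z\in\partial_{e}B_{k}}h_{z}(y)$ via \eqref{eq:CDBallHitting}, and then invokes the Harnack inequality to replace that supremum by $ch_{x}(y)$, so that the inequality can be rearranged into $h_{x}(y)\le cP_{x}[H_{\bar{A}}<T_{\bar{D}},Y_{H_{\bar{A}}}=y]$. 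You instead perform the full renewal decomposition over successive returns $\tilde{\tau}_{k}$ to $B_{j}$ and departures $\tilde{\sigma}_{k}$ from $D_{j}$, observe (correctly) that $\tilde{\tau}_{k}<U$ forces $H_{\bar{C}}\circ\theta_{\tilde{\sigma}_{k-1}}<t^{\star}$ by the definition \eqref{eq:DefOfUandtStar} of $U$, and obtain $\sum_{k}P_{x}[\tilde{\tau}_{k}<U]\le2$ as a geometric series driven by \eqref{eq:CDBallHitting}. Both routes then conclude identically with \eqref{eq:EquilDistHitDistFromFar} and \eqref{eq:BHBZdBallHitting} and the same absorption of the factor $n\le s^{c(\varepsilon)}$. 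Your version avoids the Harnack inequality and the preliminary reduction \eqref{eq:WLOG} altogether (the series argument works from an arbitrary $x\in\partial_{e}\bar{B}$ since $P_{x}[\tilde{\tau}_{1}<U]\le1$), at the cost of a slightly longer excursion bookkeeping; the paper's self-improving inequality is shorter but leans on harmonicity of $z\mapsto h_{z}(y)$ on $C_{k}\setminus A_{k}$. Both are complete proofs.
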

Note that crucially $s^{-\frac{\varepsilon}{4}}\bar{e}(y)$ does not
depend on the starting point $x$. In the proof of \prettyref{lem:CompoundPoissionIntMeasIneq},
which we now start, this is what will allow us to bound $\kappa_{1}^{i}$
from above by an intensity that is proportional to a product measure
(namely $\tilde{\kappa}_{1}^{i}$). The proof of \prettyref{lem:HittingLineEquilWeak}
will follow after the proof of \prettyref{lem:CompoundPoissionIntMeasIneq}.
\begin{proof}[Proof of \prettyref{lem:CompoundPoissionIntMeasIneq}]
Fix a $i\ge2$. Let $W_{1},...,W_{i}\subset\Gamma(\mathbb{T}_{N})$
be measurable. Then
\begin{eqnarray*}
\kappa_{1}^{i}(W_{1}\times...\times W_{i}) &\!\!\!\! \overset{\eqref{eq:DefOfKappa1},\eqref{eq:DefOfKappaL}}{=} & \!\!\!\!P_{e}[\hat{R}_{i}<U<\hat{R}_{i+1},Y_{(\hat{R}_{j}+\cdot)\wedge\hat{D}_{j}}\in W_{j},1\le j\le i]\\
 &\!\!\!\! \le & \!\!\!\!P_{e}[\hat{R}_{i}<U,Y_{(\hat{R}_{j}+\cdot)\wedge\hat{D}_{j}}\in W_{j},1\le j\le i]\\
 &\!\!\!\! \underset{\rm{Markov}}{\overset{\eqref{eq:RetDepABTorus},\eqref{eq:HittingLikeEquilWeak}}{=}} & \!\!\!\!E_{e}[1_{\{\hat{R}_{i-1}<U,Y_{(\hat{R}_{j}+\cdot)\wedge\hat{D}_{j}}\in W_{j},1\le j\le i-1\}}P_{h_{Y_{\hat{D}_{i-1}}}}[Y_{\cdot\wedge T_{\bar{B}}}\in W_{j}]],\\
 &\!\!\!\! \overset{\eqref{eq:HittingLikeEquilWeak}}{\le} & \!\!\!\!s^{-\frac{\varepsilon}{4}}P_{e}[\hat{R}_{i-1}<U,Y_{(\hat{R}_{j}+\cdot)\wedge\hat{D}_{j}}\in W_{j},1\le j\le i-1]P_{\bar{e}}[Y_{\cdot\wedge T_{\bar{B}}}\in W_{i}].
\end{eqnarray*}
Now iterating a similar inequality we get
\begin{align*}
\kappa_{1}^{i}(W_{1}\times...\times W_{i})  \le & s^{-\frac{\varepsilon}{4}(i-1)}P_{e}[Y_{\cdot\wedge T_{\bar{B}}}\in W_{1}]\mbox{\f $\dis\prod_{j=2}^{i}$}P_{\bar{e}}[Y_{\cdot\wedge T_{\bar{B}}}\in W_{j}]
\\
  \le & s^{-\frac{\varepsilon}{8}(i-1)}{\rm cap}(A)\mbox{\f $\dis\prod_{j=1}^{i}$}P_{\bar{e}}[Y_{\cdot\wedge T_{\bar{B}}}\in W_{j}]\overset{\eqref{eq:CompoundPoissonIntMeasIneq}}{=}\tilde{\kappa}_{1}^{i}(W_{1}\times...\times W_{i}),
\end{align*}
where we have used that $s^{-\frac{\varepsilon}{4}(i-1)}P_{e}[Y_{\cdot\wedge T_{\bar{B}}}\in W_{1}]=s^{-\frac{\varepsilon}{4}(i-1)}n\,\mbox{cap}(A)P_{\bar{e}}[Y_{\cdot\wedge T_{\bar{B}}}\in W_{1}]$
and $s^{-\frac{\varepsilon}{4}(i-1)}n\le s^{-\frac{\varepsilon}{8}(i-1)}$
(we require $n\le s^{\frac{\varepsilon}{8}}$). 

\smallskip
Thus $\kappa_{1}^{i}(W)\le\tilde{\kappa}_{1}^{i}(W)$ for all $W\in\Gamma(\mathbb{T}_{N})^{i}$
that are products of measurable sets. This implies that $\kappa_{1}^{i}(W)\le\tilde{\kappa}_{1}^{i}(W)$
for all $W\in\Gamma(\mathbb{T}_{N})^{i}$ that are finite unions of
such sets ($W$ need not be a disjoint union, since ``overlapping''
unions of products of measurable sets may be turned into disjoint
unions of such sets by further ``subdividing'' the ``overlapping''
sets). By a monotone class argument, this implies that $\kappa_{1}^{i}(W)\le\tilde{\kappa}_{1}^{i}(W)$
for \emph{all} measurable $W\in\Gamma(\mathbb{T}_{N})^{i}$ (see Theorem
3.4, p. 39 in \cite{Billingsley}), so \prettyref{eq:CompoundPoissonIntMeasIneq}
follows.
\end{proof}
Finally, we prove \prettyref{lem:HittingLineEquilWeak}, using the
Harnack inequality and \eqref{eq:EquilDistHitDistFromFar}.
\begin{proof}[Proof of \prettyref{lem:HittingLineEquilWeak}]
If $j\ne k$, $x\in\partial_{e}B_{j}$ and $y\in\partial_{i}A_{k}$
then by the Markov property
\begin{equation}
h_{x}(y)=P_{x}[H_{\bar{A}}<U,Y_{H_{\bar{A}}}=y]\le\sup_{x\in\partial_{e}B_{k}}P_{x}[H_{\bar{A}}<U,Y_{H_{\bar{A}}}=y]=\sup_{x\in\partial_{e}B_{k}}h_{x}(y),\label{eq:WLOG}
\end{equation}
(provided $s\ge c(\varepsilon)$ so that $B_{j}$ and $B_{k}$ are
disjoint), so without loss of generality we may assume $x\in\partial_{e}B_{k}$.
We have (recall from \eqref{eq:DefOfD} that $C\subset D=B(0,s^{1-\frac{\varepsilon}{8}})$)
\begin{equation}
h_{x}(y)\le P_{x}[H_{\bar{A}}<T_{\bar{D}},Y_{H_{\bar{A}}}=y]+P_{x}[T_{\bar{D}}<H_{\bar{A}}<U,Y_{H_{\bar{A}}}=y].\label{eq:zerlegung}
\end{equation}
By the strong Markov property, \prettyref{eq:CDBallHitting} and \eqref{eq:WLOG},
we have for $s\ge c(\varepsilon)$ 
\begin{equation}
P_{x}[T_{\bar{D}}<H_{\bar{A}}<U,Y_{H_{\bar{A}}}=y]\le\sup_{z\in\partial_{e}\bar{D}}P_{z}[H_{\bar{C}}<U]\sup_{z\in\partial_{e}\bar{B}}h_{z}(y)\le cs^{-c(\varepsilon)}\sup_{z\in\partial_{e}B_{k}}h_{z}(y).\label{eq:strongmarkovused}
\end{equation}
The function $z\rightarrow h_{z}(y)$ is non-negative harmonic on
$C_{k}\backslash A_{k}$ (which can be identified with a subset of
$\mathbb{Z}^{d}$) so by the Harnack inequality (Proposition 1.7.2,
p. 42, \cite{LawlersLillaGrona}) and a standard covering argument
we have $h_{z}(y)\le ch_{x}(y)$ for all $z\in\partial_{e}B_{k}$.
Applying this inequality to the right-hand side of \eqref{eq:strongmarkovused},
plugging the result into \eqref{eq:zerlegung} and rearranging we
find that 
\begin{equation}
h_{x}(y)\le cP_{x}[H_{\bar{A}}<T_{\bar{D}},Y_{H_{\bar{A}}}=y]\le c\sup_{z\in\partial_{i}B}P_{z}^{\mathbb{Z}^{d}}[H_{A}<\infty,Y_{H_{A}}=y]\mbox{ for }s\ge c(\varepsilon).\label{eq:TheFirstStep}
\end{equation}
Thus using \eqref{eq:EquilDistHitDistFromFar} with $K=A,r=s^{1-\varepsilon}$
(recall \eqref{eq:DefOfA}) we have that if $s\ge c(\varepsilon)$
(so that $z\notin B(0,\Cr{eqhitdist}s^{1-\varepsilon})$ if $z\in\partial_{i}B$)
then 
\[
h_{x}(y)\le \Cr{eqhitupper}\mbox{\f $\dis\frac{e_{A}(y)}{{\rm cap}(A)}$} \sup_{z\in\partial_{i}B}P_{z}^{\mathbb{Z}^{d}}[H_{A}<\infty]\overset{\eqref{eq:BHBZdBallHitting}}{\le}cn\bar{e}(y)s^{-\frac{\varepsilon}{2}}\overset{n\le s^{\frac{\varepsilon}{8}},s\ge c(\varepsilon)}{\le}\bar{e}(y)s^{-\frac{\varepsilon}{4}}.
\]
\end{proof}

\vspace{-3ex}
Now all components used in the proof of \prettyref{pro:CouplePPoEandUFreePPoE}
have been proved.

\section{\label{sec:ToRI}Coupling with random interlacements}

In this section we prove \prettyref{pro:CouplePPoEandRI}. We use
essentially the same techniques that were used to prove \prettyref{pro:CouplePPoEandUFreePPoE}
in the previous section, but speaking very roughly we use them ``in
reverse'' to reconstruct from the excursions in the Poisson point
process $\eta$ of intensity $u\kappa_{3}$, which all end upon leaving
$B$, excursions with law $P_{e_{A}}^{\mathbb{Z}^{d}}$ (or rather,
the successive visits to $A$ after departures from $B$ of such excursions).
$P_{e_{A}}^{\mathbb{Z}^{d}}$ gives positive measure to excursions
that return to $A$ even after leaving $B$, and to construct such
excursions we will in \prettyref{lem:Mu234etc-1} take a ``small
number'' of excursions from $\eta$ and ``glue them together'',
essentially reversing the argument from \prettyref{lem:Mu234etc}.

Let $\tilde{R}_{1}\le\tilde{D}_{1}\le\tilde{R}_{2}\le\tilde{D}_{2}\le...$
on $\Gamma(\mathbb{Z}^{d})$ denote the successive returns of $Y_{\cdot}$
to $A$ and successive departures from $B$,
\[
\tilde{R}_{1}=H_{A},\mbox{ }\tilde{R}_{k}=H_{A}\circ\theta_{\tilde{D}_{k-1}}+\tilde{D}_{k-1},k\ge2,\mbox{ }\tilde{D}_{k}=T_{B}\circ\theta_{\tilde{R}_{k}}+\tilde{R}_{k},k\ge1.
\]
These should not be confused with the $\hat{R}_{k},\hat{D}_{k},$
which were defined on $\Gamma(\mathbb{T}_{N})$ and used in \prettyref{sec:OutOfTorus}
(see \prettyref{eq:RetDepABTorus}), or the $R_{k}$ from \prettyref{sec:Poissonization}
(see \prettyref{eq:DefOfRkUk}). Furthermore similarly to \prettyref{eq:PhiExcursionExtractor}
define maps $\phi_{i}^{\mathbb{Z}^{d}},i\ge2,$ from $\Gamma(\mathbb{Z}^{d})$
to $\Gamma(\mathbb{Z}^{d})^{i}$ extracting the excursions between
$A$ and $B$, 
\begin{equation}
(\phi_{i}^{\mathbb{Z}^{d}}(w))_{j}=w((\tilde{R}_{j}+\cdot)\wedge\tilde{D}_{j})\mbox{ for }j=1,...,i,w\in\{\tilde{R}_{i}<\infty=\tilde{R}_{i+1}\}\subset\Gamma(\mathbb{Z}^{d}),i\ge1.\label{eq:PhiZd}
\end{equation}

\smallskip\n
To construct the random set $\mathcal{I}_{1}$ from the statement
of \prettyref{pro:CouplePPoEandRI} we will construct Poisson point
processes of intensities $u(1-\delta)\kappa_{4}^{i}$, where (cf.
\prettyref{eq:DefOfKappaL}) 
\begin{equation}
\kappa_{4}^{i}=\phi_{i}\circ(1_{\{\tilde{R}_{i}<\infty = \tilde{R}_{i+1}\}}P_{e_{A}}^{\mathbb{Z}^{d}}),i\ge1.\label{eq:DefOfKappaL-1}
\end{equation}
This will be enough to construct $\mathcal{I}_{1}$ because if $\mu_{i},i\ge1$,
are i.i.d. Poisson point processes of intensity $u(1-\delta)\kappa_{4}^{i}$
then by \prettyref{eq:LawOfRIInFiniteSet}, \prettyref{eq:DefOfMuK},
\prettyref{eq:PhiZd} and \prettyref{eq:DefOfKappaL-1} (recalling
the notation from \prettyref{eq:DefOfPPPTrace})
\begin{equation}
\mathcal{I}^{u(1-\delta)}\cap A\overset{\mbox{law}}{=}\cup_{i\ge1}\mathcal{I}(\mu_{i})\cap A.\label{eq:LawOfUnionIsRI}
\end{equation}
To construct a Poisson point process $\mu_{1}$ of intensity $u(1-\delta)\kappa_{4}^{1}$
we will, in the proof of \prettyref{pro:CouplePPoEandRI} , ``extract''
a Poisson point process of intensity $u(1-\delta)\kappa_{3}$ from
$\eta$, and ``thin'' it to get $\mu_{1}$. This will be possible
because of the following inequality.
\begin{lem} \label{lem:IntensitySandwhich1Zd}($N\ge3,d\ge3$) 

\medskip
If $s\ge c(\varepsilon)$
then \textup{$\kappa_{4}^{1}\le\kappa_{3}\le(1+cs^{-c(\varepsilon)})\kappa_{4}^{1}$.}\end{lem}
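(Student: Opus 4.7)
The plan is to unfold the definitions and verify each inequality directly. First observe that $P_{e_A}^{\mathbb{Z}^d}$ is supported on paths starting in $A$ (since $e_A$ is concentrated on $\partial_i A \subset A$), so under this law $\tilde{R}_1 = H_A = 0$ and $\tilde{D}_1 = T_B$ almost surely. Thus $\phi_1^{\mathbb{Z}^d}(w) = w(\cdot \wedge T_B)$, and for measurable $W \subset \Gamma(\mathbb{Z}^d)$,
\[
\kappa_4^1(W) = P_{e_A}^{\mathbb{Z}^d}[\tilde{R}_2 = \infty,\; Y_{\cdot \wedge T_B} \in W], \qquad \kappa_3(W) = P_{e_A}^{\mathbb{Z}^d}[Y_{\cdot \wedge T_B} \in W],
\]
from which the inequality $\kappa_4^1 \le \kappa_3$ is immediate.

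For the upper bound on $\kappa_3$, I would split $\kappa_3$ according to whether $\tilde{R}_2 = \infty$:
\[
\kappa_3(W) = \kappa_4^1(W) + P_{e_A}^{\mathbb{Z}^d}[\tilde{R}_2 < \infty,\; Y_{\cdot \wedge T_B} \in W].
\]
Applying the strong Markov property at time $T_B$, the second term equals
\[
E_{e_A}^{\mathbb{Z}^d}\bigl[1_{\{Y_{\cdot \wedge T_B} \in W\}}\, P_{Y_{T_B}}^{\mathbb{Z}^d}[H_A < \infty]\bigr].
\]
Since $Y_{T_B} \in \partial_e B$ and $A = B(0,s^{1-\varepsilon}) \subset B = B(0, s^{1-\varepsilon/2})$, the bound \eqref{eq:BHBZdBallHitting} with $r_1 = s^{1-\varepsilon}$ and $r_2 = s^{1-\varepsilon/2}$ gives
\[
\sup_{x \in \partial_e B} P_x^{\mathbb{Z}^d}[H_A < \infty] \le c\,(r_1/r_2)^{d-2} = c\,s^{-\frac{\varepsilon}{2}(d-2)} \le c\,s^{-c(\varepsilon)}.
\]
Consequently
\[
\kappa_3(W) \le \kappa_4^1(W) + c\,s^{-c(\varepsilon)}\, \kappa_3(W),
\]
and rearranging yields $\kappa_3(W) \le (1 - c\,s^{-c(\varepsilon)})^{-1}\kappa_4^1(W) \le (1 + c\,s^{-c(\varepsilon)})\kappa_4^1(W)$, provided $s \ge c(\varepsilon)$ so that $c\,s^{-c(\varepsilon)} \le 1/2$.

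There is no real obstacle here: the only ingredient beyond elementary bookkeeping is the uniform estimate $\sup_{x \in \partial_e B} P_x^{\mathbb{Z}^d}[H_A < \infty] \le c\,s^{-c(\varepsilon)}$, which is a direct application of the already-established hitting bound \eqref{eq:BHBZdBallHitting} for concentric boxes with radii differing by a factor $s^{\varepsilon/2}$.
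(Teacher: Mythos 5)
Your proof is correct and is essentially the argument the paper has in mind: the paper omits the proof, referring to Lemma \ref{lem:KappaOneIntensitySandwich}, whose adaptation to $\mathbb{Z}^{d}$ is exactly your combination of the strong Markov property at $T_{B}$ with the hitting estimate \eqref{eq:BHBZdBallHitting} for the concentric boxes $A\subset B$. The only cosmetic difference is that the paper's template bounds $\kappa_{4}^{1}(W)\ge\kappa_{3}(W)\inf_{x\in\partial_{e}B}P_{x}^{\mathbb{Z}^{d}}[H_{A}=\infty]$ directly, whereas you decompose $\kappa_{3}=\kappa_{4}^{1}+(\mbox{return term})$ and rearrange; these are the same estimate.
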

\begin{proof}
This is a consequence of \prettyref{eq:BHBZdBallHitting}, \prettyref{eq:DefOfKappa3}
and \prettyref{eq:DefOfKappaL-1}. The proof is a very similar to
that of \prettyref{lem:KappaOneIntensitySandwich}, so we omit it.
\end{proof}
After constructing $\mu_{1}$ in the proof of \prettyref{pro:CouplePPoEandRI}
we will take ``what is left of $\eta$ after thinning using \prettyref{lem:IntensitySandwhich1Zd}'',
namely a Poisson point process $\theta$ of intensity $u\delta\kappa_{3}$,
and ``extract from it'' the Poisson point processes $\mu_{2},\mu_{3},...$
of respective intensities $u(1-\delta)\kappa_{4}^{i}$. This will
be done using the following lemma.
\begin{lem} \label{lem:Mu234etc-1}$(d\ge3,N\ge3)$ 

\medskip
Let $u\ge0$, $\delta\ge cs^{-c(\varepsilon)}$
and $s\ge c(\varepsilon)$, and let $(\Omega,\mathcal{A},Q)$ be a
probability space with a Poisson point processes $\theta$ of intensity
$u\delta\kappa_{3}$. Then we can construct a space $(\Omega',\mathcal{A}',Q')$
and, on the product space, independent $\sigma(\theta)\times\mathcal{A}'-$measurable
Poisson point processes $\mu_{i},i\ge2,$ and $\rho_{2}$ such that
$\mu_{i}$ has intensity $u(1-\delta)\kappa_{4}^{i}$, $\rho_{2}$
has intensity $u\frac{3\delta}{2}\kappa_{3}$ and
\begin{equation}
Q\otimes Q'\Big[\mbox{\f $\dis\bigcup\limits_{i\ge2}$}\; \mathcal{I}(\mu_{i})\subset\mathcal{I}(\theta)\subset\cup_{i\ge2}\mathcal{I}(\mu_{i})\, \mbox{\f $\dis\bigcup$} \;\mathcal{I}(\rho_{2})\Big]\ge1-ce^{-cu\delta\mbox{cap(A)}}.\label{eq:Mu234etc-1}
\end{equation}

\end{lem}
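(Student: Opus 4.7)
The plan is to mirror the strategy of \prettyref{lem:Mu234etc} but in the reverse direction: given $\theta$, construct the Poisson point processes $\mu_{i}$, $i \ge 2$, and $\rho_{2}$. The essential preliminary is a $\mathbb{Z}^{d}$-analog of \prettyref{lem:CompoundPoissionIntMeasIneq}: for all $i \ge 2$,
\begin{equation*}
\kappa_{4}^{i} \le \tilde\kappa_{4}^{i} := C(\varepsilon)\, s^{-c(\varepsilon)(i-1)}\,{\rm cap}(A) \otimes_{k=1}^{i} P_{\bar e_{A}}^{\mathbb{Z}^{d}}[Y_{\cdot \wedge T_{B}} \in dw_{k}],
\end{equation*}
where $\bar e_{A} := e_{A}/{\rm cap}(A)$. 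As in \prettyref{lem:CompoundPoissionIntMeasIneq}, this is proved by iterating the inequality $P_{x}^{\mathbb{Z}^{d}}[H_{A} < \infty,\, Y_{H_{A}} = y] \le c s^{-c(\varepsilon)}\bar e_{A}(y)$ for $x \in \partial_{e} B$, $y \in \partial_{i} A$, which is an immediate consequence of \eqref{eq:EquilDistHitDistFromFar} (with $K = A$, $r = s^{1-\varepsilon}$) and \eqref{eq:BHBZdBallHitting}; the argument is in fact cleaner than \prettyref{lem:HittingLineEquilWeak}, since the absence of torus geometry removes the need for any detour through $\bar D$.

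For the coupling, I would first split $\theta$ by an independent Poisson thinning into $\theta^{\mu}$ (of intensity $\alpha u\delta\kappa_{3}$, with $\alpha \asymp (1-\delta) s^{-c(\varepsilon)}/\delta$ tuned so that $\alpha \le 1/2$, which is possible by the hypothesis $\delta \ge cs^{-c(\varepsilon)}$) and $\theta^{\rho}$ (of intensity $(1-\alpha)u\delta\kappa_{3}$), and enlarge the probability space by independent auxiliary ingredients: Poisson random variables $N_{i}$, $i \ge 2$, with $N_{i}$ of mean $u(1-\delta)\tilde\kappa_{4}^{i}(\Gamma(\mathbb{Z}^{d})^{i})$; i.i.d.\ backup excursions of law $P_{\bar e_{A}}^{\mathbb{Z}^{d}}[Y_{\cdot \wedge T_{B}} \in dw]$; Bernoulli marks for tuple-wise thinning; a uniformly random enumeration of $\theta^{\mu}$'s points; and an independent Poisson point process $\rho_{2}'$ of suitable intensity. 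Writing $\bar N := \sum_{i \ge 2} i N_{i}$, on the event $\{\bar N \le \theta^{\mu}(\Gamma(\mathbb{Z}^{d}))\}$ form auxiliary Poisson point processes $\eta_{i}$ of intensity $u(1-\delta)\tilde\kappa_{4}^{i}$ by using the first $\bar N$ enumerated paths of $\theta^{\mu}$ as tuple coordinates (grouped into $N_{i}$ tuples of size $i$); off this event, use backup paths. Thin each $\eta_{i}$ tuple-by-tuple with acceptance probability $d\kappa_{4}^{i}/d\tilde\kappa_{4}^{i} \in [0,1]$ to obtain $\mu_{i}$ of intensity $u(1-\delta)\kappa_{4}^{i}$, and set $\rho_{2} := \theta^{\rho} + \rho_{2}'$ with $\rho_{2}'$ calibrated so that $\rho_{2}$ is Poisson of intensity $u\tfrac{3\delta}{2}\kappa_{3}$.

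Mutual independence among the $\mu_{i}$'s, and between the $\mu_{i}$'s and $\rho_{2}$, follows from the Poisson splitting of $\theta$ and the independence of the auxiliary ingredients: the $\mu_{i}$'s are $\sigma(\theta^{\mu})$-measurable (plus a part of $\mathcal{A}'$) while $\rho_{2}$ is $\sigma(\theta^{\rho})$-measurable (plus the independent $\rho_{2}'$). The lower containment $\bigcup_{i \ge 2} \mathcal I(\mu_{i}) \subset \mathcal I(\theta)$ holds on the matching event because every path appearing in some $\mu_{i}$-tuple is then a path of $\theta^{\mu} \subset \theta$. The main technical difficulty is the upper containment $\mathcal I(\theta) \subset \bigcup_{i \ge 2}\mathcal I(\mu_{i}) \cup \mathcal I(\rho_{2})$: while $\mathcal I(\theta^{\rho}) \subset \mathcal I(\rho_{2})$ is automatic, ensuring $\mathcal I(\theta^{\mu}) \subset \bigcup_{i \ge 2}\mathcal I(\mu_{i})$ requires simultaneously accounting for the $\theta^{\mu}$-paths left unused by the enumeration-matching and for the tuples rejected by the $\eta_{i} \to \mu_{i}$ thinning. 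Since the latter form, by the Poisson thinning decomposition, an independent Poisson point process, they can be routed into $\rho_{2}$ via an additional compatible sub-splitting of $\theta^{\mu}$, preserving all required independence. The matching-event probability bound, of the stated form $1 - ce^{-cu\delta\,{\rm cap}(A)}$, then follows from \prettyref{lem:SumOfPoissonsLemma} (applied as at \eqref{eq:FirstUseOfPoissonLemma}) with geometric ratio $r \asymp (1-\delta)s^{-c(\varepsilon)}/\delta \le c$, valid by the hypothesis $\delta \ge c' s^{-c(\varepsilon)}$.
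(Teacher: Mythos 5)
Your proposal follows the paper's own strategy in all structural respects: the intensity domination $\kappa_{4}^{i}\le\tilde{\kappa}_{4}^{i}$ obtained by iterating the hitting estimate from \eqref{eq:EquilDistHitDistFromFar} and \eqref{eq:BHBZdBallHitting}, the assembly of tuple-processes $\eta_{i}$ from an i.i.d.\ enumeration of the points of $\theta$ governed by independent Poisson counts $N_{i}$, the tuple-wise thinning $\eta_{i}\to\mu_{i}$, the absorption of rejected tuples and unused paths into $\rho_{2}$, and \prettyref{lem:SumOfPoissonsLemma} for the probability estimate. There is, however, one genuine quantitative flaw in the set-up.

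By pre-splitting $\theta$ into $\theta^{\mu}$ of intensity $\alpha u\delta\kappa_{3}$ with $\alpha\asymp(1-\delta)s^{-c(\varepsilon)}/\delta$, you make the reservoir from which the $\eta_{i}$ are built a Poisson process whose mean number of points is $\lambda=\alpha u\delta\,{\rm cap}(A)\asymp u(1-\delta)s^{-c(\varepsilon)}{\rm cap}(A)$, i.e.\ of the same order as $E[\bar N]$. \prettyref{lem:SumOfPoissonsLemma} then only gives $Q[\bar N\le\theta^{\mu}(\Gamma(\mathbb{Z}^{d}))]\ge1-ce^{-c\lambda}$, and this loss is not an artifact of the estimate: already $Q[\theta^{\mu}(\Gamma(\mathbb{Z}^{d}))=0]=e^{-\lambda}$, so the matching event genuinely fails with probability of order at least $e^{-cu(1-\delta)s^{-c(\varepsilon)}{\rm cap}(A)}$. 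Whenever $\delta\gg s^{-c(\varepsilon)}$ (for instance $\delta$ of constant order) this exceeds the claimed error $ce^{-cu\delta\,{\rm cap}(A)}$, so the construction as described does not prove \eqref{eq:Mu234etc-1}. The remedy is not to thin $\theta$ before building the tuples: draw the paths $w_{j}$ for the $\eta_{i}$ from the full enumeration of $\theta$, whose count $N$ is Poisson of mean $u\delta\,{\rm cap}(A)$, so that \prettyref{lem:SumOfPoissonsLemma} applies with $\lambda=u\delta\,{\rm cap}(A)$ and $r\asymp\delta^{-1}s^{-c(\varepsilon)}\le c$; the Poisson structure of the leftover is then recovered by the index shift $\rho=\sum_{i=\bar K+1}^{\bar K+N}\delta_{w_{i}}$ with $\bar K=\sum_{k}kN_{k}$ as in \eqref{eq:ConstrucOfRho}, which is Poisson of intensity $u\delta\kappa_{3}$, independent of the $\eta_{i}$ (disjoint index blocks), and almost surely contains every point of $\theta$ not used by the $\eta_{i}$. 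A secondary imprecision: the rejected tuples $\eta_{i}-\mu_{i}$ have intensity proportional to $\tilde{\kappa}_{4}^{i}-\kappa_{4}^{i}$, which is not a product measure, so their constituent paths are not i.i.d.\ with law $P_{\bar e_{A}}^{\mathbb{Z}^{d}}[Y_{\cdot\wedge T_{B}}\in dw]$; before absorbing them into a Poisson process with intensity a multiple of $\kappa_{3}$ one should first thicken $\eta_{i}-\mu_{i}$ to a process of intensity $u\tilde{\kappa}_{4}^{i}$ (using $u(1-\delta)\tilde{\kappa}_{4}^{i}\le u(1-\delta)\kappa_{4}^{i}+u\tilde{\kappa}_{4}^{i}$) and then dominate the total path count by an independent Poisson variable via a second application of \prettyref{lem:SumOfPoissonsLemma}; this is what accounts for the extra $u\frac{\delta}{2}\kappa_{3}$ in the intensity of $\rho_{2}$.
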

The ``residual'' Poisson point process $\rho_{2}$, as well as a
``residual'' Poisson point process left after ``thinning'' to
obtain $\mu_{1}$, will be used to construct $\mathcal{I}_{2}$. We
postpone the proof of \prettyref{lem:Mu234etc-1} until later, and
instead use it to prove \prettyref{pro:CouplePPoEandRI}.
\begin{proof}[Proof of \prettyref{pro:CouplePPoEandRI}]
We start by constructing $(\Omega',\mathcal{A}',Q')$ appropriately,
to obtain, by ``thinning'' $\eta$, a $\sigma(\eta)\times\mathcal{A}'-$measurable
Poisson point process $\theta$ on the product space of intensity
$u\delta\kappa_{3}$, such that $\eta-\theta$ and $\theta$ are independent,
and $\eta-\theta$ is a Poisson point process of intensity $u(1-\delta)\kappa_{3}$
(similarly to below \prettyref{eq:FirstIntensitySandwich}, note that of course $u\delta\kappa_3 \le u\kappa_3$).

Since we require $\delta\ge cs^{-c(\varepsilon)}$ and $s\ge c(\varepsilon)$
we have $u(1-\delta)(1+s^{-c(\varepsilon)})\le u(1-\frac{\delta}{2})$,
and thus $u(1-\delta)\kappa_{4}^{1}\le u(1-\delta)\kappa_{3}\le u(1-\frac{\delta}{2})\kappa_{4}^{1}$
by \prettyref{lem:IntensitySandwhich1Zd}. But $u(1-\frac{\delta}{2})\kappa_{4}^{1}\le u(1-\delta)\kappa_{4}^{1}+\frac{\delta}{2}\kappa_{3}$
since $\kappa_{4}^{1}\le\kappa_{3}$, so that 
\[
u(1-\delta)\kappa_{4}^{1}\le u(1-\delta)\kappa_{3}\le u(1-\delta)\kappa_{4}^{1}+u\mbox{\f $\dis\frac{\delta}{2}$}\,\kappa_{3}.
\]
Therefore we can, similarly to under \prettyref{eq:Kappa1DeltaIntensitySandwich-1},
construct (extending $(\Omega',\mathcal{A}',Q')$ appropriately) $\sigma(\eta)\times\mathcal{A}'-$measurable
Poisson point processes $\mu_{1}$ and $\rho_{1}$ such that $\mu_{1},\rho_{1},\theta$
are independent, $\mu_{1}$ has intensity $u(1-\delta)\kappa_{4}^{1}$,
$\rho_{1}$ has intensity $u\frac{\delta}{2}\kappa_{3}$, 
\begin{equation}
\mu_{1}\le\eta-\theta\le\mu_{1}+\rho_{1},\mbox{ and thus }\mathcal{I}(\mu_{1})\subset\mathcal{I}(\eta-\theta)\subset\mathcal{I}(\mu_{1})\mbox{\f $\dis\bigcup$} \mathcal{I}(\rho_{1}).\label{eq:mu1etathetamuarho1}
\end{equation}
We then apply \prettyref{lem:Mu234etc-1} (once again extending $(\Omega',\mathcal{A}',Q')$)
to $\theta$ to get $\sigma(\eta)\times\mathcal{A}'-$measurable Poisson
point processes $\mu_{i},i\ge2,\rho_{2}$ such that $\rho_{1},\rho_{2},\mu_{i},i\ge1,$
are independent, $\mu_{i}$ has intensity $u(1-\delta)\kappa_{4}^{i}$,
$\rho_{2}$ has intensity $u\frac{3\delta}{2}\kappa_{3}$, 
\begin{equation}
Q\otimes Q'\Big[\mbox{\f $\dis\bigcup_{i\ge2}$}\; \mathcal{I}(\mu_{i})\subset\mathcal{I}(\theta)\subset\cup_{i\ge2}\mathcal{I}(\mu_{i})\; \mbox{\f $\dis\bigcup$}\; \mathcal{I}(\rho_{2})\Big]\ge1-ce^{-cu\delta\mbox{cap(\emph{A})}}.\label{eq:ConstructingTheMus}
\end{equation}
Note that $\rho_{1}+\rho_{2}$ is a Poisson point process of intensity
$2u\delta\kappa_{3}$, and that the ``points'' of this process are
pieces of random walk with law $\frac{1}{\mbox{cap}(A)}P_{e_{A}}[Y_{\cdot\wedge T_{B}}\in dw]$
(recall \prettyref{eq:DefOfeKandCap}). By constructing countably
many independent random walks on $(\Omega',\mathcal{A}',Q')$, and
``attaching'' a different one to each piece of random walk in $\rho_{1}+\rho_{2}$
we obtain a Poisson point process $\rho_{3}$ of intensity $2u\delta P_{e_{A}}$
(the ``points'' of $\rho_{3}$ have law $\frac{1}{{\rm cap}(A)}P_{e_{A}}$,
by the strong Markov property) such that 
\begin{equation}
\mathcal{I}(\rho_{1}+\rho_{2})\subset\mathcal{I}(\rho_{3})\mbox{ almost surely.}\label{eq:rho1rho2inrho3}
\end{equation}
Now let 
\[
\mathcal{I}_{1}=\cup_{i\ge1}\mathcal{I}(\mu_{i})\cap A\mbox{ and }\mathcal{I}_{2}=\mathcal{I}(\rho_{3})\cap A
\]

\smallskip\n
and note that $\mathcal{I}_{1}$ has the law of $\mathcal{I}^{u(1-\delta)}\cap A$
under $Q_{0}$, by \prettyref{eq:LawOfUnionIsRI}, $\mathcal{I}_{2}$
has the law of $\mathcal{I}^{2u\delta}$ under $Q_{0}$, by \prettyref{eq:LawOfRIInFiniteSet},
$\mathcal{I}_{1}$ and $\mathcal{I}_{2}$ are $\sigma(\eta)\times\mathcal{A}'-$measurable
and independent, and since $\mathcal{I}(\eta)\cap A=\mathcal{I}(\eta-\theta)\cap A\bigcup\mathcal{I}(\theta)\cap A$
we get from \prettyref{eq:mu1etathetamuarho1}, \prettyref{eq:ConstructingTheMus}
and \prettyref{eq:rho1rho2inrho3} that
\begin{equation}
Q\otimes Q'[\mathcal{I}_{1}\cap A\subset\mathcal{I}(\eta)\cap A\subset\mathcal{I}_{1}\cup\mathcal{I}_{2}]\ge1-ce^{-cu\delta\rm{cap(\emph{A})}}\ge1-ce^{-cs^{c(\varepsilon)}},\label{eq:FirstThing}
\end{equation}
were the second inequality holds because we require $u,\delta\ge s^{-c(1- \varepsilon)}=s^{-c(\varepsilon)}$,
similarly to in \prettyref{eq:DealingWithIt}. This completes the
proof of \prettyref{pro:CouplePPoEandRI}.
\end{proof}
It remains to prove \prettyref{lem:Mu234etc-1}. In the proof we will
extract from the Poisson point process $\theta$ of intensity $u\delta\kappa_{3}$
Poisson point processes of intensity $u(1-\delta)\tilde{\kappa}_{4}^{i}$
(see \prettyref{eq:Kappa4lUpperBound} below). This will be possible
because the ``points'' of a Poisson point process of intensity a
multiple of $\tilde{\kappa}_{4}^{i}$ is an \emph{i.i.d. vector} of
excursions with law $P_{\bar{e}_{A}}^{\mathbb{Z}^{d}}[Y_{\cdot\wedge T_{B}}\in dw]$
(see \prettyref{eq:Kappa4lUpperBound}), which is also the law of
the single excursions that make up the points of $\theta$ (and because
the number of ``points'' we need to construct the Poisson point
processes of intensity $u(1-\delta)\tilde{\kappa}_{4}^{i}$ will with
high probability not exceed the number of points in $\theta$). Once
we have these Poisson point processes we will use the following lemma
of intensity measures to ``thin'' them to obtain Poisson point processes
of intensity $u(1-\delta)\kappa_{4}^{i}$, and these will be the $\mu_{2},\mu_{3},..$
from the statement of \prettyref{lem:Mu234etc-1}.
\begin{lem} ($N\ge3,d\ge3$) 

\medskip
If $s\ge c(\varepsilon)$ and $i\ge2$
\begin{equation}
\kappa_{4}^{i}\le\tilde{\kappa}_{4}^{i}\mbox{ where }\tilde{\kappa}_{4}^{i}(d(w_{1},...,w_{i}))=s^{-\frac{\varepsilon}{4}(i-1)}\mbox{cap}(A)\otimes_{j=1}^{i}P_{\bar{e}_{A}}^{\mathbb{Z}^{d}}[Y_{\cdot\wedge T_{B}}\in dw_{j}],\label{eq:Kappa4lUpperBound}
\end{equation}
where $\bar{e}_{A}(\cdot)=\frac{e_{A}(\cdot)}{\mbox{cap}(A)}$ denotes
the normalisation of the measure $e_{A}(\cdot)$, see \prettyref{eq:DefOfeKandCap},
(and should not be confused with the measure $\bar{e}$ from \eqref{eq:DefOfKappa1}
and the proof of \prettyref{lem:Mu234etc}).\end{lem}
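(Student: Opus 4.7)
The plan is to adapt the proof of Lemma \ref{lem:CompoundPoissionIntMeasIneq} to the simpler $\mathbb{Z}^d$ setting. The structure is identical---iterate the strong Markov property at the successive departures $\tilde D_{i-1},\tilde D_{i-2},\ldots$ and at each step peel off an independent excursion starting from (approximately) the normalised equilibrium measure, paying a factor of $s^{-\varepsilon/4}$ per step. The two simplifications compared with Section~\ref{sec:OutOfTorus} are that there is no cutoff $U$ to control (the walk lives in $\mathbb Z^d$ and we only require $\tilde R_{i+1}=\infty$, which we will simply drop), and there is only one box $A$ rather than a union of $n$ boxes. Consequently, the role played by Lemma \ref{lem:HittingLineEquilWeak} can be filled by a direct application of \eqref{eq:EquilDistHitDistFromFar} and \eqref{eq:BHBZdBallHitting}, without any Harnack/covering argument.

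Concretely, I would first record the analogue of Lemma \ref{lem:HittingLineEquilWeak}: for all $x\in\partial_e B$ and $y\in\partial_i A$, provided $s\ge c(\varepsilon)$,
\begin{equation*}
P_x^{\mathbb Z^d}[H_A<\infty,\,Y_{H_A}=y]\le s^{-\varepsilon/4}\,\bar e_A(y).
\end{equation*}
This follows by factoring the left-hand side as $P_x^{\mathbb Z^d}[H_A<\infty]\cdot P_x^{\mathbb Z^d}[Y_{H_A}=y\mid H_A<\infty]$, bounding the first factor by $c\,s^{-\varepsilon(d-2)/2}$ via \eqref{eq:BHBZdBallHitting} (with $r_1=s^{1-\varepsilon}$, $r_2=s^{1-\varepsilon/2}$, using that $B\subset B(0,\Cr{eqhitdist} s^{1-\varepsilon})^c$ for $s\ge c(\varepsilon)$), and bounding the second by $\Cr{eqhitupper}\bar e_A(y)$ via \eqref{eq:EquilDistHitDistFromFar}. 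Since $d\ge 3$, $\varepsilon(d-2)/2\ge \varepsilon/2>\varepsilon/4$, so the claim holds for $s\ge c(\varepsilon)$.

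For the main computation, fix $i\ge 2$ and measurable $W_1,\ldots,W_i\subset\Gamma(\mathbb Z^d)$. By \eqref{eq:DefOfKappaL-1} and \eqref{eq:PhiZd},
\begin{equation*}
\kappa_4^i(W_1\times\cdots\times W_i)\le P_{e_A}^{\mathbb Z^d}\bigl[\tilde R_i<\infty,\,Y_{(\tilde R_j+\cdot)\wedge\tilde D_j}\in W_j,\,1\le j\le i\bigr].
\end{equation*}
Applying the strong Markov property at $\tilde D_{i-1}$ and then at $\tilde R_i=H_A\circ\theta_{\tilde D_{i-1}}+\tilde D_{i-1}$, and using the hitting estimate above on the walk started at $Y_{\tilde D_{i-1}}\in\partial_e B$, I peel off the last excursion, picking up a factor $s^{-\varepsilon/4}P_{\bar e_A}^{\mathbb Z^d}[Y_{\cdot\wedge T_B}\in W_i]$. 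Iterating this $i-1$ times, and finally using $P_{e_A}^{\mathbb Z^d}[Y_{\cdot\wedge T_B}\in W_1]=\mathrm{cap}(A)\,P_{\bar e_A}^{\mathbb Z^d}[Y_{\cdot\wedge T_B}\in W_1]$, yields
\begin{equation*}
\kappa_4^i(W_1\times\cdots\times W_i)\le s^{-\varepsilon(i-1)/4}\,\mathrm{cap}(A)\prod_{j=1}^{i}P_{\bar e_A}^{\mathbb Z^d}[Y_{\cdot\wedge T_B}\in W_j]=\tilde\kappa_4^i(W_1\times\cdots\times W_i),
\end{equation*}
which is the desired bound on products.

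Finally, I would extend from products to arbitrary measurable subsets of $\Gamma(\mathbb Z^d)^i$ by a monotone class argument, exactly as in the last paragraph of the proof of Lemma \ref{lem:CompoundPoissionIntMeasIneq}. I do not anticipate a genuine obstacle: the only quantitative point requiring care is verifying that the constant in \eqref{eq:BHBZdBallHitting} can be absorbed into $s^{-\varepsilon/4}$ (which uses $s\ge c(\varepsilon)$ and $d\ge 3$), and the only subtlety beyond that is the standard monotone class extension, identical to what was done in Section~\ref{sec:OutOfTorus}.
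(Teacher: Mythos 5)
Your proposal is correct and follows essentially the same route as the paper, which itself only sketches the argument by reference to Lemma \ref{lem:CompoundPoissionIntMeasIneq}: the key hitting estimate $\sup_{x\in\partial_e B}P_x^{\mathbb Z^d}[H_A<\infty, Y_{H_A}=y]\le cs^{-\varepsilon(d-2)/2}\bar e_A(y)\le s^{-\varepsilon/4}\bar e_A(y)$ obtained from \eqref{eq:EquilDistHitDistFromFar} and \eqref{eq:BHBZdBallHitting}, the iteration of the strong Markov property at the $\tilde D_j$ and $\tilde R_j$, and the monotone class extension are exactly the steps the paper intends. Your write-up simply supplies the details the paper omits.
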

\begin{proof}
Similarly to how it (in the proof of \prettyref{lem:CompoundPoissionIntMeasIneq})
followed from \prettyref{eq:HittingLikeEquilWeak} that $\kappa_{3}^{i}(W)\le\tilde{\kappa}_{3}^{i}(W)$
for all $W\in\Gamma(\mathbb{T}_{N})^{i}$ that are products of measurable
sets, it follows from\linebreak{}
 $\sup_{x\in\partial_{e}B}P_{x}^{\mathbb{Z}^{d}}[Y_{H_{A}}=y,Y_{H_{A}}<\infty]\le cs^{-\frac{\varepsilon}{2}}\bar{e}_{A}(y)\le s^{-\frac{\varepsilon}{4}}\bar{e}_{A}(y)$
(see \eqref{eq:EquilDistHitDistFromFar} and \eqref{eq:BHBZdBallHitting}
and recall $s\ge c(\varepsilon)$) that $\kappa_{4}^{i}(W)\le\tilde{\kappa}_{4}^{i}(W)$
for all such $W$. But this implies \eqref{eq:Kappa4lUpperBound}
(by a monotone class argument, like at the end of the proof of \prettyref{lem:CompoundPoissionIntMeasIneq}).
We omit the details. 
\end{proof}
We now prove \prettyref{lem:Mu234etc-1}.
\begin{proof}[Proof of \prettyref{lem:Mu234etc-1}]
Note that 
\begin{equation}
N\stackrel{\rm def}{=}\theta(\Gamma(\mathbb{Z}^{d}))\mbox{ is Poisson with intensity }u\delta\mbox{cap}(A).\label{eq:NisPoiss}
\end{equation}
Similarly to in the proof of \prettyref{lem:Mu234etc} (see \prettyref{eq:DefOfNlandEtal})
we can construct $(\Omega',\mathcal{A}',Q')$ appropriately to obtain
i.i.d. $\sigma(\theta)\times\mathcal{A}'-$measurable trajectories
$w_{i},i\ge1,$ independent of $N$, 
\begin{equation}
\mbox{such that }w_{i}\mbox{ has law }P_{\bar{e}}[Y_{\cdot\wedge T_{B}}\in dw],\mbox{ and }\theta=\dsl_{i=1}^{N}\delta_{w_{i}}.\label{eq:ExcursionsOfTheta-1}
\end{equation}
 Extend $(\Omega',\mathcal{A}',Q')$ with independent Poisson random
variables $N_{i},i\ge2,$ of respective intensities $u(1-\delta)s{}^{-\frac{\varepsilon}{4}(i-1)}\mbox{cap}(A)$
(also independent of $w_{i},i\ge1,$ and $\theta$) and let
\begin{align}
\eta_{i}  = & \dsl_{j=1}^{N_{i}}\delta_{(w_{K_{j}^{i},},w_{K_{j}^{i}+1},..,w_{K_{j}^{i}+(i-1)})},\mbox{ where }K_{j}^{i}=\dsl_{k=1}^{i-1}kN_{k}+(j-1)i+1\mbox{ and}\label{eq:ConstrucOfEtas}\\
\rho  = & \dsl_{i=\bar{K}+1}^{\bar{K}+N}\delta_{w_{i}}\mbox{ where }\bar{K}=\dsl_{k=1}^{\infty}kN_{k}.\label{eq:ConstrucOfRho}
\end{align}
The $\eta_{i}$ ``use'' only $w_{1},...,w_{\bar{K}}$, so on the
event the event $\{\bar{K}\le N\}$ we have $\cup_{i\ge2}\mathcal{I}(\eta_{i})\subset\mathcal{I}(\theta)$
(see \prettyref{eq:ExcursionsOfTheta-1}). Recalling \prettyref{eq:NisPoiss}
and that the $N_{i},i\ge2,$ are independent Poisson random variables
of intensity less than $us^{-\frac{\varepsilon}{4}(i-1)}\mbox{cap}(A)\le u\delta s^{-\frac{\varepsilon}{8}(i-1)}\mbox{cap}(A)=\lambda r^{i-1}$
(we require $\delta\ge s^{-\frac{\varepsilon}{8}}$), where $\lambda=u\delta\mbox{cap}(A)$
and $r=s^{-\frac{\varepsilon}{8}}\le c$ (we require $s\ge c(\varepsilon)$),
we have by \prettyref{eq:SumOfPoissonsLemma}
\begin{equation}
Q\otimes Q'[\cup_{i\ge2}\mathcal{I}(\eta_{i})\subset\mathcal{I}(\theta)]\ge Q\otimes Q'[\bar{K}\le N]\ge1-ce^{-cu\delta\mbox{cap}(A)}.\label{eq:EtaContainedInTheta}
\end{equation}

\smallskip\n
Also, since the number of the $w_{i}$ ``used'' by $\rho$ is the
same as the number ``used by'' $\theta$ 
\begin{equation}
\mathcal{I}(\theta)\subset\cup_{i\ge2}\mathcal{I}(\eta_{i})\cup\mathcal{I}(\rho)\mbox{ almost surely.}\label{eq:ThetaContainedAlmostSurely}
\end{equation}

\medskip\n
Furthermore, because they ``use different $w_{i}$'' and $N,N_{i},i\ge2,w_{i},i\ge1,$
are independent $\rho,\eta_{i},i\ge2$ are independent $\sigma(\theta)\times\mathcal{A}'-$measurable
point processes, where $\rho$ has intensity $u\delta\mbox{cap}(A)P_{\bar{e}_{A}}[Y_{\cdot\wedge T_{B}}\in dw]=u\delta\kappa_{3}(dw)$
(see \prettyref{eq:DefOfKappa3}, \prettyref{eq:NisPoiss} and \prettyref{eq:ConstrucOfRho})
and $\eta_{i}$ has intensity $u(1-\delta)\tilde{\kappa}_{4}^{i}$
(see \eqref{eq:Kappa4lUpperBound}, \prettyref{eq:ConstrucOfEtas}
and recall that $N_{i}$ has intensity $(1-\delta)s^{-\frac{\varepsilon}{4}(i-1)}\mbox{cap}(A)$).
By the inequality in \eqref{eq:Kappa4lUpperBound} we have $u(1-\delta)\kappa_{4}^{i}\le u(1-\delta)\tilde{\kappa}_{4}^{i}$.
Together with the (very crude) bound $u(1-\delta)\tilde{\kappa}_{4}^{i}\le u(1-\delta)\kappa_{4}^{i}+u\tilde{\kappa}_{4}^{i}$
this allows us to (similarly to under \prettyref{eq:Kappa1DeltaIntensitySandwich-1})
construct independent Poisson point processes $\mu_{i},\mu_{i}^{'},i\ge1,$
such that $\mu_{i}$ has intensity $u(1-\delta)\kappa_{4}^{i}$, $\mu_{i}^{'}$
has intensity $u\tilde{\kappa}_{4}^{i}$ and 
\begin{equation}
\mu_{i}\le\eta_{i}\le\mu_{i}+\mu_{i}^{'}\mbox{ for }i\ge2.\label{eq:EtaIdominatedMui}
\end{equation}
By \prettyref{eq:EtaContainedInTheta} and \prettyref{eq:EtaIdominatedMui}
we have 
\begin{equation}
Q\otimes Q'[\cup_{i\ge2}\mathcal{I}(\mu_{i})\subset\mathcal{I}(\theta)]\ge1-ce^{-cu\delta\,\rm{cap(\emph{A})}}.\label{eq:TheLowerInv}
\end{equation}
Now $\mu_{i}^{'}(\Gamma(\mathbb{Z}^{d}))$ are independent Poisson
random variables of respective intensities \linebreak{}
$us^{-\frac{\varepsilon}{4}(i-1)}\mbox{cap}(A)\le\lambda r^{i-1}$,
where $\lambda=u\frac{\delta}{2}\mbox{cap}(A)$ and $r=s^{-\frac{\varepsilon}{8}}$
(we require $\delta\ge2s^{-\frac{\varepsilon}{8}}$). Thus using \prettyref{eq:SumOfPoissonsLemma}
we see that $\sum_{i\ge2}i\times\mu_{i}^{'}(\Gamma(\mathbb{Z}^{d}))\le N'$
with probability at least $1-ce^{-cu\delta\mbox{cap}(A)}$, where
$N'$ is a Poisson random variable of intensity $u\frac{\delta}{2}\mbox{cap}(A)$.
Therefore we can, similarly how the $\theta$ from \prettyref{lem:Mu234etc}
(not to be confused with the $\theta$ here) was constructed (see
\prettyref{eq:DefOfTheta-1}) construct a $\sigma(\theta)\times\mathcal{A}'-$measurable
Poisson point process $\rho'$ of intensity $u\frac{\delta}{2}\kappa_{3}$
such that $\mu_{i},i\ge2,\rho,\rho',$ are independent and
\begin{equation}
Q\otimes Q'[\cup_{i\ge2}\mathcal{I}(\mu_{i}^{'})\subset\mathcal{I}(\rho')]\ge1-ce^{-cu\delta\mbox{cap(\emph{A})}}.\label{eq:etaiminusmuiinrho}
\end{equation}
Now construct a $\sigma(\theta)\times\mathcal{A}'-$measurable Poisson
point process 
\begin{equation}
\rho_{2}\mbox{ of intensity }u\mbox{\f $\dis\frac{3\delta}{2}$}\, \kappa_{3}\mbox{ by setting }\rho_{2}=\rho+\rho'.\label{eq:defofrho2}
\end{equation}
Note that $\mu_{i},i\ge2,\rho_{2},$ are independent and and by \eqref{eq:ThetaContainedAlmostSurely},
\prettyref{eq:EtaIdominatedMui}, \eqref{eq:etaiminusmuiinrho} and
\prettyref{eq:defofrho2} 
\[
Q\otimes Q'[\mathcal{I}(\theta)\subset\mbox{\f $\dis\bigcup\limits_{i\ge2}$}\;\mathcal{I}(\mu_{i})\; \mbox{\f $\dis\bigcup$}\; \mathcal{I}(\rho_{2})]\ge1-ce^{-cu\delta\mbox{cap(A)}}.
\]
Together with \prettyref{eq:TheLowerInv} this implies \prettyref{eq:Mu234etc-1},
so the proof of \prettyref{lem:Mu234etc-1} is complete.
\end{proof}
Now all components used in the proof of \prettyref{pro:CouplePPoEandRI}
have been proved, and thus the last piece of the proof of \prettyref{thm:Coupling}
is complete. Since \prettyref{thm:G_Gumbel} was reduced to \prettyref{thm:Coupling}
in \prettyref{sec:ProofOfGumbel}, also the last part of the proof
of \prettyref{thm:G_Gumbel} is done. We finish with the following
remark.
\begin{rem}
\label{rem:endremark}(1) As mentioned in \prettyref{rem:EndOfSec3Remarks} (2),
a generalisation of the cover time result \prettyref{thm:G_Gumbel}
to other graphs may be possible. Roberto Oliveira and Alan Paula are
working on such a generalisation.

\medskip\n
(2) In \prettyref{cor:PPoVCL} we proved that the point process $\mathcal{N}_{N}^{z}$
of points of $\mathbb{T}_{N}$ not hit at time $g(0)N^{d}\{\log N^{d}+z\}$,
for $z\in\mathbb{R}$, converges in law to a Poisson point process.
It is an open question whether this can be generalised to show that
the point process $\mathcal{N}_{N}=\sum_{x\in\mathbb{T}_{N}}\delta_{(x/N,\frac{H_{x}}{g(0)N^{d}}-\log N^{d})}$
on the space $(\mathbb{R}/\mathbb{Z})^{d}\times\mathbb{R}$, from
which $\mathcal{N}_{N}^{z}$ can be recovered but which records also
when the vertices are hit, converges in law to a Poisson point process
of intensity $e^{-z}dxdz$, where $dx$ denotes Lebesgue measure on
$(\mathbb{R}/\mathbb{Z})^{d}$ and $dz$ denotes Lebesgue measure
on $\mathbb{R}$.\inputencoding{latin1}{}\inputencoding{latin9}\qed

\medskip\n
(3) It would be interesting to explicitly determine the dependence on $\varepsilon$ of the constant
$\Cr{couplingerr}=\Cr{couplingerr}(\varepsilon)$ from the coupling \prettyref{thm:Coupling}.
 This constant arises as the minimum of a number of constants
from sections \ref{sec:Quasistationary}-\ref{sec:ToRI}, that appear in requirements on the level $u$, the number of
boxes $n$, and the parameter $\delta$.
Several of the requirements cause us to choose $c_5(\varepsilon) \le c\varepsilon$; 
see for instance 
\eqref{eq:reqonn},
\eqref{eq:CDBallHitting}, and above
\eqref{eq:FirstUseOfPoissonLemma} and 
\eqref{eq:EtaContainedInTheta}.
Several others cause us to choose $c_5(\varepsilon) \le c(1-\varepsilon)$; see
for instance above
\eqref{eq:RWPPOEResultPrime} and
below \eqref{eq:DealingWithIt}
and \eqref{eq:FirstThing}.
It is plausible that $c_5$ can in fact be chosen to be $c\min(\varepsilon,1-\varepsilon)$,
for a small constant $c$ independent of $\varepsilon$.\qed

\end{rem}

\appendix

\section{Appendix}

Here we prove \prettyref{lem:ETTEntraceTimesTorus}. Recall \eqref{eq:DefOfD}.
\begin{proof}[Proof of \prettyref{lem:ETTEntraceTimesTorus}.]
If $V\subset\bar{C}$ then by (3.22) and (3.23) of \cite{TeixeiraWindischOnTheFrag}
with $A_{1}=V$ and $A_{2}=\bar{D}$ we have (recalling the definition
from \eqref{eq:DefOfRelativeEquilAndCap} and that the $D_{1},..,D_{n}$
are disjoint when $s\ge c(\varepsilon)$)
\begin{equation}
\Big(1-c\sup_{x\notin\bar{D}}\big|\mbox{\f $\dis\frac{E_{x}[H_{V}]}{E[H_{V}]}$}-1\big|\Big)\dsl_{i=1}^{n}\mbox{cap}_{D}(V^{i})\le\mbox{\f $\dis\frac{N^{d}}{E[H_{V}]}$} \le\mbox{\f $\dis\frac{1}{(\pi(\mathbb{T}_{N}\backslash\bar{D}))^{2}}$} \dsl_{i=1}^{n}\mbox{cap}_{D}(V^{i}).\label{eq:EETOtherSandwhich}
\end{equation}
We have $\left(\pi(\mathbb{T}_{N}\backslash\bar{D})\right)^{-2}\overset{n\le s^{c(\varepsilon)},N\ge s\ge c(\varepsilon)}{\le}1+cN^{-c(\varepsilon)}$,
and by \eqref{eq:RelativeEquilUpperBound} (with $K=V^{i}$, $r=s^{1-\frac{\varepsilon}{4}}$
and $\lambda$ such that $(1-\frac{\varepsilon}{4})(1+\lambda)=1-\frac{\varepsilon}{8}$
so that $U=D$), \prettyref{eq:DefOfeKandCap} and \prettyref{eq:DefOfRelativeEquilAndCap}
we have $\mbox{cap}_{D}(V^{i})\le(1+c(\varepsilon)s^{-c(\varepsilon)})\mbox{cap}(V^{i})$,
so \eqref{eq:ETTEntranceTimeLowerBound} follows. Thanks to \eqref{eq:RelativeEquilLowerBound}
we will obtain \eqref{eq:ETTEntraceTimeUpperBound} from \eqref{eq:EETOtherSandwhich}
once we have shown \eqref{eq:ETTStartingFromPointandUniformComparison}.
It thus only remains to show \eqref{eq:ETTStartingFromPointandUniformComparison}.

So assume $V\subset\bar{B}$. By \eqref{eq:TorusMixing} and $\sup_{x,y\in\mathbb{T}_{N}}E_{x}[H_{y}]\le cN^{d}$
(see e.g. (10.18), p. 133 in \cite{LevPerWilMarkovChainsandMixingTimes})
we have for all $x\in\mathbb{T}_{N}$ (recall from \prettyref{eq:DefOfUandtStar}
that $t^{\star}=N^{2+c\varepsilon}$)
\begin{equation}
\left|E_{x}[E_{Y_{t^{\star}}}[H_{V}]]-E[H_{V}]\right|\le ce^{-cN^{c\varepsilon}}\sup_{x\in\mathbb{T}_{N}}E_{x}[H_{V}]\overset{N\ge c(\varepsilon)}{\le}ce^{-cN^{c\varepsilon}}.\label{eq:MixingUsed}
\end{equation}
Therefore for all $x\in\mathbb{T}_{N}$ 
\begin{equation}
E_{x}[H_{V}]\le E_{x}[E_{Y_{t^{\star}}}[H_{V}]]+t^{\star}\overset{\eqref{eq:MixingUsed}}{\le}E[H_{V}]+ct^{\star}\le(1+cN^{-c(\varepsilon)})E[H_{V}],\label{eq:UpperBoundInLemma}
\end{equation}
where we have used that (recall that the capacity is monotone, see
e.g. Proposition 2.2.1, p.52 in \cite{LawlersLillaGrona}) 
\begin{equation}
ct^{\star}\overset{n\le s^{\frac{(d-2)\varepsilon}{4}}}{\le}\frac{N^{d}}{cns^{(1-\frac{\varepsilon}{2})(d-2)}}N^{-c(\varepsilon)}\overset{\eqref{eq:AsymptoticsOfCapOfBox},\eqref{eq:ETTEntranceTimeLowerBound},V\subset\bar{B}}{\le}E[H_{V}]c(\varepsilon)N^{-c(\varepsilon)}.\label{eq:Noname}
\end{equation}
Thus the upper bound of \eqref{eq:ETTStartingFromPointandUniformComparison}
holds. The lower bound follows since for all $x\notin\bar{C}$
\begin{eqnarray*}
E_{x}[H_{V}] & \ge & E_{x}[E_{Y_{t^{\star}}}[H_{V}]]-E_{x}[1_{\{H_{V}<t^{\star}\}}E_{Y_{t^{\star}}}[H_{V}]]\\
 & \overset{\eqref{eq:MixingUsed},V\subset\bar{B}}{\ge} & E[H_{V}]-c-\sup_{y\notin\bar{C}}P_{y}[H_{\bar{B}}<t^{\star}]\sup_{y\in\mathbb{T}_{N}}E_{y}[H_{V}]\\
 & \overset{\eqref{eq:UpperBoundInLemma},\eqref{eq:Noname},n\le s^{c(\varepsilon)}}{\ge} & E[H_{V}](1-c(\varepsilon)N^{-c(\varepsilon)}-c(\varepsilon)s^{-c(\varepsilon)}),
\end{eqnarray*}
where we have used that $\sup_{y\notin\bar{C}}P_{y}[H_{\bar{B}}<t^{\star}]\le c(\varepsilon)s^{-c(\varepsilon)}$
if $n\le s^{c(\varepsilon)}$, similarly to \prettyref{eq:CDBallHitting}.
\end{proof}

\begin{acknowledgement*}
The author would like to thank his Ph.D. adviser Alain-Sol Sznitman for supervising his research, 
and Bal\'azs R\'ath, Artem Sapozhnikov and Augusto Teixeira for useful discussions.
\end{acknowledgement*}

\end{document}